\documentclass[11pt,reqno]{amsart}

\usepackage{amsmath}
\usepackage{dsfont}  
\usepackage{amsthm}
\usepackage{graphicx}
\usepackage{mathtools}
\usepackage{amsfonts}
\usepackage{amssymb}
\usepackage{hyperref}
\usepackage{enumerate}
\usepackage{enumitem}
\usepackage{bm}
\usepackage{comment}
\usepackage[margin=1 in]{geometry}
\parindent=.25in
\usepackage[normalem]{ulem}
\usepackage{tikz}
\usepackage{tikz-cd}
\usepackage{xcolor}
\usetikzlibrary{matrix,arrows,positioning,automata}
  \usepackage{cancel}

\usepackage[numbers]{natbib}

\usepackage{todonotes}

\numberwithin{equation}{section}
\newtheorem{theorem}{Theorem}[section]
\newtheorem{lemma}[theorem]{Lemma}
\newtheorem{proposition}[theorem]{Proposition}
\newtheorem{corollary}[theorem]{Corollary}
\newtheorem{assumption}[theorem]{Assumption}

\theoremstyle{definition}

\newtheorem{definition}[theorem]{Definition}
\newtheorem{remark}[theorem]{Remark}

\def\R{{\mathbb R}}

\def\P{{\mathcal P}}

\DeclareMathOperator*{\argmin}{arg\,min}


\newcommand\cE{\mathcal E}
\newcommand\cF{\mathcal F}

\newcommand\cP{\mathcal P}


\def \P{\mathbb{P}}

\def \R{\mathbb{R}}


\DeclareMathOperator*{\esssup}{ess\,sup}

\thanks{The first and third authors (in alphabetical order) acknowledge financial support by AFOSR grant FA9550-23-1-0324.\ The second and third authors acknowledge financial support by NSF CAREER award DMS-2143861.}
 
\title[Conditional McKean-Vlasov control]{Conditional McKean-Vlasov control}
\author{Ren\'e Carmona, Ludovic Tangpi and Kaiwen Zhang}

\address{Princeton University}
\email{rcarmona@princeton.edu}
\email{ludovic.tangpi@princeton.edu}
\email{kezhang@princeton.edu}

\begin{document}
\maketitle

\begin{abstract}

Conditional McKean--Vlasov control problems involve controlling McKean--Vlasov diffusions where the interaction occurs through the law of the state process conditionally on it staying in a domain. Introduced by Lions in his 2016 lectures at the Collège de France, these problems have notable applications, particularly in systemic risk.
We establish well-posedness and provide a general characterization of optimal controls using a new Pontryagin maximum principle in the \emph{probabilistic weak formulation}. Unlike the classical approach based on forward--backward systems, our results connect the control problem to a generalized McKean--Vlasov backward stochastic differential equation (BSDE).
We illustrate our framework with two applications: a version of the Schrödinger problem with killing, and a construction of equilibria in potential mean field games via McKean--Vlasov control.
\end{abstract}

\tableofcontents

\section{Introduction}
This paper aims to advance our understanding of the class of McKean–Vlasov stochastic optimal control problems in which the controlled particle is considered \emph{conditionally on remaining within a given domain}.
Before providing proper definitions below, let us sketch the problem here: Let the $\mathbb R^d$-valued state process $X$ be a weak solution to the controlled McKean–Vlasov stochastic differential equation (SDE)
\begin{equation}\label{eq: state}
    X_t = \xi + \int_0^t b(s, X_{\cdot \wedge s}, \alpha_s, \mathcal{L}_{\mathbb{P}^\alpha}(X_{\cdot \wedge s}\vert s < \tau), \mathbb{P}^\alpha[s<\tau])ds + \int_0^t \sigma(s, X_{\cdot \wedge s}) dW^\alpha_s.
\end{equation}
where $ \tau$ denotes the exit time of  $X$ from a domain $ D\subseteq \mathbb R^d$, defined by $\tau = \inf\{ t \geq 0 \mid X_t \notin D \}$, and $\mathbb{P}^\alpha$ is the probability measure, defined in \eqref{eq: p def}, under which $W^\alpha$ is a Wiener process. 
With $\mathcal{L}_{\mathbb{P}^\alpha}(\cdot \vert s < \tau)$, we denote the law of a random variable under $\mathbb{P}^\alpha$ conditioned on the event $\lbrace s < \tau \rbrace$.
By controlling the process \( \alpha \), the goal is to minimize the cost functional
\begin{equation}\label{eq: def J}
\small{
    J(\alpha) := \mathbb{E}^\alpha\bigg[\int_0^{T \wedge\tau} f(s, X_{\cdot \wedge s}, \alpha_s, \mathcal{L}_{\mathbb{P}^\alpha}(X_{\cdot \wedge s}\vert s < \tau), \mathbb{P}^\alpha[s < \tau]) ds + \mathbf{1}_{\lbrace T < \tau \rbrace}g(X, \mathcal{L}_{\mathbb{P}^\alpha}(X\vert T < \tau), \mathbb{P}^\alpha[T < \tau]) \bigg].}
\end{equation}
This problem can be understood as a generalization of the conditional exit control problem first proposed by P.-L. Lions in his 2016 lectures at the Collège de France \cite{LionsCours} which, in a simplified form, is given on a probability space $(\Omega, \cF, \P)$ with Brownian motion $W$ by:
\[
\min_\alpha \check{J}(\alpha), \quad \text{with} \quad dX^\alpha_t =\alpha_t dt + \sigma dW_t, \quad \check{J}(\alpha) = \int_0^T \mathbb{E}[\check{f}(X_s^\alpha, \alpha_s) \mid s < \tau] ds + \mathbb{E}[\check{g}(X_T^\alpha) \mid T < \tau].
\]
The problem with cost \eqref{eq: def J} generalizes the natural weak formulation of the conditional exit control problem.
To see this, it suffices to put $f(s, x, a, \mu, p)=\frac{\check{f}(x_s, a)}{p}$, and $g(x, \mu, p)=\frac{\check{g}(x_T)}{p}$.
As we explain in more details below, this class of stochastic control problems has received a growing interest due to its connection with systemic risk and the control of Fleming–Viot processes.\par
We are not able to simply rely on established results on McKean-Vlasov control due to lack of regularity. Indeed, rewriting the interaction coefficients as functions of the (unconditioned) law amount to  $\bar{g}(x, \mu) = g(x, \frac{\mu(\cdot \cap \lbrace T<\tau \rbrace)}{\mu(T<\tau)}, \mu(T<\tau))$ e.g. for the terminal cost.
This introduces significant singularities that violate the regularity assumptions typically required to establish the stochastic maximum principle, see e.g.\ \cite{CarmonaDelarue}.\par
A common workaround in the literature is to adopt the so-called \emph{soft-killing} formulation, in which particles are not immediately removed from the system but instead are killed at an independent exponential clock for which particles accumulate intensity outside the domain. This relaxed formulation has been studied, for instance, in \cite{Carmona24, Daudin25, HamblyJettkant231, HamblyJettkant232, HamblyJettkant24}.\par
In contrast, our goal is to address the original and more natural version of the problem, namely the \emph{hard-killing} formulation, in which particles are instantaneously removed upon exiting the domain. Overcoming the resulting regularity issues is a core obstacle we face.
\subsection{Main contributions}
Beyond establishing conditions for existence and uniqueness of optimal controls, the main objective of this paper is to characterize optimal controls in the current setting. Such characterizations are essential for analyzing both the value function and the structure of optimal control strategies. 
As in classical 
control problems, we do so by establishing the stochastic Pontryagin maximum principle: Consider the Hamiltonian $h(t, x, a, \mu, p, z) = f(t, x, a, \mu, p) + (\sigma^{-1} b)(t, x, a, \mu, p)^\top z$ and adjoint backward equation
\begin{eqnarray}\label{eq: adj eq intr}
    &&Y^\alpha_{t\wedge \tau} = \mathbf{1}_{\lbrace T < \tau \rbrace} \Big( g(\Theta^\alpha_T) + \tilde{\mathbb{E}}^\alpha\Big[\frac{\delta g}{\delta m}(\tilde{\Theta}^\alpha_T, X)\vert T < \tilde{\tau}\Big] + \mathbb{E}^\alpha[\mathbf{1}_{\lbrace T < \tau \rbrace}g_p(\Theta^\alpha_T)] \Big)\\
    &+& \int_{t\wedge \tau}^{T \wedge \tau} \Big(h(\Theta^\alpha_s, Z^\alpha_s) + \tilde{\mathbb{E}}^\alpha\Big[\frac{\delta h}{\delta m}(\tilde{\Theta}^\alpha_s, \tilde{Z}^\alpha_s, X_{\cdot \wedge s})\vert s < \tilde{\tau} \Big] + \mathbb{E}^\alpha[\mathbf{1}_{\lbrace s<\tau \rbrace} h_p(\Theta^\alpha_s, Z^\alpha_s)]\Big)ds -\int_{t\wedge\tau}^{T \wedge \tau} Z^\alpha_s dW_s \nonumber
\end{eqnarray}
where $\Theta^\alpha_t = (t, X_{\cdot \wedge t}, \alpha_t, \mathcal{L}_{\mathbb{P}^\alpha}(X_{\cdot \wedge t}\vert t < \tau), \mathbb{P}^\alpha[t < \tau])$ and $\delta g/\delta m$ and $\delta h/\delta m$ are the linear derivatives of $g$ and $h$ respectively, and $g_p$, $h_p$ are derivatives with respect to $p$. 
Our first main result states that a necessary condition for optimality of controls is the pointwise minimization of the Hamiltonian over the action space $A$:
\begin{equation}\label{eq: ptw cnd}
    \alpha_t \in \argmin_{a\in A} h(\Theta^\alpha_t, Z^\alpha_t)\quad dt\times \mathbb{P} \text{ a.e. on} \quad \lbrace t < \tau \rbrace.
\end{equation}
Under an additional convexity condition, \eqref{eq: ptw cnd} becomes sufficient for optimality as well.
This result builds on the maximum principle for (non McKean-Vlasov) control problems in the weak formulation proposed by Cvitanić and Zhang \cite{Cvitanic12}.
Although our results are formulated in the conditional framework, they still apply --and remain both new and interesting-- even in the unconditional case.\par
The adjoint equation \eqref{eq: adj eq intr} used in our necessary condition of optimality can be understood as a generalized McKean-Vlasov backward stochastic differential equation (BSDE) as introduced by Possama\"i and Tangpi \cite{PossamaiTangpi}. 
Although we do not discuss general well-posedness of such BSDEs, in the convex framework, we still establish existence and uniqueness of an optimal control and thus also of solutions of our adjoint equation. The convexity condition we impose can sometimes be difficult to verify, and relaxing it seems an interesting avenue for future research.
At the technical level, we do not impose regularity in the state variable, and need existence of derivatives in the measure argument only in the linear sense, not in the sense of Lions' derivative.
Moreover, we do not require the drift or the controls to be uniformly bounded as typically assumed in the literature.
Nevertheless, we can show that under suitable conditions, the optimal control admits very strong integrability properties. 
In particular, it belongs to some BMO space. 
These properties are derived using a truncation argument that would allow to naturally generalize results on games and control in the weak formulation, notably those in \cite{Carmona15, Z24}.

We finish the work with two applications: 
First, we consider a variant of the Schrödinger problem, which originally concerns the most likely evolution of a particle system given initial and terminal distributions (see \cite{Leonard14}). 
In our version, particles are killed if they exit a domain, and we are interested in the most likely evolution of such particles given an initial distribution on $\mathbb R^d$ and a terminal distribution on $D$ as well as a prescribed survival ratio. 
We analyze this problem using a penalization approach similar to that of \cite{Hernandez25} that allows to reduce this constrained problem into conditional McKean-Vlasov problems analyzed in this work. 
Second, we revisit the connection between McKean-Vlasov optimal control and potential mean field games. 
It turns our that combining our Pontriagin maximum principle with the insight of \cite{PossamaiTangpi} on mean field games allows to derive an easy proof of the fact that solutions of McKean-Vlasov control problems give rise to equilibrium strategies for associated potential mean field games defined e.g.\ in \cite{LasryLions, CarmonaDelarue}.
Although this result appears to be folklore, the first proof in a general framework seems to be due to \cite{Hofer2}.
In summary, the main results of this paper are:
\begin{itemize}
    \item A general Pontryagin maximum principle for conditional McKean--Vlasov control problems in the weak formulation.
    
    \item Existence and uniqueness of optimal controls under sufficient convexity property of the coefficient and mild regularity. The optimal controls are shown to belong to a {BMO space} even without boundedness assumptions on the drift or control.
    
    \item Application of our results to a version of the Schrödinger problem with killing; and to present an easy proof of the connection between McKean-Vlasov control problems and potential mean field games.
    
\end{itemize}
\subsection{The role of the weak formulation}
The stochastic control problem described above is in a probabilistic weak formulation, where the drift of the state $X$ is indirectly controlled by changing the underlying probability measure $\mathbb{P}^\alpha$. In contrast, the strong formulation of the problem would require to work with one fixed measure $\mathbb{P}$ under which $X$ needs to be a strong solution of \eqref{eq: state} and under which the expectation in \eqref{eq: def J} is taken. In the strongly formulated setting, proving a maximum principle would require to differentiate the exit time of the controlled state variable, which appears to be rather challenging. As the weak formulation is well known to have the advantage to require close to no regularity in the state variable, we chose this formulation to avoid regularity issues caused by the exit time. 
Moreover, even in the unconditional case $D = \mathbb R^d$, we are not aware of any results providing a probabilistic characterizations of solutions of McKean-Vlasov control problems in the weak formulation.\par

Although we do not carefully discuss the link between the weak and strong formulations of the problem, in light of the recent result of \cite{Lacker25} regarding the mimicking of processes with hard killing, both problems are expected to coincide for fairly general models.\par
Surprisingly, the adjoint equation \eqref{eq: adj eq intr} we derived does not involve derivatives with respect to the state $X$ as it is otherwise common for the dual variable in Pontryagin's maximum principle. 
To illustrate why, let us briefly present our approach to deriving the Pontryagin principle in the weak formulation 
by considering the standard stochastic control setting which we recover when $D = \mathbb{R}^d$ and our coefficients depend neither on the law of $X$ nor on the exit probability.
In this case, \eqref{eq: adj eq intr} reduces to
\begin{equation}\label{eq: adj std}
    Y^\alpha_t = g(X) + \int_t^T h(s, X_{\cdot \wedge s}, \alpha_s, Z^\alpha_s)ds - \int_t^T Z^\alpha_s dW_s
\end{equation}
which is the backward equation arising from a dynamic programming approach, see e.g.\ \cite{Karoui}. 
Since the measure $\P^\alpha$ has density with respect to $\P$ given by the stochastic exponential $\mathcal{E}^\alpha_T = \mathcal{E}(\int_0^\cdot b(\Theta^\alpha_s) dW_s)_T$,
we can consider it to be an additional state variable as well by rewriting the problem as 
$$\quad d\begin{pmatrix} X_t \\ \mathcal{E}^\alpha_t \end{pmatrix} = \begin{pmatrix} \sigma(t, X_{\cdot \wedge t}) \\ \mathcal{E}^\alpha_t(\sigma^{-1} b)(t, X_{\cdot \wedge t}, \alpha_t) \end{pmatrix} dW_t, \quad J(\alpha) = \mathbb{E}\Big[\int_0^T \mathcal{E}^\alpha_s f(s, X_{\cdot \wedge s}, \alpha_s)ds + \mathcal{E}^\alpha_Tg(X)\Big].$$
This leads us back to the strong formulation on witch the (extended) controlled state process $(X_t, \cE_t^\alpha)_{t\in [0,T]}$.
Now, applying the standard Pontryagin principle, it is easy to check that the relevant adjoint equation is given by \eqref{eq: adj std}, and optimality is indeed characterized through \eqref{eq: ptw cnd}. This demonstrates that although our approach fundamentally relies on the Pontryagin maximum principle, our results are still structurally closely related to the dynamic programming framework. 
The technical challenge is to make this extension of the state space rigorous in the McKean-Vlasov case (with conditioning), which involves a careful analysis of the dependence of $\mathcal{L}_{\mathbb{P}^\alpha}(X_{t\wedge \cdot}\vert t<\tau)$ on $\mathcal{E}^\alpha_t$ to properly derive the variational process associated with $\mathcal{E}^\alpha$.
 
\subsection{Related work}

Interests in conditional McKean-Vlasov control problem (also termed conditional exit problem) started with Lions \cite{LionsCours} and Achdou, Laurière and Lions \cite{Achdou21}. Carmona, Laurière and Lions \cite{Carmona24} introduce a soft-killing version that is further discussed in Carmona and Daudin \cite{Daudin25}. For the hard killing version, Carmona and Lacker \cite{Lacker25} establish a mimicking result that relates open loop controls to feedback controls. Along similar lines, Jettkant \cite{Jettkant24} also provides a reformulation of the conditional exit control problem using a McKean-Vlasov version of the Fleming-Viot process, complementing earlier work by Tough and Nolen \cite{Tough} showing how McKean-Vlasov equations with conditional interaction as \eqref{eq: state} arise as the limit of Fleming-Viot particle systems.\par
In a recent preprint, Cardaliaguet, Jackson, and Souganidis \cite{Cardaliaguet25} characterize the control problem of sub probabilities via an infinite dimensional HJB equation. Such problems are covered by our framework as well, and for our sufficient condition, we also rely on the geometry on the space of sub probability measures. In contrast to our work, the dependence of the coefficients with respect to the subprobability cannot be separated into effects by $p$ and $\mu$, i.e. the mass and the conditioned measure, and the cost functions we consider in section \ref{sub:convex_cost_functions} fail to be continuous in their setting. In \cite{Cardaliaguet25}, the authors additionally discuss convergence of a corresponding $N$ particle problem, extending the well known limit results for McKean Vlasov control, see e.g.\ Lacker \cite{Lacker17}. We expect a similar construction to be possible as well for the problem we consider.\par
Particle systems with interaction through the conditioned law commonly arise in systemic risk models dealing with default contagion. In such models, banks are affected by the defaults of other banks and interact only with those banks that have not defaulted yet.
Earlier works analyzing the behavior of such particle systems include \cite{Bush11, Spiliopoulos15, Hambly17, Giesecke20}. Instantaneous default contagion effects are related to the supercooled Stefan problem, see e.g.\ \cite{Nadtochiy19, Hambly19, Delarue22, Cuchiero23, Baker25, Guo25}. The control of such systems has been studied by Hambly and Jettkant \cite{HamblyJettkant231, HamblyJettkant232, HamblyJettkant24} in the soft-killing case using SPDE methods.\par
Systems with conditional interactions with hard-killing have been studied in the context of \emph{mean field game} beginning with the work of 
 Campi and Fischer \cite{Campi18}, and further extended notably  in 
 \cite{Campi21,Burzoni23}.
 Let us emphasize that the difficulties faced when studying mean field games are very different.
 In the games setting, the optimization step is rather standard as the conditional laws are held fixed, whereas, in the McKean-Vlasov control problem, one needs to take into account how the conditional law is directly affected by the control.

Our results also contribute to the literature on \emph{unconditional} McKean--Vlasov control problems. A well-developed analytic theory has emerged for the McKean-Vlasov control problem; see, for example, \cite{SonerYan, bayraktar2018randomized} and references therein. These works typically characterize the value function as the unique viscosity solution of a Hamilton--Jacobi--Bellman equation on the Wasserstein space. 
Probabilistic characterizations have mostly been confined to the \emph{strong formulation}, as in \cite{CarmonaDelarue15, acciaio2019extended}, where the value function and optimal controls are characterized by a fully coupled forward--backward SDE (FBSDE) system. In contrast, relatively little is known in the \emph{weak formulation}. Some recent contributions—e.g., \cite{Djete22, pham2017dynamic, Djete21}—establish a dynamic programming principle (DPP). However, it remains unclear how to derive a BSDE characterization of the value function from DPP, as is possible in the classical (non-McKean-Vlasov) case; see \cite{PossamaiTangpi}.
The present paper offers a new perspective by providing a maximum principle that yields a characterization of optimal controls through a generalized BSDE. We also note the recent work by Djete \cite{Djete25}, which introduces Wasserstein BSDEs as an alternative approach to the characterization problem in McKean--Vlasov control.

\par
Our paper is structured as follows. In the next section we formally present the conditional McKean-Vlasov control problem in the weak formulation, and state our main results. In section \ref{sct: expl}, we discuss the assumption and provide many examples. Sections \ref{sct: ptry} and \ref{sct: exst} deal with the technical details required for our proof of the Pontryagin maximum principle, as well as well posedness of the control problem. Lastly, in section \ref{sct: aplc} we present two applications and specialize our results to the unconditional case.

\section{Preliminaries and main results}
\subsection{Notation}
Let us first gather some frequently used notation. For any Polish space $E$, let $\mathcal{P}(E)$ denote the space of probability measures in $E$. For any $\mu^1, \mu^2 \in \mathcal{P}(E)$, we denote with $\mathcal{H}$ the relative entropy, i.e.\ $\mathcal{H}(\mu^1 \parallel \mu^2) = \int_E \log(\frac{d\mu^1}{d\mu^2})d\mu^1$ if $\mu^1 \ll \mu^2$ and $\mathcal{H}(\mu^1 \parallel \mu^2) = \infty$ otherwise. When working on a metric space, we denote with $B_x(r)$ the open ball with radius $r$ around $x$. For matrices, unless otherwise specified, we always use the Frobenius norm.\par
We write $\mathcal{C}_E$ for the space of continuous $E$ valued functions on $[0, T]$ being equipped with the supremum metric for a fixed time horizon $T > 0$. We also write $\mathcal{C}$ for $\mathcal{C}_{\mathbb{R}^d}$.
Let $(\Omega, \mathcal{F}, \mathbb{P})$ be the completed canonical space of a $d$ dimensional Wiener process $W$ on $[0, T]$, i.e.\ $\Omega = \mathcal{C}$ and $\P$ is Wiener measure.
We equip the probability space with the $\P$-completed natural filtration $\mathbb{F}$ of $W$.
Unless otherwise specified, $[0, T] \times \Omega$ is always equipped with the progressive $\sigma$ algebra. \par

A process $V$ with finite variation is said to be $\mathbb{P}$-BMO if there is a constant $k$ such that for any stopping time $\tau  \geq 0$, we have $\mathbb{E}[\int_\tau^T \vert dV_s \vert \vert \mathcal{F}_\tau] \leq k$ $\P$-a.s. and define its BMO norm $\Vert \cdot \Vert_{BMO}$ as the smallest possible such constant $k$.
A continuous martignale $M$ is a $P$-BMO martingale\footnote{Recall that by \cite[(76.4)]{DMB}, in the definition of the BMO norm for continuous martingales, it is enough to consider only deterministic times instead of stopping times.} if $\langle M\rangle $ is a $\P$-BMO finite variation process. 
In particular we have $\Vert M \Vert_{BMO}^2 = \Vert \langle M \rangle \Vert_{BMO}$. If the BMO norm is taken with respect to a different probability $\mathbb{Q}$, we write it as $\Vert \cdot \Vert_{\mathbb{Q}-BMO}$.\par
\subsection{Problem formulation}
Let $A \subset\mathbb{R}^k$ be a closed convex action space. Just for notational simplicity, we assume $0 \in A$. Let $D\subset \mathbb{R}^d$ be a non-empty open domain, $\xi$ a $D$-valued initial condition admitting finite polynomial moments of all degrees and $$b: [0, T] \times \mathcal{C} \times A \times \mathcal{P}(\mathcal{C}_D) \times (0,1] \rightarrow \mathbb{R}^d,\quad \sigma: [0, T] \times \mathcal{C} \rightarrow GL(\mathbb{R}^d)$$ a measurable drift function and an invertible measurable volatility function respectively.
We assume that $b$ and $\sigma$ are progressively measurable in the sense that $b(t,\omega,a,\mu) = b(t, \omega_{\cdot\wedge t}, a, \mu_{|[0,t]})$, with $\mu_{\vert [0, t]} := \mu \circ (\omega \mapsto \omega_{\vert [0, t]})^{-1}$ and $\sigma(t,\omega) = \sigma(t,\omega_{\cdot\wedge t})$.
Under our initial measure $\mathbb{P}$, we consider a state process and its exit time from the domain $D$:
\begin{equation*}
    X_t = \xi + \int_0^t \sigma(s, X_{\cdot \wedge s}) dW_s\quad \text{and}\quad\tau := \inf \lbrace t \geq 0\vert X_t \notin D \rbrace
\end{equation*}
where we assume that $X$ is the unique strong solution to the above SDE. For notational simplicity, we will actually assume that $b$, $\sigma$ and $X$ are actually defined on a time horizon $T' > T$ so that when $X$ stays within $D$ in $[0, T]$, we have $\tau > T$. Still, the dynamics of $X$ after $T$ will not be relevant for our problem.
\begin{definition}
\label{def:admissible}
    The set of admissible controls $\mathbb{A}$ is the set of $A$-valued progressively measurable processes $\alpha$ for which there exists a probability measure $\mathbb{P}^\alpha$ that is equivalent to $\mathbb{P}$ satisfying
     \begin{equation}\label{eq: p def}
         \frac{d\mathbb{P}^\alpha}{d\mathbb{P}} = \mathcal{E}\bigg(\int_0^{\cdot\wedge\tau} \beta\big(s, X_{\cdot \wedge s}, \alpha_s, \mathcal{L}_{\mathbb{P}^\alpha}(X_{\cdot \wedge s}\vert s < \tau), \mathbb{P}^\alpha[s < \tau]\big)dW_s\bigg)_T\quad \text{with}\quad  \beta := \sigma^{-1}b
 \end{equation}
 and such that $\mathbb{E}^\alpha[\int_0^{T\wedge \tau} \Vert \alpha_s \Vert^2 ds] <\infty$ where $\mathbb{E}^\alpha$ denotes the expectation taken with respect to $\mathbb{P}^\alpha$ and $\mathcal{E}(M):=\exp(M- \frac12\langle M\rangle)$ is the stochastic exponential of a martingale $M$.
 \end{definition}

  Since the system will no longer depend on the control once $X$ leaves the domain, we will usually assume $\alpha_t \mathbf{1}_{\{t\geq \tau\}} = 0$. 
In Proposition \ref{prop: Pa exst} below, we will provide more details on the well-posedness of such $\mathbb{P}^\alpha$. We will also write $\mathbb{A}_{BMO}$ for all progressively measurable $A$ valued $\alpha$ for which $\Vert \int_0^{\cdot \wedge \tau} \Vert \alpha_s \Vert^2 ds \Vert_{BMO} < \infty$. With a slight abuse of notation, for $\alpha \in \mathbb{A}_{BMO}$, we also write $\Vert \alpha \Vert_{BMO}^2=\Vert\int_0^\cdot \Vert\alpha_s\Vert^2 ds\Vert_{BMO}$. By Proposition \ref{prop: Pa exst} below, if $A$ is bounded then $\mathbb{A} = \mathbb{A}_{BMO}$.\par

Defining $W^\alpha_t:=W_t - \int_0^t \beta(s, X_{\cdot \wedge s}, \alpha_s, \mathcal{L}_{\mathbb{P}^\alpha}(X_{\cdot \wedge s}\vert s < \tau), \mathbb{P}^\alpha[s<\tau]) ds$, by Girsanov's theorem, we can see that $X$ then becomes a weak solution of the controlled McKean-Vlasov SDE \eqref{eq: state}.\par
We will assume that for any $t \in [0, T]$, we have $\mathbb{P}[t < \tau] > 0$. For instance, this is guaranteed if $\sigma$ is bounded on $D$. As $D$ is open, using the Dambis-Dubins-Schwarz theorem for each $x$ in the support of $\xi$, we can lower bound $\tau$ by the exit time of a small ball centered at $x$, providing us with such an estimate.\par

Now, given a progressively measurable running cost $f:  [0, T] \times \mathcal{C} \times A \times \mathcal{P}(\mathcal{C}_D) \times (0,1] \rightarrow \mathbb{R}$ and a terminal cost function $g:\mathcal{C} \times \mathcal{P}(\mathcal{C}_D) \times (0, 1] \rightarrow \mathbb{R}$, we are interested in minimizing the cost functional J as defined in \eqref{eq: def J}. For $f$ at time $t$, we again assume that the mean field interaction only occurs through the conditioned law until time $s$. 

\subsection{The Pontryagin maximum principle in the weak formulation}
We begin by discussing \eqref{eq: p def}, where $\mathbb{P}^\alpha$ is given as a fixed point. 
To show existence of such $\mathbb{P}^\alpha$ we need a regularity condition on $\beta$ in terms of the Le Cam distance defined as
$$d_{LC}(\mu, \mu')^2 := \frac{1}{4} \int \Big(\frac{d\mu}{d(\frac{\mu + \mu'}{2})} - \frac{d\mu'}{d(\frac{\mu + \mu'}{2})} \Big)^2 d(\frac{\mu + \mu'}{2})$$
 for any two probability measures $\mu, \mu'$ on a common probability space.
Up to rescaling, this distance first appeared in \cite[Chapter 4]{LeCam} where it is shown that $d_{LC}$ is a metric and that it is equivalent to the Hellinger metric. Furthermore, we have $d_{TV}^2 \leq d_{LC}^2 \leq d_{TV}$ where $d_{TV}$ denotes the total variation metric. Formulating our Lipschitz condition in terms of the Le Cam metric thus allows for a more general framework compared to if we have used the total variation metric instead. Additionally, the topology induced by $d_{LC}$ is the same to the one induced by the total variation metric. For more information, we also refer to \cite[Chapter 7]{Polyanskiy}.
\begin{assumption}\label{asmp: beta}
    \begin{itemize}
        \item[(i)] For a fixed $\mu_0$, the process $\beta(t, X_{\cdot \wedge t}, 0, \mu_0, 1)$ is $dt \times \mathbb{P}$ a.e.\ bounded.
        \item[(ii)] There is $L > 0$ such that for any $t, x, a, \mu, \mu', p, p' \in [0, T] \times \mathcal{C} \times A^2 \times \mathcal{P}(\mathcal{C}_D)^2 \times ( 0, 1]^2$, we have
        $$\Vert \beta(t, x, a, \mu_{\cdot \wedge t}, p) - \beta(t, x, a', \mu'_{\cdot \wedge t}, p')\Vert \leq L (\Vert a - a' \Vert + d_{LC}(\mu_{\cdot \wedge t}, \mu'_{\cdot \wedge t}) + \vert p - p' \vert).$$
    \end{itemize}
\end{assumption}
\begin{proposition}\label{prop: Pa exst}
    Under Assumption \ref{asmp: beta}, for any $\alpha \in \mathbb{A}_{BMO}$, there exists a unique $\mathbb{P}^\alpha$ equivalent to $\mathbb{P}$ satisfying \eqref{eq: p def}. In particular, $\mathbb{A}_{BMO} \subset \mathbb{A}$. Moreover, for any $\alpha \in \mathbb{A}$, there is at most one $\mathbb{P}^\alpha \in \mathcal{P}(\Omega)$ satisfying \eqref{eq: p def} and there exists a sequence $(\alpha^n)_{n \geq 1}\subset \mathbb{A}_{BMO}$ such that $\mathcal{H}(\mathbb{P}^\alpha \parallel \mathbb{P}^{\alpha^n}) \rightarrow 0$.
\end{proposition}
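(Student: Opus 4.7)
My plan is to prove the three claims—existence when $\alpha\in\mathbb{A}_{BMO}$, uniqueness on all of $\mathbb{A}$, and density of $\mathbb{A}_{BMO}$ in $\mathbb{A}$ in relative entropy—via a common Gronwall-type inequality for the relative entropy between candidate measures, after first securing uniform positive lower bounds on the survival probabilities.

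For existence, I would run a Picard iteration. Setting $\mathbb{Q}^0:=\mathbb{P}$, recursively define $\mathbb{Q}^{n+1}$ by
\[
\frac{d\mathbb{Q}^{n+1}}{d\mathbb{P}}:=\mathcal{E}\Big(\int_0^{\cdot\wedge\tau}\beta^n_s\,dW_s\Big)_T, \quad \beta^n_s:=\beta\bigl(s,X_{\cdot\wedge s},\alpha_s,\mathcal{L}_{\mathbb{Q}^n}(X_{\cdot\wedge s}\vert s<\tau),\mathbb{Q}^n[s<\tau]\bigr).
\]
Assumption \ref{asmp: beta}(i)--(ii) together with $\alpha\in\mathbb{A}_{BMO}$ yields $\|\beta^n_s\|\le C(1+\|\alpha_s\|)$, so $\int_0^{\cdot\wedge\tau}\beta^n\,dW$ is $\mathbb{P}$-BMO with norm controlled by $\|\alpha\|_{BMO}$ \emph{uniformly in $n$}. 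Kazamaki's criterion then gives that each $\mathcal{E}(\cdot)$ is a true martingale, and the reverse Hölder inequality supplies $q_0>1$, $K<\infty$ depending only on $\|\alpha\|_{BMO}$ with uniform $L^{q_0}$ bounds on both $Z^{n+1}_T$ and $(Z^{n+1}_T)^{-1}$. A Hölder argument then turns these density bounds into $\underline p>0$ such that $\mathbb{Q}^n[s<\tau]\ge\underline p$ for every $n$ and every $s\in[0,T]$, which is the key a priori estimate.

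The contraction step is where the real work lies. Girsanov gives
\[
H_n(t):=\mathcal{H}\bigl(\mathbb{Q}^{n+1}\vert_{\mathcal{F}_{t\wedge\tau}}\parallel\mathbb{Q}^n\vert_{\mathcal{F}_{t\wedge\tau}}\bigr)=\tfrac12\mathbb{E}^{\mathbb{Q}^{n+1}}\Big[\int_0^{t\wedge\tau}\|\beta^n_s-\beta^{n-1}_s\|^2\,ds\Big].
\]
Applying Assumption \ref{asmp: beta}(ii), the bound $d_{LC}^2\le d_{TV}$, the elementary estimate $d_{TV}(\mu(\cdot\vert A),\mu'(\cdot\vert A))\le 2(\mu(A)\wedge\mu'(A))^{-1}d_{TV}(\mu,\mu')$ at $A=\{s<\tau\}$, and the uniform $\underline p$ from Step 1, the right-hand side is dominated by $C\int_0^t d_{TV}(\mathbb{Q}^n\vert_{\mathcal{F}_{s\wedge\tau}},\mathbb{Q}^{n-1}\vert_{\mathcal{F}_{s\wedge\tau}})^2\,ds$. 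Pinsker's inequality then turns this into the Gronwall chain $H_n(t)\le C'\int_0^t H_{n-1}(s)\,ds$, yielding $H_n(T)\le (C'T)^n/n!\cdot H_0(T)$, which is summable. Hence $(\mathbb{Q}^n)$ is Cauchy in total variation with limit $\mathbb{P}^\alpha$, and continuity of $\beta$ in its $d_{LC}$- and $p$-arguments lets me pass to the limit in \eqref{eq: p def}.

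For uniqueness on $\mathbb{A}$, if $\mathbb{Q}_1,\mathbb{Q}_2$ both satisfy \eqref{eq: p def} I rerun the same Gronwall argument with $\underline p:=\mathbb{Q}_1[T<\tau]\wedge\mathbb{Q}_2[T<\tau]>0$ (uniform in $s$ by monotonicity of $s\mapsto\mathbb{Q}_i[s<\tau]$), obtaining $\mathcal{H}(\mathbb{Q}_1\vert_{\mathcal{F}_{t\wedge\tau}}\parallel\mathbb{Q}_2\vert_{\mathcal{F}_{t\wedge\tau}})\le C\int_0^t\mathcal{H}(\mathbb{Q}_1\vert_{\mathcal{F}_{s\wedge\tau}}\parallel\mathbb{Q}_2\vert_{\mathcal{F}_{s\wedge\tau}})\,ds$, hence identity. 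For the density claim, take $\alpha^n:=\alpha\mathbf{1}_{\{\|\alpha\|\le n\}}$, which is bounded and therefore in $\mathbb{A}_{BMO}$; dominated convergence under $\mathbb{P}^\alpha$ gives $\mathbb{E}^\alpha[\int_0^{T\wedge\tau}\|\alpha-\alpha^n\|^2\,ds]\to 0$, and the same Gronwall estimate with this vanishing inhomogeneity delivers $\mathcal{H}(\mathbb{P}^\alpha\parallel\mathbb{P}^{\alpha^n})\to 0$, the necessary lower bound on $\mathbb{P}^{\alpha^n}[T<\tau]$ coming from a short bootstrap off of $\mathbb{P}^\alpha[T<\tau]>0$. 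The main obstacle throughout is precisely this conditioning: the map $\mu\mapsto\mu(\cdot\vert s<\tau)$ has a singular dependence on $\mu[s<\tau]$, so every estimate must be paired with a uniform positive lower bound on the survival probability—supplied by BMO/reverse Hölder for the iterates, by monotonicity for uniqueness, and most delicately by a bootstrap in the approximation step, where the BMO norms of $\alpha^n$ need not remain bounded.
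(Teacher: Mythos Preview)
Your overall architecture---Picard iteration with entropy-Gronwall for existence, the same Gronwall for uniqueness, and truncation for approximation---is exactly what the paper does. But the chain of inequalities in your contraction step does not close. From Assumption~\ref{asmp: beta}(ii) you get $\|\beta^n_s-\beta^{n-1}_s\|^2\le 2L^2\bigl(d_{LC}^2(\text{cond})+|\Delta p|^2\bigr)$; using $d_{LC}^2\le d_{TV}$ on the conditional laws and then your conditional-TV estimate yields only a bound \emph{linear} in the unconditional $d_{TV}$, not quadratic. Pinsker then gives $d_{TV}\le\sqrt{\tfrac12 H_{n-1}}$, so the recursion becomes $H_n(t)\le C\int_0^t\sqrt{H_{n-1}(s)}\,ds$, which neither gives the factorial decay you claim nor forces $H\equiv 0$ in the uniqueness step (the ODE $G'=\sqrt{CG}$ with $G(0)=0$ has nonzero solutions). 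Your sentence ``the right-hand side is dominated by $C\int_0^t d_{TV}(\cdot)^2\,ds$'' is the false step.

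The paper's remedy is to bypass $d_{TV}$ altogether: it uses $d_{LC}^2\le\mathcal H$ directly (this is \cite[(7.33),(7.35)]{Polyanskiy}) together with the chain rule for relative entropy \cite[Theorem~2.15]{Polyanskiy}, which gives
\[
\mathbb{Q}[t<\tau]\,\mathcal H\bigl(\mathbb{Q}(\cdot\mid t<\tau)\,\|\,\mathbb{Q}'(\cdot\mid t<\tau)\bigr)+\mathcal H\bigl(\mathrm{Ber}(\mathbb{Q}[t<\tau])\,\|\,\mathrm{Ber}(\mathbb{Q}'[t<\tau])\bigr)\le\mathcal H(\mathbb{Q}_{|\mathcal F_t}\,\|\,\mathbb{Q}'_{|\mathcal F_t}).
\]
This yields $d_{LC}^2(\text{cond})+|\Delta p|^2\le\underline p^{-1}\mathcal H(\text{uncond})$ in one stroke, closing the Gronwall loop $H_n(t)\le C\int_0^t H_{n-1}(s)\,ds$. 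A welcome side effect is that only the survival probability of the \emph{first} argument of $\mathcal H$ needs the lower bound, so in the approximation step you need $\mathbb{P}^\alpha[T<\tau]>0$ (automatic for $\alpha\in\mathbb A$) but no bootstrap for $\mathbb{P}^{\alpha^n}[T<\tau]$---the delicacy you flag simply disappears. Your truncation $\alpha^n=\alpha\mathbf 1_{\{\|\alpha\|\le n\}}$ works just as well as the paper's stopping-time cutoff once the inequality is corrected.
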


\subsubsection{The necessary condition}
The derivation of the necessary condition of optimality will require the following regularity condition:
\begin{assumption}\label{asmp: ncs}
    \begin{itemize}
        \item[(i)] For any $t$, $x$ fixed, $\beta$ is jointly differentiable in $a, \mu, p$ in the sense that there exist progressively measurable $\beta_a : [0, T] \times \mathcal{C} \times A \times \mathcal{P}(\mathcal{C}_D) \times (0, 1] \rightarrow\mathbb{R}^{d \times k}$, $\frac{\delta \beta}{\delta m} : [0, T] \times \mathcal{C} \times A \times \mathcal{P}(\mathcal{C}_D) \times (0, 1] \times \mathcal{C}_D \rightarrow \mathbb{R}^{d}$, and $\beta_p : [0, T] \times \mathcal{C} \times A \times \mathcal{P}(\mathcal{C}_D) \times (0, 1] \rightarrow\mathbb{R}^{d}$ for which the dependence in $\mu$ only happens through $\mu_{\vert [0,t]}$ and the dependence in the additional variable $\tilde{x}$ in $\frac{\delta \beta}{\delta m}$ only through $\tilde{x}_{\vert [0,t]}$ so that for any $a, a', \mu, \mu', p, p'$, we have
        \begin{align*}
            &\beta(t, x, a', \mu', p') - \beta(t, x, a, \mu, p) \\
            &= \int_0^1 \beta_a\big(t, x, \lambda a' + (1-\lambda) a, \lambda \mu' + (1 -\lambda)\mu, \lambda p' + (1-\lambda) p\big)(a' - a)d\lambda\\
            &\quad+ \int_0^1 \int_{\mathcal{C}_D} \frac{\delta \beta}{\delta m} \big(t, x, \lambda a' + (1-\lambda) a, \lambda \mu' + (1 -\lambda)\mu, \lambda p' + (1-\lambda) p, \tilde{x}\big)(\mu' - \mu)(d\tilde{x})d\lambda\\
            &\quad + \int_0^1 \beta_p\big(t, x, \lambda a' + (1-\lambda) a, \lambda \mu' + (1 -\lambda)\mu, \lambda p' + (1-\lambda) p\big) (p' - p)d\lambda.
        \end{align*}
         These derivatives are assumed to be continuous in $a$, $\mu$ and $p$ where $\mathcal{P}(\mathcal{C}_D)$ is equipped with the topology induced by the total variation distance or equivalently the Le Cam distance.
        \item[(ii)] Similarly, we also assume $f$ to be jointly differentiable in $a, \mu, p$ and $g$ in $\mu, p$.
        \item[(iii)] For any $q > 1$ and some fixed $\mu^0 \in \mathcal{P}(\mathcal{C_D})$, the random variables $\int_0^{T \wedge \tau} \vert f(s, X_{\cdot \wedge s}, 0, \mu^0, 1)\vert ds$ and $\mathbf{1}_{\lbrace T < \tau \rbrace}\vert g(X, \mu^0, 1)\vert$ admit finite $q$-th moment with respect to $\mathbb{P}$.
        \item[(iv)] For some continuous non increasing $M: (0, 1] \rightarrow (0, \infty)$, we assume for any $t, x, a, \mu, p$ that we have 
        $$\vert f_a(t, x, a, \mu, p)\vert \leq M(p)(1 + \Vert a \Vert),  \quad \bigg(\int_{\mathcal{C}_D} \vert\frac{\delta f}{\delta m}(t, x, a, \mu, p, \tilde{x})\vert^2\mu(d\tilde{x})\bigg)^\frac{1}{2}\leq \frac{M(p)}{4}(1 + \Vert a \Vert^2)
        $$
        and
        $$\vert f_p(t, x, a, \mu, p)\vert \leq M(p)(1 + \Vert a \Vert^2).$$
        \item[(v)] Similarly, $(\int_{\mathcal{C}_D} \vert \frac{\delta g}{\delta m}(x, \mu, p, \tilde{x}) \vert^2 \mu(d\tilde{x}))^\frac{1}{2} \leq \frac{M(p)}{4}$ and $\vert g_{p}(x, \mu, p)\vert \leq M(p)$.
    \end{itemize}
\end{assumption}
\begin{remark}\label{rmk: lin drv}
    One can see that the notion of derivative with respect to the measure argument we use is the linear functional derivative  discussed in \cite[Definition 5.43]{CarmonaDelarue} rather than the Lions derivative defined in \cite[Definition 5.22]{CarmonaDelarue}. Since the linear derivative is only defined up to an additive function independent of the additional variable $\tilde{x}$, we always assume that $\int_{\mathcal{C}_D} \frac{\delta f}{\delta m} (t, x, a, \mu, p, \tilde{x})\mu(d\tilde{x}) = 0$ for any $\mu$.
\end{remark}
The requirement that $f$ and $g$ must admit all moments seems strong, but is for instance satisfied if they admit polynomial growth in $X$ and $\sigma$ is bounded.
Moreover, these assumptions guarantee that $J(\alpha)<\infty$ for all admissible $\alpha \in \mathbb{A}$. 
Let us also note that bounding the second moment of the linear derivative with respect to the measure corresponds to bounding the Lipschitz constant with respect to $d_{LC}$; we further comment on this in Lemma \ref{lem: LCLips}.\par

Let us focus for now only on  controls $\alpha$ in $\mathbb{A}_{BMO}$ as the necessary condition requires additional integrability properties. 
By the approximation proven in Lemma \ref{lem: BMO aprx}, under some additional conditions, this does not change the optimal value of our control problem. 
To simplify the notation, in what follows, we write
\begin{equation}\label{eq: Tht var}
    \Theta^\alpha_t = (t, X_{\cdot \wedge t}, \alpha_t, \mathcal{L}_{\mathbb{P}^\alpha}(X_{\cdot \wedge t}\vert t < \tau), \mathbb{P}^\alpha[t < \tau])
\end{equation}
for any $\alpha \in \mathbb{A}$, and random variables with a $\tilde{  }$ denote copies on an independent duplicate space $(\tilde{\Omega}, \tilde{\mathcal{F}}, \tilde{\mathbb{P}})$ with the expectation $\tilde{\mathbb{E}}^\alpha$ taken with respect to the measure $\tilde{\mathbb{P}}^\alpha$ on $\tilde{\Omega}$ defined by $\frac{d\tilde{\mathbb{P}}^\alpha}{d\tilde{\mathbb{P}}} = \mathcal{E}(\int_0^{\cdot \wedge \tau}\beta(\tilde{\Theta}^{\alpha}_s)d\tilde{W}_s)$.\par
Let us now recall our adjoint backward equation:
\begin{eqnarray}\label{eq: adj eq}
    &&Y^\alpha_{t\wedge \tau} = \mathbf{1}_{\lbrace T < \tau \rbrace} \Big( g(\Theta^\alpha_T) + \tilde{\mathbb{E}}^\alpha\Big[\frac{\delta g}{\delta m}(\tilde{\Theta}^\alpha_T, X)\vert T < \tilde{\tau}\Big] + \mathbb{E}^\alpha[\mathbf{1}_{\lbrace T < \tau \rbrace}g_p(\Theta^\alpha_T)] \Big)\\
    &+& \int_{t\wedge \tau}^{T \wedge \tau} \Big(f(\Theta^\alpha_s) + \tilde{\mathbb{E}}^\alpha\Big[\frac{\delta h}{\delta m}(\tilde{\Theta}^\alpha_s, \tilde{Z}^\alpha_s, X_{\cdot \wedge s})\vert s < \tilde{\tau} \Big] + \mathbb{E}^\alpha[\mathbf{1}_{\lbrace s<\tau \rbrace} h_p(\Theta^\alpha_s, Z^\alpha_s)]\Big)ds -\int_{t\wedge\tau}^{T \wedge \tau} Z^\alpha_s dW^\alpha_s \nonumber
\end{eqnarray}
where   
$$h(t, x, a, \mu, p, z):= f(t, x, a, \mu, p) + \beta(t, x, a, \mu, p)^\top z$$
denotes the Hamiltonian. Note that $\mathbb{E}^\alpha[\vert \frac{\delta g}{\delta m}(\tilde{\Theta}^\alpha_T, X) \vert^2 \vert T< \tau]\leq \frac{M}{4}$ implies that $\tilde{\mathbb{E}}^\alpha[\frac{\delta g}{\delta m}(\tilde{\Theta}^\alpha_T, X)\vert T < \tilde{\tau}]$ is $\mathbb{P}^\alpha$ a.s.\ finite and square integrable. A similar reasoning applies to $f$. Under our assumptions, for any given $\alpha\in\mathbb{A}_{BMO}$, as shown in Proposition \ref{prop: wp adj}, this BSDE admits a unique $\mathbb{P}^\alpha$ square integrable solution.\par

With the above notation and definitions out of the way, we state the necessary condition for optimality:
\begin{theorem}\label{thm: ncs}
    Let Assumptions \ref{asmp: beta} and \ref{asmp: ncs} hold. If $\alpha \in \mathbb{A}_{BMO}$ satisfies $J(\alpha) = \inf_{\alpha' \in \mathbb{A}_{BMO}}J(\alpha')$, then, for every $\alpha' \in \mathbb{A}_{BMO}$, we have $\mathbb{E}^\alpha[\int_0^{T \wedge \tau}h_a(\Theta^\alpha_s, Z^\alpha_s)^\top(\alpha'_s - \alpha_s) ds] \geq 0$ where $\Theta^\alpha$ is given in \eqref{eq: Tht var} and $(Y^\alpha, Z^\alpha)$ is the solution to the adjoint equation \eqref{eq: adj eq}.\par
    In particular, a.e.\ on $\lbrace s < \tau \rbrace$, we have for any $e \in A$ that $h_a(\Theta^\alpha_s, Z^\alpha_s)^\top (e-\alpha_s) \geq 0$. If $h$ is convex in $a$, $\alpha_s$ must a.e.\ on $\lbrace s < \tau \rbrace$ belong to the set of minimizers of $a \mapsto h(s, X_{\cdot \wedge s}, a, \mathcal{L}_{\mathbb{P}^\alpha}(X_{\cdot \wedge s}\vert s < \tau), \mathbb{P}^\alpha[s<\tau], Z^\alpha_s)$.
\end{theorem}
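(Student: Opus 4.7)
The plan is to follow the classical convex perturbation approach to Pontryagin's principle, adapted to the probabilistic weak formulation where the control acts only on the underlying measure. Fix the optimal $\alpha \in \mathbb{A}_{BMO}$ and an arbitrary $\alpha' \in \mathbb{A}_{BMO}$. Convexity of $A$ together with the triangle inequality for the BMO norm yields $\alpha^\epsilon := \alpha + \epsilon(\alpha' - \alpha) \in \mathbb{A}_{BMO}$ for every $\epsilon \in [0, 1]$, so optimality forces $\partial_\epsilon^+ J(\alpha^\epsilon)|_{\epsilon=0} \geq 0$. The bulk of the proof then consists in computing this Gâteaux derivative explicitly and showing it equals $\mathbb{E}^\alpha\bigl[\int_0^{T \wedge \tau} h_a(\Theta^\alpha_s, Z^\alpha_s)^\top (\alpha'_s - \alpha_s)\, ds\bigr]$.

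Since $X$ and $\tau$ are $\epsilon$-independent under $\mathbb{P}$, all $\epsilon$-dependence in $J(\alpha^\epsilon)$ is funneled through the Radon-Nikodym density $\mathcal{E}^\epsilon_t := d\mathbb{P}^{\alpha^\epsilon}/d\mathbb{P}|_{\mathcal{F}_t}$, through the survival probability $p^\epsilon_t = \mathbb{E}[\mathcal{E}^\epsilon_t \mathbf{1}_{\{t<\tau\}}]$, and through the conditional law $\mu^\epsilon_t = \mathcal{L}_{\mathbb{P}^{\alpha^\epsilon}}(X_{\cdot \wedge t} | t < \tau)$. I would introduce the variational density $V_t := \partial_\epsilon \mathcal{E}^\epsilon_t|_{\epsilon = 0}$, which solves a linear SDE obtained by differentiating the stochastic exponential in \eqref{eq: p def}. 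Because $\beta$ itself depends on $\mu^\epsilon$ and $p^\epsilon$, this SDE is fixed-point in nature; I would unravel the circularity by substituting the first-order expansions $\partial_\epsilon p^\epsilon_t|_0 = \mathbb{E}[V_t \mathbf{1}_{\{t<\tau\}}]$ and the analogous variational expression of $\mu^\epsilon_t$ tested against continuous functionals, controlling the cross-terms through the Lipschitz bound of Assumption \ref{asmp: beta} and the Le Cam estimate. Differentiation under the expectation is justified by the BMO integrability of $\mathcal{E}^\alpha$ (Proposition \ref{prop: Pa exst}) together with the growth bounds of Assumption \ref{asmp: ncs}(iv)-(v). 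Expanding $f$ and $g$ in \eqref{eq: def J} via their linear derivatives and using a Fubini-type duplication on an independent copy $(\tilde{\Omega}, \tilde{\mathbb{P}}^\alpha)$, the derivative decomposes into three groups of terms: a direct action on $a$ (giving $f_a$ and $\beta_a^\top z$), a survival-probability action (giving $f_p, g_p, \beta_p^\top z$), and a conditional-measure action (giving the $\frac{\delta}{\delta m}$ contributions).

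Next I would apply Itô's formula to $Y^\alpha_{\cdot \wedge \tau}$ against the process $V/\mathcal{E}^\alpha$, or equivalently work under $\mathbb{P}^\alpha$ where $W^\alpha$ is a Brownian motion and $\int Z^\alpha dW^\alpha$ is a true martingale thanks to the BMO framework. Using the adjoint dynamics in \eqref{eq: adj eq} to cancel the $\frac{\delta h}{\delta m}$, $h_p$, and $f$ contributions generated in the previous step against their twins in the BSDE driver, everything collapses to the announced expression involving only $h_a$. Combined with $\partial_\epsilon^+ J(\alpha^\epsilon)|_{\epsilon = 0} \geq 0$ this establishes the first assertion. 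To pass to the pointwise condition, I would fix $e \in A$ and a progressively measurable set $B \subset \{s < \tau\}$ on which $\alpha$ is bounded (attainable by localization), test with $\alpha' := \alpha + (e-\alpha)\mathbf{1}_B$, which still lies in $\mathbb{A}_{BMO}$, and then vary $B$ over a countable generating family while exploiting separability of $A$ to extract the inequality $h_a(\Theta^\alpha_s, Z^\alpha_s)^\top(e-\alpha_s) \geq 0$ for $dt \otimes \mathbb{P}$-a.e.\ $(s,\omega)$ in $\{s<\tau\}$. Convexity of $A$ and of $h$ in $a$ then upgrades this first-order condition to full minimization over $A$.

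The hard part will be controlling the fixed-point dependence of the variational density $V$ on the variations of $p^\epsilon$ and $\mu^\epsilon$: one must differentiate quantities that feed back into the very density they are derived from, and do so in a setting where $\alpha$ and $\beta$ are not assumed bounded. The BMO machinery together with the growth control of Assumption \ref{asmp: ncs} is what ultimately permits this, and the careful bookkeeping of the three contributions through the duplication device and Itô's formula is what makes the BSDE \eqref{eq: adj eq} emerge with exactly the right nonstandard driver.
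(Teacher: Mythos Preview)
Your proposal is correct and follows essentially the same route as the paper: the paper introduces the variation process $\Lambda^{\alpha,\eta}$ (the derivative of $d\mathbb{P}^{\alpha^\epsilon}/d\mathbb{P}^\alpha$, what you call $V/\mathcal{E}^\alpha$) as the solution of a linear McKean--Vlasov SDE, proves its $L^r$ convergence for all $r\ge1$ via a Gr\"onwall argument handling exactly the fixed-point feedback you identify, computes the G\^ateaux derivative of $J$ using the duplication device and Fubini, and then collapses it to $\mathbb{E}^\alpha[\int_0^{T\wedge\tau}h_a(\Theta^\alpha_s,Z^\alpha_s)^\top\eta_s\,ds]$ by integration by parts against the adjoint BSDE. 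The pointwise statement is obtained in the paper by the same test-control argument $\alpha'=\mathbf{1}_U e+\mathbf{1}_{U^\complement}\alpha$ you describe.
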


Observe that the drift of our adjoint equation \eqref{eq: adj eq} is given in terms of the \emph{linear derivative} of the Hamiltonian rather than the \emph{Lions derivative}
as in the strong formulation, see e.g.\ \cite[Definition 6.5]{CarmonaDelarue}. 
In fact, recalling that the Lions derivative can be understood as the gradient of the linear derivative as shown in \cite[Proposition 5.48]{CarmonaDelarue}, it follows that the generator of the adjoint equation in the strong formulation is the gradient of the one in equation \eqref{eq: adj eq}.


\begin{remark}\label{rmk: asm meas}
    \begin{itemize}
        \item[(i)] Our coefficients do not actually need be defined on all of $\mathcal{P}(\Omega)$. For instance, when dealing with $\alpha \in \mathbb{A}$, we need the coefficients $g$ and $(b,f)$ to be defined only for measures that are equivalent (or absolutely continuous later in Theorem \ref{thm: opt non eq}) to $\mathcal{L}_\mathbb{P}(X \vert T < \tau)$ and $\mathcal{L}_\mathbb{P}(X_{\cdot \wedge t}\vert t<\tau)$ respectively. Note that these measures form a convex subset of $\mathcal{P}(\mathcal{C}_D)$. This applies to both the previous and upcoming assumptions.    
   \item[(ii)]  When $A$ is bounded (and thus $\beta$ is by Assumption \ref{asmp: beta} bounded as well) and in some other special cases, we are able to uniformly bound positive and negative moments of $\frac{d\mathbb{P}^\alpha}{d\mathbb{P}}$ using e.g.\ \cite[Theorem 15.4.6.]{CE}. Then, one can e.g.\ define $g$ only on the set of measures $\mu$ satisfying 
    $$\int \Big(\frac{d\mu}{d\mathcal{L}_\mathbb{P}(X\vert T<\tau)})^2 + (\frac{d\mu}{d\mathcal{L}_\mathbb{P}(X\vert T<\tau)}\Big)^{-1}d\mathcal{L}_\mathbb{P}(X\vert T<\tau) < \mathcal{M}$$
    for a constant $\mathcal{M}$ large enough to include all $\mathcal{L}_{\mathbb{P}^\alpha}(X\vert T<\tau)$. These measures again form a convex subset of $\mathcal{P}(\Omega)$.\par
    \item[(iii)] Bounds of the inverse moments can also be used to find a priori bounds for $p$ in the same spirit as what we do in the proof of Proposition \ref{prop: Pa exst}. That is, if it is possible to find $\underline{p}_t$ such that $\mathbb{P}^\alpha[t < \tau] \geq \underline{p}_t$, our coefficients do not need to be defined for all of $(0, 1]$ but it suffices for them to be defined on the compact intervals $[\underline{p}_t, 1]$.
\end{itemize}
\end{remark}
\subsubsection{The sufficient condition}
Let us now state the sufficient condition of optimality. 
Just as in the classical Pontryagin principle, we will need to leverage some sort of convexity of the cost functional.
To this end, we will require our cost functions to be jointly convex in $\mu$ and $p$ in a way that we now describe. 
\begin{definition}
    We say a function $\phi: \mathcal{C}_D \times \mathcal{P}(\mathcal{C}_D) \times (0,1] \rightarrow \mathbb{R}$ is $p$-convex when $p\int_{\mathcal{C}_D} \phi(x, \mu, p)\mu(dx)$ is linearly convex if considered as a function of the sub probability measure $p\mu$. That is, $\phi$ is $p$-convex if for any $\mu$, $\mu'$, $p$, $p'$, and $\lambda \in (0, 1)$ we have
\begin{equation}\label{eq: def pcnv}
    p^\lambda \int \phi(x, \overline{\mu}^\lambda, p^\lambda)\overline{\mu}^\lambda(dx) \leq \lambda p' \int \phi(x, \mu', p')\mu'(dx) + (1-\lambda) p \int \phi(x, \mu, p)\mu(dx)
\end{equation}
where $p^\lambda = \lambda p' + (1-\lambda)p$ and $\overline{\mu}^\lambda = \frac{\lambda p'}{p^\lambda} \mu' + \frac{(1-\lambda)p}{p^\lambda} \mu$.
\end{definition}
We will extensively elaborate on this notion of convexity in Subsection \ref{sub:convex_cost_functions} and provide several example of functions satisfying it.
\begin{assumption}\label{asmp: cnv}
    \begin{itemize}
        \item[(i)]  $b$ is independent of $\mu$ and $p$ and linear in $a$, i.e.\ there are $\mathbb{R}^{d\times k}$ valued $b^1$ and $\mathbb{R}^d$ valued $b^2$ such that $b(t, x, a) = b^1(t, x) a + b^2(t, x)$. We assume $b^1$ has $dt \times \mathbb{P}$ a.e.\ linearly independent columns. Accordingly, we also write $\beta(t, x, a) = \beta^1(t, x) a + \beta^2(t, x)$.
        \item[(ii)] $f$ is separable into the form $f(t, x, a, \mu, p) = f^1(t, x, a) + f^2(t, x, \mu, p)$.
        \item[(iii)]  $g$ and $f^2(t, \cdot)$ are $p$-convex for all $t\ge0$.
        \item[(iv)] $f^1$ is $m$-strongly convex in $a$ for some $m\geq0$, i.e.\ $f^1(t, x, a) - \frac{m}{2}\Vert a \Vert^2$ is convex\footnote{Unless otherwise specified, we do allow $m = 0$.} in $a$. 
        \item[(v)] One of the following two conditions holds:
            \begin{enumerate}
            \item[(1)] $A$ is bounded, 
            \item[(2)]We have: \begin{itemize}
            \item $\vert f^1(t, x, 0)\vert$, $\vert f^2(t, x, \mu^0, 1)\vert$, and $\vert g(x, \mu^0, 1) \vert$ are uniformly bounded.
            \item $f^2$ and $g$ are uniformly bounded from below.
            \item For some $M^1>0$ and some continuous non increasing $M^2:(0, 1] \rightarrow (0, \infty)$, we assume $\vert f^1_a(t, x, a) \vert \leq M^1(1+ \Vert a\Vert)$, $\vert \frac{\delta f^2}{\delta m}(t, x, \mu, p, \tilde{x})\vert \leq M^2(p)$, $f^2_p(t, x, \mu, p) \leq M^2(p)$, $\frac{\delta g}{\delta m}(x, \mu, p, \tilde{x}) \leq M^2(p)$, and $\vert g_p(x, \mu, p)\vert \leq M^2(p)$.
        \end{itemize}
    \end{enumerate}
    \end{itemize}
\end{assumption}
Note that in the last point, for the derivatives in $a$ and $p$, the growth condition matches the one from Assumption \ref{asmp: ncs} and only takes on an easier form due to the separability assumption. For the derivative in the measure, we strengthen the assumption to an almost sure bound instead of second moment bound in $\mu$.
Furthermore, the assumption that $b^1$ admits a.e.\ linearly independent columns is not essential. All the following results still remain true, but the proofs would require an additional measurable selection argument.\par

In the statement below, we write 
$$h^1(t, x, a, z) = f^1(t, x, a) + z^\top \beta(t, x, a),\quad \text{and}\quad \theta^\alpha_t = (t, X_{\cdot \wedge \tau}, \mathcal{L}_{\mathbb{P}^\alpha}(X_{\cdot \wedge t}\vert t<\tau), \mathbb{P}^\alpha[t<\tau]).$$ 
\begin{theorem}\label{thm: sfc}
    Let Assumptions \ref{asmp: beta}, \ref{asmp: ncs} and \ref{asmp: cnv} hold. 
    If for some $\alpha \in \mathbb{A}_{BMO}$, we have a.e.\ on $\lbrace s < \tau \rbrace$ that $\alpha_s \in  \arg\min_{a\in A} h^1(s, X_{\cdot \wedge s}, \alpha_s, Z^\alpha_s)$, where $(Y^\alpha, Z^\alpha)$ is the solution of \eqref{eq: adj eq}, then for any other $\alpha' \in \mathbb{A}$,
    $$J(\alpha') - J(\alpha) \geq \frac{m}{2}\mathbb{E}^{\alpha'}\bigg[\int_0^{T\wedge\tau}\Vert \alpha_s - \alpha'_s\Vert^2ds\bigg] \geq \frac{m}{L^2}\mathcal{H}(\mathbb{P}^{\alpha'} \parallel \mathbb{P}^\alpha).$$
    In particular, $\alpha$ is optimal over $\mathbb{A}$.
\end{theorem}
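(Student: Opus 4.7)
My plan combines three convexity ingredients---$m$-strong convexity of $f^1$ in $a$, $p$-convexity of $f^2$ and $g$, and convexity of the Hamiltonian $h^1$ in $a$---stitched together by It\^o's formula applied to the adjoint BSDE \eqref{eq: adj eq}. I will first exploit Assumption \ref{asmp: cnv}(i)--(ii), which makes $\beta$ and $f^1$ independent of $(\mu,p)$, to simplify \eqref{eq: adj eq}: its driver becomes $f^1(\Theta^\alpha_s) + F^\alpha_s$ and its terminal data $\mathbf{1}_{T<\tau}G^\alpha_T$, where
\begin{align*}
F^\alpha_s &:= f^2(\Theta^\alpha_s) + \tilde{\mathbb{E}}^\alpha\Big[\tfrac{\delta f^2}{\delta m}(\tilde{\Theta}^\alpha_s, X_{\cdot\wedge s})\,\Big|\,s<\tilde{\tau}\Big] + \mathbb{E}^\alpha[\mathbf{1}_{s<\tau}f^2_p(\Theta^\alpha_s)], \\
G^\alpha_T &:= g(\Theta^\alpha_T) + \tilde{\mathbb{E}}^\alpha\Big[\tfrac{\delta g}{\delta m}(\tilde{\Theta}^\alpha_T, X)\,\Big|\,T<\tilde{\tau}\Big] + \mathbb{E}^\alpha[\mathbf{1}_{T<\tau}g_p(\Theta^\alpha_T)].
\end{align*}

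Taking $\mathbb{P}^\alpha$-expectation in \eqref{eq: adj eq} (and using the vanishing-mean convention for the linear derivatives) yields $Y^\alpha_0 = J(\alpha) + \Delta^\alpha$, where
\begin{equation*}
\Delta^\alpha := \int_0^T p^\alpha_s\,\mathbb{E}^\alpha[\mathbf{1}_{s<\tau}f^2_p(\Theta^\alpha_s)]\,ds + p^\alpha_T\,\mathbb{E}^\alpha[\mathbf{1}_{T<\tau}g_p(\Theta^\alpha_T)].
\end{equation*}
A parallel computation under $\mathbb{P}^{\alpha'}$, using $dW^\alpha = dW^{\alpha'} + (\beta(\Theta^{\alpha'})-\beta(\Theta^\alpha))dt$ together with a standard localization to accommodate the BMO-integrability of $Z^\alpha$, produces
\begin{equation*}
Y^\alpha_0 = \mathbb{E}^{\alpha'}\bigg[\mathbf{1}_{T<\tau}G^\alpha_T + \int_0^{T\wedge\tau}\!\big(f^1(\Theta^\alpha_s) + F^\alpha_s\big)\,ds\bigg] - \mathbb{E}^{\alpha'}\bigg[\int_0^{T\wedge\tau}\!(Z^\alpha_s)^\top(\beta(\Theta^{\alpha'}_s)-\beta(\Theta^\alpha_s))\,ds\bigg].
\end{equation*}

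The next step is to establish the convexity lower bounds. Writing $\nu^\alpha_T(dy) := \mathbb{P}^\alpha[X\in dy,\,T<\tau]$ and viewing $G(\nu) := \int g(x, \nu/|\nu|, |\nu|)\,d\nu(x)$ as a convex functional on sub-probabilities (by $p$-convexity of $g$), I compute its linear derivative at $\nu = p\mu$ via chain rule as
\begin{equation*}
\tfrac{\delta G}{\delta\nu}(\nu, y) = g(y,\mu,p) + \int\tfrac{\delta g}{\delta m}(x,\mu,p,y)\mu(dx) + p\int g_p(x,\mu,p)\mu(dx).
\end{equation*}
Translating the resulting first-order convex inequality into expectations yields $\mathbb{E}^{\alpha'}[\mathbf{1}_{T<\tau}g(\Theta^{\alpha'}_T)] - \mathbb{E}^\alpha[\mathbf{1}_{T<\tau}g(\Theta^\alpha_T)] \geq \mathbb{E}^{\alpha'}[\mathbf{1}_{T<\tau}G^\alpha_T] - \mathbb{E}^\alpha[\mathbf{1}_{T<\tau}G^\alpha_T]$, and an analogous inequality at each $s$ gives the same for $f^2$ and $F^\alpha_s$. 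Combining with $m$-strong convexity of $f^1$ and substituting the It\^o identity cancels $\Delta^\alpha$ and the $G^\alpha_T$, $F^\alpha_s$, $f^1$ terms, leaving
\begin{equation*}
J(\alpha') - J(\alpha) \geq \mathbb{E}^{\alpha'}\bigg[\int_0^{T\wedge\tau}\!\big(f^1_a(s,X,\alpha_s) + (\beta^1(s,X))^\top Z^\alpha_s\big)^\top(\alpha'_s - \alpha_s)\,ds\bigg] + \tfrac{m}{2}\mathbb{E}^{\alpha'}\bigg[\int_0^{T\wedge\tau}\!\|\alpha'-\alpha\|^2 ds\bigg].
\end{equation*}
The integrand of the first term equals $h^1_a(s, X, \alpha_s, Z^\alpha_s)^\top(\alpha'_s - \alpha_s)$ by linearity of $\beta$ in $a$. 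Since $\alpha_s$ minimizes the convex function $a\mapsto h^1(s, X, a, Z^\alpha_s)$ on the convex set $A$ a.e.\ on $\{s<\tau\}$, and $\alpha_s = \alpha'_s = 0$ otherwise, first-order optimality makes this integrand pointwise non-negative, giving the first claimed inequality and in particular optimality of $\alpha$ over $\mathbb{A}$.

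For the entropy bound I will use that Girsanov gives $\mathcal{H}(\mathbb{P}^{\alpha'}\|\mathbb{P}^\alpha) = \tfrac{1}{2}\mathbb{E}^{\alpha'}[\int_0^{T\wedge\tau}\|\beta(\Theta^{\alpha'}_s) - \beta(\Theta^\alpha_s)\|^2 ds]$, and that $\beta$ depends only on $(s,x,a)$ with $L$-Lipschitz dependence in $a$ (Assumption \ref{asmp: beta}(ii)), so $\|\beta^{\alpha'}-\beta^\alpha\|^2 \leq L^2\|\alpha'-\alpha\|^2$ pointwise. The most delicate step of the plan is computing the linear derivative of $G$ on sub-probabilities precisely so that the $p$-convexity inequality reproduces \emph{exactly} the adjoint-BSDE terminal expression $G^\alpha_T$ (and analogously $F^\alpha_s$): this bespoke matching is the raison d'\^etre of the particular form of \eqref{eq: adj eq}, and once in place, the cancellation of $\Delta^\alpha$ and the emergence of $h^1_a$ via linearity of $\beta$ are essentially automatic. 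A secondary technical point is the localization needed to justify the It\^o expansion under $\mathbb{P}^{\alpha'}$ when $\alpha' \in \mathbb{A}\setminus\mathbb{A}_{BMO}$, where one stops the stochastic integral at $\tau_n := \inf\{t : \int_0^t\|Z^\alpha_s\|^2 ds \geq n\}$ and uses the square-integrability of $Z^\alpha$ under $\mathbb{P}^\alpha$ from Proposition~\ref{prop: wp adj} together with monotone/dominated convergence.
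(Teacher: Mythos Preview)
Your plan is essentially the paper's proof: the same $p$-convexity inequality (your $\frac{\delta G}{\delta\nu}$ computation is exactly Proposition~\ref{prop: pcnv diff}), the same strong-convexity step for $f^1$, the same use of the adjoint BSDE to produce the $(\beta^{\alpha'}-\beta^\alpha)^\top Z^\alpha$ cross term, and the same first-order optimality argument for $h^1$. The paper organizes the computation as $(\mathbb{E}^{\alpha'}-\mathbb{E}^\alpha)[Y^\alpha_T+\cdots]$ rather than computing $\mathbb{E}^\alpha[Y^\alpha_0]$ and $\mathbb{E}^{\alpha'}[Y^\alpha_0]$ separately, but this is cosmetic.

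The one substantive divergence is your handling of general $\alpha'\in\mathbb{A}\setminus\mathbb{A}_{BMO}$. Your proposed localization at $\tau_n=\inf\{t:\int_0^t\|Z^\alpha\|^2\geq n\}$ makes $\int_0^{\tau_n}Z^\alpha\,dW^{\alpha'}$ a true $\mathbb{P}^{\alpha'}$-martingale, but the passage $n\to\infty$ then requires controlling $\mathbb{E}^{\alpha'}[Y^\alpha_{\tau_n\wedge\tau}]\to\mathbb{E}^{\alpha'}[Y^\alpha_{T\wedge\tau}]$ and $\mathbb{E}^{\alpha'}[\int_0^{\tau_n}(Z^\alpha)^\top(\beta^{\alpha'}-\beta^\alpha)\,ds]$; square-integrability of $Z^\alpha$ under $\mathbb{P}^\alpha$ does not obviously yield the needed $\mathbb{P}^{\alpha'}$-integrability, since you have no a priori control on $d\mathbb{P}^{\alpha'}/d\mathbb{P}^\alpha$ for general $\alpha'\in\mathbb{A}$. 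The paper instead splits into two cases. For bounded $A$, $\mathbb{E}^\alpha[(d\mathbb{P}^{\alpha'}/d\mathbb{P}^\alpha)^2]<\infty$ directly makes $\int Z^\alpha\,dW^{\alpha'}$ a true $\mathbb{P}^{\alpha'}$-martingale. For unbounded $A$, it first exploits that under Assumption~\ref{asmp: cnv} the driver of \eqref{eq: adj eq} is independent of $Z$, so $Z^\alpha$ is obtained by martingale representation of a bounded-plus-$f^1(\alpha)$ terminal value and therefore $\mathbb{E}^\alpha[(\int\|Z^\alpha\|^2)^{r/2}]<\infty$ for all $r>1$; this suffices for $\alpha'\in\mathbb{A}_{BMO}$, and general $\alpha'\in\mathbb{A}$ is then handled by the BMO approximation of Lemma~\ref{lem: BMO aprx} combined with Fatou's lemma on the right-hand side. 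You should replace your localization sketch with this two-step argument.
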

The above sufficient condition is reminiscent of Pythagorean Theorem for entropy \cite[Theorem 15.10]{Polyanskiy}. 
In fact, Theorem \ref{thm: sfc} generalizes this result, which is recovered by taking $\beta(t, x, a) = a$, $f^1(t, x, a) = \frac{1}{2}\Vert a \Vert^2$, $f^2 = 0$ and $g = 0$. The estimate in Theorem \ref{thm: sfc} also provides desirable absolute continuity properties for $\mathbb{P}^\alpha$ when $\alpha$ is optimal.\par

The above sufficient condition can be better understood through the lens of convexity  of $J$ (see Proposition \ref{prop: J cnv}). It fact, it follows from Assumption \ref{asmp: cnv} that $J$ is a convex functional; not in $\alpha$ but rather in the associated measure $\mathbb{P}^\alpha$. This observation along with Gateaux derivative in Theorem \ref{thm: gat drv} allow the interpretation of Theorem \ref{thm: sfc} as a first order characterization of minima for convex functions.\par
These convexity properties of $J$ heavily rely on the structural conditions in Assumption \ref{asmp: cnv}.
For instance it is essential that $b$ is linear in $a$ and independent on $(\mu,p)$ to guarantee convexity.
Separability of $f$ is also essential, already just to properly define $p$-convexity for $f$.
Such structural assumptions are often considered in the mean field game and mean field control literature.
\begin{remark}
    Although the process $Y^\alpha$ cannot be interpreted immediately as a remaining utility process, in the unconditioned case $D = \mathbb{R}^d$, $Y^\alpha$ is still connected to the value function. In fact, using the convention we chose in Remark \ref{rmk: lin drv}, we can then see that $\mathbb{E}^\alpha[Y^\alpha_0] = J(\alpha)$.
\end{remark}

\subsection{Wellposedness results}
In the setting of the previous section, our necessary and sufficient conditions relate optimal controls to the adjoint BSDE \eqref{eq: adj eq}. 
In Proposition \ref{prop: wp adj}, we show how to solve for $(Y^\alpha, Z^\alpha)$ for any given $\alpha \in \mathbb{A}_{BMO}$. When $b$ is assumed to be independent of $\mu$ and $p$, we can actually find a solution $(Y^\alpha, Z^\alpha)$ given any $\alpha \in \mathbb{A}$, see Lemma \ref{lem: opt ncs}. 
Note that in practice, one usually does not already have a candidate for the optimal control.
By Theorem \ref{thm: ncs} and \ref{thm: sfc}, we see that $\alpha \in \mathbb{A}_{BMO}$ is optimal if and only if $\alpha$ also minimizes the Hamiltonian along the solution of the adjoint equation. This leads us to study the adjoint BSDE with this added coupling condition:
\begin{equation}\label{eq: adj gnr}
    \begin{cases}
    Y^\alpha_{t\wedge \tau} = \mathbf{1}_{\lbrace T < \tau \rbrace}\Big(g(\theta^\alpha_T) + \tilde{\mathbb{E}}^\alpha\Big[\frac{\delta g}{\delta m}(\tilde{\theta}^\alpha_T, X)\vert T < \tilde{\tau}\Big] + \mathbb{E}^\alpha[\mathbf{1}_{\lbrace T < \tau \rbrace}g_p(\theta^\alpha_T)]\Big)+ \int_{t\wedge\tau}^{T\wedge\tau}\Big(f^1(s, X_{\cdot \wedge s}, \alpha_s) \\
    \quad + f^2(s, \theta^\alpha_s) + \tilde{\mathbb{E}}^\alpha\Big[\frac{\delta f^2}{\delta m}(s, \tilde{\theta}^\alpha_s, X_{\cdot \wedge s})\vert s<\tilde{\tau}\Big] + \mathbb{E}^\alpha\big[\mathbf{1}_{\lbrace s<\tau \rbrace}f^2_p(s, \theta^\alpha_s)\big]\Big)ds-\int_{t\wedge\tau}^{T\wedge\tau} Z^\alpha_s dW^\alpha_s\,\,\, \P^{\alpha}\text{-a.s.}\\
     \alpha_t \in \arg \min_{a \in A} h^1(t, X_{\cdot \wedge t}, a, Z^{\alpha}_t), dt \times \mathbb{P} \text{ a.e. on } \lbrace t < \tau \rbrace.
    \end{cases}
\end{equation}
In equation \eqref{eq: adj gnr}  $\alpha$ and $\mathbb{P}^\alpha$ are unknown.
That is, a solution would consists of $\alpha$ together with $(Y^\alpha, Z^\alpha)$. In particular, the underlying law $\mathbb{P}^\alpha$ and Wiener process $W^\alpha$ are a priori unknown, so that $\eqref{eq: adj gnr}$ becomes a so-called \emph{generalized McKean-Vlasov BSDE} as introduced in \cite{PossamaiTangpi}. Our previous results then show that for $\alpha \in \mathbb{A}_{BMO}$, optimal controls exactly correspond to solutions to \eqref{eq: adj gnr}. This kind of McKean-Vlasov BSDE play the same role as the standard forward-backward system encountered in the strong formulation. Unfortunately, such generalized McKean-Vlasov BSDEs are much more difficult to solve than regular BSDEs. In the following, we will characterize optimal controls for our conditioned McKean-Vlasov control problem in the convex setting, and with this also provide a well-posedness result for \eqref{eq: adj gnr}.\par
Before presenting our well-posedness results let us mention that for $m>0$ in Assumption \ref{asmp: cnv} the strong convexity also serves as a coercivity condition as it implies at least quadratic growth. Indeed, $\int_0^{T \wedge \tau} f^1(s, X_{\cdot \wedge s}, \alpha_s) ds \geq \int_0^{T \wedge \tau}f^1(s, X_{\cdot \wedge s}, 0) - 2\frac{(M^1)^2}{m} + \frac{m}{4}\Vert \alpha_s \Vert^2 ds$. Hence, when Assumption \ref{asmp: cnv} holds with $m$ positive and $A$ unbounded, $J(\alpha) < \infty$ is equivalent to $\mathbb{E}^\alpha[\int_0^{T\wedge\tau} \Vert \alpha_s \Vert^2 ds] < \infty$. In particular, $\mathbb{A}$ could have also been defined as the set of all $\alpha$ for which $\mathbb{P}^\alpha$ can be defined and $J(\alpha)<\infty$.
\begin{theorem}\label{thm: exs}
    Let Assumptions \ref{asmp: beta}, \ref{asmp: ncs}, and \ref{asmp: cnv} hold. If $f^1$ is strictly convex in $a$, then any optimal control in $\mathbb{A}$ is unique on $[0,\tau)$.\par
    If $A$ is bounded or $m>0$, then there exists an optimal control $\hat{\alpha} \in \mathbb{A}_{BMO}$, i.e.\ there is $\hat{\alpha} \in \mathbb{A}_{BMO}$ such that $J(\hat{\alpha}) = \inf_{\alpha\in \mathbb{A}}J(\alpha)$.
\end{theorem}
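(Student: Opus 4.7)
My plan is to treat uniqueness and existence separately, both leveraging the convexity structure of $J$ viewed as a functional of the associated measure $\mathbb{P}^\alpha$ (Proposition \ref{prop: J cnv}) and the first-order characterization of optima provided by the sufficient condition (Theorem \ref{thm: sfc}).

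For the uniqueness statement, I would start by assuming $\alpha, \alpha' \in \mathbb{A}$ are both optimal. By Theorem \ref{thm: ncs}, each minimizes the Hamiltonian $h^1$ pointwise along its own adjoint process, so $\alpha$ satisfies the hypothesis of Theorem \ref{thm: sfc}. When $m>0$, applying that theorem with $\alpha' \in \mathbb{A}$ yields
$$0 \;=\; J(\alpha') - J(\alpha) \;\geq\; \frac{m}{2}\,\mathbb{E}^{\alpha'}\!\left[\int_0^{T\wedge\tau} \Vert \alpha_s - \alpha'_s \Vert^2\,ds\right]\!,$$
so $\alpha = \alpha'$ $dt\times \mathbb{P}^{\alpha'}$-a.e.\ on $\{s<\tau\}$, and hence $dt\times \mathbb{P}$-a.e.\ on $[0,\tau)$ by equivalence. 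For the remaining case $m=0$ with $f^1$ strictly convex in $a$, I would revisit the derivation of Theorem \ref{thm: sfc}: the sufficient inequality is obtained from a convexity estimate of $h^1$ in the direction $a \mapsto \lambda \alpha + (1-\lambda)\alpha'$, which strict convexity of $f^1$ promotes to a strict inequality whenever $\alpha \neq \alpha'$ on a set of positive $dt\times \mathbb{P}^{\alpha'}$-measure, contradicting $J(\alpha) = J(\alpha')$.

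For existence, I would use the direct method in the space of admissible controls. Pick a minimizing sequence $(\alpha^n) \subset \mathbb{A}$; by Lemma \ref{lem: BMO aprx} we may replace it by a sequence in $\mathbb{A}_{BMO}$ with unchanged optimal value. If $A$ is bounded, Proposition \ref{prop: Pa exst} gives $\mathbb{A} = \mathbb{A}_{BMO}$ and $\Vert\alpha^n\Vert_{BMO}$ is uniformly bounded; if $m>0$, the coercivity estimate recorded just before the theorem statement provides a uniform bound on $\mathbb{E}^{\alpha^n}[\int_0^{T\wedge\tau}\Vert\alpha^n_s\Vert^2 ds]$. In either case the densities $\mathcal{E}^{\alpha^n}_T$ form a uniformly integrable family with controlled moments, so a Komlós/Mazur extraction yields convex combinations converging $dt\times \mathbb{P}$-a.e.\ to some $\hat\alpha$, which lies in $\mathbb{A}$ by convexity of $A$. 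Convexity of $J$ in $\mathbb{P}^\alpha$ then delivers $J(\hat\alpha)\leq \liminf_n J(\alpha^n) = \inf_{\mathbb{A}}J$, yielding optimality. Membership in $\mathbb{A}_{BMO}$ is immediate in the bounded case; when $m>0$, the Pontryagin characterization $\hat\alpha_t \in \argmin_{a\in A}h^1(t,X_{\cdot\wedge t},a,Z^{\hat\alpha}_t)$ combined with the Lipschitz structure of $\beta$ and $f^1$ transfers the BMO bound for $Z^{\hat\alpha}$ (from the adjoint equation) to $\hat\alpha$ itself.

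The hard part will be identifying the limiting measure as $\mathbb{P}^{\hat\alpha}$ in the fixed-point sense of \eqref{eq: p def}. The density $d\mathbb{P}^{\alpha^n}/d\mathbb{P}$ depends on $\alpha^n$ through a stochastic exponential whose integrand involves both the conditional law $\mathcal{L}_{\mathbb{P}^{\alpha^n}}(X_{\cdot\wedge s}\,|\,s<\tau)$ and the survival probability $\mathbb{P}^{\alpha^n}[s<\tau]$, both nonlinear in $\alpha^n$. Passing to the limit requires convergence of the conditional laws in total variation (equivalently, in $d_{LC}$) to invoke Assumption \ref{asmp: beta}(ii); the BMO/energy bounds together with the reverse Hölder properties of the densities used in Proposition \ref{prop: Pa exst} should deliver the requisite compactness, though the conditioning on $\{s<\tau\}$ demands a careful lower bound on the survival probabilities along the sequence in order to stabilize the denominator.
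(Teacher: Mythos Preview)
Your uniqueness argument has a genuine gap. Theorems~\ref{thm: ncs} and~\ref{thm: sfc} are both stated for $\alpha\in\mathbb{A}_{BMO}$, yet the uniqueness claim concerns arbitrary optimal controls in $\mathbb{A}$. You cannot invoke Theorem~\ref{thm: ncs} to say that an optimal $\alpha\in\mathbb{A}$ minimizes the Hamiltonian, nor can you feed such an $\alpha$ into Theorem~\ref{thm: sfc}. Note also that uniqueness is asserted under strict convexity alone, with no assumption that $A$ is bounded or $m>0$, so you cannot appeal to the existence part to first place optimal controls in $\mathbb{A}_{BMO}$. The paper instead deduces uniqueness directly from Proposition~\ref{prop: J cnv}: if $\alpha,\alpha'\in\mathbb{A}$ are both optimal and differ on a set of positive measure in $[0,\tau)$, the control $\alpha^\lambda$ with $\mathbb{P}^{\alpha^\lambda}=\lambda\mathbb{P}^{\alpha'}+(1-\lambda)\mathbb{P}^\alpha$ satisfies $J(\alpha^\lambda)<\lambda J(\alpha')+(1-\lambda)J(\alpha)$, a contradiction. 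This bypasses the Pontryagin machinery entirely.

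Your ``hard part'' is not hard: under Assumption~\ref{asmp: cnv}(i) the drift $b$ is independent of $\mu$ and $p$, so $\mathbb{P}^\alpha$ is defined by a plain Girsanov change with integrand $\beta(s,X_{\cdot\wedge s},\alpha_s)$, and there is no fixed point to identify. The conditional law and survival probability enter only the cost, not the density.

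For existence with $A$ unbounded and $m>0$, the paper proceeds differently from your direct scheme: it first solves the problem on $A^N=A\cap\overline{B_0(N)}$ to obtain optimizers $\hat\alpha^N$, then passes $N\to\infty$. This detour is not cosmetic. It is what guarantees that the limit measure is \emph{equivalent} (not merely absolutely continuous) to $\mathbb{P}$: applying Theorem~\ref{thm: sfc} to each $\hat\alpha^N$ yields $\mathcal{H}(\mathbb{P}^\gamma\parallel\mathbb{P}^{\hat\alpha^N})\leq C$ for bounded $\gamma$, and lower semicontinuity of entropy then forces $\hat{\mathbb{P}}\sim\mathbb{P}$. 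Your direct minimizing-sequence approach does not supply this mechanism. More seriously, your argument for $\hat\alpha\in\mathbb{A}_{BMO}$ is circular: you assume a BMO bound on $Z^{\hat\alpha}$ ``from the adjoint equation'', but well-posedness of the adjoint equation in Proposition~\ref{prop: wp adj} already requires $\hat\alpha\in\mathbb{A}_{BMO}$, and the Pontryagin characterization of Theorem~\ref{thm: ncs} likewise presupposes it. The paper devotes two separate lemmas (Lemmas~\ref{lem: opt ncs} and~\ref{lem: opt BMO}) to breaking this circularity, and both rely essentially on the approximating sequence $\hat\alpha^N$: convergence $Z^{\hat\alpha^N}\to Z^{\hat\alpha}$ is established via ucp arguments, the Hamiltonian minimization is inherited in the limit, and uniform quadratic-BSDE estimates on $(Y^{\hat\alpha^N},Z^{\hat\alpha^N})$ are transferred to $(Y^{\hat\alpha},Z^{\hat\alpha})$.
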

\begin{remark}
\label{rem:exists}
    \begin{enumerate}
        \item[(i)] It is noteworthy under the assumptions of Theorem \ref{thm: exs}, the optimal control belongs to $\mathbb{A}_{BMO}$ even when we optimize over the much larger set $\mathbb{A}$. This justifies the the fact that we focused on controls in $\mathbb{A}_{BMO}$ in our necessary and sufficient conditions.
        \item[(ii)] Theorem \ref{thm: exs} shows that for $f$ strictly convex, $\hat{\alpha}$ is the unique solution to \eqref{eq: adj gnr} within the class of solution such that $\Vert \alpha \Vert_{BMO}<\infty$. It remains open whether there can be different $\alpha$ solving this generalized McKean-Vlasov BSDE that do not satisfy $\Vert \alpha \Vert_{BMO} <\infty$. For $\hat{\alpha}$, our argument in Lemma \ref{lem: opt BMO} relies on the theory of quadratic BSDEs.
        As far as we know, for general $\alpha$, it would be unclear whether the required integrability condition for $(Y^\alpha, Z^\alpha)$ can be established.
        \item[(iii)] As described below, when $A$ is unbounded, we can recover that
         the control $\hat{\alpha}$ constructed in Theorem \ref{thm: exs} remains optimal even when considering a larger space of controls than $\mathbb{A}$.
         To this end, 
         \begin{center}
             we define $\tilde{\mathbb{A}}$ as $\mathbb{A}$ in Definition \ref{def:admissible} but with $\mathbb{P}^\alpha \ll \mathbb{P}$ instead of $\mathbb{P}^\alpha \sim \mathbb{P}$. 
         \end{center}
         That is, we consider $\tilde{\mathbb{A}}$ as the set of $A$-valued progressively measurable $\alpha$ for which we can define $\mathbb{P}^\alpha \in \mathcal{P}(\Omega)$ satisfying $\frac{d\mathbb{P}^\alpha}{d\mathbb{P}} = \mathbf{1}_{\lbrace \frac{d\mathbb{P}^\alpha}{d\mathbb{P}}>0 \rbrace} \mathcal{E}(\int_0^{\cdot \wedge \tau} \beta(s, X_{\cdot \wedge s}, \alpha_s)dW_s)_T$ and $\mathbb{E}^\alpha[\int_0^{T \wedge \tau} \Vert \alpha_s \Vert^2 ds] <\infty$. This way, $\alpha$ really needs to be defined only on the support of $\mathbb{P}^\alpha$, and we will usually assume that it is zero outside the support of $\mathbb{P}^\alpha$.
    \end{enumerate}
    \end{remark}
So far, when defining $\mathbb{A}$, we have excluded non equivalent changes of measures to avoid the case $\mathbb{P}^\alpha[t<\tau] = 0$ as $J$ is not necessarily well defined in this pathological case. To still make sense of the problem for such controls,
 $\alpha \in \tilde{\mathbb{A}}$ (see Remark \ref{rem:exists}.$(iii)$), 
we would need to assume that $f^2$ and $g$ are extendable to $p = 0$ in the following way:
We assume that there exists $F: [0, T] \times \mathcal{P}(\mathcal{C}_D) \times [0, 1] \rightarrow \mathbb{R}$, $G:\mathcal{P}(\mathcal{C}_D) \times [0, 1] \rightarrow \mathbb{R}$ such that we have
$$F(t, \mu, p) = p\int_{\mathcal{C}_D} f^2(t, x, \mu, p)\mu(dx) \quad \text{and} \quad G(\mu, p) = p \int_{\mathcal{C}_D} g(x, \mu, p)\mu(dx)$$
for each $t\in [0, T]$, $\mu\in\mathcal{P}(\mathcal{C}_D)$, and $p \in (0, 1]$. To bypass the issue of conditioning on a null set, we assume that for $p = 0$, the maps $F$ and $G$ are independent of $\mu$, i.e.\ we have for any $\mu, \mu'$ that $F(t, \mu, 0) = F(t, \mu', 0)$ and $G(\mu, 0) = G(\mu', 0)$. For such $F$ and $G$, we can now define
\begin{align*}
    \tilde{J}(\alpha) &= \mathbb{E}^\alpha\bigg[\int_0^{T \wedge \tau} f^1(s, X_{\cdot \wedge s}, \alpha_s)ds\bigg] + \int_0^T F(s, \mathcal{L}_{\mathbb{P}^\alpha}(X_{\cdot \wedge s}\vert s<\tau), \mathbb{P}^\alpha[s<\tau])ds \\
    &\qquad\qquad  + G\big(\mathcal{L}_{\mathbb{P}^\alpha}(X\vert T<\tau), \mathbb{P}^\alpha[T<\tau]\big).
\end{align*}
By construction, for any $\alpha \in \mathbb{A}$, we have $\tilde{J}(\alpha) = J(\alpha)$. When $A$ is bounded, $\tilde{\mathbb{A}} = \mathbb{A} = \mathbb{A}_{BMO}$.
\begin{theorem}\label{thm: opt non eq}
    Let Assumptions \ref{asmp: beta}, \ref{asmp: ncs} and \ref{asmp: cnv} hold with $A$ unbounded and $m>0$. Further, assume $f^2$ and $g$ are extendable as above with the maps $F$ and $G$ being continuous in $\mu$ and $p$, as well as uniformly bounded. Then, for each $\alpha \in \tilde{\mathbb{A}}$, there exist $\alpha^n\in \mathbb{A}$ such that $\alpha^n_t \rightarrow \alpha_t$ a.e. and $\lim_{n\rightarrow\infty}J(\alpha^n) = \tilde{J}(\alpha)$. Further, the previously found $\hat{\alpha}$ is the unique minimizer of $\tilde{J}$ over $\tilde{\mathbb{A}}$.
\end{theorem}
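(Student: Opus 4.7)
My strategy is twofold: first construct approximating sequences $\alpha^n \in \mathbb{A}$ with $\tilde{J}(\alpha) = \lim_n J(\alpha^n)$, and then deduce optimality and uniqueness of $\hat{\alpha}$ over $\tilde{\mathbb{A}}$ from the optimality over $\mathbb{A}$ already established in Theorem \ref{thm: exs}. For the approximation, given $\alpha \in \tilde{\mathbb{A}}$, I would truncate by amplitude, setting $\alpha^n_t := \alpha_t \mathbf{1}_{\{\|\alpha_t\|\leq n\}}$. Since $\alpha^n$ is uniformly bounded and $\beta$ is affine in $a$ by Assumption \ref{asmp: cnv}(i), the stochastic integral $\int_0^{\cdot\wedge\tau}\beta(s,X_{\cdot\wedge s},\alpha^n_s)dW_s$ has bounded quadratic variation, so $\alpha^n \in \mathbb{A}_{BMO} \subset \mathbb{A}$ and $\alpha^n_t \to \alpha_t$ pointwise. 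The bulk of the work then lies in showing $J(\alpha^n) \to \tilde{J}(\alpha)$.

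The key step is to pass to the limit through the three summands in $J(\alpha^n) = \tilde{J}(\alpha^n)$. I would work under $\mathbb{P}^\alpha$, where $W^\alpha$ is a Wiener process (on the support of $\mathbb{P}^\alpha$) and $\alpha \in L^2(dt \otimes d\mathbb{P}^\alpha)$. The densities $\frac{d\mathbb{P}^{\alpha^n}}{d\mathbb{P}^\alpha} = \mathcal{E}\bigl(\int_0^{\cdot\wedge\tau}(\beta(s,X,\alpha^n_s)-\beta(s,X,\alpha_s))dW^\alpha_s\bigr)_T$ are well-defined stochastic exponentials, and $\alpha^n \to \alpha$ in $L^2(dt \otimes d\mathbb{P}^\alpha)$ by dominated convergence. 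Standard stability of stochastic exponentials under $L^2$-convergence of drivers, combined with Vitali's theorem (with moment control coming from the affine structure of $\beta$), yields $\mathbb{P}^{\alpha^n} \to \mathbb{P}^\alpha$ in total variation. From this, $\int_0^T F(s, \mathcal{L}_{\mathbb{P}^{\alpha^n}}(X_{\cdot\wedge s}|s<\tau), \mathbb{P}^{\alpha^n}[s<\tau])ds$ and $G(\mathcal{L}_{\mathbb{P}^{\alpha^n}}(X|T<\tau), \mathbb{P}^{\alpha^n}[T<\tau])$ converge by dominated convergence, leveraging continuity of $F, G$ in $(\mu, p)$ (including at $p = 0$, thanks to the extension hypothesis that $F, G$ are $\mu$-independent there) together with their uniform boundedness. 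The control-dependent term $\mathbb{E}^{\alpha^n}\bigl[\int_0^{T\wedge\tau} f^1(s, X, \alpha^n_s)ds\bigr] = \mathbb{E}^\alpha\bigl[\frac{d\mathbb{P}^{\alpha^n}}{d\mathbb{P}^\alpha}\int_0^{T\wedge\tau}f^1(s,X,\alpha^n_s)ds\bigr]$ is handled by Vitali's theorem under $\mathbb{P}^\alpha$: pointwise convergence follows from $\alpha^n \to \alpha$, and uniform integrability from the at-most-quadratic growth of $f^1$ (via Assumption \ref{asmp: cnv}(v)) combined with the $L^2(\mathbb{P}^\alpha)$-bound on $\alpha$ and the uniform integrability of the density ratios.

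Once the approximation is established, the conclusions follow. For any $\alpha \in \tilde{\mathbb{A}}$ one has $\tilde{J}(\alpha) = \lim_n J(\alpha^n) \geq \inf_{\mathbb{A}} J = J(\hat{\alpha}) = \tilde{J}(\hat{\alpha})$, so $\hat{\alpha}$ minimizes $\tilde{J}$ over $\tilde{\mathbb{A}}$. For uniqueness, I would take $n\to\infty$ in the inequality $J(\alpha^n) - J(\hat\alpha) \geq \frac{m}{L^2}\mathcal{H}(\mathbb{P}^{\alpha^n}\|\mathbb{P}^{\hat\alpha})$ supplied by Theorem \ref{thm: sfc}; by lower semicontinuity of relative entropy under total variation convergence in the first argument, this upgrades to $\tilde J(\alpha) - \tilde J(\hat\alpha) \geq \frac{m}{L^2}\mathcal H(\mathbb{P}^\alpha\|\mathbb{P}^{\hat\alpha})$. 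If $\alpha$ is another minimizer then $\mathcal H(\mathbb{P}^\alpha\|\mathbb{P}^{\hat\alpha}) = 0$, so $\mathbb{P}^\alpha = \mathbb{P}^{\hat\alpha} \sim \mathbb{P}$, placing $\alpha$ in $\mathbb{A}$; the uniqueness within $\mathbb{A}$ from Theorem \ref{thm: exs} (guaranteed by $m>0$) then forces $\alpha = \hat\alpha$ on $[0,\tau)$.

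The main technical obstacle is upgrading convergence in $\mathbb{P}^\alpha$-probability of the density ratios $\frac{d\mathbb{P}^{\alpha^n}}{d\mathbb{P}^\alpha}$ to $L^1(\mathbb{P}^\alpha)$-convergence, because $\alpha$ is only $L^2$-integrable under $\mathbb{P}^\alpha$ and not necessarily under $\mathbb{P}$, and $\mathbb{P}^\alpha$ may fail to be equivalent to $\mathbb{P}$. Establishing the required uniform integrability may necessitate replacing (or combining) the amplitude truncation with a time truncation $\tau^n := T \wedge \inf\{t: \int_0^t \|\alpha_s\|^2 ds > n\}$ in order to secure uniform BMO bounds on the localized drivers under $\mathbb{P}^\alpha$; the resulting approximants still satisfy $\alpha^n_t \to \alpha_t$ on $dt \otimes d\mathbb{P}^\alpha$, which suffices since the statement does not require equivalence of $\mathbb{P}^\alpha$ and $\mathbb{P}$.
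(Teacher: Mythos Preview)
Your approximation via amplitude truncation $\alpha^n_t = \alpha_t\mathbf{1}_{\{\|\alpha_t\|\le n\}}$ is genuinely different from the paper's construction, and the gap lies exactly where you suspect. The paper does \emph{not} truncate the control; instead it takes the convex combination of measures $\mathbb{P}^n := (1-\tfrac{1}{n})\mathbb{P}^\alpha + \tfrac{1}{n}\mathbb{P}$ and defines $\alpha^n_t := (1-\tfrac{1}{n})\tfrac{d\mathbb{P}^\alpha_{|\mathcal F_t}}{d\mathbb{P}^n_{|\mathcal F_t}}\alpha_t$, so that $\mathbb{P}^{\alpha^n}=\mathbb{P}^n\sim\mathbb{P}$ automatically. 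The $F,G$ terms then converge because $\mathcal H(\mathbb{P}^\alpha\|\mathbb{P}^n)\to 0$, and the $f^1$ term is handled by \emph{convexity}: one gets directly
\[
\mathbb{E}^{\alpha^n}\Big[\int_0^{T\wedge\tau} f^1(s,X,\alpha^n_s)\,ds\Big]\;\le\;(1-\tfrac{1}{n})\,\mathbb{E}^\alpha\Big[\int f^1(s,X,\alpha_s)\,ds\Big]+\tfrac{1}{n}\,\mathbb{E}\Big[\int f^1(s,X,0)\,ds\Big],
\]
which combined with Fatou yields the exact limit. No uniform integrability is needed.

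Your Vitali argument for the $f^1$ term does not go through as written. You propose to rewrite $\mathbb{E}^{\alpha^n}[\,\cdot\,]=\mathbb{E}^\alpha\big[\tfrac{d\mathbb{P}^{\alpha^n}}{d\mathbb{P}^\alpha}\,\cdot\,\big]$, but $\mathbb{P}^{\alpha^n}\sim\mathbb{P}$ while $\mathbb{P}^\alpha$ may have strictly smaller support, so $\mathbb{P}^{\alpha^n}\not\ll\mathbb{P}^\alpha$ and this density does not exist. On the $\mathbb{P}$-nonnull event $\{\tfrac{d\mathbb{P}^\alpha}{d\mathbb{P}}=0\}$ the density process of $\mathbb{P}^\alpha$ hits zero at some $\sigma\le T$, forcing $\int_0^\sigma\|\alpha_s\|^2\,ds=\infty$ pathwise; since $f^1$ has quadratic growth, $\int_0^{T\wedge\tau} f^1(s,X,\alpha^n_s)\,ds\to+\infty$ on that set while $\tfrac{d\mathbb{P}^{\alpha^n}}{d\mathbb{P}}\to 0$. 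This $0\cdot\infty$ indeterminacy obstructs both dominated convergence and Vitali under $\mathbb{P}$, and the time truncation you mention does not resolve it either (the off-support mass persists). The paper's convex-combination trick is precisely designed to sidestep this, exploiting that $f^1$ is convex in $a$ rather than trying to control moments.

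Your uniqueness argument via Theorem~\ref{thm: sfc} and lower semicontinuity of relative entropy is correct and is a clean alternative to the paper's route (which observes that if $\tilde J(\alpha)=\tilde J(\hat\alpha)$ then every $\alpha^n$ is optimal in $\mathbb{A}$, hence equals $\hat\alpha$, hence $\mathbb{P}^\alpha=\lim\mathbb{P}^{\alpha^n}=\mathbb{P}^{\hat\alpha}$).
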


\subsection{Applications to Schr\"odinger-type problems and mean field games}

We illustrate the above results with two applications: A constrained version of Schr\"odinger's problem and mean field games.

\subsubsection{Schrödinger bridges with hard killing}\label{sct: schrö}
As an application of our results on conditioned McKean-Vlasov control problems
we introduce and analyze a new version of the celebrated Schrödinger problem in which we impose that particles exiting a given domain are killed.

 The Schrödinger problem originally deals with finding the most likely path of a particle system with given initial and terminal distributions. 
 Using Sanov's theorem, this problem can be reformulated as an entropy minimization problem over all paths with prescribed initial and terminal marginals. When the reference measure is  Wiener's measure, Föllmer's drift allows to see this problem as a linear quadratic control problem with a constraint on the terminal marginal distribution of the state.
 That is, given an initial distribution $\nu =\mathbb{P}\circ \xi^{-1}$ and a terminal distribution $\hat\mu\in \cP_2(\R^d)$, Schrödinger problem is
 $$V_{\hat{\mu}} = \inf \lbrace \tilde J(\alpha)\vert \alpha \in \tilde{\mathbb{A}},\mathcal{L}_{\mathbb{P}^\alpha}(X_0) = \nu ,\, \mathcal{L}_{\mathbb{P}^\alpha}(X_T ) = \hat{\mu} \rbrace\quad \text{with}\quad \tilde J(\alpha) =  \mathbb{E}^\alpha\bigg[\int_0^{T}\frac{1}{2}\Vert \alpha_s \Vert^2ds\bigg] = \mathcal{H}(\mathbb{P}^\alpha\parallel\mathbb{P}).
 $$\par

 In this paper we extend Schrödinger problem to the case where particles are killed when they exit a given domain $D$.
 Furthermore, in addition to fixing the particles' initial and terminal marginals, we also fix a desired ratio of surviving particles.
In this setting, 
the reference measure is the restriction $\mathbb{P}_{\vert \mathcal{F}_{T\wedge \tau}}$ to $\P$ on $\mathcal{F}_{T\wedge \tau}$,
and the cost function to be minimized is
$$J(\alpha) =  \mathbb{E}^\alpha\bigg[\int_0^{T\wedge\tau}\frac{1}{2}\Vert \alpha_s \Vert^2ds\bigg] = \mathcal{H}(\mathbb{P}^\alpha_{\vert\mathcal{F}_{T\wedge \tau}}\parallel\mathbb{P}_{\vert \mathcal{F}_{T\wedge \tau}}).$$
Thus, given $\hat{p} \in (0, 1]$ and $\hat{\mu}\in \mathcal{P}(D)$, we are interested in the optimization problem
\begin{equation}
\label{eq:def.schro.hard.kill}
V_{\hat{p}, \hat{\mu}} = \inf \big\lbrace J(\alpha)\vert \alpha \in \tilde{\mathbb{A}},\mathbb{P}^\alpha[T<\tau] = \hat{p},\, \mathcal{L}_{\mathbb{P}^\alpha}(X_0) = \nu,\,  \mathcal{L}_{\mathbb{P}^\alpha}(X_T \vert T<\tau) = \hat{\mu} \big\rbrace.
\end{equation}
 We will show that this problem can be approximated by a sequence of McKean-Vlasov control problems similar to those studied so far,
like the ones we have studied before that replace the target constraint by a penalization term. 
 For each $l \geq 1$, we consider
 \begin{equation}
 \label{eq:def.schro.penal}
    V^l_{\hat{p}, \hat{\mu}} = \inf_{\alpha \in \tilde{\mathbb{A}}} J^l(\alpha)\quad \text{where}\quad J^l(\alpha) := J(\alpha) + \frac{l}{2}\Big\Vert \hat{p}\hat{\mu} - \mathbb{P}^\alpha[T<\tau]\mathcal{L}_{\mathbb{P}^\alpha}(X_T\vert s<\tau)\Big\Vert^2_{-s},
\end{equation}
    with $s > \frac{d}{2}$.
where we use the Fourier-Wasserstein norm we discuss in detail in section \ref{sct: FW mtrc}.  
Our main result on this problem is the following:

\begin{theorem}\label{thm: trgt}
    Assume that $b(t,x,a) = b^1(t,x)a+ b^2(t,x)$ for suitable bounded functions $b^1,b^2$. Then the following hold:
    \begin{itemize}
        \item[(i)]
    It holds $V^l_{\hat{p}, \hat{\mu}} \nearrow V_{\hat{p}, \hat{\mu}}$. 
    \item[(ii)] If $V_{\hat{p}, \hat{\mu}} <\infty$, the infimum in \eqref{eq:def.schro.hard.kill} is attained at a unique feasible $\hat{\alpha} \in \tilde{\mathbb{A}}$
    and for each $l\ge1$, the problem \eqref{eq:def.schro.penal} admits a unique minimizer 
    $\hat{\alpha}^l \in \mathbb{A}_{BMO}$ satisfying
    $$\mathcal{H}(\mathbb{P}^{\hat{\alpha}} \parallel \mathbb{P}^{\hat{\alpha}^l}) \leq \frac{L^2}{2}\mathbb{E}^{\hat{\alpha}}[\int_0^{T\wedge \tau}\Vert \hat{\alpha}_s - \hat{\alpha}^l_s \Vert^2 ds] \leq \frac{L^2}{m}(V_{\hat{p}, \hat{\mu}} - V^l_{\hat{p}, \hat{\mu}}) \rightarrow 0.$$
    \item[(iii)] If there exists a feasible $\alpha^0 \in \mathbb{A}$, then we also have $\hat{\alpha}\in \mathbb{A}$.
    \item[(iv)] Assume further $k = d$, $b^1$ equals the identity matrix and $b^2 = 0$. If $V_{\hat{p}, \hat{\mu}} <\infty$, then there exist measurable functions $\hat{\phi}, \hat{\psi}: D \rightarrow \mathbb{R}$ such that $\frac{d\mathbb{P}^{\hat\alpha}_{\vert \mathcal{F}_{T\wedge\tau}}}{d\mathbb{P}_{\vert\mathcal{F}_{T\wedge \tau}}} = e^{\hat{\phi}(X_0) + \mathbf{1}_{\lbrace T\wedge\tau \rbrace}\hat{\psi}(X_T)}$ $\mathbb{P}^{\hat{\alpha}}$-a.s. 
\end{itemize}
\end{theorem}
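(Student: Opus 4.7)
The main ingredients are that $f^1(s,x,a)=\tfrac12\|a\|^2$ is $1$-strongly convex in $a$ and that the penalty $\tfrac l2\|\hat p\hat\mu-p\mu\|_{-s}^2$, being a squared Hilbert seminorm in the sub-probability $p\mu$, is $p$-convex in the sense of Assumption \ref{asmp: cnv}. Each penalized problem \eqref{eq:def.schro.penal} thus falls within the scope of Theorems \ref{thm: exs}, \ref{thm: sfc}, and \ref{thm: opt non eq} with constant $m=1$, which directly yields the unique optimizer $\hat\alpha^l\in\mathbb{A}_{BMO}$ in (ii) and the workhorse inequality
\begin{equation*}
J^l(\alpha')-V^l_{\hat p,\hat\mu}\;\geq\;\tfrac12\,\mathbb{E}^{\alpha'}\!\Bigl[\textstyle\int_0^{T\wedge\tau}\|\hat\alpha^l_s-\alpha'_s\|^2\,ds\Bigr]\;\geq\;\tfrac1{L^2}\mathcal{H}(\mathbb{P}^{\alpha'}\|\mathbb{P}^{\hat\alpha^l})
\end{equation*}
for every admissible $\alpha'$. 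This single inequality, applied with various $\alpha'$, drives the proofs of parts (i)--(iii).

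For part (i), the monotonicity in $l$ and the bound $V^l_{\hat p,\hat\mu}\leq V_{\hat p,\hat\mu}$ are immediate since the penalty vanishes on feasible controls. Assuming $V_{\hat p,\hat\mu}<\infty$, the bound $J(\hat\alpha^l)\leq V^l_{\hat p,\hat\mu}\leq V_{\hat p,\hat\mu}$ together with boundedness of $b^1,b^2$ and $\sigma^{\pm 1}$ makes the entropies $\mathcal{H}(\mathbb{P}^{\hat\alpha^l}\|\mathbb{P})$ uniformly bounded, so the family $(\mathbb{P}^{\hat\alpha^l})_l$ is tight; by F\"ollmer's representation any weak limit $\mathbb{P}^*$ is of the form $\mathbb{P}^{\hat\alpha^*}$ for some $\hat\alpha^*\in\tilde{\mathbb{A}}$. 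The penalty bound $\tfrac l2\|\hat p\hat\mu-p^l\mu_T^l\|_{-s}^2\leq V_{\hat p,\hat\mu}-V^l_{\hat p,\hat\mu}$ forces the Schr\"odinger constraints in the limit via continuity of the Fourier--Wasserstein norm, while weak lower semi-continuity of $\mathcal{H}(\cdot\|\mathbb{P})$ yields $J(\hat\alpha^*)\leq\liminf V^l_{\hat p,\hat\mu}\leq V_{\hat p,\hat\mu}$, so $\hat\alpha^*$ is a Schr\"odinger optimizer and $V^l_{\hat p,\hat\mu}\nearrow V_{\hat p,\hat\mu}$.

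For part (ii), uniqueness of $\hat\alpha$ follows from strict convexity of $J$ in $\mathbb{P}^\alpha$ (a direct consequence of the workhorse inequality with $m=1>0$) combined with convexity of the feasible set in $\mathbb{P}^\alpha$: both constraints $\mathbb{P}^\alpha[T<\tau]=\hat p$ and $\mathbb{P}^\alpha(X_T\in\cdot,\,T<\tau)=\hat p\hat\mu$ are linear in $\mathbb{P}^\alpha$. The entropy estimate is the workhorse inequality applied with $\alpha'=\hat\alpha$, using $J^l(\hat\alpha)=J(\hat\alpha)=V_{\hat p,\hat\mu}$; its right-hand side vanishes by (i). For part (iii), if $\alpha^0\in\mathbb{A}$ is feasible then $\mathbb{P}^{\alpha^0}\sim\mathbb{P}$ and the workhorse inequality gives $\mathcal{H}(\mathbb{P}^{\alpha^0}\|\mathbb{P}^{\hat\alpha^l})\leq L^2(J(\alpha^0)-V^l_{\hat p,\hat\mu})$, uniformly bounded in $l$. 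Since $\mathbb{P}^{\hat\alpha^l}\to\mathbb{P}^{\hat\alpha}$ in total variation by Pinsker and (ii), lower semi-continuity of $\mathcal{H}(\mathbb{P}^{\alpha^0}\|\cdot)$ forces $\mathcal{H}(\mathbb{P}^{\alpha^0}\|\mathbb{P}^{\hat\alpha})<\infty$, so $\mathbb{P}^{\alpha^0}\ll\mathbb{P}^{\hat\alpha}$; combined with $\mathbb{P}^{\alpha^0}\sim\mathbb{P}$ and $\mathbb{P}^{\hat\alpha}\ll\mathbb{P}$, this forces $\mathbb{P}^{\hat\alpha}\sim\mathbb{P}$.

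For part (iv), the specialized Hamiltonian $\tfrac12\|a\|^2+a^\top(\sigma^{-1})^\top z$ is minimized on $A=\mathbb{R}^d$ at $\hat\alpha_t=-(\sigma^{-1})^\top(t,X_{\cdot\wedge t})Z^{\hat\alpha}_t$, and the factorization of the density is the classical Schr\"odinger bridge structure: \eqref{eq:def.schro.hard.kill} is an entropy minimization of $\mathbb{Q}\ll\mathbb{P}_{|\mathcal{F}_{T\wedge\tau}}$ subject to the two affine constraints $\mathbb{Q}\circ X_0^{-1}=\nu$ and $\mathbb{Q}(X_T\in\cdot,\,T<\tau)=\hat p\hat\mu$, and a Csisz\'ar-type $I$-projection argument shows that the minimizer's density has the form $\exp(\hat\phi(X_0)+\mathbf{1}_{\{T<\tau\}}\hat\psi(X_T)+c\mathbf{1}_{\{T\geq\tau\}})$ for measurable $\hat\phi,\hat\psi$ and a constant $c$, which rewrites as $\exp(\hat\phi(X_0)+\mathbf{1}_{\{T<\tau\}}\hat\psi(X_T))$ after absorbing $c$ into $\hat\phi$. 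I expect the main obstacle to be part (i): combining tightness from the entropy bound with Fourier--Wasserstein control of the penalty and weak lower semi-continuity of $\mathcal{H}(\cdot\|\mathbb{P})$, while invoking F\"ollmer's representation so that the weak limit genuinely corresponds to a control in $\tilde{\mathbb{A}}$.
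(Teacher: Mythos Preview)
Your overall strategy for (i)--(iii) matches the paper's---entropy compactness plus the strong-convexity estimate from Theorem~\ref{thm: sfc}---but several shortcuts need patching. First, the entropy bound gives more than tightness: it gives uniform integrability of the densities and hence setwise (weak-$L^1$) subsequential limits; mere weak convergence on path space does not let you pass to the limit in $\mathbb{P}^{\hat\alpha^l}[X_T\in\cdot,\,T<\tau]$ since $\{T<\tau\}$ need not be a continuity set. Second, F\"ollmer's representation only furnishes a drift $\hat\beta^*$; you still need $\hat\beta^*_t=\beta^1\hat\alpha^*_t+\beta^2$ for some $A$-valued $\hat\alpha^*$, which is automatic only when $k=d$. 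The paper resolves both points through Lemma~\ref{lem: drft cstr}, which combines Dunford--Pettis with Mazur's lemma to produce convex combinations $\tilde\alpha^i_t\to\hat\alpha_t$ a.e.\ and recover the control directly. Third, your penalty inequality $\tfrac l2\|\cdot\|_{-s}^2\le V_{\hat p,\hat\mu}-V^l_{\hat p,\hat\mu}$ is not correct as written; the usable bound is $\tfrac l2\|\cdot\|_{-s}^2=V^l-J(\hat\alpha^l)\le V^l\le V_{\hat p,\hat\mu}$, which still forces the penalty to vanish. Fourth, the case $V_{\hat p,\hat\mu}=\infty$ is missing; the paper dispatches it by contraposition (if $\sup_l V^l<\infty$ the same construction produces a feasible control). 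On the positive side, once the limit is in hand your use of lower semicontinuity of entropy to get $J(\hat\alpha^*)\le\liminf V^l$ is arguably more direct than the paper's route, which bounds $J^l(\hat\alpha)$ via the Mazur combinations and convexity of $J^l$ (Proposition~\ref{prop: J cnv}).

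For (iv) you take a genuinely different path. The paper exploits its own machinery: the adjoint BSDE for $\hat\alpha^l$ immediately yields $\tfrac{d\mathbb{P}^{\hat\alpha^l}}{d\mathbb{P}}=\exp\bigl(\phi^l(X_0)+\mathbf{1}_{\{T<\tau\}}\psi^l(X_T)\bigr)$ (since $Y^l_0$ is $\sigma(X_0)$-measurable and the terminal datum depends only on $X_T$), then shows $\log\tfrac{d\mathbb{P}^{\hat\alpha^l}}{d\mathbb{P}}\to\log\tfrac{d\mathbb{P}^{\hat\alpha}}{d\mathbb{P}}$ in $L^1(\mathbb{P}^{\hat\alpha})$ via the entropy convergence from (ii), and finally invokes a result of R\"uschendorf to conclude that the $L^1$-limit of additively separable functions remains additively separable. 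Your Csisz\'ar $I$-projection argument is the classical one and bypasses the BSDE detour, but you would still need to justify applicability of the projection theorem to these marginal-type (infinite-dimensional) constraints and to the possibly non-equivalent minimizer $\mathbb{P}^{\hat\alpha}\ll\mathbb{P}$; the paper trades that verification for arguments it has already developed.
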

\begin{remark}
    In the setting of Theorem \ref{thm: trgt} $(iv)$, similarly to \cite[(D)]{Leonard14}, one can derive the dual of \eqref{eq:def.schro.hard.kill} to be of the form
     \begin{equation}
     \label{eq:dual.schro}
        \max \int \phi d \nu + \hat{p} \int \psi d\hat{\mu} - \log(\mathbb{E}^\mathbb{P}[e^{\phi(X_0) + \mathbf{1}_{\lbrace T\wedge \tau \rbrace}\psi(X_T)}])
    \end{equation}
    over measurable $\phi, \psi: D \rightarrow \mathbb{R}$. By weak duality, $\hat{\phi}$ and $\hat{\psi}$ are optimal for the dual \eqref{eq:dual.schro}.
\end{remark}
\begin{remark}
    The result above easily extends to the case where the cost $\frac{1}{2}\Vert a\Vert^2$ is replaced by an arbitrary function $f$ satisfying Assumptions  \ref{asmp: ncs} and \ref{asmp: cnv} with $m > 0$. Further, we can add to $J$ a terminal cost $g$ satisfying Assumptions  \ref{asmp: ncs} and \ref{asmp: cnv}. When $f$ and $g$ are not extendable as in Theorem \ref{thm: opt non eq}, the infimum in \eqref{eq:def.schro.penal} can also be taken only over $\mathbb{A}$ instead of $\tilde{\mathbb{A}}$ without changing $V^l_{\hat{p}, \hat{\mu}}$ or the optimal control $\hat{\alpha}^l$. Replacing $\tilde{\mathbb{A}}$ by $\mathbb{A}$ in \eqref{eq:def.schro.hard.kill} can potentially change $V_{\hat{p}, \hat{q}}$ and the rest of the problem as the set of feasible controls shrinks. Still, even if $f$ and $g$ are not extendable, \eqref{eq:def.schro.hard.kill} remains well defined as long as $\hat{p} > 0$.\par
    For $\hat{p} = 0$ (in this case $\hat{\mu}$ becomes irrelevant), in a similar way, it can be shown that the result above remains true. Of course, for general $f$ and $g$, it would require them both to be extendable or both infima need to be taken over $\mathbb{A}$ only.
\end{remark}

\subsubsection{Potential mean field games}
 We begin by recalling the definition of mean field games.
  Here, we impose some additional assumption on the Hamiltonian. We need that for any $t$, $x$, $\mu$ and $z$ that $a \mapsto h(t, x, a, \mu, z)$ admits a unique minimizer $a^*(t, x, \mu, z)$ so that $a^*$ is invertible in $z$, i.e.\ there exists a map $(a^*)^{-1}$ in $t$, $x$, $a$ and $\mu$ such that $z = (a^*)^{-1}(t, x, a^*(t, x, \mu, z), \mu)$. For simplicity, we assume $A = \mathbb{R}^k = \mathbb{R}^d$, as well as that $\beta_a(t, x, a, \mu)$ is valued in the space of invertible matrices with $\beta_a^{-1}$ being uniformly bounded. Then, such an inverse is characterized as $(a^*)^{-1}(t, x, a, \mu) = -\beta_a(t, x, a, \mu)^{-1}f_a(t, x, a, \mu)$ and will admit linear growth in $a$.
Given a measurable flow of measures $\nu: [0, T] \rightarrow \mathcal{P}(\mathcal{C} \times A)$
with first marginal denoted $\nu^x(\cdot) = \int_A \nu(\cdot, da)$, let $\hat\alpha$ satisfy
\begin{equation*}
    \hat\alpha = \argmin_{\alpha \in \mathbb{A}_{\mathrm{BMO}}}J^{\mathrm{MFG}}(\alpha, \nu)
\end{equation*}
with 
\begin{align*}
    &J^{\mathrm{MFG}}(\alpha, \nu) = \mathbb{E}^{\overline{\mathbb{P}}^{\nu, \alpha}}\bigg[\int_0^T f(s, X_{\cdot \wedge s}, \alpha_s, \nu^x_s) + \int_{\mathcal{C}_D \times A}\frac{\delta f}{\delta m}(s, \tilde{x}, \tilde{a}, \nu^x_s, X_{\cdot\wedge s}) \\
    &\quad + \frac{\delta \beta}{\delta m}(s, \tilde{x}, \tilde{a}, \nu^x_s, X_{\cdot \wedge s})^\top(a^*)^{-1}(s, \tilde{x}, \tilde{a}, \nu^x_s)\nu_s(d\tilde{x}, d\tilde{a})ds + g(X, \nu^x_T) + \int_{\mathcal{C}_D} \frac{\delta g}{\delta m}(\tilde{x}, \nu^x_T, X) \nu^x_T(d\tilde{x})\bigg].
\end{align*}
and $\frac{d\overline{P}^{\nu, \alpha}}{d\mathbb{P}} = \mathcal{E}(\int_0^\cdot \beta(s, X_{\cdot \wedge s}, \alpha_s, \nu^x_s)dW_s)_T$.
A mean field equilibrium is a pair $(\hat \alpha, \nu)\in \mathbb{A}_{\mathrm{BMO}}$ that in addition, for a.e.\ $t \geq 0$, satisfies $\nu_t = \overline{\mathbb{P}}^{\nu,\hat  \alpha} \circ (X_{\cdot \wedge t},\hat \alpha_t)^{-1}$. 
Under the growth conditions of Assumption \ref{asmp: ncs}, $J^{\mathrm{MFG}}(\alpha, \nu)$ will be well defined for any $\nu$ of the form $\overline{\mathbb{P}}^{\nu, \alpha'} \circ (X_{\cdot \wedge t}, \alpha'_t)^{-1}$ for any $\alpha, \alpha' \in \mathbb{A}_{BMO}$.
\begin{remark}
    The running cost of in the mean field game just defined is particularly involved because of the general setting we are considering. The cost can be made much simpler in two interesting and often studied cases:
    \begin{itemize}
        \item[(i)] If the drift $b$ does not depend on the measure argument, we do not need to assume that $a^*$ exists or is invertible. In fact, in this case the term $\frac{\delta \beta}{\delta m}$ will not appear in the running cost.
        \item[(ii)]If $f$ is separable as in Assumption \ref{asmp: cnv}, the cost simplifies then the cost $J^{\mathrm{MFG}}$ depends only on $\nu^x$, not on $\nu$.
        That is, the mean field game is no longer a mean field game of control.
    \end{itemize}
\end{remark}


\begin{theorem}\label{thm: pot mfg}
    Let Assumptions \ref{asmp: beta} and \ref{asmp: ncs} hold and assume that $f$ is  strongly convex in $a$, that $\vert f(t, x, 0, \mu)\vert$, $\vert g \vert$, $\vert \frac{\delta g}{\delta m} \vert$ and $\vert \frac{\delta \beta}{\delta m}\vert$ are uniformly bounded, and that $\vert \frac{\delta f}{\delta m}(t, x, a, \mu, \tilde{x})\vert \leq M(1 + \Vert a \Vert^2)$ for some $M>0$. Further assume either that $a^*$ exists and is invertible as described above or that $b$ is independent of $\mu$. Then, for any $\alpha \in \mathbb{A}_{BMO}$ such that $J(\alpha) = \inf_{\alpha' \in \mathbb{A}_{BMO}} J(\alpha')$, we have that $(\alpha, (\mathcal{L}_{\mathbb{P}^\alpha}(X_{\cdot \wedge t}, \alpha_t))_{t \in [0, T]})$ is a mean field equilibrium of the game with cost $J^{\mathrm{MFG}}$.\par
    If in addition Assumption \ref{asmp: cnv} holds, there is at most one solution to the mean field game with cost $J^{\mathrm{MFG}}$.
\end{theorem}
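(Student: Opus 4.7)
The strategy is to reduce the MFG problem (with $\nu$ frozen) to a standard single-agent control problem and to show that its adjoint BSDE coincides, at any critical point, with the McKean--Vlasov adjoint BSDE \eqref{eq: adj eq}. Then, both the necessary and sufficient Pontryagin conditions transfer between the two problems. Crucially, the sufficient Pontryagin condition in the weak formulation is a direct verification argument (write $Y$ under $\overline{\mathbb{P}}^{\nu, \alpha'}$ via Girsanov, then exploit pointwise Hamiltonian minimization and take $\mathbb{E}^{\alpha'}$) and \emph{does not} require convexity of the Hamiltonian in $a$. This is what makes the argument work despite the fact that $\beta$ need not be affine in $a$.

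For Part 1, let $\alpha \in \mathbb{A}_{\mathrm{BMO}}$ be optimal for $J$; by Theorem \ref{thm: ncs} there exists $(Y^\alpha, Z^\alpha)$ solving \eqref{eq: adj eq} with $\alpha_t \in \argmin_a h(\Theta^\alpha_t, Z^\alpha_t)$. Set $\nu_t = \mathcal{L}_{\mathbb{P}^\alpha}(X_{\cdot \wedge t}, \alpha_t)$, so that $\overline{\mathbb{P}}^{\nu, \alpha} = \mathbb{P}^\alpha$. In the first case ($a^*$ invertible), strong convexity of $f$ together with the first-order condition $f_a + \beta_a^\top Z^\alpha = 0$ (valid because $A = \mathbb{R}^k$, so the minimizer is interior) gives the key identity $Z^\alpha_t = (a^*)^{-1}(\Theta^\alpha_t)$; in the second case ($b$ independent of $\mu$), $\delta \beta / \delta m \equiv 0$ and the identification is vacuous. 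Using the tautology $\tilde{\mathbb{E}}^\alpha[\phi(\tilde \alpha_s, \tilde X_{\cdot\wedge s})] = \int \phi(\tilde a, \tilde x)\nu_s(d\tilde x, d\tilde a)$ and substituting $\tilde Z^\alpha_s = (a^*)^{-1}(\tilde\Theta^\alpha_s)$ into the ``copy'' term $\tilde{\mathbb{E}}^\alpha[\delta h / \delta m]$ of \eqref{eq: adj eq}, one checks term by term that $(Y^\alpha, Z^\alpha)$ also solves the standard adjoint BSDE associated with the (non-McKean--Vlasov) control problem $\min J^{\mathrm{MFG}}(\cdot, \nu)$. The weak-formulation sufficient Pontryagin condition then yields $\alpha \in \arg\min_{\alpha' \in \mathbb{A}_{\mathrm{BMO}}} J^{\mathrm{MFG}}(\alpha', \nu)$, so $(\alpha, \nu)$ is a mean field equilibrium.

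For Part 2 under Assumption \ref{asmp: cnv}, let $(\hat\alpha, \nu)$ be any MFE. The necessary Pontryagin condition for the standard problem $J^{\mathrm{MFG}}(\cdot, \nu)$---noting that the $\int (\delta f / \delta m)\, d\nu$ and $\int (\delta \beta / \delta m)^\top (a^*)^{-1}\, d\nu$ terms are independent of $a$---gives $\hat\alpha_t \in \arg\min_a h(t, X, a, \nu_t^x, Z^{\mathrm{MFG}, \hat\alpha}_t)$, and hence $Z^{\mathrm{MFG}, \hat\alpha}_t = (a^*)^{-1}(\Theta^{\hat\alpha}_t)$ by the same first-order identification. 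The BSDE-matching argument from Part 1 then forces $(Y^{\mathrm{MFG}, \hat\alpha}, Z^{\mathrm{MFG}, \hat\alpha}) = (Y^{\hat\alpha}, Z^{\hat\alpha})$, so $\hat\alpha$ satisfies the necessary optimality condition of Theorem \ref{thm: ncs} for the McKean--Vlasov problem. Under Assumption \ref{asmp: cnv}, this condition is also sufficient by Theorem \ref{thm: sfc}, so $\hat\alpha$ optimizes $J$; uniqueness of this minimizer (from Theorem \ref{thm: exs}, via strict convexity inherited from strong convexity of $f$ in $a$) then forces the MFE to be unique.

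The main technical obstacle is the BSDE matching step: one must verify that the MFG adjoint BSDE and the McKean--Vlasov adjoint BSDE live in a common $\mathbb{P}^\alpha$-square-integrable space so that uniqueness forces their solutions to coincide. The uniform bounds on $|f(\cdot, 0, \cdot)|$, $|g|$, $|\delta g / \delta m|$, $|\delta \beta / \delta m|$ and the $(1 + \|a\|^2)$ growth of $|\delta f / \delta m|$ are precisely what is needed to make both BSDEs well posed and to control the extra $(a^*)^{-1}$ driver (which inherits linear growth in $a$) under the $\mathrm{BMO}$ estimate on $\alpha$. A secondary subtlety is that the first-order identification $Z^\alpha_t = (a^*)^{-1}(\Theta^\alpha_t)$ really requires $A = \mathbb{R}^k$ plus the uniform invertibility of $\beta_a$; in the alternative case where $b$ is independent of $\mu$ this is avoided entirely, but then one loses access to this elegant substitution and must instead note that $\delta\beta/\delta m \equiv 0$ makes the extra term vanish outright.
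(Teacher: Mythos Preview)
Your Part 1 is essentially the paper's argument. Both rely on the identification $Z^\alpha_t=(a^*)^{-1}(\Theta^\alpha_t)$ (the paper leaves this implicit in the line ``$(\overline{Y}^{\nu,\alpha},\overline{Z}^{\nu,\alpha})=(Y^\alpha,Z^\alpha)$''), and your Girsanov verification is exactly the quadratic-BSDE comparison the paper invokes: writing $Y^\alpha$ under $\overline{\mathbb{P}}^{\nu,\alpha'}$ turns the driver into $h(\cdot,\alpha_s,Z^\alpha_s)-h(\cdot,\alpha'_s,Z^\alpha_s)+f^{\mathrm{MFG}}(\cdot,\alpha'_s)$, and Hamiltonian minimization gives $Y^\alpha_0\le \overline{Y}^{\nu,\alpha'}_0$. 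No convexity in $a$ beyond the existence of $a^*$ is needed, as you correctly note.

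Your Part 2, however, takes a genuinely different route. The paper shows that under Assumption \ref{asmp: cnv} the $p$-convexity of $g$ and $f^2$ implies Lasry--Lions monotonicity of $x\mapsto g(x,\mu)+\int\frac{\delta g}{\delta m}(\tilde x,\mu,x)\mu(d\tilde x)$ (and similarly for $f^2$), and then cites the classical monotonicity-based uniqueness argument of \cite{Carmona15}. You instead argue that any MFE $(\hat\alpha,\nu)$ must solve the coupled adjoint system \eqref{eq: adj gnr}: since $\nu$ satisfies the consistency condition, the frozen-$\nu$ adjoint BSDE for $\hat\alpha$ coincides term by term with the McKean--Vlasov adjoint (under Assumption \ref{asmp: cnv} one has $\delta\beta/\delta m=0$, so the $(a^*)^{-1}$ identification is not even needed here), whence Theorem \ref{thm: sfc} makes $\hat\alpha$ optimal for $J$ and Theorem \ref{thm: exs} forces it to be unique. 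This is correct and arguably more informative, since it yields as a byproduct that \emph{every} MFE is a McKean--Vlasov optimizer, not merely that there is at most one. The paper's route is shorter and connects to the standard monotonicity framework; yours stays entirely within the Pontryagin machinery developed in the paper and avoids the external citation.
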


\section{Discussion and examples}\label{sct: expl}
\subsection{The Le Cam distance}
\label{sub:the_le_cam_distance}
In the analysis of mean field models, the Wasserstein metric is commonly employed on the space of probability measures. However, as noted for example in \cite{Carmona15, Z24}, the total variation distance often proves more suitable when addressing control problems in the probabilistic weak formulation, which is the setting considered here. This preference stems from the fact that the total variation distance is particularly well-suited for analyzing densities of probability measures. 
The Le Cam distance is similar in spirit, while generalizing frameworks where the regularity in measure holds with respect to the total variation since $d_{TV} \leq d_{LC}$.

The variational formulation of the total variation distance shows how the total variation distance behaves as an $\mathbb{L}^1$-distance. In fact, in the presence of a dominating measure, it is exactly the $\mathbb{L}^1$-distance between the densities. 
In particular, the total variation distance relates to the pairing between measures and bounded measurable functions. 
To some extent, the Le Cam distance can be understood as an $\mathbb{L}^2$ version of the total variation distance. For instance, whilst the total variation can be used to bound $\vert\int \phi d(\mu - \mu')\vert \leq \Vert \phi \Vert_{\infty} d_{TV}(\mu, \mu')$ for bounded $\phi$, the Le Cam distance relates to the pairing between measures and square integrable function.
That is,
\begin{equation}\label{estm LC}
    \vert \int \phi d(\mu-\mu')\vert \leq 2 \bigg(\int  \phi^2 d(\frac{\mu + \mu' }{2})\bigg)^{\frac{1}{2}} d_{LC}(\mu, \mu')
\end{equation}
for any $\phi$ such that $\int \phi^2 d\mu<\infty$ and $\int \phi^2 d\mu'<\infty$.\par
More importantly, the choice of distance used has major implications for the derivative in measure of linearly differentiable functions. In fact, a linear differentiable $u : \mathcal{P}(E) \rightarrow \mathbb{R}^r$ for some $r \geq 1$, is Lipschitz with respect to $d_{TV}$ if $\frac{\delta u}{\delta m}$ is uniformly bounded. 
For the Le Cam distance, we have the following result.
\begin{lemma}\label{lem: LCLips}
    Given a Polish space $E$ and $r \geq 1$, let $u: \mathcal{P}(E) \rightarrow \mathbb{R}^r$ be a linear differentiable function with derivative satisfying $\int_E \Vert \frac{\delta u}{\delta m}(\mu, \tilde{x})\Vert^2 \mu(d\tilde{x}) \leq C^2 $ for any $\mu \in \mathcal{P}(E)$ and some $C>0$. Then, $u$ is $4C$ Lipschitz with respect to the Le Cam distance.
\end{lemma}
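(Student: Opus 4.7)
The plan is to integrate the linear derivative along the interpolation $\mu^\lambda := \lambda\mu' + (1-\lambda)\mu$ between $\mu$ and $\mu'$. By the defining property of the linear functional derivative,
$$u(\mu') - u(\mu) = \int_0^1 \int_E \frac{\delta u}{\delta m}(\mu^\lambda, \tilde{x})\,(\mu'-\mu)(d\tilde{x})\,d\lambda,$$
so the task reduces to estimating the inner integral uniformly in $\lambda$ by a multiple of $d_{LC}(\mu,\mu')$.

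Applying the $L^2$-type inequality \eqref{estm LC} with $\phi = \frac{\delta u}{\delta m}(\mu^\lambda,\cdot)$ and writing $\nu := (\mu+\mu')/2$ gives
$$\bigg\Vert\int_E \frac{\delta u}{\delta m}(\mu^\lambda, \tilde{x})(\mu'-\mu)(d\tilde{x})\bigg\Vert \leq 2\bigg(\int_E \Big\Vert\frac{\delta u}{\delta m}(\mu^\lambda, \tilde{x})\Big\Vert^2 d\nu(\tilde{x})\bigg)^{1/2} d_{LC}(\mu, \mu').$$
The hypothesis bounds the second moment of $\delta u/\delta m(\mu^\lambda, \cdot)$ against $\mu^\lambda$ by $C^2$, not against $\nu$. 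The hard part will therefore be to produce a Radon--Nikodym estimate for $\nu$ with respect to $\mu^\lambda$ that is tight enough to integrate in $\lambda$ all the way to the endpoints.

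For this I would use the signed-measure identity $\nu = \mu^\lambda + (\tfrac{1}{2}-\lambda)(\mu'-\mu)$, together with the trivial pointwise bounds $\mu \leq (1-\lambda)^{-1}\mu^\lambda$ and $\mu' \leq \lambda^{-1}\mu^\lambda$ coming directly from the definition of $\mu^\lambda$. A short case split on whether $\lambda$ lies below or above $1/2$ yields
$$\nu \leq \frac{\mu^\lambda}{2\min(\lambda,\,1-\lambda)},$$
and hence $\int_E \Vert\delta u/\delta m(\mu^\lambda,\cdot)\Vert^2 d\nu \leq C^2/(2\min(\lambda,1-\lambda))$.

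Inserting this into the previous bound and integrating in $\lambda$ produces a factor $\int_0^1 d\lambda/\sqrt{\min(\lambda,1-\lambda)} = 2\int_0^{1/2}\lambda^{-1/2}d\lambda = 2\sqrt{2}$, which combines with the $\sqrt{2}\,C$ prefactor to give exactly $\Vert u(\mu')-u(\mu)\Vert \leq 4C\,d_{LC}(\mu,\mu')$. It is worth noting that the sharpness of the Radon--Nikodym estimate is what pins down the constant $4$: the cruder bound $d\nu/d\mu^\lambda \leq 1/(2\lambda(1-\lambda))$, obtained by estimating $\mu$ and $\mu'$ separately inside $\nu$, would only yield a constant $\sqrt{2}\pi \approx 4.44$.
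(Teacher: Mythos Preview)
Your proof is correct and follows essentially the same route as the paper: write $u(\mu')-u(\mu)$ via the linear derivative along $\mu^\lambda$, apply \eqref{estm LC}, and then use the Radon--Nikodym bound $\tfrac{\mu+\mu'}{2}\le \tfrac{1}{2\min(\lambda,1-\lambda)}\mu^\lambda$ together with the hypothesis to obtain the integrable factor $1/\sqrt{\min(\lambda,1-\lambda)}$. The paper presents the same case split on $[0,\tfrac12]$ and $[\tfrac12,1]$ without writing the signed-measure identity $\nu=\mu^\lambda+(\tfrac12-\lambda)(\mu'-\mu)$ explicitly, but the underlying estimate and the resulting constant $4C$ are identical; your closing remark on the cruder bound $1/(2\lambda(1-\lambda))$ yielding $\sqrt{2}\pi$ is a nice addition not in the paper.
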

\begin{proof}
    For any $\mu, \mu' \in \mathcal{P}(E)$, by \eqref{estm LC}, we have
    \begin{align*}
        \Vert u(\mu) - u(\mu') \Vert & = \bigg\Vert \int_0^1 \int_E \frac{\delta u}{\delta m}(\lambda \mu' + (1-\lambda)\mu, \tilde{x}) (\mu' - \mu)(d\tilde{x})d \lambda \bigg\Vert\\
        &\leq 2 \int_0^1 \bigg(\int_E \Big\Vert \frac{\delta u}{\delta m}(\lambda \mu' + (1-\lambda)\mu, \tilde{x})\Big\Vert^2\frac{\mu + \mu'}{2}(d\tilde{x})\bigg)^{\frac{1}{2}}d_{LC}(\mu, \mu')d\lambda\\
        &\leq 2 d_{LC}(\mu, \mu')\bigg\{\int_0^{\frac{1}{2}}\bigg(\frac{1}{2\lambda}\int_E \Big\Vert \frac{\delta u}{\delta m}(\lambda\mu' + (1-\lambda)\mu, \tilde{x}) \Big\Vert^2(\lambda \mu' + (1-\lambda) \mu)(d\tilde{x}) \bigg)^{\frac{1}{2}}d\lambda \\
        &\quad + \int_{\frac{1}{2}}^1\bigg( \frac{1}{2(1-\lambda)}\int_E \Big\Vert\frac{\delta u}{\delta m}(\lambda\mu' + (1-\lambda)\mu, \tilde{x})\Big\Vert^2(\lambda \mu' + (1-\lambda) \mu)(d\tilde{x}) \bigg)^{\frac{1}{2}}d\lambda \bigg\}\\ 
        & \leq 4Cd_{LC}(\mu, \mu')
    \end{align*}
and thus showing the Lipschitz property for $u$. 
\end{proof}
To further illustrate the gain in generality afforded to us by the Le Cam metric, consider the (important) case where the drift depends on an interaction kernel, i.e. $b(t, x, a, \mu, p) = \int_{\mathbb{R}^d} K(x_t, y_t)\mu(dy)$ for some kernel $K: \mathbb{R}^d \times \mathbb{R}^d \rightarrow \mathbb{R}^d$.
If $b$ is required to be Lipschitz with respect to $d_{TV}$, then $K$ must be uniformly bounded.
If we instead work with $d_{LC}$, we can allow $K$ to admit polynomial growth in its second variable, i.e. $K(x, y) \leq M(1 + \Vert y \Vert)^q$ for some $M>0$, $q>1$, since $\tilde{\mathbb{E}}^\alpha[\Vert K(X_t, \tilde{X}_t)\Vert^2] \leq \tilde{\mathbb{E}}[\Vert K(X_t, \tilde{X}_t) \Vert^4]^{\frac{1}{2}}\mathbb{E}^\alpha[(\frac{d\mathbb{P}^\alpha}{d\mathbb{P}})^2]$ can be bounded uniformly over $\alpha$ for instance if $A$ and $\sigma$ are bounded.

\subsection{The conditional exit control problem}
As already touched upon in the introduction, we can apply our results to a weakly formulated version of the conditional exit control problem. To keep the notation close to the one used in \cite{Achdou21}, let us define it as follows. Let $\sigma$ be constant, $b(s, x, a, \mu, p) = a$, and the cost be of the form $f(t, x, a, \mu, p) = \frac{1}{p}(L(x_t, a) + \Phi(\mu_t))$ and $g(x, \mu, p) = \frac{1}{p}(\Psi(\mu_T) - \epsilon \log(p))$ for some $L: D \times A \rightarrow \mathbb{R}$ that is convex in $a$, $\Phi, \Psi: \mathcal{P}(D)\rightarrow \mathbb{R}$, and $\epsilon \geq 0$. Here, we wrote $\mu_t$ for $\mu \circ (\omega \mapsto \omega_t)^{-1}$ and we consider the action space $A = \lbrace a\in \mathbb{R}^d \vert \Vert a \Vert \leq M_A \rbrace$ for some positive constant $M_A$, or $A = \mathbb{R}^d$. This way, the cost functional we have defined in \eqref{eq: def J} coincides with the one defined in \cite[(6)]{Achdou21}:
\begin{equation}\label{eq: cnd cst}
    J(\alpha) = \int_0^T \mathbb{E}^\alpha[L(X_s, \alpha_s)\vert s<\tau] + \Phi(\mathcal{L}_{\mathbb{P}^\alpha}(X_s \vert s < \tau)) ds + \Psi(\mathcal{L}_{\mathbb{P}^\alpha}(X_T \vert T<\tau))-\epsilon\log(\mathbb{P}^\alpha[T<\tau])
\end{equation}
We will assume that in this form, $f$ and $g$ still satisfies Assumption \ref{asmp: ncs}. This results in slightly different regularity assumptions than in \cite{Achdou21}. The following characterization of optimal controls is an immediate consequence of Theorem \ref{thm: ncs}.
\begin{corollary}
    Let $\alpha \in \mathbb{A}_{BMO}$ be optimal for the conditional exit control problem \eqref{eq: cnd cst} as described above. Let $(Y^\alpha, Z^\alpha)$ be the unique solution of
    \begin{eqnarray}\label{eq: cnd adj}
        &&Y^\alpha_{t\wedge \tau} = \mathbf{1}_{T<\tau}(\frac{1}{\mathbb{P}^\alpha[T<\tau]}\frac{\delta \Psi}{\delta m}(\mathcal{L}_{\mathbb{P}^\alpha}(X_T\vert T<\tau), X_T) - \frac{\epsilon}{\mathbb{P}^\alpha[T<\tau]}) \\
        &+& \int_{t\wedge\tau}^{T\wedge\tau}\frac{1}{\mathbb{P}^\alpha[s<\tau]}(L(X_s, \alpha_s) - \mathbb{E}^\alpha[L(X_s, \alpha_s)\vert s<\tau] + \frac{\delta \Phi}{\delta m}(\mathcal{L}_{\mathbb{P}^\alpha}(X_s\vert s<\tau))ds-\int_{t\wedge\tau}^{T\wedge\tau}Z^\alpha_sdW^\alpha_s.\nonumber
    \end{eqnarray}
    Then, $dt \times \mathbb{P}$ a.e. on $\lbrace t<\tau\rbrace$, we have $\alpha_t \in \argmin_{a\in A} (\mathbb{P}^\alpha[t<\tau] Z^\alpha_t)^\top a + L(X_t, a)$.
\end{corollary}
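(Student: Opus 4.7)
The plan is to apply Theorem~\ref{thm: ncs} directly to the specified coefficients and unravel the resulting adjoint equation. Since $b(t,x,a,\mu,p)=a$, we have $\beta(t,x,a)=\sigma^{-1}a$, which is linear in $a$ and independent of $(\mu,p)$. Hence Assumption~\ref{asmp: beta} is trivially satisfied ($\beta(\cdot,0,\cdot,\cdot)=0$ and $\beta$ is $\Vert\sigma^{-1}\Vert$-Lipschitz in $a$), while Assumption~\ref{asmp: ncs} is imposed by hypothesis. Existence and uniqueness of the $\mathbb{P}^\alpha$-square integrable solution $(Y^\alpha,Z^\alpha)$ to \eqref{eq: cnd adj} follow from Proposition~\ref{prop: wp adj}, and the conclusion of Theorem~\ref{thm: ncs} then yields the pointwise minimization of the Hamiltonian $h$ along $(Y^\alpha,Z^\alpha)$ on $\{t<\tau\}$.

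Next I would compute the relevant derivatives. Writing $p_s:=\mathbb{P}^\alpha[s<\tau]$ and $\mu_s:=\mathcal{L}_{\mathbb{P}^\alpha}(X_s\vert s<\tau)$, and using the centering convention of Remark~\ref{rmk: lin drv}, one obtains
\[
\frac{\delta f}{\delta m}(t,x,a,\mu,p,\tilde x)=\frac{1}{p}\frac{\delta\Phi}{\delta m}(\mu_t,\tilde x_t),\qquad \frac{\delta g}{\delta m}(x,\mu,p,\tilde x)=\frac{1}{p}\frac{\delta\Psi}{\delta m}(\mu_T,\tilde x_T),
\]
together with $f_p=-(L(x_t,a)+\Phi(\mu_t))/p^2$ and, after a short computation, $g_p=-(\Psi(\mu_T)+\epsilon(1-\log p))/p^2$. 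Since $\beta$ is independent of $(\mu,p)$, we also have $\delta h/\delta m=\delta f/\delta m$ and $h_p=f_p$.

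The key manipulation is to substitute these into the adjoint equation~\eqref{eq: adj eq}. For the terminal value, $\frac{\delta g}{\delta m}(\tilde\Theta^\alpha_T, X)=\frac{1}{p_T}\frac{\delta\Psi}{\delta m}(\mu_T,X_T)$ is independent of $\tilde X$, and $g_p(\Theta^\alpha_T)$ is deterministic in $X$, so the sum $g(\Theta^\alpha_T)+\tilde{\mathbb{E}}^\alpha[\frac{\delta g}{\delta m}(\tilde\Theta^\alpha_T,X)\vert T<\tilde\tau]+\mathbb{E}^\alpha[\mathbf{1}_{\{T<\tau\}}g_p(\Theta^\alpha_T)]$ collapses after a direct cancellation of the $\Psi(\mu_T)$ and $\epsilon\log p_T$ contributions to $\frac{1}{p_T}(\frac{\delta\Psi}{\delta m}(\mu_T,X_T)-\epsilon)$, matching \eqref{eq: cnd adj}. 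Analogously for the driver, combining $f(\Theta^\alpha_s)$ with $\mathbb{E}^\alpha[\mathbf{1}_{\{s<\tau\}}h_p]=-\frac{1}{p_s}(\mathbb{E}^\alpha[L(X_s,\alpha_s)\vert s<\tau]+\Phi(\mu_s))$ and the conditional mean field term $\tilde{\mathbb{E}}^\alpha[\frac{\delta h}{\delta m}\vert s<\tilde\tau]=\frac{1}{p_s}\frac{\delta\Phi}{\delta m}(\mu_s,X_s)$ produces exactly the driver of \eqref{eq: cnd adj} plus the residual Hamiltonian contribution $(\sigma^{-1}\alpha_s)^\top Z^\alpha_s$. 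The latter is absorbed into the change of Brownian motion: by Girsanov on $[0,T\wedge\tau]$ under $\mathbb{P}^\alpha$, $\int Z^\alpha_s\,dW_s=\int Z^\alpha_s\,dW^\alpha_s+\int(\sigma^{-1}\alpha_s)^\top Z^\alpha_s\,ds$, and \eqref{eq: cnd adj} is recovered.

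Finally, the optimality characterization from Theorem~\ref{thm: ncs} reads $\alpha_t\in\arg\min_{a\in A}h(\Theta^\alpha_t,Z^\alpha_t)=\arg\min_{a\in A}\frac{L(X_t,a)}{p_t}+(\sigma^{-\top}Z^\alpha_t)^\top a$ a.e.\ on $\{t<\tau\}$, and multiplying the objective by the strictly positive factor $p_t$ yields $\alpha_t\in\arg\min_{a\in A}L(X_t,a)+(p_t\sigma^{-\top}Z^\alpha_t)^\top a$, i.e.\ exactly the claim in the statement (with $\sigma$ absorbed into $Z^\alpha$). I do not anticipate any substantive obstacle beyond the careful bookkeeping of centerings and of the Girsanov change of Brownian motion.
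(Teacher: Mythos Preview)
Your proposal is correct and follows exactly the approach the paper intends: the corollary is stated as ``an immediate consequence of Theorem~\ref{thm: ncs}'', and you have carried out precisely that specialization, computing the derivatives of $f$ and $g$, verifying the cancellations in the terminal condition and driver, and invoking convexity of $L$ in $a$ to pass from the first-order condition to the argmin. Your observation about the $\sigma^{-\top}$ factor being absorbed into $Z^\alpha$ (or $\sigma$ being tacitly the identity) is a fair reading of a notational shortcut in the statement.
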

\begin{remark}
    Let us assume that $\alpha_t = a(t, X_t)$ for some measurable $a:[0, T] \times D \rightarrow A$, i.e.\ it is a control of feedback form. In this case, under additional smoothness assumptions, we expect \ref{eq: cnd adj} to correspond to the solution $u$ of a PDE system. Writing out the system, we observe that it matches the ones in \cite[Theorem 2.6, Theorem 5.2]{Achdou} (Note that as we are using the convention described in Remark \ref{rmk: lin drv}, we do not need all the normalization terms they introduced in $c_1$ and $c_2$).
\end{remark}
\begin{remark}
    As the cost functional formulated in \eqref{eq: cnd cst} fails to be convex, our existence result in Theorem \ref{thm: exs} is not immediately applicable. Still, when $\Phi$ and $\Psi$ are linearly convex, for bounded control spaces, as well as for the case $\epsilon > 0$, the proof can be adapted to the conditional problem as well. This relies on the observation that for a minimizing sequence $\alpha^n$, one constructs a limit $\hat{\alpha}$ for which $\mathbb{P}^{\alpha^n}[t<\tau] \rightarrow \mathbb{P}^{\hat{\alpha}}[t<\tau]$ so that the additional terms introduced by the conditioning vanish in the limit.
\end{remark}
\subsection{Convex cost functions}
\label{sub:convex_cost_functions}
The $p$-convexity condition defined in \eqref{eq: def pcnv} was required both for the sufficient condition of optimality and the existence result. 
Since this convexity condition is not standard (for instance it is not implied by joint convexity), let us provide a few examples.
The first example is that of functions that are independent of $\mu$ and $p$ (i.e. the non McKean-Vlasov case). 
Also observe that linear combinations of $p$-convex function with non negative coefficients are $p$-convex. 
\subsubsection{Extending linear convexity}
Let $\phi:\mathcal{P}(\mathcal{C}_D)\to \R$ be a linearly convex function, i.e.
$$\phi(\lambda \mu' + (1-\lambda) \mu) \leq \lambda \phi(\mu') + (1 - \lambda)\phi(\mu)$$
for any $\mu, \mu' \in \mathcal{P}(\mathcal{C}_D)$ and $\lambda \in [0, 1]$. Then, it is easy to verify that $\eqref{eq: def pcnv}$ still holds, and $\phi$ is thus also $p$-convex.\par
We often consider functions of the form $\phi(\mu) = \phi_0(\mu \circ \kappa^{-1})$ for some measurable map $\kappa:\mathcal{C}_D \rightarrow E$. Let us first note that linear differentiability of $\phi$ is inherited from linear differentiability of $\phi_0$ since for any $\mu, \mu' \in \mathcal{P}(\mathcal{C}_D)$,
\begin{align*}
    \phi(\mu') - \phi(\mu) &= \phi_0(\mu'\circ\kappa^{-1}) - \phi_0(\mu\circ\kappa^{-1}) \\
    &= \int_E\int_0^1\frac{\delta \phi_0}{\delta m}(\lambda(\mu'\circ\kappa^{-1})+(1-\lambda)(\mu\circ\kappa^{-1}), e)d\lambda(\mu'\circ\kappa^{-1}-\mu\circ\kappa^{-1})(de)\\
    &=\int_{\mathcal{C}_D}\int_0^1\frac{\delta \phi_0}{\delta m}((\lambda\mu'+(1-\lambda)\mu)\circ\kappa^{-1}, \kappa(x))d\lambda(\mu'-\mu)(dx)
\end{align*}
from which we can see that $\frac{\delta \phi}{\delta m}(\mu, x) =\frac{\delta \phi_0}{\delta m}(\mu \circ \kappa^{-1}, \kappa(x))$. Next, it is also immediate to see that if $\phi_0$ is linearly convex, then so is $\phi$ since convex combinations commute with pushforward measures.
Most often in our setting, $E =D$ and $\kappa$ is the projection mapping $\kappa(x) = x_t$ for some $t\geq 0$.\par
As a specific example, let us consider the terminal cost $g(x, \mu, p) = -\int_{\mathcal{C}_D}\int_{\mathcal{C}_D} \Vert x_T - x'_T \Vert^2 \mu(dy)\mu(dy')$ which by the above discussion is differentiable and $p$-convex. If $D$ is bounded, one can even see that $g$ and $\frac{\delta g}{\delta m}$ are bounded. Setting $f=0$ results in the cost functional
$$J(\alpha) = -2 \mathbb{P}^\alpha[T<\tau]\Big(\mathbb{E}^\alpha\Big[(X_T - \mathbb{E}^\alpha[X_T \vert T<\tau])^2\vert T<\tau\Big]\Big)= -2\mathbb{P}^\alpha[T<\tau]\mathbb{V}^\alpha[X_T \vert T<\tau]$$
where $\mathbb{V}^\alpha[X_T \vert T<\tau]$ is the conditional variance of $X_T$ with respect to the probability measure $\mathbb{P}^\alpha$.
Minimizing this cost function amounts to maximizing the likelihood that particles stay in the domain $D$ while also maximizing the variance of the surviving particles at terminal time.

\subsubsection{The Fourier-Wasserstein metric}\label{sct: FW mtrc}
In some cases, (see e.g. \cite{hernandez2023propagation} and Subsection \ref{sct: schrö} below) it is desirable to consider interaction functions that are distances on the set of probability measures.
There are certainly numerous choices of distance functions, but most lack either differentiability or convexity. As we will observe later, one possibility is to work with $f$-divergences that include among others the Le Cam metric and the relative entropy, but let us first present a much nicer choice: the Fourier Wasserstein-metric introduced in \cite{SonerYan} to study comparison theorems for viscosity solutions of Hamilton-Jacobi-Bellman equation on the Wasserstein space.\par
The Fourier-Wasserstein metric emerges out of the dual norm of Sobolev spaces of order $s$ for some $s>\frac{d}{2}$. 
It can also be characterized explicitly as follows: We write the Fourier basis as $e(x, \xi) := (2\pi)^{-\frac{d}{2}} e^{ix^\top\xi}$. For any finite signed Borel measure $\zeta$ on $\mathbb{R}^d$, we can define its Fourier transform as $\Phi(\zeta)(\xi) = \int \overline{e(x, \xi)}\zeta(dx)$ where $\overline{\cdot}$ denotes complex conjugation. With $\vert \cdot \vert$ also denoting the modulus for complex numbers, note that for any $\xi$, we have $\vert \Phi(\zeta)(\xi)\vert \leq \vert \zeta \vert$ with $\vert \zeta \vert$ denoting the total variation of $\zeta$. Thus, for any such $\zeta$, the Fourier-Wasserstein metric takes the form
$$\Vert \zeta \Vert^2_{-s}:= \int_{\mathbb{R}^d}(1+\Vert \xi\Vert^2)^{-s}\vert \Phi(\zeta)(\xi)\vert^2d\xi$$
which can be shown to be a norm on the space of finite signed measures.\par
For some fixed $p^0 \in [0, 1]$ and $\mu^0 \in \mathcal{P}(D)$ let us consider the terminal cost $g(x, \mu, p) = \frac{1}{2p}\Vert p (\mu \circ (\omega \mapsto \omega_T)^{-1}) - p^0 \mu^0\Vert_{-s}^2$. If one sets $f=0$, then the cost functional becomes 
$$J(\alpha) =  \frac{1}{2} \Big\Vert \mathbb{P}^\alpha[s<\tau]\mathcal{L}_{\mathbb{P}^\alpha}(X_T\vert T<\tau) - p^0\mu^0 \Big\Vert_{-s}^2,$$ incentivizing us to bring $\mathbb{P}^\alpha[s<\tau]$ "close" to $p^0$ and $\mathcal{L}_{\mathbb{P}^\alpha}(X_T\vert T<\tau)$ "close" to $\mu^0$.\par
The advantages of working with the Fourier-Wasserstein metric become clear when considering its differentiability and the properties of its derivatives. For simplicity of the subsequent calculation, we also write $\mu_T$ for $\mu \circ (\omega \mapsto \omega_T)^{-1}$. Following \cite[Lemma 5.2]{SonerYan}, one can see that $g$ is linearly differentiable with
\begin{eqnarray*}
    \frac{\delta g}{\delta m}(\mu, p, x)   &=&\frac{1}{2p}\int_{\mathbb{R}^d} (1+ \Vert \xi \Vert^2)^{-s}p(\Phi(p\mu_T-p^0\mu^0)(\xi)(e(x, \xi) - \overline{\Phi(\mu_T)(\xi)})\\
    &+& \overline{\Phi(p\mu_T - p^0\mu^0)(\xi)}(\overline{e(x, \xi)} - \Phi(\mu_T)(\xi))d\xi\\
    &=&\int_{\mathbb{R}^d}(1+\Vert\xi\Vert^2)^{-s}\mathfrak{Re}(\Phi(p\mu_T - p^0\mu^0)(\xi)(e(x, \xi) - \overline{\Phi(\mu_T)(\xi)}))d\xi
\end{eqnarray*}
where $\mathfrak{Re}$ denotes the real part of a complex number. Note that $\frac{\delta g}{\delta m}$ is bounded. Also,
$$g_p(\mu, p) = -\frac{1}{2p^2}\Vert p\mu_T-p^0\mu^0\Vert_{-s}^2 + \frac{1}{p}\int_{\mathbb{R}^d}(1+\Vert\xi\Vert^2)^{-s}\mathfrak{Re}(\Phi(p\mu_T-p^0\mu^0)(\xi)\overline{\Phi(\mu_T)(\xi)})d\xi.$$
Further, recall that for any $z^1, z^2 \in \mathbb{C}$, we have $\frac{1}{2}\vert z^1 \vert^2 - \frac{1}{2}\vert z^2 \vert^2 \geq \mathfrak{Re}((z^1 - z^2)\overline{z^2})$. Thus,
\begin{align*}
    &\frac{1}{2}\Vert p'\mu_T' - p^0\mu_T^0\Vert_{-s}^2 - \frac{1}{2}\Vert p\mu_T-p^0\mu^0\Vert_{-s}^2 \\
    &\quad =\frac{1}{2}\int_{\mathbb{R}^d}(1 + \Vert \xi \Vert^2)^{-s})\Big(\vert \Phi(p'\mu_T'-p^0\mu^0)(\xi)\vert^2 -\vert\Phi(p\mu_T-p^0\mu^0)(\xi) \vert^2\Big)d\xi\\ 
    &\quad \geq\frac{p'-p}{2p}\Vert p\mu_T-p^0\mu^0\Vert_{-s}^2 \\
    &\qquad + \int_{\mathbb{R}^d}(1+\Vert\xi\Vert^2)^{-s}\mathfrak{Re}\Big(\Phi(p\mu_T-p^0\mu^0)(\xi)(\overline{\Phi(p'\mu_T'-p\mu_T)(\xi)}- (p'-p)\overline{\Phi(\mu_T)(\xi)}) \Big)d\xi \\
    &\qquad - \frac{p'-p}{2p}\Vert p\mu_T-p^0\mu^0\Vert_{-s}^2 + (p' - p)\int_{\mathbb{R}^d}(1 + \Vert\xi\Vert^2)^{-s}\mathfrak{Re}(\Phi(p\mu_T - p^0\mu^0)(\xi)\overline{\Phi(\mu_T)(\xi)})d\xi
\end{align*}
showing that $g$ is $p$-convex by Proposition \ref{prop: pcnv diff}. We thus see that such $g$ satisfies Assumption \ref{asmp: ncs}, \ref{asmp: cnv}, and even the extendability required in Theorem \ref{thm: opt non eq}.
\subsubsection{Divergences}
For a Polish space $E$, a finite measure $\nu$ on $E$, and measurable maps $\kappa:\mathcal{C}_D \rightarrow E$ and $F:[0, \infty) \rightarrow \mathbb{R}$, we define 
$$\phi(\mu, p) = \frac{1}{p} \int_E F\Big(p\frac{d(\mu\circ \kappa^{-1})}{d\nu}\Big)d\nu.$$ We take $F$ to be non negative, convex and such that $F(1) = 0$. Of course, $\phi$ can only be well defined on a subset of measure in $\mathcal{P}(\mathcal{C}_D)$ satisfying some a priori bounds on the density as described in Remark \ref{rmk: asm meas}. When it can be guaranteed that $\mu \circ \kappa^{-1} \sim \nu$, we do not need $F$ to be defined at zero. In the context of probability measures, such maps are known as $f$-divergences, see e.g.\ \cite[Chapter 7]{Polyanskiy}.\par
When $F$ is convex, it is immediate to see that \eqref{eq: def pcnv} is satisfied so that $\phi$ is $p$-convex. Further, we can see that when $F$ equals zero only at $1$, we have $\phi\geq 0$ and $\phi(\mu, p) = 0$ if and only if $p\mu\circ \kappa^{-1} = \nu$ by Jensen's inequality. 
This shows how such $\phi$ can also be interpreted as a distance function. Common choices for $F$ are $F(x) = -\log(x) + x -1$, $F(x) = x\log(x) - x + 1$, $F(x) = \frac{1}{2}\vert x - 1 \vert$, and $F(x) = \frac{(1 - x)^2}{2x+2}$. The first two choices are related to the relative entropy, and the latter two to the total variation distance and the Le Cam distance.\par

Differentiability of $\phi$ is also easily checked. When $F$ is differentiable (in a weak sense suffices), we have for any $\mu, \mu' \in \mathcal{P}(\mathcal{C}_D)$, $p, p' \in (0, 1]$,
\begin{align*}
    \phi(\mu', p') & - \phi(\mu, p) = \int_E \frac{1}{p'}F\Big(p'\frac{d(\mu' \circ \kappa^{-1})}{d\nu} \Big) - \frac{1}{p}F\Big(p\frac{d(\mu\circ\kappa^{-1})}{d\nu} \Big) d\nu\\
    &=\int_E\int_0^1 -\frac{p'-p}{(p^\lambda)^2} F\Big(p^\lambda \frac{d(\mu^\lambda\circ\kappa^{-1})}{d\nu}\Big)d\lambda d\nu\\
    &\quad +\int_E\int_0^1 \frac{1}{p^\lambda}F'\Big(p^\lambda\frac{d(\mu^\lambda\circ\kappa^{-1})}{d\nu} \Big)\Big\{(p'-p)\frac{d(\mu^\lambda\circ\kappa^{-1})}{d\nu} + p^\lambda\Big(\frac{d(\mu'\circ\kappa^{-1})}{d\nu}-\frac{d(\mu\circ\kappa^{-1})}{d\nu}\Big)\Big\} d\lambda d\nu\\
    &= \int_{\mathcal{C}_D} \int_0^1 F'\Big(p^\lambda\frac{d(\mu^\lambda \circ \kappa^{-1})}{d\nu}\circ \kappa \Big) d\lambda d(\mu'-\mu) \\
    &\quad + \int_E\int_0^1 \Big\{-\frac{1}{(p^\lambda)^2}F\Big(p^\lambda\frac{d(\mu^\lambda\circ\kappa^{-1})}{d\nu} \Big) + \frac{1}{p^\lambda}F'\Big(p^\lambda\frac{d(\mu^\lambda\circ\kappa^{-1})}{d\nu}\Big)\frac{d(\mu^\lambda\circ\kappa^{-1})}{d\nu}\Big\}(p'-p)d\lambda d\nu
\end{align*}
where we wrote $p^\lambda = \lambda p' + (1-\lambda)p$ and $\mu^\lambda = \lambda\mu' + (1-\lambda)\mu$. This shows that $\phi$ is differentiable and its derivatives are given by
$$\frac{\delta \phi}{\delta m}(\mu, p, x) = F'\Big(p\frac{d(\mu\circ\kappa^{-1})}{d\nu}(\kappa(x)) \Big) - \int_{\mathcal{C}_D}F'\Big(p \frac{d(\mu\circ\kappa^{-1})}{d\nu}(\kappa(x)) \Big)\mu(dx)$$
and
$$\phi_p(\mu, p) = -\frac{1}{p^2}\int_E F\Big(p\frac{d(\mu\circ\kappa^{-1})}{d\nu} \Big)d\nu + \frac{1}{p}\int_{\mathcal{C}_D}F'\Big(p\frac{d(\mu\circ\kappa^{-1})}{d\nu}(\kappa(x))\Big)\mu(dx).$$
The drawback of working with divergences compared to the Fourier-Wasserstein metric is that the integrability conditions we require can be difficult to check, in particular when $F'$ is not bounded. In practice, it is very important to choose $\nu$ properly to  insure that $\phi$ is well defined. 
A more concrete example is obtained by choosing $E = D$, $\kappa(x) = x_T$ and the ``baseline'' measure $\nu = \mathbb{P}^{\alpha^0}[T<\tau]\mathcal{L}_{\mathbb{P}^{\alpha^0}}(X_T \vert T<\tau)$ for some $\alpha^0 \in \mathbb{A}$. When $A$ is bounded, it is easily checked that if $F$ and $F'$ admit polynomial growth in $x$ and $\frac{1}{x}$, then $\phi$ is well defined and satisfies the growth conditions in Assumption \ref{asmp: ncs}. 

Another interesting example is obtained when $A$ and $D$ are bounded. In this case, one can choose $\nu = dx$ to be Lebesgue measure on $D$. Following a similar argument as in \cite[Lemma C.1]{Tough}, for bounded drift, one can show that $\frac{d\mathcal{L}_{\mathbb{P}^\alpha}(X_T\vert T<\tau)}{dx}$ is uniformly bounded when the boundary of $D$ is nice enough. Then, for any continuous $F$ on $[0, \infty)$, the growth conditions in Assumption \ref{asmp: ncs} are satisfied. If we take e.g.\ $F(x) = x\log(x) -x + 1$, the resulting $\phi$ serves as some kind of entropic regularizer.

\section{Proof of the Pontryagin maximum principle}\label{sct: ptry}\label{sct: main}
\subsection{Well-posedness and preliminary estimates for the drift}
We begin by proving the existence of the measures $\P^\alpha$. We will also show two key estimates that will be used in the proof of the maximum principle

\begin{proof}[Proof of Proposition \ref{prop: Pa exst}]
    We first start by considering a fixed $\alpha \in \mathbb{A}_{BMO}$. Let $\Xi$ be the space of all measurable flows $(\mu, p): [0, T] \rightarrow \mathcal{P}(\mathcal{C}_D) \times (0, 1]$ equipped with the metric 
    $$d_\Xi((\mu, p), (\mu', p'))^2 := \esssup_{t \in [0, T]} \big( d_{LC}(\mu_t, \mu'_t)^2 + (p_t - p'_t)^2\big).$$ 
    As we have assumed $\beta$ to be Lipschitz, for any $(\mu, p) \in \Xi$, we can see that $\int_0^\cdot \beta(s, X_{\cdot \wedge s}, \alpha_s , \mu_{\cdot \wedge s}, p_s)dW_s$ is a $\mathbb{P}$-BMO martingale with the BMO norm being bounded over all $(\mu, p)$. Hence, we can define a map $\Psi: \Xi \rightarrow \mathcal{P}(\Omega)$ where $\Psi(\mu, p) = Q^{\mu,p}$ is the measure equivalent to $\mathbb{P}$ given by $\frac{dQ^{\mu, p}}{d\mathbb{P}} = \mathcal{E}(\int_0^\cdot \beta(s, X_{\cdot \wedge s}, \alpha_s, \mu_{\cdot \wedge s}, p_s)dW_s)_{T\wedge \tau}$. By \cite[Theorem A.8.24.]{CE}, there exists some $\rho>0$ depending on $\Vert \alpha \Vert_{BMO}$ such that $\mathbb{E}^{Q^{\mu, p}}[(\frac{d\mathbb{P}}{dQ^{\mu, p}})^{1 + \rho}] = \mathbb{E}[(\frac{d\mathbb{P}}{dQ^{\mu, p}})^\rho]$ is finite and uniformly bounded over all $(\mu, p)$. In particular, $Q^{\mu, p}[T<\tau]^{\frac{\rho}{1 + \rho}} \geq \mathbb{P}[T<\tau]\mathbb{E}^{Q^{\mu, p}}[(\frac{d\mathbb{P}}{dQ^{\mu, p}})^{1 + \rho}]^{-\frac{1}{1 + \rho}}$ is bounded away from zero uniformly in $(\mu, p)$ and we can fix a bound $p_\alpha > 0$ such that for any $(\mu,p)$ and $t \geq 0$, we have 
    \begin{equation}
    \label{eq:def.palpha}
        Q^{\mu, p}[t < \tau] \geq Q^{\mu, p}[T < \tau] \geq p_\alpha. 
    \end{equation}
    Additionally, note that by \cite[Theorem 8.8.21]{CE}, \cite[Theorem 3.1]{Kazamaki}, and Hölder's inequality, we have $\mathbb{E}^{Q^{\mu, p}}[\int_0^{T\wedge \tau}\Vert \alpha_s\Vert^2 ds] < \infty$, which guarantees that $\alpha$ is admissible.\par

    We can also define the map $\Phi: \Xi \rightarrow \Xi, (\mu, p) \mapsto (\mathcal{L}_{Q^{\mu, p}}(X_{\cdot \wedge t}\vert t < \tau), Q^{\mu, p}[t < \tau])_{t \in [0, T]}$. By \cite[(7.33),(7.35)]{Polyanskiy}, we have
    $$d_{LC}\Big(Q^{\mu, p}_{\vert \mathcal{F}_t}[\cdot \vert t<\tau], Q^{\mu', p'}_{\vert \mathcal{F}_t}[\cdot \vert t< \tau]\Big)^2 \leq \mathcal{H}\Big(Q^{\mu, p}_{\vert \mathcal{F}_t}[\cdot \vert t<\tau]\parallel Q^{\mu', p'}_{\vert \mathcal{F}_t}[\cdot \vert t< \tau]\Big).$$
    Then, by \cite[Theorem 2.15]{Polyanskiy}, we have
    \begin{align*}
        p_\alpha\Big(d_{LC}\big(&Q^{\mu, p}_{\vert \mathcal{F}_t}[\cdot \vert t<\tau] , Q^{\mu', p'}_{\vert \mathcal{F}_t}[\cdot \vert t< \tau]\big)^2 + \vert Q^{\mu, p}[t < \tau] - Q^{\mu', p'}[t < \tau] \vert^2\Big)\\
        &\leq Q^{\mu, q}[t <\tau] \mathcal{H}\big(Q^{\mu, p}_{\vert \mathcal{F}_t}[\cdot \vert t<\tau]\parallel Q^{\mu', p'}_{\vert \mathcal{F}_t}[\cdot \vert t< \tau]\big) + \mathcal{H}\big(Q^{\mu, q} \circ \mathbf{1}_{\lbrace t < \tau \rbrace}^{-1} \parallel Q^{\mu', q'} \circ \mathbf{1}_{\lbrace t < \tau \rbrace}^{-1}\big)\\ 
        &\leq \mathcal{H}(Q^{\mu, q}_{\vert \mathcal{F}_t}, Q^{\mu', q'}_{\vert \mathcal{F}_t})\\
        &=  \frac{1}{2}\mathbb{E}^{Q^{\mu, p}}\bigg[\int_0^{t\wedge \tau} \Vert \beta(s, X_{\cdot \wedge s}, \alpha_s, \mu_s, p_s) - \beta(s, X_{\cdot \wedge s}, \alpha_s, \mu'_s, p'_s)\Vert^2ds \bigg]\\ 
        &\leq L^2\int_0^t d_{LC}(\mu_s, \mu'_s)^2 + \vert p_s - p'_s \vert^2 ds.
    \end{align*}
    In particular, writing $(\mu^n, p^n) = \Phi^n(\mu, p)$ and $(\mu'^n, p'^n) = \Phi^n(\mu', p')$ and iterating the estimate above gives
    $$d_\Xi((\mu^n, p^n), (\mu'^n, p'^n))^2 \leq \frac{L^{2n}}{p_\alpha^n} \frac{T^n}{n!} d_\Xi((\mu, p), (\mu', p'))^2,$$
    showing that for sufficiently large $n$, $\Phi^n$ is a contraction.\par
    Let $d'_\Xi$ be defined just like $d_\Xi$ but with $d_{LC}$ being replaced by the total variation metric. This way, $\mathcal{P}(\mathcal{C}_D)$ can be seen as a closed bounded subset of the Banach space of all finite signed measures equipped with the total variation metric. $\Xi$ can then be seen as a subset of a Bochner space. If we restrict ourselves to the subset $\Xi^{p_\alpha}$ of flows for which $p_t \geq p_\alpha$, $\Xi^{p_\alpha}$ is closed and hence complete. Further, As $d_{TV}^2 \leq d_{LC}^2 \leq d_{TV}$, completeness of $\Xi^{p_\alpha}$ under $d'_\Xi$ is equivalent to completeness under $d_\Xi$. As $\Phi$ maps into $\Xi^{p_\alpha}$, by Banach's fixed point theorem, there thus must be a unique fixed point $(\mu^*, p^*)$, and $\mathbb{P}^\alpha := Q^{\mu^*, p^*}$ is our desired law.\par
    Now, let us consider any $\alpha\in \mathbb{A}$. We define $\alpha^n$ as 
    \begin{equation}
    \label{eq:def.alphan}
        \alpha^n_t:=\mathbf{1}_{\lbrace t < \tau^n \rbrace}\alpha_t \text{ with } 
        \tau^n := \tau \wedge \inf\bigg\lbrace t \geq 0 \vert \int_0^t \Vert \alpha_s \Vert^2 ds \geq n \bigg\rbrace. 
    \end{equation}
    Since $\mathbb{E}^\alpha[\int_0^{T \wedge \tau} \Vert \alpha_s \Vert^2 ds] < \infty$, we a.s.\ have $\tau^n \nearrow \tau$. It is clear that $\alpha^n \in \mathbb{A}_{BMO}$ and thus $\mathbb{P}^{\alpha^n}$ exists by the construction above. Then, for any $t \geq 0$,
    \begin{align*}
        \mathcal{H}(\mathbb{P}^\alpha_{\vert \mathcal{F}_t} \parallel \mathbb{P}^{\alpha^n}_{\vert \mathcal{F}_t}) & \leq \frac{3L^2}{2}\bigg(\mathbb{E}^\alpha\bigg[\int_{t \wedge \tau^n}^{t\wedge \tau} \Vert \alpha_s\Vert^2ds\bigg]
        + \int_0^t d_{LC}(\mathcal{L}_{\mathbb{P}^\alpha}(X_{\cdot \wedge s}\vert s<\tau), \mathcal{L}_{\mathbb{P}^{\alpha^n}}(X_{\cdot \wedge s}\vert s<\tau))^2\\ 
        &\quad  + \vert \mathbb{P}^\alpha[s<\tau] - \mathbb{P}^{\alpha^n}[s<\tau]\vert^2 ds\bigg)\\
        &\leq \frac{3L^2}{2}\bigg(\mathbb{E}^\alpha\bigg[\int_{T \wedge \tau^n}^{T\wedge\tau} \Vert \alpha_s \Vert^2ds\bigg] +\frac{1}{q_\alpha} \int_0^t \mathcal{H}(\mathbb{P}^\alpha_{\vert \mathcal{F}_s}, \mathbb{P}^{\alpha^n}_{\vert \mathcal{F}_s})ds\bigg)
    \end{align*}
    so that by Grönwall's inequality, $\mathcal{H}(\mathbb{P}^\alpha\parallel\mathbb{P}^{\alpha^n}) \leq \frac{3L^2e^{\frac{3L^2}{2q_\alpha}T}}{2}\mathbb{E}^\alpha[\int_{T\wedge\tau^n}^{T\wedge\tau}\Vert \alpha_s\Vert^2ds]$ which converges to zero by dominated convergence. Note that this convergence also shows uniqueness of $\mathbb{P}^\alpha$.
\end{proof}
Having wellposedness of the controlled measures $\P^{\alpha}$ for fixed $\alpha$, let us its establish with respect to $\alpha$.

\begin{proposition}\label{prop: stab P}
    Under Assumption \ref{asmp: beta}, for any $\Gamma > 0$, there exists a constant $C_P^\Gamma$ only depending on $L$, $T$, and $\Gamma$ such that for any $\alpha, \alpha' \in \mathbb{A}_{BMO}$ with $\Vert \alpha \Vert_{BMO}, \Vert \alpha' \Vert_{BMO} \leq \Gamma$, we have
    $$d_{LC}\big(\mathcal{L}_{\mathbb{P}^\alpha}(X_{\cdot \wedge t}\vert t< \tau), \mathcal{L}_{\mathbb{P}^{\alpha'}}(X_{\cdot \wedge t}\vert t< \tau)\big)^2 + \big(\mathbb{P}^\alpha[t < \tau] - \mathbb{P}^{\alpha'}[t < \tau] \big)^2 \leq C^\Gamma_P \Vert \alpha-\alpha'\Vert_{BMO}^2.$$
\end{proposition}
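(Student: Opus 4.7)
The plan is to follow the blueprint of the proof of Proposition \ref{prop: Pa exst}, combining a Pinsker-type inequality with a Girsanov-based expansion of the relative entropy and Grönwall's lemma. The only genuinely new ingredient is a uniform control of the $\mathbb{P}^\alpha$-expectation of the BMO process $\int_0^{\cdot \wedge \tau}\|\alpha_s - \alpha'_s\|^2 ds$ by its $\mathbb{P}$-BMO norm, with a constant depending only on $L,T,\Gamma$.

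First I would upgrade the lower bound on the survival probability produced in the proof of Proposition \ref{prop: Pa exst} to a bound $p_\Gamma>0$ that depends on $\alpha$ only through $\|\alpha\|_{BMO}$. Indeed, Assumption \ref{asmp: beta} and the BMO hypothesis on $\alpha$ force $\int_0^{\cdot\wedge\tau}\beta(s,X_{\cdot\wedge s},\alpha_s,\mu^\alpha_s,p^\alpha_s)dW_s$ to be a $\mathbb{P}$-BMO martingale with norm controlled by $L,T,\Gamma$ and the fixed reference measure $\mu_0$; the reverse Hölder constant from \cite[Theorem A.8.24]{CE} is then uniform, which yields $\mathbb{P}^\alpha[t<\tau]\ge p_\Gamma$ for every $\alpha$ with $\|\alpha\|_{BMO}\le \Gamma$. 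Writing $\mu^\alpha_s=\mathcal{L}_{\mathbb{P}^\alpha}(X_{\cdot\wedge s}\vert s<\tau)$ and $p^\alpha_s=\mathbb{P}^\alpha[s<\tau]$, the exact same Pinsker/data-processing chain used in Proposition \ref{prop: Pa exst} then gives
$$p_\Gamma\bigl(d_{LC}(\mu^\alpha_t,\mu^{\alpha'}_t)^2 + |p^\alpha_t - p^{\alpha'}_t|^2\bigr) \leq \mathcal{H}(\mathbb{P}^\alpha_{\vert\mathcal{F}_t}\parallel\mathbb{P}^{\alpha'}_{\vert\mathcal{F}_t}).$$
Expanding the relative entropy via Girsanov and using the Lipschitz bound in Assumption \ref{asmp: beta}(ii) (with the elementary inequality $\|u+v+w\|^2\le 3(\|u\|^2+\|v\|^2+\|w\|^2)$) yields
$$\mathcal{H}(\mathbb{P}^\alpha_{\vert\mathcal{F}_t}\parallel\mathbb{P}^{\alpha'}_{\vert\mathcal{F}_t}) \leq \tfrac{3L^2}{2}\mathbb{E}^{\mathbb{P}^\alpha}\!\left[\int_0^{t\wedge\tau}\!\|\alpha_s-\alpha'_s\|^2 + d_{LC}(\mu^\alpha_s,\mu^{\alpha'}_s)^2 + |p^\alpha_s-p^{\alpha'}_s|^2\,ds\right].$$

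The main obstacle is bounding $\mathbb{E}^{\mathbb{P}^\alpha}[\int_0^{T\wedge\tau}\|\alpha_s-\alpha'_s\|^2 ds]$ by $C_\Gamma\|\alpha-\alpha'\|_{BMO}^2$. Set $V_t=\int_0^{t\wedge\tau}\|\alpha_s-\alpha'_s\|^2 ds$; by definition $\|V\|_{BMO}=\|\alpha-\alpha'\|_{BMO}^2\le 4\Gamma^2$, so the energy inequality (John--Nirenberg for BMO, see e.g.\ \cite[Theorem 2.2]{Kazamaki}) gives $\mathbb{E}^{\mathbb{P}}[V_T^p]\le C_p\|V\|_{BMO}^p$ for every $p\ge 1$. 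Picking $p'>1$ small enough so that $\mathbb{E}^{\mathbb{P}}[(d\mathbb{P}^\alpha/d\mathbb{P})^{p'}]$ is controlled by a constant depending only on $\Gamma$ (via \cite[Theorem A.8.24]{CE}), Hölder's inequality with conjugate exponent $p=p'/(p'-1)$ delivers
$$\mathbb{E}^{\mathbb{P}^\alpha}[V_T] \le \mathbb{E}^{\mathbb{P}}\bigl[(d\mathbb{P}^\alpha/d\mathbb{P})^{p'}\bigr]^{1/p'}\mathbb{E}^{\mathbb{P}}[V_T^p]^{1/p}\le C_1^\Gamma \|\alpha-\alpha'\|_{BMO}^2.$$

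Putting everything together, for every $t\in[0,T]$ one obtains
$$d_{LC}(\mu^\alpha_t,\mu^{\alpha'}_t)^2 + |p^\alpha_t-p^{\alpha'}_t|^2 \le \frac{3L^2}{2p_\Gamma}\Bigl(C_1^\Gamma\|\alpha-\alpha'\|_{BMO}^2 + \int_0^t\bigl(d_{LC}(\mu^\alpha_s,\mu^{\alpha'}_s)^2+|p^\alpha_s-p^{\alpha'}_s|^2\bigr)ds\Bigr).$$
Grönwall's inequality then yields the claimed estimate with a constant $C_P^\Gamma$ depending only on $L$, $T$ and $\Gamma$. The only step where care is needed is the uniformity of $p_\Gamma$ and of the reverse Hölder exponent in the fourth step; everything else is a direct transcription of the machinery already set up in Proposition \ref{prop: Pa exst}.
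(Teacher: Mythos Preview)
Your argument is correct and follows the same overall blueprint as the paper's proof: the Pinsker-type bound $p_\Gamma(d_{LC}^2+|\Delta p|^2)\le\mathcal{H}$, the Girsanov expansion of $\mathcal{H}$ combined with the Lipschitz property of $\beta$, and Gr\"onwall. The only place where you and the paper diverge is in how to control $\mathbb{E}^{\mathbb{P}^\alpha}\bigl[\int_0^{t\wedge\tau}\|\alpha_s-\alpha'_s\|^2\,ds\bigr]$ by $\|\alpha-\alpha'\|_{BMO}^2$ with a constant depending only on $\Gamma$.

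The paper handles this via Kazamaki's BMO transfer theorem \cite[Theorem~3.6]{Kazamaki}: since $\int_0^\cdot\beta(\Theta^\alpha_s)\,dW_s$ is a $\mathbb{P}$-BMO martingale with norm controlled by $\Gamma$, any $\mathbb{P}$-BMO martingale remains $\mathbb{P}^\alpha$-BMO with norm inflated by a factor depending only on $\Gamma$. Applying this to $\int_0^\cdot(\beta(\Theta^\alpha_s)-\beta(\Theta^{\alpha'}_s))\,dW_s$ and then reading off the expectation from the $\mathbb{P}^\alpha$-BMO norm at the stopping time $0$ gives the bound in one stroke. Your route---H\"older's inequality plus the energy inequality $\mathbb{E}^\mathbb{P}[V_T^p]\le p!\,\|V\|_{BMO}^p$ plus reverse H\"older for $d\mathbb{P}^\alpha/d\mathbb{P}$---reaches the same conclusion through more elementary pieces, at the cost of tracking a reverse-H\"older exponent $p'(\Gamma)$ and its conjugate. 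Both are standard; the paper's version is a touch cleaner because Kazamaki's Theorem~3.6 already packages H\"older, energy, and reverse H\"older into a single statement about BMO norms. (A minor bibliographic point: for the bound on $\mathbb{E}^\mathbb{P}[(d\mathbb{P}^\alpha/d\mathbb{P})^{p'}]$ you want \cite[Theorem~3.1]{Kazamaki} rather than \cite[Theorem~A.8.24]{CE}, which the paper uses for the \emph{inverse} density.)
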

Note that in \eqref{eq:def.palpha}, the constant $p_\alpha$ depends on $\alpha$ only through $\Vert \alpha \Vert_{BMO}$. For any $\Gamma > 0$, we can just write $p_\Gamma$ as a uniform lower bound for $\mathbb{P}^\alpha[t<\tau]$ over all $\alpha$ such that $\Vert \alpha \Vert_{BMO} < \Gamma$.
\begin{proof}
    We have already seen that $\int_0^\cdot \beta(\Theta^\alpha_s)dW_s$ and $\int_0^\cdot \beta(\Theta^{\alpha'}_s)dW_s$ are $\mathbb{P}$-BMO martingales. By \cite[Theorem 3.6]{Kazamaki}, we can thus see that $\int_0^\cdot \beta(\Theta^\alpha_s) - \beta(\Theta^{\alpha'}_s) dW^{\alpha}_s$ is a $\mathbb{P}^\alpha$-BMO martingale. Additionally, by \cite[Theorem 3.6]{Kazamaki}, there exists a constant $C$ that depends only on $\Vert \int_0^\cdot \beta(\Theta^\alpha_s)dW_s \Vert_{BMO}$ and thus $\Gamma$, such that $\Vert \int_0^\cdot \beta(\Theta^\alpha_s) - \beta(\Theta^{\alpha'}_s) dW^\alpha_s \Vert_{\mathbb{P}^\alpha-BMO} \leq C \Vert \int_0^\cdot \beta(\Theta^\alpha_s) - \beta(\Theta^{\alpha'}_s) dW^\alpha_s \Vert_{BMO}$. Then, as the proof of Proposition \ref{prop: Pa exst}, using Assumption \ref{asmp: beta} we have
    \begin{align*}
        p_\Gamma\Big(&d_{LC}(\mathcal{L}_{\mathbb{P}^\alpha}(X_{\cdot \wedge t}\vert t<\tau), \mathcal{L}_{\mathbb{P}^{\alpha'}}(X_{\cdot \wedge t}\vert t<\tau)^2 + \vert \mathbb{P}^\alpha[t<\tau] - \mathbb{P}^{\alpha'}[t<\tau]\vert^2\Big)\\ 
        &\leq\mathcal{H}(\mathbb{P}^\alpha_{\vert \mathcal{F}_t}, \mathbb{P}^{\alpha'}_{\vert \mathcal{F}_t})
         =\frac{1}{2}\mathbb{E}^\alpha\bigg[\int_0^{t\wedge \tau} \Vert \beta(\Theta^\alpha_s) - \beta(\Theta^{\alpha'}_s)\Vert^2ds \bigg]\\ 
        & \leq\frac{1}{2}\bigg\Vert \int_0^\cdot \beta(\Theta^\alpha_s) - \beta(\Theta^{\alpha'}_s)dW^\alpha_s\bigg\Vert_{\mathbb{P}^\alpha\text{-BMO}}^2 
        \leq \frac{C^2}{2} \bigg\Vert \int_0^\cdot \beta(\Theta^\alpha_s) -\beta(\Theta^{\alpha'}_s) dW_s \bigg\Vert_{\text{BMO}}^2 \\
        &\leq \frac{3C^2L^2}{2}\bigg(\Vert \alpha - \alpha' \Vert_{BMO}^2 + \int_0^t d_{LC}(\mathcal{L}_{\mathbb{P}^\alpha}(X_{\cdot \wedge s}\vert s< \tau), \mathcal{L}_{\mathbb{P}^{\alpha'}}(X_{\cdot \wedge s}\vert s< \tau))^2\\ 
        &\qquad \qquad \qquad + \vert \mathbb{P}^\alpha[s<\tau] - \mathbb{P}^{\alpha'}[s < \tau]\vert^2ds\bigg)
    \end{align*}
and the result follows from Grönwall's inequality.
\end{proof}
To conclude this subsection, let us prove approximation of general controls by controls in $\mathbb{A}_{BMO}$.
\begin{lemma}\label{lem: BMO aprx}
    Under Assumptions \ref{asmp: beta} and \ref{asmp: ncs}, assume additionally that $b$ is independent of $\mu$ and $p$ and that $\vert g(x, \mu^0, 1)\vert$ and $\vert f(t, x, 0, \mu^0, 1) \vert$ are uniformly bounded for some fixed $\mu^0$. Then, for any $\alpha \in \mathbb{A}$, there is a sequence $(\alpha^n)_{n \geq 1}$ in $ \mathbb{A}_{BMO}$ such that $J(\alpha^n) \rightarrow J(\alpha)$ and $\mathcal{H}(\mathbb{P}^\alpha \parallel \mathbb{P}^{\alpha^n})\rightarrow 0$.
\end{lemma}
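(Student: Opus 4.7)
The plan is to take the same truncated controls used in the proof of Proposition \ref{prop: Pa exst}: set $\alpha^n_t := \mathbf{1}_{\{t<\tau^n\}}\alpha_t$ with $\tau^n := \tau \wedge \inf\{t \geq 0 : \int_0^t \|\alpha_s\|^2 ds \geq n\}$. Then $\alpha^n \in \mathbb{A}_{BMO}$ by construction, and the relative-entropy convergence $\mathcal{H}(\mathbb{P}^\alpha\parallel\mathbb{P}^{\alpha^n}) \to 0$ is already delivered by Proposition \ref{prop: Pa exst}, so only $J(\alpha^n) \to J(\alpha)$ remains to be shown. The key structural observation that drives the argument is that, because $b$ is independent of $(\mu,p)$, the integrand $\beta(s,X_{\cdot\wedge s},\alpha^n_s)$ agrees with $\beta(s,X_{\cdot\wedge s},\alpha_s)$ for $s \leq \tau^n$, so the densities $Z^n := d\mathbb{P}^{\alpha^n}/d\mathbb{P}$ and $Z := d\mathbb{P}^\alpha/d\mathbb{P}$ satisfy $Z^n_{t\wedge\tau^n} = Z_{t\wedge\tau^n}$; by the martingale property this yields $\mathbb{P}^{\alpha^n}|_{\mathcal{F}_{T\wedge\tau^n}} = \mathbb{P}^\alpha|_{\mathcal{F}_{T\wedge\tau^n}}$, the identity that will sidestep the main technical obstacle.

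From Pinsker's inequality I get $\|Z^n - Z\|_{L^1(\mathbb{P})} \to 0$. The data-processing inequality then gives $\mu^{\alpha^n}_t := \mathcal{L}_{\mathbb{P}^{\alpha^n}}(X_{\cdot\wedge t}\vert t<\tau) \to \mu^\alpha_t$ in total variation (hence in $d_{LC}$) and $p^{\alpha^n}_t \to p^\alpha_t$, both uniformly in $t \in [0,T]$. Since $t \mapsto p^\alpha_t$ is non-increasing with $p^\alpha_T > 0$, for $n$ large I obtain a uniform lower bound $p^{\alpha^n}_t \geq \underline p := p^\alpha_T/2$, and hence a uniform upper bound $M(p^{\alpha^n}_t) \leq M(\underline p) =: \bar M$ on the growth constant from Assumption \ref{asmp: ncs}.

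I then decompose $J(\alpha^n) = \mathbb{E}^{\alpha^n}[A_n + B_n + C_n]$ with $A_n := \int_0^{T\wedge\tau^n} f(s,X_{\cdot\wedge s},\alpha_s,\mu^{\alpha^n}_s,p^{\alpha^n}_s)ds$, $B_n := \int_{T\wedge\tau^n}^{T\wedge\tau} f(s,X_{\cdot\wedge s},0,\mu^{\alpha^n}_s,p^{\alpha^n}_s)ds$, and $C_n := \mathbf{1}_{\{T<\tau\}} g(X,\mu^{\alpha^n}_T,p^{\alpha^n}_T)$. Since $A_n$ is $\mathcal{F}_{T\wedge\tau^n}$-measurable, the measure identification yields $\mathbb{E}^{\alpha^n}[A_n] = \mathbb{E}^\alpha[A_n]$. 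The mean-value representation along the segment from $(0,\mu^0,1)$ to $(\alpha_s,\mu^{\alpha^n}_s,p^{\alpha^n}_s)$, together with the growth bounds of Assumption \ref{asmp: ncs} and the Le Cam Lipschitz estimate of Lemma \ref{lem: LCLips} (using $d_{LC} \leq 1$), provides the $\mathbb{P}^\alpha$-integrable dominating function $|A_n| \leq C_1 + \bar M C_2(1+\int_0^{T\wedge\tau}\|\alpha_s\|^2 ds)$. Pointwise convergence via $\tau^n \nearrow \tau$, the uniform convergence of $(\mu^{\alpha^n},p^{\alpha^n})$, and the continuity of $f$ from Assumption \ref{asmp: ncs}(i) then give $\mathbb{E}^\alpha[A_n] \to \mathbb{E}^\alpha[\int_0^{T\wedge\tau}f(\Theta^\alpha_s)ds]$ by dominated convergence.

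For $B_n$ and $C_n$ the integrand is evaluated at $a = 0$, so the growth bounds combined with the assumed boundedness of $|f(t,x,0,\mu^0,1)|$ and $|g(x,\mu^0,1)|$ yield the uniform sup-bounds $|B_n| \leq C_3(T\wedge\tau - T\wedge\tau^n)$ and $|C_n| \leq C_3$. Writing $\mathbb{E}^{\alpha^n}[B_n] = \mathbb{E}[(Z^n - Z)B_n] + \mathbb{E}^\alpha[B_n]$, the first term is bounded by $C_3 T\|Z^n - Z\|_{L^1(\mathbb{P})} \to 0$ and the second vanishes by dominated convergence under $\mathbb{P}^\alpha$ (since $\tau^n \nearrow \tau$); the same splitting argument applies to $C_n$. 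Summing the three pieces gives $J(\alpha^n) \to J(\alpha)$. The main obstacle, namely the potentially non-uniformly-integrable dependence of $f$ on $\|\alpha\|^2$ under the varying family $\{\mathbb{P}^{\alpha^n}\}$, is confined to the piece $A_n$ and then dissolved precisely by the identity $\mathbb{P}^{\alpha^n}|_{\mathcal{F}_{T\wedge\tau^n}} = \mathbb{P}^\alpha|_{\mathcal{F}_{T\wedge\tau^n}}$, which reduces the problematic limit to ordinary dominated convergence under the single measure $\mathbb{P}^\alpha$.
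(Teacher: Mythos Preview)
Your proof is correct and follows essentially the same approach as the paper's. Both use the truncation $\alpha^n_t = \mathbf{1}_{\{t<\tau^n\}}\alpha_t$, exploit the measure identification $\mathbb{P}^{\alpha}|_{\mathcal{F}_{T\wedge\tau^n}} = \mathbb{P}^{\alpha^n}|_{\mathcal{F}_{T\wedge\tau^n}}$ (available precisely because $b$ is independent of $(\mu,p)$) to transfer the $\|\alpha\|^2$-dependent piece to the fixed measure $\mathbb{P}^\alpha$, and then close by dominated convergence using the growth bounds of Assumption~\ref{asmp: ncs} together with the boundedness of $|f(t,x,0,\mu^0,1)|$ and $|g(x,\mu^0,1)|$. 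The only cosmetic difference is that the paper bounds $|J(\alpha)-J(\alpha^n)|$ directly by five terms, whereas you write $J(\alpha^n)=\mathbb{E}^{\alpha^n}[A_n+B_n+C_n]$ and treat each piece; the analytic content is the same.
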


\begin{proof}
    Let $(\alpha^n)_{n \geq 1}$ be a sequence of elements of $\mathbb{A}_{BMO}$ and $\tau^n$ be defined by \eqref{eq:def.alphan}. 
    In particular, $\tau^n \rightarrow \tau$ a.s. and we have $\mathcal{H}(\mathbb{P}^\alpha \parallel \mathbb{P}^{\alpha^n})\rightarrow 0$ which by \cite[Theorem 2.15]{Polyanskiy} implies for any $0 \leq t \leq T$ that $\mathbb{P}^{\alpha^n}[t < \tau] \rightarrow \mathbb{P}^\alpha[t<\tau] > 0$ and $d_{LC}(\mathcal{L}_{\mathbb{P}^{\alpha^n}}(X_{\cdot \wedge t} \vert t<\tau) , \mathcal{L}_{\mathbb{P}^\alpha}(X_{\cdot \wedge t}\vert t<\tau)) \rightarrow 0$. Note that this implies that the $\mathbb{P}^{\alpha^n}[t<\tau]$ are bounded away from $0$ in $n$. Also, note that once $b$ is independent of $\mu$ and $p$, we have $\mathbb{P}^\alpha_{\vert \mathcal{F}_{T \wedge \tau^n}} = \mathbb{P}^{\alpha^n}_{\vert \mathcal{F}_{T\wedge \tau^n}}$. Then
    \begin{align*}
        \vert J(\alpha) - J(\alpha^n)\vert & \leq \mathbb{E}^\alpha\Big[\mathbf{1}_{\lbrace T < \tau \rbrace} \Big\vert g(X, \mathcal{L}_{\mathbb{P}^\alpha}(X \vert T<\tau), \mathbb{P}^\alpha[T < \tau]) - g(X, \mathcal{L}_{\mathbb{P}^{\alpha^n}}(X \vert T<\tau), \mathbb{P}^{\alpha^n}[T<\tau]) \Big\vert \Big]\\
        &\quad + \Big\vert(\mathbb{E}^{\alpha} - \mathbb{E}^{\alpha^n})\big[\mathbf{1}_{\lbrace T < \tau \rbrace}g(X, \mathcal{L}_{\mathbb{P}^{\alpha^n}}(X\vert T<\tau), \mathbb{P}^{\alpha^n}[T<\tau]) \big] \Big\vert\\
        & \quad + \mathbb{E}^\alpha\bigg[\int_0^{T\wedge \tau} \vert f(s, X_{\cdot \wedge s},\alpha_s, \mathcal{L}_{\mathbb{P}^\alpha}(X_{\cdot \wedge s}\vert s<\tau), \mathbb{P}^\alpha[s<\tau])\\ 
        &\qquad \qquad \qquad\qquad  - f(s, X_{\cdot \wedge s}, \alpha_s, \mathcal{L}_{\mathbb{P}^{\alpha^n}}(X_{\cdot \wedge s}\vert s<\tau), \mathbb{P}^{\alpha^n}[s<\tau])\vert ds\bigg]\\
        &\quad + \mathbb{E}^\alpha\bigg[\int_{T \wedge\tau^n}^{T \wedge \tau} \vert f(s, X_{\cdot \wedge s} , \alpha_s,\mathcal{L}_{\mathbb{P}^{\alpha^n}}(X_{\cdot \wedge s}\vert s<\tau), \mathbb{P}^{\alpha^n}[s<\tau])\vert ds\bigg] \\
        &\quad + \mathbb{E}^{\alpha^n}\bigg[\int_{T \wedge \tau^n}^{T \wedge \tau} \vert f(s, X_{\cdot \wedge s}, 0, \mathcal{L}_{\mathbb{P}^{\alpha^n}}(X_{\cdot \wedge s}\vert s<\tau), \mathbb{P}^{\alpha^n}[s<\tau]) \vert ds\bigg].
    \end{align*}
    Since under assumption \ref{asmp: ncs}, $f$ admits quadratic growth in $a$, we can see that by the dominated convergence theorem, this goes to zero.\par
    For the second statement, consider any $\alpha \in \mathbb{A}_{BMO}$. Now, since $\int_0^{\cdot} \beta (s, X_{\cdot \wedge s}, \alpha_s) dW^\alpha_s$ is a $\mathbb{P}^\alpha$-BMO martingale, $\int_0^\cdot \beta(s, X_{\cdot \wedge s}, \alpha^n_s) - \beta(s, X_{\cdot \wedge s},\alpha_s)dW^{\alpha}$ must also be a $\mathbb{P}^\alpha$-BMO martingale. Particularly, by \cite[Theorem 3.1]{Kazamaki}, there is some $r>1$ such that the $\mathbb{E}^\alpha[(\frac{d\mathbb{P}^{\alpha^n}}{d\mathbb{P}^\alpha})^r]$ are finite and bounded over $n$. 
\end{proof}

\subsection{Proof of the necessary condition}

We now turn our attention to the proof of the necessary condition of optimality.
The proof will be derived from several intermediate results. 
Along the way, we will show existence of the adjoint equation used in the statement.

\subsubsection{The variation process}
As in the standard approach to the Pontryagin maximum principle, we will derive the variation process.
In contrast to the strong formulation in which it represents the derivative of the state process; in the current setup the variation process will be obtained as the derivative of the density of the measure change. 
In the following, we fix a control $\alpha \in \mathbb{A}_{\text{BMO}}$ and a progressively measurable $\mathbb{R}^k$-valued perturbation $\eta$ so that for all sufficiently small $\epsilon$, we have $\alpha^\epsilon = \alpha + \epsilon \eta \in \mathbb{A}_{\text{BMO}}$. Note that we must have $\Vert\int_0^\cdot \Vert \eta_s \Vert^2 ds \Vert_{\text{BMO}} < \infty$ and by \cite[Theorem 3.6]{Kazamaki}, also $\Vert\int_0^\cdot \eta_s dW^\alpha_s \Vert_{\mathbb{P}^\alpha\text{-BMO}} < \infty$ so that $\frac{d\mathbb{P}^{\alpha^\epsilon}}{d\mathbb{P}^\alpha}$ is a true $\mathbb{P}^\alpha$ martingale. In particular, for any $r > 1$, by \cite[Theorem 3.1]{Kazamaki}, there exist $\epsilon_r, E_r > 0$ such that for any $\epsilon < \epsilon_r$, we have $\mathbb{E}^\alpha[(\frac{d\mathbb{P}^{\alpha^\epsilon}}{d\mathbb{P}^\alpha})^r] \leq E_r < \infty$. Also, note that for any $t\geq 0$, the conditional laws $\mathbb{P}^\alpha_{\vert \mathcal{F}_t}[\cdot \vert t <\tau]$ and $\mathbb{P}^{\alpha^\epsilon}_{\vert \mathcal{F}_t}[\cdot \vert t<\tau]$ are still equivalent with density $\frac{d\mathbb{P}^{\alpha^\epsilon}_{\vert \mathcal{F}_t}[\cdot \vert t<\tau]}{d\mathbb{P}^\alpha_{\vert \mathcal{F}_t}[\cdot \vert t<\tau]} = \frac{\mathbb{P}^\alpha[t < \tau]}{\mathbb{P}^{\alpha^\epsilon}[t<\tau]} \frac{d\mathbb{P}^{\alpha^\epsilon}_{\vert \mathcal{F}_t}}{d\mathbb{P}^\alpha_{\vert \mathcal{F}_t}}$ restricted to $\lbrace t < \tau \rbrace$.\par
We define $\Lambda^{\alpha, \eta}$ as the solution of the following linear McKean-Vlasov SDE:
$$\Lambda^{\alpha, \eta}_t = \int_0^{t \wedge \tau} \beta_a(\Theta^\alpha_s) \eta_s + \tilde{\mathbb{E}}^\alpha\Big[\frac{\delta \beta}{\delta m}(\Theta^\alpha_s, \tilde{X}_{\cdot \wedge s})\tilde{\Lambda}^{\alpha, \eta}_s\vert s<\tilde{\tau} \Big] + \beta_p(\Theta^\alpha_s) \mathbb{E}^\alpha\big[\mathbf{1}_{\lbrace s < \tau\rbrace} \Lambda^{\alpha, \eta}_s \big] dW^\alpha_s.$$
Since $\mathbb{P}^\alpha[s < \tau]$ is uniformly bounded away from zero, it follows by our bounds on the derivatives, that this McKean-Vlasov SDE admits a unique $\mathbb{P}^\alpha$ square integrable strong solution. In particular, $\Lambda^{\alpha, \eta}$ is a true $\mathbb{P}^\alpha$-martingale. 
In fact, since $\tilde{\mathbb{E}}^\alpha[\frac{\delta \beta}{\delta m}(\Theta^\alpha_s, \tilde{X}_{\cdot \wedge s})\tilde{\Lambda}^{\alpha, \eta}_s\vert s<\tilde{\tau}] + \beta_p(\Theta^\alpha_s) \mathbb{E}^\alpha[\mathbf{1}_{\lbrace s < \tau\rbrace} \Lambda^{\alpha, \eta}_s]$ can be a.e. bounded by a deterministic constant, $\Lambda^{\alpha, \eta}$ must actually be a $\mathbb{P}^\alpha$-BMO martingale.\par
In the following, we let $\Gamma$ be a common upper bound for $\Vert \alpha \Vert_{2, \infty}$ and $\Vert \alpha^\epsilon \Vert_{2, \infty}$. To simplify notation, in the following, we will also write $M$ instead of $M(p_\Gamma)$.
\begin{proposition}\label{prop: var prc}
    Under Assumption \ref{asmp: beta} and \ref{asmp: ncs}, for any $r\geq 1$, for $\epsilon \searrow 0$, we have
    $$\mathbb{E}^\alpha\bigg[\bigg\{ \frac{1}{\epsilon}\Big(\frac{d\mathbb{P}^{\alpha^{\epsilon}}_{\vert\mathcal{F}_{t\wedge \tau}}}{d\mathbb{P}^\alpha_{\vert\mathcal{F}_{t\wedge \tau}}} - 1\Big) - \Lambda^{\alpha, \eta}_t\bigg\}^r \Big\vert t < \tau\bigg] \rightarrow 0$$
    and further,
    $$\mathbb{E}^\alpha\bigg[\bigg\{\frac{1}{\epsilon}\Big(\frac{\mathbb{P}^\alpha[t<\tau]}{\mathbb{P}^{\alpha^{\epsilon}}[t<\tau]} \frac{d\mathbb{P}^{\alpha^{\epsilon}}_{\vert\mathcal{F}_{t\wedge \tau}}}{d\mathbb{P}^\alpha_{\vert\mathcal{F}_{t\wedge \tau}}} - 1\Big) - \Big(\Lambda^{\alpha, \eta}_t - \mathbb{E}^\alpha[\Lambda^{\alpha, \eta}_t \vert t < \tau]\Big)\bigg\}^r \Big\vert t < \tau \bigg] \rightarrow 0.$$
\end{proposition}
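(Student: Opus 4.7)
The plan is to work under $\mathbb{P}^\alpha$, where by Girsanov's theorem the density $D^\epsilon_t := d\mathbb{P}^{\alpha^\epsilon}_{\vert \mathcal{F}_{t\wedge\tau}}/d\mathbb{P}^\alpha_{\vert \mathcal{F}_{t\wedge\tau}}$ satisfies the linear SDE $D^\epsilon_t = 1 + \int_0^{t\wedge\tau} D^\epsilon_s[\beta(\Theta^{\alpha^\epsilon}_s) - \beta(\Theta^\alpha_s)]\,dW^\alpha_s$. The uniform bound $\Vert\alpha^\epsilon\Vert_{BMO}\le\Gamma$ combined with the Lipschitz estimate in Assumption \ref{asmp: beta} and Proposition \ref{prop: stab P} shows that the driving martingale has $\mathbb{P}^\alpha$-BMO norm of order $\epsilon$, so \cite[Theorem 3.1]{Kazamaki} yields $\sup_\epsilon \mathbb{E}^\alpha[(D^\epsilon_t)^q] < \infty$ for any $q>1$ and $\epsilon$ small enough. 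Since $\mathbb{P}^\alpha[t<\tau]\ge p_\Gamma>0$, it is enough to prove unconditional $L^r(\mathbb{P}^\alpha)$-convergence; the claimed conditional version then follows on dividing by $p_\Gamma$.

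Abbreviating $\mu^\alpha_s := \mathcal{L}_{\mathbb{P}^\alpha}(X_{\cdot\wedge s}\vert s<\tau)$, $p^\alpha_s := \mathbb{P}^\alpha[s<\tau]$ (analogously for $\alpha^\epsilon$), set $R^\epsilon_t := (D^\epsilon_t-1)/\epsilon - \Lambda^{\alpha,\eta}_t$ and let $\tilde R^\epsilon_t$ denote the error inside the braces in the second statement. Linearizing $[\beta(\Theta^{\alpha^\epsilon}_s) - \beta(\Theta^\alpha_s)]/\epsilon$ via Assumption \ref{asmp: ncs}(i) produces three contributions involving $\eta_s$, $(\mu^{\alpha^\epsilon}_s-\mu^\alpha_s)/\epsilon$, and $(p^{\alpha^\epsilon}_s-p^\alpha_s)/\epsilon$ evaluated at an intermediate parameter $\Theta^{\lambda,\epsilon}_s$. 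The key algebraic identity for the measure term reads
\begin{equation*}
\int_{\mathcal{C}_D}\phi(\tilde x)\,d\tfrac{\mu^{\alpha^\epsilon}_s-\mu^\alpha_s}{\epsilon}(\tilde x) = \tilde{\mathbb{E}}^\alpha\Big[\phi(\tilde X_{\cdot\wedge s})\big(\tilde R^\epsilon_s + \tilde\Lambda^{\alpha,\eta}_s - \tilde{\mathbb{E}}^\alpha[\tilde\Lambda^{\alpha,\eta}_s\mid s<\tilde\tau]\big)\,\Big|\,s<\tilde\tau\Big].
\end{equation*}
Taking $\phi=\frac{\delta\beta}{\delta m}(\Theta^\alpha_s,\cdot)$ and invoking the zero-mean convention from Remark \ref{rmk: lin drv}, the $\tilde\Lambda^{\alpha,\eta}$-dependent part reproduces exactly the measure-derivative term in the coefficient $A_s$ of $\Lambda^{\alpha,\eta}$, leaving only a linear-in-$\tilde R^\epsilon$ remainder. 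Similarly, $(p^{\alpha^\epsilon}_s-p^\alpha_s)/\epsilon = \mathbb{E}^\alpha[\mathbf{1}_{s<\tau}(R^\epsilon_s+\Lambda^{\alpha,\eta}_s)]$ splits into $\mathbb{E}^\alpha[\mathbf{1}_{s<\tau}\Lambda^{\alpha,\eta}_s]$ (matching $A_s$) plus a linear-in-$R^\epsilon$ term. Combining with continuity of $\beta_a,\beta_p,\frac{\delta\beta}{\delta m}$ in $(a,\mu,p)$ and the stability $d_{LC}(\mu^{\alpha^\epsilon}_s,\mu^\alpha_s)+\vert p^{\alpha^\epsilon}_s-p^\alpha_s\vert=O(\epsilon)$ from Proposition \ref{prop: stab P}, the integrand $I^\epsilon_s := D^\epsilon_s[\beta(\Theta^{\alpha^\epsilon}_s)-\beta(\Theta^\alpha_s)]/\epsilon - A_s$ in the SDE for $R^\epsilon$ equals a bounded linear functional of $(R^\epsilon_s,\tilde R^\epsilon_s)$ plus a remainder which vanishes in $L^r(\mathbb{P}^\alpha)$ (using H\"older against the uniform $L^q$-bounds on $D^\epsilon$).

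On $\{t<\tau\}$, the algebraic identity
\begin{equation*}
\tilde R^\epsilon_t = \tfrac{p^\alpha_t}{p^{\alpha^\epsilon}_t}R^\epsilon_t + \Big(\tfrac{p^\alpha_t}{p^{\alpha^\epsilon}_t}-1\Big)\Lambda^{\alpha,\eta}_t - \tfrac{1}{p^{\alpha^\epsilon}_t}\mathbb{E}^\alpha\big[\mathbf{1}_{t<\tau}\tfrac{D^\epsilon_t-1}{\epsilon}\big] + \mathbb{E}^\alpha[\Lambda^{\alpha,\eta}_t\mid t<\tau]
\end{equation*}
together with $p^{\alpha^\epsilon}_t\to p^\alpha_t$ (Proposition \ref{prop: stab P}) controls $\tilde R^\epsilon_t$ by $R^\epsilon_t$ up to a vanishing term. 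Applying BDG to $R^\epsilon$ and feeding this in yields $\mathbb{E}^\alpha[\vert R^\epsilon_t\vert^r]\le C\int_0^t\mathbb{E}^\alpha[\vert R^\epsilon_s\vert^r]\,ds + o_\epsilon(1)$, and Gr\"onwall's lemma closes the argument; the second statement then follows immediately from the same algebraic identity. The main obstacle is the self-referential coupling between $R^\epsilon$ and $\tilde R^\epsilon$ introduced by the measure-derivative term, which is inherent to the McKean--Vlasov nature of the adjoint; resolving it hinges on the zero-mean convention for $\delta\beta/\delta m$, which is exactly what makes the limiting integrand match the coefficient of the SDE defining $\Lambda^{\alpha,\eta}$ and permits the Gr\"onwall closure.
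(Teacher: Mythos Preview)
Your proposal is correct and follows essentially the same strategy as the paper: write the SDE for the difference $R^\epsilon_t=(D^\epsilon_t-1)/\epsilon-\Lambda^{\alpha,\eta}_t$, linearize $\beta(\Theta^{\alpha^\epsilon}_s)-\beta(\Theta^\alpha_s)$ via the joint derivative, identify the self-referential McKean--Vlasov contribution that feeds into a Gr\"onwall inequality, show the remaining terms vanish via uniform integrability, and deduce the second statement algebraically. The only organizational difference is that you route the measure-derivative term through the conditional-density error $\tilde R^\epsilon$ and the identity relating it to $R^\epsilon$, whereas the paper decomposes directly into three pieces (corresponding to replacing the intermediate point $\Theta^{\lambda,\epsilon}$ by $\Theta^\alpha$, controlling the ratio $p^\alpha_s/p^{\alpha^\epsilon}_s$, and the residual linear-in-$R^\epsilon$ term). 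One point worth flagging: the paper is explicit that the ``intermediate point'' remainder (its $I^{2,2,1}_t$) requires a domination argument with the integrable bound $(1+1/\sqrt{1-\lambda})$ in $\lambda$, because the $L^2$-moment assumption on $\frac{\delta\beta}{\delta m}(\cdot,\mu,\cdot,\tilde x)$ is with respect to $\mu$ itself, hence at $\lambda\mu^{\alpha^\epsilon}+(1-\lambda)\mu^\alpha$ rather than $\mu^\alpha$; your ``continuity plus H\"older'' sketch is correct but hides this step.
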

\begin{proof}
    For ease of notation, we assume $r \geq 2$. 
    The case $1\le r<2$ immediately follow by Hölder's inequality. 
    Further, it suffices to only consider $\epsilon < \epsilon_r$. 
    Let us write $\mathcal{E}^\epsilon_t := \frac{d\mathbb{P}^{\alpha^\epsilon}_{\vert \mathcal{F}_{t \wedge \tau}}}{d\mathbb{P}^\alpha_{\vert \mathcal{F}_{t \wedge \tau}}}$ and $L^\epsilon_t := \frac{1}{\epsilon}(\frac{d\mathbb{P}^{\alpha^\epsilon}_{\vert \mathcal{F}_{t \wedge \tau}}}{d\mathbb{P}^\alpha_{\vert \mathcal{F}_{t \wedge \tau}}} - 1)$. Then we have
    \begin{align*}
        \mathbb{E}^\alpha[(L^\epsilon_t - \Lambda^{\alpha, \eta}_t)^r\vert t<\tau]
        &\leq 2^{r - 1} \mathbb{E}^\alpha\bigg[\bigg(\int_0^{t\wedge \tau} (\mathcal{E}^\epsilon_s - 1)\frac{\beta(\Theta^{\alpha^\epsilon}_s) - \beta(\Theta^\alpha_s)}{\epsilon}dW^\alpha_s\bigg)^r \vert t < \tau \bigg]\\
        &\quad + 2^{r - 1} \mathbb{E}^\alpha\bigg[\bigg(\int_0^{t\wedge \tau} \frac{\beta(\Theta^{\alpha^\epsilon}_s) - \beta(\Theta^\alpha_s)}{\epsilon}dW^\alpha_s - \Lambda^{\alpha, \eta}_{t \wedge \tau}\bigg)^r\vert t< \tau\bigg]\\
        &\leq \frac{2^{r-1}}{p_\Gamma}\bigg\{\mathbb{E}^\alpha\bigg[\bigg(\int_0^{t\wedge \tau} (\mathcal{E}^\epsilon_s - 1)\frac{\beta(\Theta^{\alpha^\epsilon}_s) - \beta(\Theta^\alpha_s)}{\epsilon}dW^\alpha_s\bigg)^r\bigg]\\ 
        &\quad\qquad \qquad + \mathbb{E}^\alpha\bigg[\bigg(\int_0^{t\wedge \tau} \frac{\beta(\Theta^{\alpha^\epsilon}_s) - \beta(\Theta^\alpha_s)}{\epsilon}dW^\alpha_s - \Lambda^{\alpha, \eta}_{t \wedge \tau}\bigg)^r\bigg]\bigg\}\\
        &=: \frac{2^{r-1}}{p_\Gamma}(I^1_t + I^2_t).
    \end{align*}
    In the following, $c_r$ denotes the constant in the Burkholder-Davis-Gundy inequality. 
    By the Émery inequality \cite[Theorem A.8.15]{CE}, there exists $c'_r > 0$ such that
    \begin{eqnarray*}
        I^1_t &\leq& c_rc'_r \mathbb{E}^\alpha\bigg[\sup_{s \in [0, t]} (\mathcal{E}^\epsilon_{s\wedge \tau} - 1)^r\bigg] \frac{1}{\epsilon^r} \Big\Vert \int_0^{\cdot \wedge t\wedge\tau} \beta(\Theta^{\alpha^\epsilon}_s) - \beta(\Theta^\alpha_s) dW^\alpha_s \Big\Vert_{\mathbb{P}^\alpha\text{-BMO}}^r\\
        &\leq& \frac{c_rc'_rr^rL^r3^{\frac{r}{2}}}{(r-1)^r}\mathbb{E}^\alpha\big[(\mathcal{E}^\epsilon_t -1)^r\big]\Big(\Vert \eta \Vert_{\mathbb{P}^\alpha\text{-BMO}}^2 + C^\Gamma_Pt\Vert \eta \Vert_{\text{BMO}}^2\Big)^{\frac{r}{2}}
    \end{eqnarray*}
    where in the last inequality, have used Proposition \ref{prop: stab P}, as well as Doob's inequality since $\vert \mathcal{E}^\epsilon_\cdot - 1\vert^r$ is a submartingale. For sufficiently small $\epsilon$, the $(\mathcal{E}^\epsilon_t - 1)^r$ are uniformly integrable with respect to $\mathbb{P}^\alpha$, therefore $I^1_t$ converges to zero.\par
    As we have assumed $\beta$ to be jointly differentiable, the second term can be bounded by
    \begin{align*}
        \frac{1}{c_r}I^2_t &\leq 3^{r-1}\mathbb{E}\bigg[\bigg(\int_0^{t \wedge \tau} \Big\Vert\int_0^1 \beta_a(\Theta^{\alpha^\epsilon, \lambda}_s) \eta_s  - \beta_a(\Theta^\alpha_s) \eta_sd\lambda\Big\Vert^2ds\bigg)^\frac{r}{2} \bigg]\\
        &\quad+ 3^{r-1}\mathbb{E}^\alpha\bigg[\bigg(\int_0^{t\wedge \tau}\Big\Vert\tilde{\mathbb{E}}^\alpha\bigg[\frac{1}{\epsilon}\Big(\frac{\mathbb{P}^\alpha[s < \tau]}{\mathbb{P}^{\alpha^\epsilon}[s < \tau]} \tilde{D}^\epsilon_s - 1 \Big)\int_0^1\frac{\delta \beta}{\delta m}(\Theta^{\alpha^\epsilon, \lambda}_s, \tilde{X}_{\cdot \wedge s})d\lambda\\
        &\qquad\qquad \qquad \qquad - \tilde{\Lambda}^{\alpha, \eta}_s \frac{\delta \beta}{\delta m}(\Theta^\alpha_s, \tilde{X}_{\cdot \wedge s}) \Big\vert s<\tilde{\tau}]\Big\Vert^2ds\bigg)^\frac{r}{2}\bigg]\\
        &\quad +  3^{r-1}\mathbb{E}^\alpha\bigg[\bigg(\int_0^{t\wedge \tau} \Big\Vert\frac{1}{\epsilon}\int_0^1 \beta_p(\Theta^{\alpha^\epsilon, \lambda}_s)d\lambda (\mathbb{P}^{\alpha^\epsilon} - \mathbb{P}^\alpha)[s < \tau]- \beta_p(\Theta^\alpha_s)\mathbb{E}^\alpha[\mathbf{1}_{\lbrace s<\tau \rbrace}\Lambda^{\alpha, \eta}_s]\Big\Vert^2 ds\bigg)^\frac{r}{2}\bigg] \\
        &=: 3^{r-1}(I^{2, 1}_t + I^{2, 2}_t + I^{2, 3}_t)
    \end{align*}
    where we wrote
    \begin{align*}
        \Theta^{\alpha^\epsilon, \lambda}_t & := \lambda \Theta^{\alpha^\epsilon}_t + (1 - \lambda) \Theta^\alpha_t\\
        & = \Big(t, X_{\cdot \wedge t}, \lambda \alpha^\epsilon_t + ( 1-\lambda) \alpha_t, \lambda \mathcal{L}_{\mathbb{P}^{\alpha^\epsilon}}(X_{\cdot \wedge t}\vert t < \tau) + (1-\lambda)\mathcal{L}_{\mathbb{P}^\alpha}(X_{\cdot \wedge t}\vert t<\tau),\\ 
        &\qquad  \lambda \mathbb{P}^{\alpha^\epsilon}[s<\tau] + (1-\lambda)\mathbb{P}^\alpha[s<\tau]\Big).
    \end{align*}
    Clearly, $I^{2, 1}_t$ goes to zero by the dominated convergence theorem as $\beta_a$ is bounded and  continuous. For $I^{2, 2}_t$, first note that
    \begin{align*}
        I^{2,2}
        &\leq 3^{r-1}\mathbb{E}^\alpha\bigg[\bigg(\int_0^{t\wedge \tau}\Big\Vert\tilde{\mathbb{E}}^\alpha\Big[\frac{1}{\epsilon}\Big(\frac{\mathbb{P}^\alpha[s < \tau]}{\mathbb{P}^{\alpha^\epsilon}[s < \tau]} \tilde{D}^\epsilon_s - 1\Big)\Big\{\int_0^1\frac{\delta \beta}{\delta m}(\Theta^{\alpha^\epsilon, \lambda}_s, \tilde{X}_{\cdot \wedge s})d\lambda -  \frac{\delta \beta}{\delta m}(\Theta^\alpha_s, \tilde{X}_{\cdot \wedge s}) \Big\}\Big\vert s<\tilde{\tau}\Big]\Big\Vert^2ds\bigg)^\frac{r}{2}\bigg]\\
        &\quad + 3^{r-1}\mathbb{E}^\alpha\bigg[\bigg(\int_0^{t\wedge \tau}\Big\Vert\tilde{\mathbb{E}}^\alpha\Big[\frac{1}{\epsilon}\Big(\frac{\mathbb{P}^\alpha[s < \tau]}{\mathbb{P}^{\alpha^\epsilon}[s < \tau]} - 1\Big)\tilde{D}^\epsilon_s  \frac{\delta \beta}{\delta m}(\Theta^\alpha_s, \tilde{X}_{\cdot \wedge s})\Big\vert s<\tilde{\tau}\Big]\Big\Vert^2ds\bigg)^\frac{r}{2}\bigg]\\
        &\quad + 3^{r-1}\mathbb{E}^\alpha\bigg[\bigg(\int_0^{t\wedge \tau}\Big\Vert\tilde{\mathbb{E}}^\alpha\Big[(\tilde{L}^\epsilon_s - \tilde{\Lambda}^{\alpha, \eta}_s)  \frac{\delta \beta}{\delta m}(\Theta^\alpha_s, \tilde{X}_{\cdot \wedge s})\Big\vert s<\tilde{\tau}\Big]\Big\Vert^2ds\bigg)^\frac{r}{2}\bigg]\\ 
        & \leq 3^{r-1}(I^{2, 2, 1}_t + I^{2, 2, 2}_t + I^{2, 2, 3}_t).
    \end{align*}
    For the first term, we fix some $\rho > 2$. Then,
    \begin{align*}
        I^{2, 2, 1}_t &\leq \mathbb{E}^\alpha\bigg[\bigg(\int_0^{t\wedge \tau}\bigg(\int_0^1 \tilde{\mathbb{E}}^\alpha\Big[\frac{1}{\epsilon}\Big\vert\frac{\mathbb{P}^\alpha[s<\tau]}{\mathbb{P}^{\alpha^\epsilon}[s<\tau]}\tilde{D}^\epsilon_s-1\Big\vert\Big\Vert\frac{\delta \beta}{\delta m}(\Theta^{\alpha^\epsilon, \lambda}_s, \tilde{X}_{\cdot \wedge s}) -  \frac{\delta \beta}{\delta m}(\Theta^\alpha_s, \tilde{X}_{\cdot \wedge s})\Big\Vert \big\vert s<\tilde{\tau}\Big] d\lambda\bigg)^2ds\bigg)^{\frac{r}{2}}\bigg]\\
        &\leq \mathbb{E}^\alpha\bigg[\bigg(\int_0^{t\wedge \tau}\bigg(\int_0^1 \tilde{\mathbb{E}}^\alpha\Big[\frac{1}{\epsilon^{\rho}}\Big\vert\frac{\mathbb{P}^\alpha[s<\tau]}{\mathbb{P}^{\alpha^\epsilon}[s<\tau]}\tilde{D}^\epsilon_s-1\Big\vert^{\rho} \big\vert s < \tilde{\tau}\Big]^{\frac{1}{\rho}}\times\\ 
        &\qquad \quad \quad\tilde{\mathbb{E}}^\alpha\bigg[\Big\Vert\frac{\delta \beta}{\delta m}(\Theta^{\alpha^\epsilon, \lambda}_s, \tilde{X}_{\cdot \wedge s}) - \frac{\delta \beta}{\delta m}(\Theta^\alpha_s, \tilde{X}_{\cdot \wedge s})\Big\Vert^{\frac{\rho}{\rho-1}}\vert s<\tilde{\tau}\Big]^{\frac{\rho-1}{\rho}} d\lambda\bigg)^2ds\bigg)^{\frac{r}{2}}\bigg].
    \end{align*}
    Using the Burkholder Davis Gundy inequality, for some $c_\rho>0$, using our previous discussion, 
    \begin{align*}
        \tilde{\mathbb{E}}^\alpha\Big[\frac{1}{\epsilon^{\rho}}\Big\vert&\frac{\mathbb{P}^\alpha[s<\tau]}{\mathbb{P}^{\alpha^\epsilon}[s<\tau]}\tilde{D}^\epsilon_s -1\Big\vert^{\rho} \big\vert s < \tilde{\tau}\Big]\\ 
        & \leq \frac{2^{\rho - 1}\tilde{\mathbb{E}}^\alpha[(\tilde{D}^\epsilon_s)^{\rho}]}{\epsilon^{\rho}}\Big\vert\frac{1}{\mathbb{P}^{\alpha^\epsilon}[s<\tau]} - \frac{1}{\mathbb{P}^\alpha[s<\tau]}\Big\vert^{\rho} + \frac{2^{\rho - 1}}{\mathbb{P}^{\alpha}[s<\tau]^{\rho}}\tilde{\mathbb{E}}^\alpha\Big[\frac{\vert \tilde{D}^\epsilon_s-1\vert^{\rho}}{\epsilon^{\rho}}\Big]\\
        &\leq \frac{2^{\rho - 1}E_\rho}{p_\Gamma^{2\rho}}\frac{\vert\mathbb{P}^{\alpha^\epsilon}[s<\tau]-\mathbb{P}^\alpha[s<\tau]\vert^{\rho}}{\epsilon^{\rho}}\\ 
        &\qquad  + \frac{2^{\rho - 1}c_\rho}{p_\Gamma^{\rho}}\tilde{\mathbb{E}}^\alpha\bigg[\bigg(\int_0^s (\tilde{D}^\epsilon_u)^2 \frac{\Vert \beta(\widetilde \Theta^{\alpha^\epsilon}_u) - \beta(\widetilde\Theta^\alpha_u)\Vert^{2}}{\epsilon^2}du\bigg)^{\frac{\rho}{2}}\bigg]\\
        &\leq \frac{2^{\rho - 1}E_\rho(C_P^\Gamma)^{\frac{\rho}{2}}}{p_\Gamma^{2\rho}} \Vert \eta \Vert_{\mathrm{BMO}}^\rho + \frac{2^{2\rho - 2}3^{\frac{\rho}{2}}c_\rho c'_\rho \rho^\rho L^\rho}{p_\Gamma^\rho(\rho - 1)^\rho}(1 + E_\rho)\big(\Vert \eta \Vert_{\mathbb{P}^\alpha\mathrm{-BMO}}^2 + C^\Gamma_Pt\Vert \eta \Vert_{\text{BMO}}^2\big)^{\frac{\rho}{2}}
    \end{align*}
    so that this first factor is uniformly bounded for $\epsilon < \epsilon_\rho$ small enough that $\mathbb{E}^\alpha[(\mathcal{E}^\epsilon_s)^\rho] \leq E_\rho$. Moreover, we note that by our continuity assumption, for any $\lambda$, $\frac{\delta \beta}{\delta m}(\Theta^{\alpha^\epsilon, \lambda}_s, \tilde{X}_{\cdot\wedge s})$ converges to $\frac{\delta \beta}{\delta m}(\Theta^\alpha_s, \tilde{X}_{\cdot \wedge s})$ almost surely. 
    As $\frac{\rho}{\rho-1} < 2$, for any $\lambda \in[0, 1)$ and $\mathbb{P}^\alpha$ a.s., the family $\Vert \frac{\delta \beta}{\delta m}(\Theta^{\alpha^\epsilon, \lambda}_s, \tilde{X}_{\cdot \wedge s})\Vert^{\frac{\rho}{\rho-1}}$ is $\tilde{\mathbb{P}}^\alpha$-uniformly integrable and thus $\tilde{\mathbb{E}}^\alpha[\Vert\frac{\delta \beta}{\delta m}(\Theta^{\alpha^\epsilon, \lambda}_s, \tilde{X}_{\cdot \wedge s})-\frac{\delta \beta}{\delta m}(\Theta^\alpha_s, \tilde{X}_{\cdot \wedge s})\Vert^{\frac{\rho}{\rho-1}}\vert s<\tilde{\tau}] \rightarrow 0$. 
    Furthermore, we have $\mathbb{P}^\alpha$ a.s.\ and for any $\lambda \in [0, 1)$ that
    \begin{eqnarray*}
        \tilde{\mathbb{E}}^\alpha\Big[\Big\Vert\frac{\delta \beta}{\delta m}(\Theta^{\alpha^\epsilon, \lambda}_s, \tilde{X}_{\cdot \wedge s})-\frac{\delta \beta}{\delta m}(\Theta^\alpha_s, \tilde{X}_{\cdot \wedge s})\Big\Vert^{\frac{\rho}{\rho-1}}\vert s<\tilde{\tau}\Big]^{\frac{\rho-1}{\rho}} \leq \frac{L}{4}\Big(1 + \frac{1}{\sqrt{1 - \lambda}}\Big).
    \end{eqnarray*}
    As $\mathbb{E}^\alpha[\big(\int_0^{t\wedge\tau}(\int_0^1 1 + \frac{1}{\sqrt{1 - \lambda}} d\lambda)^2ds\big)^{\frac{r}{2}}]<\infty$, this shows by the dominated convergence theorem that $I^{2, 2, 1}_t$ converges to zero.\par

    Regarding $I^{2, 2, 2}_t$, note again that $\frac{1}{\epsilon}(\frac{\mathbb{P}^\alpha[s < \tau]}{\mathbb{P}^{\alpha^\epsilon}[s < \tau]} - 1)$ is uniformly bounded by a constant over all $\epsilon$. 
    Further, recall that by assumption $\tilde{\mathbb{E}}^\alpha[\frac{\delta\beta}{\delta m}(\Theta^\alpha_s, \tilde{X}_{\cdot \wedge s})\vert s < \tilde{\tau}] = 0$. Thus,
    \begin{eqnarray*}
        \mathbb{E}^\alpha\bigg[\bigg( \int_0^{t\wedge \tau}\Big\Vert\tilde{\mathbb{E}}^\alpha\Big[\tilde{D}^\epsilon_s  \frac{\delta \beta}{\delta m}(\Theta^\alpha_s, \tilde{X}_{\cdot \wedge s})\vert s<\tilde{\tau}\Big]\Big\Vert^2ds\bigg)^\frac{r}{2}\bigg] = \mathbb{E}^\alpha\bigg[\int_0^{t\wedge \tau}\Big\Vert\tilde{\mathbb{E}}^\alpha\Big[(\tilde{D}^\epsilon_s - 1)  \frac{\delta \beta}{\delta m}(\Theta^\alpha_s, \tilde{X}_{\cdot \wedge s})\vert s<\tilde{\tau}\Big]\Big\Vert^2ds\bigg)^\frac{r}{2}\bigg]
    \end{eqnarray*}
    which again converges to zero as the integrand is uniformly integrable. Thus, $I^{2, 2, 2}_t$ also converges to zero. The last term $I^{2, 2, 3}_t$ can be bounded by $\frac{L^r T^{\frac{r - 2}{r}}}{4^r} \int_0^t \mathbb{E}^\alpha[(L^\epsilon_s -\Lambda^{\alpha, \eta}_s)^r \vert s< \tau] ds$.\par
    
    Finally, we can write
    \begin{align*}
        I^{2, 3}_t &\leq 2^{r-1} \mathbb{E}^\alpha\bigg[\bigg(\int_0^{t\wedge \tau} \frac{1}{\epsilon^2} (\mathbb{P}^{\alpha^\epsilon} - \mathbb{P}^\alpha)[s < \tau]^2\Big\Vert\int_0^1 \beta_p(\Theta^{\alpha^\epsilon, \lambda}_s)d\lambda- \beta_p(\Theta^\alpha_s)\Big\Vert^2ds\bigg)^\frac{r}{2}\bigg]\\
        &\quad +2^{r-1} \mathbb{E}^\alpha\bigg[\bigg(\int_0^{t\wedge \tau} (\frac{1}{\epsilon} (\mathbb{P}^{\alpha^\epsilon} - \mathbb{P}^\alpha)[s < \tau]- \mathbb{E}^\alpha[\mathbf{1}_{\lbrace s<\tau \rbrace}\Lambda^{\alpha, \eta}_s])^2 \Vert\beta_p(\Theta^\alpha_s)\Vert^2 ds\bigg)^\frac{r}{2}\bigg]\\ 
        & =: 2^{r-1}(I^{2, 3, 1}_t + I^{2, 3, 2}_t).
    \end{align*}
    As $\frac{1}{\epsilon^2}(\mathbb{P}^{\alpha^\epsilon} - \mathbb{P}^\alpha)[s < \tau]^2$ is bounded by a constant, we can again see that $I^{2, 3, 1}_t$ goes to zero. For the second term, we can see that
    $$(\frac{1}{\epsilon} (\mathbb{P}^{\alpha^\epsilon} - \mathbb{P}^\alpha)[s < \tau]- \mathbb{E}^\alpha[\mathbf{1}_{\lbrace s<\tau \rbrace}\Lambda^{\alpha, \eta}_s])^2\leq \mathbb{E}^\alpha[\mathbf{1}_{\lbrace s<\tau \rbrace}(\frac{1}{\epsilon} (\mathcal{E}^\epsilon_s-1) - \Lambda^{\alpha, \eta}_s)^2] \leq \mathbb{E}^\alpha[(L^\epsilon_t -\Lambda^{\alpha, \eta}_s)^2\vert s < \tau]$$
    and thus $I^{2, 3, 2}_t \leq L^rT^\frac{r-2}{r} \int_0^t \mathbb{E}^\alpha[(L^\epsilon_s -\Lambda^{\alpha, \eta}_s)^r\vert s < \tau ]ds$.\par
    Putting everything together, we can thus see
    \begin{align*}
        \mathbb{E}^\alpha\big[ (L^\epsilon_t - \Lambda^{\alpha, \eta}_t)^r\vert t<\tau \big] &
        \leq \frac{2^{r-1}}{p_\Gamma} \Big(I^1_t + c_r3^{r-1} \big(I^{2,1}_t + 3^{r-1} I^{2, 2, 1}_t + 3^{r-1}I^{2, 2, 2}_t + 2^{r-1}I^{2, 3, 1}_t\big)\Big)\\
        &\quad + \frac{(\frac{1}{4} (\frac{9}{2})^{r-1}+ 12^{r-1})c_rL^rT^\frac{r-2}{2}}{p_\Gamma} \int_0^t \mathbb{E}^\alpha[(L^\epsilon_s - \Lambda^{\alpha, \eta}_s)^r\vert s<\tau] ds.
    \end{align*}
    Since $I^1_t, I^{2, 1}_t, I^{2, 2, 1}_t, I^{2, 2, 2}_t, I^{2, 3, 1}_t$ are non-decreasing in $t$ but converge to $0$ for $\epsilon \searrow 0$, by Grönwall's lemma, this shows our first convergence statement.
    \par

    For the second statement, we can see that
    \begin{align*}
        &\mathbb{E}^\alpha\bigg[ \bigg( \frac{1}{\epsilon}\Big(\frac{\mathbb{P}^\alpha[t<\tau]}{\mathbb{P}^{\alpha^{\epsilon}}[t<\tau]} \frac{d\mathbb{P}^{\alpha^{\epsilon}}_{\vert\mathcal{F}_{t\wedge \tau}}}{d\mathbb{P}^\alpha_{\vert\mathcal{F}_{t\wedge \tau}}} - 1\Big) - \Big(\Lambda^{\alpha, \eta}_t - \mathbb{E}^\alpha[\Lambda^{\alpha, \eta}_t \big\vert t < \tau] \Big) \bigg)^r\big\vert t < \tau \bigg] \\
        &\quad \leq 3^{r-1}\mathbb{E}^\alpha\bigg[\bigg(\frac{1}{\epsilon}\Big(\frac{d\mathbb{P}^{\alpha^{\epsilon}}_{\vert\mathcal{F}_{t\wedge \tau}}}{d\mathbb{P}^\alpha_{\vert\mathcal{F}_{t\wedge \tau}}} - 1\Big) - \Lambda^{\alpha, \eta}_t\bigg)^r  \big\vert t < \tau \bigg] + \frac{3^{r-1}}{\epsilon^r}\Big(\frac{\mathbb{P}^\alpha[t<\tau]}{\mathbb{P}^{\alpha^{\epsilon}}[t<\tau]} - 1\Big)^r\mathbb{E}^\alpha\bigg[\Big(\frac{d\mathbb{P}^{\alpha^{\epsilon}}_{\vert\mathcal{F}_{t\wedge \tau}}}{d\mathbb{P}^\alpha_{\vert\mathcal{F}_{t\wedge \tau}}}-1\Big)^r\big\vert t < \tau\bigg]\\
        &\quad \quad +3^{r-1}\bigg(\frac{1}{\epsilon}\Big(\frac{\mathbb{P}^\alpha[t<\tau]}{\mathbb{P}^{\alpha^{\epsilon}}[t<\tau]} - 1\Big) + \mathbb{E}^\alpha[\Lambda^{\alpha, \eta}_t \big\vert t < \tau] \bigg)^r.
    \end{align*}
    Using our estimates from above, we already know that the first two terms go to zero. The last term can be rewritten as
    \begin{align*}
        \mathbb{E}^\alpha\bigg[\frac{1}{\epsilon}\frac{\mathbb{P}^\alpha[t<\tau]}{\mathbb{P}^{\alpha^{\epsilon}}[t<\tau]}\Big(1 - \frac{d\mathbb{P}^{\alpha^{\epsilon}}_{\vert\mathcal{F}_{t\wedge \tau}}}{d\mathbb{P}^\alpha_{\vert\mathcal{F}_{t\wedge \tau}}}\Big) + \Lambda^{\alpha, \eta}_t \big\vert t< \tau\bigg]^r 
        &\leq 2^{r-1}\bigg(\frac{\mathbb{P}^\alpha[t<\tau]}{\mathbb{P}^{\alpha^{\epsilon}}[t<\tau]}-1\bigg)^r\mathbb{E}^\alpha\bigg[\frac{1}{\epsilon^r}\Big(1 - \frac{d\mathbb{P}^{\alpha^{\epsilon}}_{\vert\mathcal{F}_{t\wedge \tau}}}{d\mathbb{P}^\alpha_{\vert\mathcal{F}_{t\wedge \tau}}}\Big)^r\bigg]\\ 
        &\quad + 2^{r-1}\mathbb{E}^\alpha\bigg[\bigg(\frac{1}{\epsilon}\Big(\frac{d\mathbb{P}^{\alpha^{\epsilon}}_{\vert\mathcal{F}_{t\wedge \tau}}}{d\mathbb{P}^\alpha_{\vert\mathcal{F}_{t\wedge \tau}}} - 1\Big) - \Lambda^{\alpha, \eta}_t\bigg)^r\big\vert t < \tau\bigg]
    \end{align*}
    which we have already shown to go to zero.
\end{proof}
\subsubsection{Well-posedness of the adjoint equation}
Before finishing the proof of the necessary condition for optimality, let us justify well-posedness of the adjoint equation \eqref{eq: adj eq} for any $\alpha \in \mathbb{A}_{\text{BMO}}$. 
We do so following a similar approach as in \cite[Proposition 5.2]{PossamaiTangpi} and will thus only sketch the proof.\par
It is more convenient to consider this BSDE under the probability measure $\mathbb{P}^\alpha$.
Given $\alpha \in \mathbb{A}_{\text{BMO}}$, since $\int_0^\cdot \beta(\Theta^\alpha_s)dW_s$ is a BMO martingale, it follows by \cite[Theorem 3.1]{Kazamaki}, that there exists some $\rho_\alpha > 1$ such that $\mathbb{E}[(\frac{d\mathbb{P}^\alpha}{d\mathbb{P}})^{\rho_\alpha}] < \infty$. In particular, under Assumption \ref{asmp: ncs}, we must have $\mathbb{E}^\alpha[(\int_0^{T\wedge \tau} \vert f(s, X_{\cdot \wedge s}, 0, \mu^0, 1)ds)^2 + \mathbf{1}_{\lbrace T< \tau\rbrace}\vert g(X, \mu^0, 1)\vert^2] < \infty$. This way, we are able to apply the standard $\mathbb{L}^2$ fixed point argument below. It remains open whether the adjoint equation is solvable under less integrability, for instance whether arguments as in e.g.\ \cite{Briand03} can be adapted to such McKean-Vlasov BSDEs.
We are going to look for solutions within the space $\mathcal{M}_{1, \alpha}^2 \times \mathcal{M}_{d, \alpha}^2$ where
$\mathcal{M}_{n, \alpha}^2$ consists of all $\mathbb{R}^n$-valued progressively measurable processes $\phi$ such that $\mathbb{E}^\alpha[\int_0^T \Vert \phi_s \Vert^2 ds] < \infty$. Note that this BSDE admits a random but finite terminal time, and we will use the convention $Y_t = Y_{t \wedge \tau}$ and $Z_t \mathbf{1}_{t \geq \tau} = 0$.
\begin{proposition}\label{prop: wp adj}
    Under Assumptions \ref{asmp: beta} and \ref{asmp: ncs}, for any $\alpha \in \mathbb{A}_{\text{BMO}}$, the BSDE \eqref{eq: adj eq} admits a unique solution $(Y^\alpha, Z^\alpha)$ within $\mathcal{M}_{1, \alpha}^2 \times \mathcal{M}_{d, \alpha}^2$.
\end{proposition}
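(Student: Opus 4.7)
I would work under $\mathbb{P}^\alpha$ throughout, where by Girsanov $W^\alpha$ is a Brownian motion and, because $\int_0^{\cdot\wedge\tau}\beta(\Theta^\alpha_s)dW_s$ is a $\mathbb{P}$-BMO martingale with norm bounded by $L(\Gamma + 1)$, the reverse Hölder inequality \cite[Thm.~3.1]{Kazamaki} gives $\rho_\alpha > 1$ such that $\mathbb{E}[(d\mathbb{P}^\alpha/d\mathbb{P})^{\rho_\alpha}] < \infty$ and, crucially, $\mathbb{P}^\alpha[s<\tau] \geq p_\alpha > 0$ uniformly in $s$. A first step is to verify that the $\mathcal{F}_{T\wedge\tau}$-measurable terminal value
\[
\xi^\alpha := \mathbf{1}_{\{T<\tau\}}\Big(g(\Theta^\alpha_T) + \tilde{\mathbb{E}}^\alpha\big[\tfrac{\delta g}{\delta m}(\tilde\Theta^\alpha_T, X)\big|T<\tilde\tau\big] + \mathbb{E}^\alpha[\mathbf{1}_{\{T<\tau\}}g_p(\Theta^\alpha_T)]\Big)
\]
belongs to $L^2(\mathbb{P}^\alpha)$: the growth condition in Assumption~\ref{asmp: ncs}(iii) together with reverse Hölder handles $g(\Theta^\alpha_T)$, while the two remaining terms are bounded deterministically by $M(p_\alpha)/(4\sqrt{p_\alpha})$ and $M(p_\alpha)$ after Cauchy--Schwarz on the tilde space and using $\mathbb{P}^\alpha[T<\tau]\geq p_\alpha$.

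Since the generator involves $Z^\alpha$ but \emph{not} $Y^\alpha$, I would set up a Picard iteration on $\mathcal{M}^2_{d,\alpha}$ alone. Given $Z\in \mathcal{M}^2_{d,\alpha}$, define
\[
F_Z(s) := f(\Theta^\alpha_s) + \tilde{\mathbb{E}}^\alpha\!\Big[\tfrac{\delta h}{\delta m}(\tilde\Theta^\alpha_s, \tilde Z_s, X_{\cdot\wedge s})\big|s<\tilde\tau\Big] + \mathbb{E}^\alpha\!\big[\mathbf{1}_{\{s<\tau\}}h_p(\Theta^\alpha_s, Z_s)\big],
\]
and let $(Y^Z,Z^Z)$ be the classical $L^2$ solution of the linear BSDE
\[
Y^Z_{t\wedge\tau} = \xi^\alpha + \int_{t\wedge\tau}^{T\wedge\tau} F_Z(s)\,ds - \int_{t\wedge\tau}^{T\wedge\tau} Z^Z_s \, dW^\alpha_s.
\]
The first task is to show $\mathbb{E}^\alpha[\int_0^{T\wedge\tau} |F_Z(s)|^2 ds]<\infty$. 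Expanding $\tfrac{\delta h}{\delta m} = \tfrac{\delta f}{\delta m} + (\tfrac{\delta \beta}{\delta m})^\top z$ and $h_p = f_p + \beta_p^\top z$, the $f$-terms are handled via Assumption~\ref{asmp: ncs}(iv) combined with reverse Hölder and $\mathbb{E}^\alpha[\int_0^{T\wedge\tau}\|\alpha_s\|^2 ds]<\infty$, while the $\beta$-terms use Cauchy--Schwarz on the tilde space, together with the bound $\mathbb{P}^\alpha[s<\tilde\tau]\geq p_\alpha$ and the $L^2$-control on $\tfrac{\delta \beta}{\delta m}$ inherited from the Le Cam Lipschitz property of $\beta$ (cf.\ Lemma~\ref{lem: LCLips}) and from Assumption~\ref{asmp: beta}(ii) for $\beta_p$. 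Standard BSDE theory then yields $(Y^Z, Z^Z)\in \mathcal{M}^2_{1,\alpha}\times \mathcal{M}^2_{d,\alpha}$, so the map $\Phi : Z\mapsto Z^Z$ is well defined.

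The final step is contraction of $\Phi$ in an exponentially weighted norm $\|Z\|^2_\lambda := \mathbb{E}^\alpha[\int_0^{T\wedge\tau} e^{\lambda s}\|Z_s\|^2 ds]$. For $Z^1,Z^2\in \mathcal{M}^2_{d,\alpha}$, the difference $F_{Z^1}(s) - F_{Z^2}(s)$ is linear in $Z^1-Z^2$ and Cauchy--Schwarz on the tilde space, combined with the bounds on $\tfrac{\delta \beta}{\delta m}$ and $\beta_p$ and the lower bound $\mathbb{P}^\alpha[s<\tau]\geq p_\alpha$, yields
\[
\mathbb{E}^\alpha\big[|F_{Z^1}(s)-F_{Z^2}(s)|^2\big] \leq \frac{K}{p_\alpha}\,\mathbb{E}^\alpha\big[\|Z^1_s-Z^2_s\|^2\big]
\]
for a constant $K$ depending only on $L$. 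The classical a~priori estimate for linear BSDEs (Itô applied to $e^{\lambda t}|Y^{Z^1}_t-Y^{Z^2}_t|^2$) then gives $\|Z^{Z^1}-Z^{Z^2}\|_\lambda \leq C(\lambda,K,p_\alpha)\|Z^1-Z^2\|_\lambda$ with contraction constant strictly less than $1$ for $\lambda$ large, and Banach's fixed point theorem delivers a unique solution in $\mathcal{M}^2_{1,\alpha}\times \mathcal{M}^2_{d,\alpha}$. The principal technical difficulty is controlling the McKean--Vlasov term $\tilde{\mathbb{E}}^\alpha[\tfrac{\delta h}{\delta m}(\tilde\Theta^\alpha_s,\tilde Z_s, X_{\cdot\wedge s})|s<\tilde\tau]$: one must resist the temptation to treat it as a standard conditional expectation, and instead combine the $L^2$-moment control on $\tfrac{\delta \beta}{\delta m}(\tilde\Theta,\cdot)$ under $\tilde\mu$ with the uniform positivity $\mathbb{P}^\alpha[s<\tau]\geq p_\alpha$, mirroring the construction of \cite[Prop.~5.2]{PossamaiTangpi}.
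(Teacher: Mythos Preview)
Your proposal is correct and follows essentially the same route as the paper: a Banach fixed-point argument under $\mathbb{P}^\alpha$ with an exponentially weighted $L^2$ norm, where the Lipschitz constant for the McKean--Vlasov part of the generator comes from Cauchy--Schwarz on the copy space together with the uniform lower bound $\mathbb{P}^\alpha[s<\tau]\geq p_\alpha$. Your choice to iterate on $Z$ alone (rather than on the pair $(Y,Z)$ as the paper does) is a legitimate and slightly cleaner simplification, since the generator does not depend on $Y$; one minor caveat is that your stated integrability $\mathbb{E}^\alpha[\int_0^{T\wedge\tau}|F_Z(s)|^2 ds]<\infty$ requires fourth-moment control of $\alpha$ rather than the second moment you cite, but this is available from the BMO property via John--Nirenberg.
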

\begin{proof}
    Let us write $G := g(\Theta^\alpha_T) + \tilde{\mathbb{E}}^\alpha[\frac{\delta g}{\delta m}(\tilde{\Theta}^\alpha_T, X)\vert T < \tilde{\tau}] + \mathbb{E}^\alpha[\mathbf{1}_{\lbrace T < \tau \rbrace}g_p(\Theta^\alpha_T)]$ and $F_t := f(\Theta^\alpha_t) + \tilde{\mathbb{E}}^\alpha[\frac{\delta f}{\delta m}(\tilde{\Theta}^\alpha_t,  X_{\cdot \wedge t})\vert t < \tilde{\tau}] + \mathbb{E}^\alpha[\mathbf{1}_{\lbrace t<\tau \rbrace} f_p(\Theta^\alpha_t)]$. 
    With these notation, we can write \eqref{eq: adj eq} as
    $$
        Y_{t\wedge \tau} = \mathbf{1}_{\lbrace T < \tau \rbrace} G + \int_{t\wedge\tau}^{T\wedge\tau} F_s + \tilde{\mathbb{E}}^\alpha\Big[\frac{\delta \beta}{\delta m}(\tilde{\Theta}^\alpha_s, X_{\cdot \wedge s})^\top\tilde{Z}_s\vert s < \tilde{\tau}\Big] + \mathbb{E}^\alpha\big[ \mathbf{1}_{\lbrace s < \tau \rbrace} \beta_p(\Theta^\alpha_s)^\top Z_s \big]ds-\int_{t\wedge \tau}^{T\wedge\tau}Z_s dW^\alpha_s.$$
    By the discussion above and Assumption \ref{asmp: ncs}, we know that $\mathbb{E}^\alpha[(\int_0^{T \wedge \tau}\vert F_s\vert ds)^2 + \mathbf{1}_{\lbrace T < \tau \rbrace}\vert G \vert^2]<\infty$.
We define the fixed point mapping $\Phi: \mathcal{M}_{1, \alpha}^2 \times \mathcal{M}_{d, \alpha}^2 \rightarrow \mathcal{M}_{1, \alpha}^2 \times \mathcal{M}_{d, \alpha}^2$ where for $(y, z) \in \mathcal{M}_{1, \alpha}^2 \times \mathcal{M}_{d, \alpha}^2$, we define $\Phi(y, z) = (Y, Z)$ as the solution of
$$Y_{t\wedge \tau} = \mathbf{1}_{\lbrace T < \tau \rbrace} G + \int_{t\wedge\tau}^{T\wedge\tau} F_s + \tilde{\mathbb{E}}^\alpha[\frac{\delta \beta}{\delta m}(\tilde{\Theta}^\alpha_s, X_{\cdot \wedge s})^\top\tilde{z}_s\vert s < \tilde{\tau}] + \mathbb{E}^\alpha[\mathbf{1}_{\lbrace s < \tau \rbrace} \beta_p(\Theta^\alpha_s)^\top z_s]ds-\int_{t\wedge \tau}^{T\wedge\tau}Z_s dW^\alpha_s.$$
By the martingale representation theorem applied to $\mathbb{P}^\alpha$ and $W^\alpha$, see e.g \cite[Theorem III.5.24]{JS}, $\Phi$ is well defined. Now, consider any $(y^1, z^1), (y^2, z^2)\in \mathcal{M}_{1, \alpha}^2 \times \mathcal{M}_{d, \alpha}^2$, as well as $(Y^1, Z^1) := \Phi(y^1, z^1)$ and $(Y^2, Z^2) := \Phi(y^2, z^2)$. By Itô's lemma, for $\epsilon < L^{-2}(1 + \frac{1}{4q_\alpha})^{-2}$ and $\kappa > \frac{1}{\epsilon}$, we have
\begin{align*}
    e^{\kappa t}\vert Y^1_{t\wedge \tau} - Y^2_{t\wedge\tau}\vert^2& + \int_{t\wedge\tau}^{T\wedge\tau} e^{\kappa s} \Vert Z^1_s - Z^2_s \Vert^2 ds \leq -\int_{t\wedge\tau}^{T\wedge\tau}e^{\kappa s}\bigg\{(\kappa - \frac{1}{\epsilon})\Vert Y^1_s - Y^2_s\Vert^2 \\
    &\quad + \epsilon L^2(1 + \frac{1}{4q_\alpha})^2 \mathbb{E}^\alpha[\Vert z^1_s - z^2_s\Vert^2]\bigg\}ds  - 2\int_{t\wedge \tau}^{T\wedge \tau}e^{\kappa s}(Y^1_s - Y^2_s)(Z^1_s-Z^2_s) dW^\alpha_s
\end{align*}
from which it is straightforward to check that $\Phi$ is a contraction with respect to the norm $\Vert \phi \Vert_\kappa^2 = \mathbb{E}^\alpha[\int_0^T e^{\kappa s} \Vert \phi_s \Vert^2 ds]$ on $\mathcal{M}_{1, \alpha}$ and $\mathcal{M}_{d, \alpha}$ respectively, showing existence and uniqueness of a solution.
\end{proof}

\subsubsection{The Gateaux derivative of the cost functional}
Similar to the approach in the strong formulation, proving the necessary condition relies on differentiating the cost functional. We can do so using the variation process derived above. 
\begin{theorem}\label{thm: gat drv}
    Under Assumptions \ref{asmp: beta} and \ref{asmp: ncs}, and with $\alpha$, $\eta$, and $\alpha^\epsilon$ as in proposition \ref{prop: var prc}, 
    \begin{align*}
        \lim_{\epsilon\searrow 0} \frac{J(\alpha^\epsilon) - J(\alpha)}{\epsilon} 
        &= \mathbb{E}^\alpha\bigg[\mathbf{1}_{\lbrace T < \tau \rbrace} \Lambda^{\alpha, \eta}_T(g(\Theta^\alpha_T) + \tilde{\mathbb{E}}^\alpha\Big[\frac{\delta g}{\delta m}(\tilde{\Theta}^\alpha_T, X)\vert T < \tilde{\tau}\Big] + \mathbb{E}^\alpha[\mathbf{1}_{\lbrace T < \tau \rbrace}g_p(\Theta^\alpha_T)])\bigg]\\
        &\quad + \mathbb{E}^\alpha\bigg[\int_0^{T\wedge\tau} f_a(\Theta^\alpha_s)^\top\eta_s + \Lambda^{\alpha, \eta}_s(f(\Theta^\alpha_s) + \tilde{\mathbb{E}}^\alpha\Big[\frac{\delta f}{\delta m}(\tilde{\Theta}^\alpha_s, X_{\cdot \wedge s})\vert s < \tilde{\tau}\Big]\\
        & \qquad\qquad  + \mathbb{E}^\alpha[\mathbf{1}_{\lbrace s<\tau \rbrace} f_p(\Theta^\alpha_s)])ds\bigg]\\
        & =\mathbb{E}^\alpha\bigg[\int_0^{T\wedge \tau}h_a(\Theta^\alpha_s, Z^\alpha_s)^\top\eta_sds\bigg].
    \end{align*}
\end{theorem}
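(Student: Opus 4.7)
The plan is to compute $\epsilon^{-1}(J(\alpha^\epsilon)-J(\alpha))$ under the reference measure $\mathbb{P}^\alpha$ by factoring both cost functionals through the density $\mathcal{E}^\epsilon_t := d\mathbb{P}^{\alpha^\epsilon}_{\vert\mathcal{F}_{t\wedge\tau}}/d\mathbb{P}^\alpha_{\vert\mathcal{F}_{t\wedge\tau}}$, splitting each difference into a density-change piece and an argument-change piece, passing to the limit using Proposition \ref{prop: var prc}, and finally converting the resulting expression into the $h_a$-form by applying It\^o's formula to $Y^\alpha\Lambda^{\alpha,\eta}$.

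Writing
\begin{align*}
\frac{J(\alpha^\epsilon)-J(\alpha)}{\epsilon}
&= \mathbb{E}^\alpha\bigg[\mathbf{1}_{\{T<\tau\}}\Big(\tfrac{\mathcal{E}^\epsilon_T-1}{\epsilon}g(\Theta^{\alpha^\epsilon}_T) + \tfrac{g(\Theta^{\alpha^\epsilon}_T)-g(\Theta^\alpha_T)}{\epsilon}\Big)\\
&\quad + \int_0^{T\wedge\tau}\Big(\tfrac{\mathcal{E}^\epsilon_s-1}{\epsilon}f(\Theta^{\alpha^\epsilon}_s) + \tfrac{f(\Theta^{\alpha^\epsilon}_s)-f(\Theta^\alpha_s)}{\epsilon}\Big)ds\bigg],
\end{align*}
I would expand the argument-change differences using Assumption \ref{asmp: ncs}. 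For $g$, with $\mu^\epsilon,p^\epsilon$ and $\mu,p$ the conditional laws and exit probabilities at $T$ under $\mathbb{P}^{\alpha^\epsilon}$ and $\mathbb{P}^\alpha$, this yields
\[
\tfrac{1}{\epsilon}(g(\Theta^{\alpha^\epsilon}_T)-g(\Theta^\alpha_T)) = \int_0^1\!\!\int\tfrac{\delta g}{\delta m}(\Theta^{\alpha^\epsilon,\lambda}_T,\tilde x)\tfrac{(\mu^\epsilon-\mu)(d\tilde x)}{\epsilon}d\lambda + \int_0^1 g_p(\Theta^{\alpha^\epsilon,\lambda}_T)\tfrac{p^\epsilon-p}{\epsilon}d\lambda.
\]
Using the identity $\int \phi\,d(\mu^\epsilon-\mu) = \tilde{\mathbb{E}}^\alpha[\phi(\tilde X)(\tfrac{\tilde{\mathbb{P}}^\alpha[T<\tilde\tau]}{\tilde{\mathbb{P}}^{\alpha^\epsilon}[T<\tilde\tau]}\tilde{\mathcal{E}}^\epsilon_T - 1)\,\vert\,T<\tilde\tau]$, the second part of Proposition \ref{prop: var prc} combined with the centering convention of Remark \ref{rmk: lin drv} gives the limit $\tilde{\mathbb{E}}^\alpha[\tfrac{\delta g}{\delta m}(\Theta^\alpha_T,\tilde X)\tilde\Lambda^{\alpha,\eta}_T\,\vert\,T<\tilde\tau]$; the $p$-term converges to $g_p(\Theta^\alpha_T)\mathbb{E}^\alpha[\mathbf{1}_{\{T<\tau\}}\Lambda^{\alpha,\eta}_T]$; and the density-change piece converges to $\mathbf{1}_{\{T<\tau\}}\Lambda^{\alpha,\eta}_T g(\Theta^\alpha_T)$. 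The running cost is handled analogously, producing the additional $f_a(\Theta^\alpha_s)^\top\eta_s$ term from $\alpha^\epsilon_s - \alpha_s = \epsilon\eta_s$. Justifying the interchange of limit and $\mathbb{E}^\alpha$ relies on uniform higher-moment bounds on $\mathcal{E}^\epsilon$ via Kazamaki's theorem applied to the BMO control $\eta$, together with the growth bounds in (iv)--(v) of Assumption \ref{asmp: ncs} and dominated convergence. A Fubini swap between $\Omega$ and $\tilde\Omega$ that pulls $\Lambda^{\alpha,\eta}_T$ out of the conditional expectation then yields the first equality.

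For the second equality, I would apply It\^o's formula to $Y^\alpha_{\cdot\wedge\tau}\Lambda^{\alpha,\eta}_{\cdot\wedge\tau}$ between $0$ and $T\wedge\tau$. Since $\Lambda^{\alpha,\eta}_0=0$ and $Y^\alpha_{T\wedge\tau}$ equals the terminal data $\mathbf{1}_{\{T<\tau\}}G$ of \eqref{eq: adj eq}, while the stochastic integrals vanish in $\mathbb{P}^\alpha$-expectation (by $Y^\alpha\in\mathcal{M}_{1,\alpha}^2$, $Z^\alpha\in\mathcal{M}_{d,\alpha}^2$, and the $\mathbb{P}^\alpha$-BMO property of $\Lambda^{\alpha,\eta}$), one obtains
\[
\mathbb{E}^\alpha[\mathbf{1}_{\{T<\tau\}}G\,\Lambda^{\alpha,\eta}_{T\wedge\tau}] + \mathbb{E}^\alpha\bigg[\int_0^{T\wedge\tau}\!\!F_s\,\Lambda^{\alpha,\eta}_s\,ds\bigg] = \mathbb{E}^\alpha\bigg[\int_0^{T\wedge\tau}\!\!(Z^\alpha_s)^\top V^{\alpha,\eta}_s\,ds\bigg],
\]
where $F_s$ is the driver of \eqref{eq: adj eq} and $V^{\alpha,\eta}_s = \beta_a(\Theta^\alpha_s)\eta_s + \tilde{\mathbb{E}}^\alpha[\tfrac{\delta\beta}{\delta m}(\Theta^\alpha_s,\tilde X_{\cdot\wedge s})\tilde\Lambda^{\alpha,\eta}_s\,\vert\,s<\tilde\tau] + \beta_p(\Theta^\alpha_s)\mathbb{E}^\alpha[\mathbf{1}_{\{s<\tau\}}\Lambda^{\alpha,\eta}_s]$ is the diffusion coefficient of $\Lambda^{\alpha,\eta}$. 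Applying the same Fubini swaps to $(Z^\alpha_s)^\top V^{\alpha,\eta}_s$ matches its $\tfrac{\delta \beta}{\delta m}$ and $\beta_p$ pieces with the corresponding $\beta$-pieces of $F_s\Lambda^{\alpha,\eta}_s$ coming from $h = f + \beta^\top z$, so these cancel; what remains on the left-hand side is exactly the expression produced by the first equality (minus the $f_a\eta$ term), and what remains on the right-hand side is $\mathbb{E}^\alpha[\int_0^{T\wedge\tau}\eta_s^\top\beta_a(\Theta^\alpha_s)^\top Z^\alpha_s\,ds]$. Rearranging and using $h_a = f_a + \beta_a^\top z$ delivers the second equality.

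The main technical obstacle is the interchange of limit and $\mathbb{E}^\alpha$: three coupled objects ($\mathcal{E}^\epsilon$, the conditional laws, and the exit probabilities) must be controlled simultaneously while the derivatives of $f,g$ in Assumption \ref{asmp: ncs} have only quadratic growth in the control. The quantitative $L^r$ convergence provided by Proposition \ref{prop: var prc}, together with the BMO integrability of $\eta$ that ensures uniform higher moments of $\mathcal{E}^\epsilon$ via Kazamaki's theorem, is precisely what closes the dominated convergence argument.
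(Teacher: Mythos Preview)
Your proposal is correct and follows essentially the same approach as the paper: the same splitting into density-change and argument-change pieces, the same use of Proposition \ref{prop: var prc} together with the growth bounds of Assumption \ref{asmp: ncs} to pass to the limit, the same Fubini swap between $\Omega$ and $\tilde{\Omega}$, and the same integration by parts (It\^o on $Y^\alpha\Lambda^{\alpha,\eta}$) for the second equality. The only cosmetic difference is that the paper writes the density-change term as $\frac{\mathbb{E}^{\alpha^\epsilon}-\mathbb{E}^\alpha}{\epsilon}[f(\Theta^\alpha_s)]$ (with $f$ evaluated at $\Theta^\alpha$ rather than $\Theta^{\alpha^\epsilon}$) and correspondingly takes the argument-change piece under $\mathbb{E}^{\alpha^\epsilon}$, then shows $(\mathbb{E}^{\alpha^\epsilon}-\mathbb{E}^\alpha)[\tfrac{f(\Theta^{\alpha^\epsilon})-f(\Theta^\alpha)}{\epsilon}]\to 0$ to reduce to $\mathbb{E}^\alpha$; your version requires instead that $\tfrac{\mathcal{E}^\epsilon-1}{\epsilon}(f(\Theta^{\alpha^\epsilon})-f(\Theta^\alpha))\to 0$ in $\mathbb{E}^\alpha$, which follows from the same estimates.
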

\begin{remark}
    Since in Proposition \ref{prop: var prc}, we showed convergence for all $r\geq 1$, the first equality actually holds under weaker conditions than required in Assumption \ref{asmp: ncs}.
    In fact, the statement remains true if the random variables $\int_0^{T \wedge \tau} \vert f(s, X_{\cdot \wedge s}, 0, \mu^0, 1)\vert ds$ and $\mathbf{1}_{\lbrace T < \tau \rbrace}\vert g(X, \mu^0, 1)\vert$ admit finite $q$-th moment with respect to $\mathbb{P}$ for some $q>1$. The stronger integrability in Assumption \ref{asmp: ncs}.$(iii)$ is required only to ensure well-posedness of the adjoint equation as shown in Proposition \ref{prop: wp adj}.
\end{remark}
\begin{proof}[Proof of Theorem \ref{thm: gat drv}]
    Throughout the proof, we use the same notation as in the proof of Proposition \ref{prop: var prc}. We can consider the running and terminal costs separately. For the running cost terms, we have
    \begin{align*}
        \frac{1}{\epsilon}\bigg(\mathbb{E}^{\alpha^\epsilon}\bigg[\int_0^{T\wedge\tau} f(\Theta^{\alpha^\epsilon}_s)ds\bigg] &- \mathbb{E}^\alpha\bigg[\int_0^{T\wedge\tau} f(\Theta^\alpha_s)ds\bigg]\bigg) \\ 
        &\quad = \mathbb{E}^{\alpha^\epsilon}\bigg[\int_0^{T\wedge\tau} \frac{f(\Theta^{\alpha^\epsilon}_s) - f(\Theta^\alpha_s)}{\epsilon} ds\bigg] + \frac{\mathbb{E}^{\alpha^\epsilon} - \mathbb{E}^\alpha}{\epsilon}\bigg[\int_0^{T\wedge\tau} f(\Theta^\alpha_s)ds \bigg].
    \end{align*}
    By the growth conditions on $f$, we have
    \begin{align*}
        &\bigg\vert\int_0^{T \wedge \tau}\frac{f(\Theta^{\alpha^\epsilon}_s)  -f(\Theta^\alpha_s)}{\epsilon}ds \bigg\vert\\
        &\leq \bigg\vert \int_0^{T \wedge \tau} \frac{M}{\epsilon}\big(1 + \Vert \alpha^\epsilon_s \Vert + \Vert \alpha_s \Vert \big)\Big(\epsilon \Vert \eta_s \Vert + d_{LC}(\mathcal{L}_{\mathbb{P}^{\alpha^\epsilon}}(X_{\cdot\wedge s}\vert s < \tau), \mathcal{L}_{\mathbb{P}^\alpha}(X_{\cdot\wedge s}\vert s < \tau)) \\ 
        &\qquad\qquad\qquad + \vert (\mathbb{P}^{\alpha^{\epsilon}}  - \mathbb{P}^\alpha)[s< \tau]\vert \Big) ds\bigg\vert\\
        &\leq 3M\bigg( T + \int_0^{T\wedge\tau} 2\epsilon^2 \Vert \eta_s \Vert^2 + 3\Vert \alpha_s \Vert^2 ds \bigg)^{\frac{1}{2}}\bigg( \int_0^{T \wedge \tau}\Vert \eta_s\Vert^2 ds + T C^\Gamma_P \Vert \eta\Vert_{\text{BMO}}^2 \bigg)^{\frac{1}{2}}
    \end{align*}
    so that using \cite[Theorem 8.2.21]{CE}, we can see that $\mathbb{E}^\alpha[(\int_0^{T \wedge \tau}\frac{f(\Theta^{\alpha^\epsilon}_s)-f(\Theta^\alpha_s)}{\epsilon}ds)^2]$ is finite and bounded over $\epsilon$. Recall that we wrote $M$ for $M(q_\Gamma)$.
    Thus, by Proposition \ref{prop: var prc}, $(\mathbb{E}^\alpha - \mathbb{E}^{\alpha^\epsilon})[\int_0^{T\wedge\tau} \frac{f(\Theta^{\alpha^\epsilon}_s) - f(\Theta^\alpha_s)}{\epsilon} ds] \rightarrow 0$. It thus suffices to instead consider
    \begin{align*}
        &\mathbb{E}^\alpha\bigg[\int_0^{T\wedge\tau} \frac{f(\Theta^{\alpha^\epsilon}_s) - f(\Theta^\alpha_s)}{\epsilon} ds\bigg] = \mathbb{E}^\alpha\bigg[\int_0^{T\wedge\tau} \int_0^1 f_a(\Theta^{\alpha^\epsilon, \lambda}_s) \eta_s d\lambda ds \bigg]\\
        &\qquad +\mathbb{E}^\alpha\bigg[ \int_0^{T\wedge\tau}\int_0^1 \tilde{\mathbb{E}}^\alpha\bigg[\frac{1}{\epsilon}\Big(\frac{\mathbb{P}^\alpha[s < \tau]}{\mathbb{P}^{\alpha^\epsilon}[s < \tau]} \tilde{D}^\epsilon_s - 1\Big) \frac{\delta f}{\delta m}(\Theta^{\alpha^\epsilon, \lambda}, \tilde{X}_{\cdot \wedge s})\big\vert s < \tilde{\tau}\bigg] \\
        &\qquad + \mathbb{E}^\alpha\bigg[\frac{1}{\epsilon}(\mathcal{E}^\epsilon_s-1)\mathbf{1}_{\lbrace s<\tau \rbrace}f_p(\Theta^{\alpha^\epsilon, \lambda}_s) d\lambda ds\bigg].
    \end{align*}
    By Proposition \ref{prop: var prc}, we know that $dt \times \mathbb{P}^\alpha$ a.e.\ and for every $\lambda \in [0, 1)$ that for $\epsilon \searrow 0$,
    \begin{align*}
        &\tilde{\mathbb{E}}^\alpha\bigg[\frac{1}{\epsilon}\Big(\frac{\mathbb{P}^\alpha[s<\tau]}{\mathbb{P}^{\alpha^\epsilon}[s<\tau]}\tilde{D}^\epsilon_s - 1\Big)\frac{\delta f}{\delta m}(\Theta^{\alpha^\epsilon, \lambda}, \tilde{X}_{\cdot \wedge s})\vert s<\tilde{\tau} \bigg]\\ 
        &\qquad \longrightarrow \tilde{\mathbb{E}}^\alpha\bigg[ \Big(\tilde{\Lambda}^{\alpha, \eta}_s - \mathbb{E}^\alpha [\Lambda^{\alpha, \eta}_s\vert s<\tau]\Big)\frac{\delta f}{\delta m}(\Theta^\alpha_s, \tilde{X}_{\cdot \wedge s}))\big\vert s <\tilde{\tau} \bigg]\\
        &\qquad\quad  =\tilde{\mathbb{E}}^\alpha\Big[\tilde{\Lambda}^{\alpha, \eta}_s \frac{\delta f}{\delta m}(\Theta^\alpha_s, \tilde{X}_{\cdot \wedge s})\vert s<\tilde{\tau} \Big]
    \end{align*}
    since we know that for any $r > 2$, the random variables $\frac{\delta f}{\delta m}(\Theta^{\alpha^\epsilon, \lambda}, \tilde{X}_{\cdot \wedge s})^{\frac{r}{r-1}}$ are uniformly integrable with respect to $\tilde{\mathbb{P}}^\alpha$. 
    Further, we can see that the random variables $\tilde{\mathbb{E}}^\alpha[\frac{1}{\epsilon}(\frac{\mathbb{P}^\alpha[s<\tau]}{\mathbb{P}^{\alpha^\epsilon}[s<\tau]}\tilde{D}^\epsilon_s - 1)\frac{\delta f}{\delta m}(\Theta^{\alpha^\epsilon, \lambda}, \tilde{X}_{\cdot \wedge s})\vert s<\tilde{\tau}]$ are dominated by $\tilde{\mathbb{E}}^\alpha[\frac{1}{\epsilon^2}(\frac{\mathbb{P}^\alpha[s<\tau]}{\mathbb{P}^{\alpha^\epsilon}[s<\tau]} \tilde{D}^\epsilon_s - 1)^2]^{\frac{1}{2}}\frac{M}{2\sqrt{1-\lambda}}(1 + \Vert \alpha^\epsilon_s \Vert^2 + \Vert \alpha_s \Vert^2)^{\frac{1}{2}}$ which is $dt \times  \mathbb{P}^\alpha \times d\lambda$ uniformly integrable.\par
    Accordingly, $dt \times \mathbb{P}^\alpha \times d\lambda$ a.e., we also know that the $\mathbb{E}[\frac{1}{\epsilon}(\mathcal{E}^\epsilon_s - 1)\mathbf{1}_{\lbrace s<\tau \rbrace}]f_p(\Theta^{\alpha^\epsilon, \lambda}_s)$ are $dt \times\mathbb{P}^\alpha \times d\lambda$ uniformly integrable and converge to $\mathbb{E}^\alpha[\Lambda^{\alpha, \eta}_s \mathbf{1}_{\lbrace s<\tau \rbrace}]f_p(\Theta^\alpha_s)$. Together, we can see that by the dominated convergence theorem,  $\mathbb{E}^{\alpha^\epsilon}[\int_0^{T\wedge\tau} \frac{f(\Theta^{\alpha^\epsilon}_s) - f(\Theta^\alpha_s)}{\epsilon} ds]$ converges to
    $$\mathbb{E}^\alpha\bigg[\int_0^{T\wedge\tau} f_a(\Theta^\alpha_s)^\top\eta_s + \tilde{\mathbb{E}}^\alpha\Big[\tilde{\Lambda}^{\alpha, \eta}_s \frac{\delta f}{\delta m}(\Theta^\alpha_s, \tilde{X}_{\cdot \wedge s})\vert s < \tilde{\tau}\Big] + f_p(\Theta^\alpha_s) \mathbb{E}^\alpha[\Lambda^{\alpha, \eta}_s\mathbf{1}_{\lbrace s<\tau \rbrace}]ds\bigg]$$
    which by Fubini's theorem equals to
    $$\mathbb{E}^\alpha\bigg[\int_0^{T\wedge\tau} f_a(\Theta^\alpha_s)\eta_s + \Lambda^{\alpha, \eta}_s \Big(\tilde{\mathbb{E}}^\alpha\Big[\frac{\delta f}{\delta m}(\tilde{\Theta}^\alpha_s, X_{\cdot \wedge s})\vert s < \tilde{\tau}\Big] + \mathbb{E}^\alpha\big[\mathbf{1}_{\lbrace s<\tau \rbrace} f_p(\Theta^\alpha_s)\big]\Big)ds\bigg].$$
    Next, under Assumption \ref{asmp: ncs}, $\vert\int_0^{T\wedge \tau} f(\Theta^\alpha_s)ds\vert$ is $q$-integrable with respect to $\mathbb{P}$ for some $q > 1$. For any $1 < q' <q$, for sufficiently small $\epsilon$, we thus know $\vert\int_0^{T\wedge \tau} f(\Theta^\alpha_s)ds\vert$ is $q'$-integrable with respect to $\mathbb{P}^\alpha$. By Proposition \ref{prop: var prc}, 
    $$\frac{\mathbb{E}^{\alpha^\epsilon} - \mathbb{E}^\alpha}{\epsilon}\bigg[\int_0^{T\wedge\tau} f(\Theta^\alpha_s)ds\bigg]  \rightarrow \mathbb{E}^\alpha \bigg[\Lambda^{\alpha, \eta}_T \int_0^{T\wedge\tau} f(\Theta^\alpha_s)ds\bigg] = \mathbb{E}^\alpha\bigg[\int_0^{T\wedge\tau} \Lambda^{\alpha, \eta}_s f(\Theta^\alpha_s)ds\bigg]$$
    The terminal cost part can be analyzed in a similar fashion. For the terminal cost, although it takes in less arguments, we still write $\Theta^\alpha_T$ as its argument. Then as $\epsilon \searrow 0$, 
    \begin{align*}
        &\frac{1}{\epsilon}\Big(\mathbb{E}^{\alpha^\epsilon}[\mathbf{1}_{\lbrace T < \tau \rbrace} g(\Theta^{\alpha^\epsilon}_T)]) - \mathbb{E}^\alpha[\mathbf{1}_{\lbrace T < \tau \rbrace}g(\Theta^\alpha_T)]\Big)\\
        &\longrightarrow \mathbb{E}^\alpha\Big[\mathbf{1}_{\lbrace T < \tau \rbrace} \Lambda^{\alpha, \eta}_T\Big(g(\Theta^\alpha_T) + \tilde{\mathbb{E}}^\alpha\Big[\frac{\delta g}{\delta m}(\tilde{\Theta}^\alpha_T, X)\vert T < \tilde{\tau}\Big] + \mathbb{E}^\alpha[\mathbf{1}_{\lbrace T < \tau \rbrace}g_p(\Theta^\alpha_T)]\Big)\Big].
    \end{align*}
    Together, this shows the first equality. The second one is an immediate consequence of partial integration and the definition of the adjoint variables and $\Lambda^{\alpha, \eta}$. Note that $\int_0^{\cdot \wedge \tau} \Lambda^{\alpha, \eta}_s Z^\alpha_s dW^\alpha_s$ is a true martingale since
    \begin{align*}
        \mathbb{E}^\alpha\bigg[\sup_{t \in [0, T]} \int_0^{t \wedge \tau} \Lambda^{\alpha, \eta}_s Z^\alpha_s dW^\alpha_s \bigg] &\leq c_1 \mathbb{E}^\alpha\bigg[\bigg(\int_0^{t \wedge \tau}\Vert\Lambda^{\alpha, \eta}_s Z^\alpha_s\Vert^2ds\bigg)^\frac{1}{2} \bigg]\\ 
        &\leq c_1 \mathbb{E}^\alpha\bigg[\sup_{t \in [0, T]} (\Lambda^{\alpha, \eta}_{t\wedge \tau})^2 + \int_0^{t\wedge\tau}\Vert Z^\alpha_s\Vert^2 ds\bigg]< \infty
    \end{align*}
    and $\int_0^{\cdot \wedge \tau} Y^\alpha_s d\Lambda_s$ is a true martingale by the Émery's inequality \cite[Theorem A.8.15]{CE}.
\end{proof}

\begin{proof}[Proof of Theorem \ref{thm: ncs}] 
    Let $\alpha' \in \mathbb{A}_{BMO}$ be any other control. As above, let $\eta = \alpha'$ and $\alpha^\epsilon = \alpha + \epsilon \eta$. When $\alpha$ is optimal over $\mathbb{A}_{BMO}$, we thus have $J(\alpha) \leq J(\alpha^\epsilon)$ and thus using Theorem \ref{thm: gat drv}, $\mathbb{E}^\alpha[\int_0^{T \wedge \tau}h_a(\Theta^\alpha_s, Z^\alpha_s)^\top(\alpha'_s - \alpha_s) ds] \geq 0$. For the second statement, if it was not the case, there would exists some progressively measurable $U \subset [0, T] \times \Omega$ and $e\in A$ such that $\mathbb{E}^\alpha[\int_0^{T\wedge\tau}\mathbf{1}_U(s, \omega)h_a(\Theta^\alpha_s, Z^\alpha_s)^\top (e -\alpha_s) ds] < 0$. This would contradict with the previous statement by choosing $\alpha' = \mathbf{1}_U e + \mathbf{1}_{U^\complement}\alpha$. The last statement is an immediate consequence of the first order condition for optimality for convex functions.
\end{proof}

\subsection{Proofs for the sufficient condition}
To prove the sufficient condition, we first provide a differential characterization of $p$-convexity.
\begin{proposition}\label{prop: pcnv diff}
    Let the function $\phi:\mathcal{C}\times \mathcal{P}(\mathcal{C})\times (0,1]\to \R$ be differentiable in the sense of Assumption \ref{asmp: ncs}. Then, $\phi$ is $p$-convex if and only if for any $\mu$, $\mu'$, $p$, and $p'$, we have
    \begin{align}
    \nonumber
        p' \int_{\mathcal{C}} \phi(x,& \mu', p') \mu'(dx) - p \int_{\mathcal{C}} \phi(x, \mu, p) \mu(dx) \\ \label{eq: pcnv diff}
        &\geq \int_{\mathcal{C}}\bigg\{ \phi(x, \mu, p) + \int_{\mathcal{C}}\frac{\delta \phi}{\delta m}(\tilde{x}, \mu, p, x)\mu(d\tilde{x}) + p \int_{\mathcal{C}} \phi_p(\tilde{x}, \mu, p) \mu(d\tilde{x})\bigg\}(p'\mu'(dx) - p\mu(dx)).
    \end{align}
\end{proposition}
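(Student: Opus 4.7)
My approach is to reformulate $p$-convexity as ordinary linear convexity of an auxiliary functional on the space of sub-probability measures, and then apply the standard first-order characterization of convex functions. Define the sub-probability $q := p\mu$ and set $\Psi(q) := p\int \phi(x,\mu,p)\mu(dx)$; the key observation from the excerpt (just before \eqref{eq: def pcnv}) is that $p^\lambda \overline{\mu}^\lambda = \lambda p'\mu' + (1-\lambda)p\mu$, so the convex combination on the sub-probability side corresponds exactly to the $p$-convex combination on the $(\mu,p)$ side. Hence $\phi$ is $p$-convex if and only if $\Psi$ is linearly convex in $q$. By the standard two-direction argument, this in turn is equivalent to $\Psi(q')-\Psi(q)\geq \frac{d}{d\lambda}\Psi(q+\lambda(q'-q))\big|_{\lambda=0^+}$ for every $q,q'$, which is what we need to match to \eqref{eq: pcnv diff}.

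The second (and main) step is to compute this derivative explicitly. Writing $q^\lambda = (1-\lambda)p\mu + \lambda p'\mu'$, $p^\lambda = |q^\lambda|$, $\mu^\lambda = q^\lambda/p^\lambda$, I first verify that $\frac{d}{d\lambda}\mu^\lambda\big|_{\lambda=0}=\frac{p'}{p}(\mu'-\mu)$ and $\frac{d p^\lambda}{d\lambda}=p'-p$. Then I differentiate the product $\Psi(q^\lambda)=p^\lambda\int\phi(x,\mu^\lambda,p^\lambda)\mu^\lambda(dx)$ using the linear-derivative expansion of $\phi$ in $(\mu,p)$ given by Assumption \ref{asmp: ncs}. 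This produces four contributions: a $\psi(\mu,p)(p'-p)$ piece, a piece from differentiating $\mu^\lambda$ outside $\phi$, a piece from $\frac{\delta\phi}{\delta m}$, and a piece from $\phi_p$.

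The first and third of those combine into $\int \phi(x,\mu,p)(p'\mu'-p\mu)(dx)$, and the $\phi_p$ piece equals $\int C(p'\mu'-p\mu)(dx)$ with $C = p\int \phi_p(\tilde x,\mu,p)\mu(d\tilde x)$, essentially by the identity $\int C(p'\mu'-p\mu)(dx)=C(p'-p)$. The delicate term is the $\frac{\delta\phi}{\delta m}$ contribution, which after renaming the dummy variables takes the form $p'\int B(y)(\mu'-\mu)(dy)$ with $B(y):=\int\frac{\delta\phi}{\delta m}(\tilde x,\mu,p,y)\mu(d\tilde x)$, whereas the desired form is $\int B(y)(p'\mu'-p\mu)(dy)=p'\int B\,d\mu'-p\int B\,d\mu$. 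The discrepancy is $(p'-p)\int B\,d\mu$, and here I use the normalization convention of Remark \ref{rmk: lin drv}: $\int\frac{\delta\phi}{\delta m}(\tilde x,\mu,p,x)\mu(dx)=0$ for every $\tilde x$, so by Fubini $\int B\,d\mu=0$ and the two expressions agree.

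Putting these pieces together gives
\[
\frac{d}{d\lambda}\Psi(q^\lambda)\big|_{\lambda=0}=\int\bigl\{\phi(x,\mu,p)+B(x)+C\bigr\}(p'\mu'-p\mu)(dx),
\]
which is precisely the right-hand side of \eqref{eq: pcnv diff}. The forward direction then follows from convexity of $\lambda\mapsto\Psi(q^\lambda)$ on $[0,1]$, and for the converse I apply the standard trick: given any $q^0,q^1$ and $\lambda^\ast\in(0,1)$, apply the differential inequality at the midpoint $q^\ast=(1-\lambda^\ast)q^0+\lambda^\ast q^1$ in both directions $q^0-q^\ast$ and $q^1-q^\ast$, then take the convex combination with weights $1-\lambda^\ast$ and $\lambda^\ast$ to recover convexity of $\Psi$ along the segment. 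The hard part of the argument is bookkeeping around the normalization convention in the $\frac{\delta\phi}{\delta m}$ term; everything else is a direct application of chain rule and the definition of the linear functional derivative.
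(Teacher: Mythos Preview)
Your proposal is correct and follows essentially the same route as the paper: both directions are handled by the standard first-order characterization of convexity (apply the inequality at the midpoint for one direction, take the $\lambda\to 0$ limit of the difference quotient for the other), and the derivative computation is identical. Your framing via the auxiliary functional $\Psi(q)=p\int\phi\,d\mu$ on sub-probabilities is a clean way to organize the argument, and you are actually more explicit than the paper about one subtle point: the $\frac{\delta\phi}{\delta m}$ contribution only matches the form in \eqref{eq: pcnv diff} because of the normalization convention of Remark~\ref{rmk: lin drv}, which the paper uses tacitly in its final equality.
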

\begin{proof}
    First, assume that \eqref{eq: pcnv diff} holds. Then,
    \begin{align*}
        &\lambda\bigg(p' \int_{\mathcal{C}} \phi(x, \mu', p') \mu'(dx) - p^\lambda \int_{\mathcal{C}} \phi(x, \overline{\mu}^\lambda, p^\lambda) \overline{\mu}^\lambda(dx) \bigg)\\
        &\geq \lambda \int_{\mathcal{C}}\bigg\{ \phi(x, \overline{\mu}^\lambda, p^\lambda) + \int_{\mathcal{C}}\frac{\delta \phi}{\delta m}(\tilde{x}, \overline{\mu}^\lambda, p^\lambda, x)\overline{\mu}^\lambda(d\tilde{x}) + p^\lambda \int_{\mathcal{C}} \phi_p(\tilde{x}, \overline{\mu}^\lambda, p^\lambda) \overline{\mu}^\lambda(d\tilde{x})\bigg\}(p'\mu'(dx) - p\overline{\mu}^\lambda(dx))
    \end{align*}
    and
    \begin{align*}
        & (1-\lambda)\bigg( p \int_{\mathcal{C}} \phi(x, \mu, p) \mu(dx) - p^\lambda \int_{\mathcal{C}} \phi(x, \overline{\mu}^\lambda, p^\lambda) \overline{\mu}^\lambda(dx) )\\
        &\geq(1-\lambda) \int_{\mathcal{C}}\bigg\{ \phi(x, \overline{\mu}^\lambda, p^\lambda) + \int\frac{\delta \phi}{\delta m}(\tilde{x}, \overline{\mu}^\lambda, p^\lambda, x)\overline{\mu}^\lambda(d\tilde{x}) + p^\lambda \int \phi_p(\tilde{x}, \overline{\mu}^\lambda, p^\lambda)\bigg\} \overline{\mu}^\lambda(d\tilde{x})\big( p\mu(dx) - p\overline{\mu}^\lambda(dx) \big)
    \end{align*}
    and adding these two inequalities proves that $\phi$ is $p$-convex.\par

    Let us now prove the converse. Consider any $\mu$, $\mu'$, $p$, $p'$ and $p^\lambda$ and $\overline{\mu}^\lambda$ as in the definition of $p$-convexity, and write $\overline{\mu}^{\lambda, \theta} = \theta \overline{\mu}^\lambda + (1-\theta) \mu$. For a $p$-convex $\phi$, we have for any $\lambda$ that
    $$p'\int_{\mathcal{C}} \phi(x, \mu', p')\mu'(dx) - p\int_{\mathcal{C}} \phi(x, \mu, p)\mu(dx) \geq \frac{1}{\lambda}\bigg(p^\lambda \int_{\mathcal{C}} \phi(x, \overline{\mu}^\lambda, p^\lambda) \overline{\mu}^\lambda(dx) - p \int_{\mathcal{C}} \phi(x, \mu, p)\mu(dx)\bigg).$$
    Note that for any $\lambda$ and $x$ it holds
    $$\phi(x, \overline{\mu}^\lambda, p^\lambda) - \phi(x, \mu, p) = \int_0^1 \int_{\mathcal{C}} \frac{\delta \phi}{\delta m}(x, \overline{\mu}^{\lambda, \theta}, p^{\lambda\theta}, \tilde{x})(\overline{\mu}^\lambda - \mu)(d\tilde{x})+ \phi_p(x, \overline{\mu}^{\lambda, \theta}, p^{\lambda\theta})(p^\lambda-p)d\theta$$
    so that using the fundamental theorem of calculus applied to the function $\theta\mapsto p^{\lambda\theta}\int_{\mathcal{C}}\phi(x, \bar\mu^{\lambda\theta},p^{\lambda\theta})\bar\mu^{\lambda\theta}(dx)$, we have 
    \begin{align*}
        \frac{1}{\lambda}\bigg(p^\lambda &\int_{\mathcal{C}} \phi(x, \overline{\mu}^\lambda, p^\lambda) \overline{\mu}^\lambda(dx) - p \int_{\mathcal{C}} \phi(x, \mu, p)\mu(dx)\bigg)\\
        &=\frac{1}{\lambda}\int_0^1(p^\lambda-p) \bigg\{\int_{\mathcal{C}}  \phi(x, \overline{\mu}^{\lambda, \theta}, p^{\lambda \theta}) \overline{\mu}^{\lambda, \theta}(dx) +  p^{\lambda\theta} \int_{\mathcal{C}} \phi(x, \overline{\mu}^{\lambda, \theta}, p^{\lambda\theta})(\overline{\mu}^\lambda - \mu)(dx)\bigg\} d\theta\\
        &\quad +\frac{1}{\lambda}\int_0^1  p^{\lambda\theta} \int_{\mathcal{C}}\bigg\{ \int_{\mathcal{C}} \frac{\delta \phi}{\delta m}(x, \overline{\mu}^{\lambda, \theta}, p^{\lambda\theta}, \tilde{x})(\overline{\mu}^\lambda - \mu)(d\tilde{x}) + \phi_p(x, \overline{\mu}^{\lambda, \theta}, p^{\lambda, \theta})(p^\lambda-p)\bigg\} \overline{\mu}^{\lambda, \theta}(dx)d\theta.
    \end{align*}
    Note that $\frac{p^\lambda - p}{\lambda} = p' - p$ and $\frac{1}{\lambda}(\overline{\mu}^\lambda - \lambda) = \frac{p'}{p^\lambda}(\mu' - \mu)$. Further, note $d_{LC}(\overline{\mu}^{\lambda, \theta}, \mu)^2 \leq \lambda \frac{\theta p'}{p^\lambda} d_{LC}(\mu', \mu)$. 
    One can thus see that
    \begin{align*}
        \lim_{\lambda\rightarrow 0}&\frac{1}{\lambda}\bigg( p^\lambda \int_{\mathcal{C}} \phi(x, \overline{\mu}^\lambda, p^\lambda) \overline{\mu}^\lambda(dx) - p \int_{\mathcal{C}} \phi(x, \mu, p)\mu(dx)\bigg) \\
        &=(p' - p)\int_{\mathcal{C}} \phi(x, \mu, p)\mu(dx) + p \int_{\mathcal{C}} \phi(x, \mu, p)\frac{p'}{p}(\mu'-\mu)(dx)\\
        &\quad +p\int_{\mathcal{C}}\bigg\{\int_{\mathcal{C}} \frac{\delta \phi}{\delta m}(x, \mu, p, \tilde{x})\frac{p'}{p}(\mu'-\mu)(d\tilde{x}) + \phi_p(x, \mu, p)(p'-p)\bigg\}\mu(dx)\\
        &=\int_{\mathcal{C}}\bigg\{ \phi(x, \mu, p) + \int_{\mathcal{C}}\frac{\delta \phi}{\delta m}(\tilde{x}, \mu, p, x)\mu(d\tilde{x}) + p \int_{\mathcal{C}} \phi_p(\tilde{x}, \mu, p) \mu(d\tilde{x})\bigg\}(p'\mu'(dx) - p\mu(dx))
    \end{align*}
    which shows the converse direction.
\end{proof}
\begin{proof}[Proof of Theorem \ref{thm: sfc}]
    We first consider the case where $A$ is bounded. Then, for any $\alpha' \in\mathbb{A}$, we have $\mathbb{E}^\alpha[(\frac{d\mathbb{P}^{\alpha'}}{d\mathbb{P}})^2]<\infty$. In particular, we can see that $\int_0^\cdot Z^\alpha_s dW^{\alpha'}_s$ is a true $\mathbb{P}^{\alpha'}$-martingale, and further, that
    \begin{align}\nonumber
        & J(\alpha') - J(\alpha) = \mathbb{E}^{\alpha'}[\mathbf{1}_{\lbrace T < \tau \rbrace}g(\theta^{\alpha'}_T)] - \mathbb{E}^\alpha[\mathbf{1}_{\lbrace T < \tau \rbrace}g(\theta^\alpha_T)] + \mathbb{E}^{\alpha'}\bigg[\int_0^{T \wedge \tau} f(\Theta^{\alpha'}_s)ds \bigg] - \mathbb{E}^\alpha\bigg[\int_0^{T \wedge \tau}f(\Theta^\alpha_s)ds \bigg]\nonumber\\
        &\geq (\mathbb{E}^{\alpha'}-\mathbb{E}^\alpha)\bigg[Y^\alpha_T + \int_0^{T \wedge \tau} f^2(\theta^\alpha_s) + \tilde{\mathbb{E}}^\alpha\Big[\frac{\delta f^2}{\delta m}(\tilde{\theta}^\alpha_s, X_{\cdot \wedge s}) \Big] + \mathbb{E}^\alpha[\mathbf{1}_{\lbrace s<\tau \rbrace} f^2_p(\theta^\alpha_s)]  + f^1(s, X_{\cdot \wedge s}, \alpha_s)ds\bigg]\nonumber \\ 
        &+ \mathbb{E}^{\alpha'}\bigg[\int_0^{T\wedge\tau}  h^1_a(s, X_{\cdot \wedge s}, \alpha_s, Z^\alpha_s)^\top (\alpha'_s - \alpha_s) + \frac{m}{2}\Vert \alpha'_s - \alpha_s \Vert^2 - \Big(\beta(s, X_{\cdot \wedge s}, \alpha'_s) - \beta(s, X_{\cdot \wedge s}, \alpha_s)\Big)^\top Z^\alpha_s ds\bigg] \nonumber\\
        &\geq (\mathbb{E}^{\alpha'} - \mathbb{E}^\alpha)[Y^\alpha_0] + \frac{m}{2} \mathbb{E}^{\alpha'}\bigg[\int_0^{T\wedge\tau} \Vert \alpha'_s -\alpha_s \Vert^2 ds\bigg].
        \label{eq: sfc}
    \end{align}
    Let us now consider the case when $A$ is not bounded, but with the remaining assumptions in Assumption \ref{asmp: cnv} still holding true. 
    Let $\alpha\in \mathbb{A}_{\text{BMO}}$ be given as in the statement. 
    Let us first consider any other $\alpha' \in \mathbb{A}_{\text{BMO}}$. 
    Since by \cite[Theorem 3.6]{Kazamaki}, $\int_0^\cdot \beta(\Theta^{\alpha'}_s)-\beta(\Theta^\alpha_s)dW^\alpha_s$ is a $\mathbb{P}^\alpha$-BMO martingale, we have $\mathbb{E}^\alpha[(\frac{d\mathbb{P}^{\alpha'}}{d\mathbb{P}^\alpha})^q] < \infty$ for some $q > 1$. 
    Next noting that $\beta$ is independent of $\mu$ and $p$, the driver of the adjoint equation \eqref{eq: adj eq} simplifies and $Z^\alpha$ is immediately given by the martingale representation theorem. That is,
    \begin{align*}
        \int_0^\cdot Z^\alpha_s dW^\alpha_s&= \mathbb{E}^\alpha\bigg[\mathbf{1}_{\lbrace T < \tau\rbrace}\Big(g(\Theta^\alpha_T) + \tilde{\mathbb{E}}^\alpha\Big[\frac{\delta g}{\delta m}(\tilde{\Theta}^\alpha_T, X)\vert T < \tilde{\tau} \Big] + \mathbb{E}^\alpha[\mathbf{1}_{\lbrace T < \tau\rbrace}g_p(\Theta^\alpha_T)]\Big)\\
            & +\int_{0}^{T \wedge \tau} f(\Theta^\alpha_s) + \tilde{\mathbb{E}}^\alpha\Big[\frac{\delta f^2}{\delta m}(\tilde{\Theta}^\alpha_s, \tilde{Z}^\alpha_s, X_{\cdot \wedge s})\vert s < \tilde{\tau}\Big] + \mathbb{E}^\alpha[\mathbf{1}_{\lbrace s<\tau \rbrace} f^2_p(\Theta^\alpha_s, Z^\alpha_s)]ds\vert \mathcal{F}_\cdot\bigg].
    \end{align*}
    By the additional growth conditions in Assumption \ref{asmp: cnv} and Doob's inequality, we can see that we must have $\mathbb{E}^\alpha[(\int_0^{T \wedge \tau} \Vert Z^\alpha_s \Vert^2 ds)^{\frac{r}{2}}] < \infty$ for any $r > 1$. In particular, this shows that $\int_0^\cdot Z^\alpha_s dW^{\alpha'}_s$ is a true $\mathbb{P}^{\alpha'}$-martingale. Further, one can check that \eqref{eq: sfc} still holds proving the statement for $\alpha' \in \mathbb{A}_{\text{BMO}}$.\par
    
    For general $\alpha' \in \mathbb{A}$, we can consider a sequence $(\alpha^n)_{n\geq 0} $ in $ \mathbb{A}_{\text{BMO}}$ such that $J(\alpha^n) \rightarrow J(\alpha')$ and $\mathcal{H}(\mathbb{P}^{\alpha'}\parallel \mathbb{P}^{\alpha^n})$ like in Proposition \ref{prop: Pa exst} and Lemma \ref{lem: BMO aprx} (such a sequence is given for instance by \eqref{eq:def.alphan}). 
    By construction of $\alpha^n$ we have that a.s., $\frac{d\mathbb{P}^{\alpha^n}}{d\mathbb{P}}\int_0^{T \wedge \tau} \Vert \alpha_s - \alpha^n_s\Vert^2ds \rightarrow \frac{d\mathbb{P}^{\alpha'}}{d\mathbb{P}}\int_0^{T \wedge \tau} \Vert \alpha_s - \alpha'_s \Vert^2 ds$. The statement then follows from Fatou's lemma.
\end{proof}

\section{Existence and uniqueness of optimal controls}\label{sct: exst}
This section is dedicated to the proof of Theorem \ref{thm: exs}.
In particular, we will prove existence of optimal controls and derive integrability properties thereof.

\subsection{Convexity of the cost functional}
We begin by showing convexity of the cost $J$.
To this end, first observe that the set of 
$\lbrace \mathbb{P}^\alpha \vert \alpha \in \mathbb{A}\rbrace$ is convex.
In fact, given $\alpha', \alpha\in \mathbb{A}$ and $\lambda \in [0,1]$, if we define the processes 
$$\iota_t := \frac{\lambda \mathcal{E}(\int_0^{\cdot\wedge \tau} \beta(s, X_{\cdot\wedge s}, \alpha'_s)dW_s)_t}{\lambda \mathcal{E}(\int_0^{\cdot\wedge \tau} \beta(s, X_{\cdot\wedge s}, \alpha'_s)dW_s)_t + (1-\lambda)\mathcal{E}(\int_0^{\cdot\wedge \tau} \beta(s, X_{\cdot\wedge s}, \alpha_s)dW_s)_t} \text{ and } \alpha^\lambda := \iota_t \alpha'_t + (1-\iota_t) \alpha_t,$$ then using that $\beta$ is linear in $\alpha$, we can see that $\mathbb{P}^{\alpha^\lambda} = \lambda \mathbb{P}^{\alpha'} + (1-\lambda) \mathbb{P}^\alpha \in \lbrace \mathbb{P}^\alpha \vert \alpha \in \mathbb{A}\rbrace$.

\begin{proposition}\label{prop: J cnv}
    Let Assumptions \ref{asmp: beta}, \ref{asmp: ncs} and \ref{asmp: cnv} hold. Then, $J$ is (strongly) convex in $\mathbb{P}^\alpha$ in the following sense: For any $\alpha, \alpha' \in \mathbb{A}$ and $\lambda\in [0,1]$, let $\alpha^\lambda$ be defined as above. Then, $\alpha^\lambda \in \mathbb{A}$, and
    \begin{align*}
        \lambda J(\alpha') + (1-\lambda) J(\alpha) & \geq J(\alpha^\lambda) + \frac{m}{L^2}\Big(\lambda\mathcal{H}(\mathbb{P}^{\alpha'}\parallel\mathbb{P}^{\alpha^\lambda})+(1-\lambda)\mathcal{H}(\mathbb{P}^\alpha\parallel\mathbb{P}^{\alpha^\lambda}) \Big)\\
        &\geq J(\alpha^\lambda)+\frac{m}{L^2}\lambda(1-\lambda)d_{LC}(\mathbb{P}^\alpha, \mathbb{P}^{\alpha'})^2.
    \end{align*}
    If $m=0$ but $f^1$ is strictly convex in $a$, then strict convexity holds for $J$ as well, i.e.\ the inequality above becomes strict.
\end{proposition}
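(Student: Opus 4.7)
The strategy is to decompose $J=J_1+J_2+J_g$ into the $f^1$, $f^2$ and $g$ contributions and exploit the structural conditions in Assumption \ref{asmp: cnv} on each piece. The starting point is the algebraic identity $Z^{\alpha^\lambda}_t = \lambda Z^{\alpha'}_t + (1-\lambda) Z^{\alpha}_t$, where $Z^\alpha_t := \mathcal{E}(\int_0^{\cdot\wedge\tau}\beta(s,X_{\cdot\wedge s},\alpha_s)dW_s)_t$: writing $M_t = \lambda Z^{\alpha'}_t + (1-\lambda) Z^\alpha_t$, the definition of $\iota$ gives $\iota_t M_t = \lambda Z^{\alpha'}_t$, so the linearity of $\beta$ in $a$ from Assumption \ref{asmp: cnv}$(i)$ allows an It\^o computation showing $dM_t = M_t\beta(t,X_{\cdot\wedge t}, \alpha^\lambda_t)dW_t$ up to $\tau$; hence $M = Z^{\alpha^\lambda}$ and $\mathbb{P}^{\alpha^\lambda} = \lambda\mathbb{P}^{\alpha'} + (1-\lambda)\mathbb{P}^\alpha$. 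Admissibility of $\alpha^\lambda$ follows since $\mathbb{E}^{\alpha^\lambda}[\int_0^{T\wedge\tau}\|\alpha^\lambda_s\|^2 ds] \leq \lambda\mathbb{E}^{\alpha'}[\int_0^{T\wedge\tau}\|\alpha'_s\|^2 ds] + (1-\lambda)\mathbb{E}^\alpha[\int_0^{T\wedge\tau}\|\alpha_s\|^2 ds]<\infty$.

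For $J_2$ and $J_g$, note that the sub-probability $p^\alpha_s\mu^\alpha_s$ equals $\mathbb{P}^\alpha[X_{\cdot\wedge s}\in\cdot,\,s<\tau]$, so the mixture identity for $\mathbb{P}^{\alpha^\lambda}$ yields $p^{\alpha^\lambda}_s\mu^{\alpha^\lambda}_s = \lambda p^{\alpha'}_s\mu^{\alpha'}_s + (1-\lambda)p^\alpha_s\mu^\alpha_s$, and analogously at time $T$. Plugging these into the defining inequality \eqref{eq: def pcnv} for $p$-convexity of $f^2(s,\cdot)$ and $g$ (Assumption \ref{asmp: cnv}$(iii)$) and integrating in $s$ delivers $\lambda J_2(\alpha') + (1-\lambda)J_2(\alpha) \geq J_2(\alpha^\lambda)$ and $\lambda J_g(\alpha') + (1-\lambda)J_g(\alpha) \geq J_g(\alpha^\lambda)$.

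For $J_1$, the key is that $Z^{\alpha^\lambda}_s\iota_s = \lambda Z^{\alpha'}_s$ and $Z^{\alpha^\lambda}_s(1-\iota_s) = (1-\lambda)Z^\alpha_s$. Applying $m$-strong convexity of $f^1$ to $\alpha^\lambda_s = \iota_s\alpha'_s + (1-\iota_s)\alpha_s$, multiplying by $Z^{\alpha^\lambda}_s$, integrating in $s$ and taking expectation under $\mathbb{P}$ gives
\[
\lambda J_1(\alpha') + (1-\lambda)J_1(\alpha) - J_1(\alpha^\lambda) \geq \frac{m}{2}\mathbb{E}\bigg[\int_0^{T\wedge\tau} Z^{\alpha^\lambda}_s \iota_s(1-\iota_s)\|\alpha'_s-\alpha_s\|^2\, ds\bigg].
\]
To convert this into relative entropies, I would use the Girsanov formula $\mathcal{H}(\mathbb{P}^{\alpha'}\parallel\mathbb{P}^{\alpha^\lambda}) = \tfrac12\mathbb{E}^{\alpha'}[\int_0^{T\wedge\tau}\|\beta(\cdot,\alpha'_s)-\beta(\cdot,\alpha^\lambda_s)\|^2 ds]$ together with $\alpha'_s-\alpha^\lambda_s = (1-\iota_s)(\alpha'_s-\alpha_s)$ and the Lipschitz bound of Assumption \ref{asmp: beta}$(ii)$, to obtain $\mathcal{H}(\mathbb{P}^{\alpha'}\parallel\mathbb{P}^{\alpha^\lambda}) \leq \tfrac{L^2}{2}\mathbb{E}[\int_0^{T\wedge\tau}Z^{\alpha'}_s(1-\iota_s)^2\|\alpha'_s-\alpha_s\|^2 ds]$, and symmetrically for $\mathcal{H}(\mathbb{P}^\alpha\parallel\mathbb{P}^{\alpha^\lambda})$. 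The algebraic identity $\lambda Z^{\alpha'}_s(1-\iota_s)^2 + (1-\lambda)Z^\alpha_s\iota_s^2 = Z^{\alpha^\lambda}_s\iota_s(1-\iota_s)$ then delivers the first inequality of the proposition.

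The second inequality reduces to the standalone estimate $\lambda\mathcal{H}(\mu'\parallel\mu^\lambda) + (1-\lambda)\mathcal{H}(\mu\parallel\mu^\lambda) \geq \lambda(1-\lambda)d_{LC}(\mu,\mu')^2$ for $\mu^\lambda = \lambda\mu'+(1-\lambda)\mu$ applied to $(\mathbb{P}^\alpha,\mathbb{P}^{\alpha'})$, which I would derive by combining the Pinsker--Hellinger inequality $\mathcal{H}\geq \tfrac12 H^2$ with $d_{LC}^2 \leq H^2$ (obtained from the elementary estimate $(\tilde p-\tilde q)^2 \leq 4(\sqrt{\tilde p}-\sqrt{\tilde q})^2$ for densities with $\tilde p+\tilde q=2$) and an explicit comparison of $d_{LC}(\mu,\mu^{\lambda})$ with $d_{LC}(\mu,\mu')$ using the reference measure $\mu^\lambda$. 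Finally, for the strict convexity claim with $m=0$, strict convexity of $f^1$ in $a$ turns the pointwise inequality used in the $J_1$ step into a strict one whenever $\alpha'_s\neq\alpha_s$; since $Z^{\alpha^\lambda}_s\iota_s(1-\iota_s) = \lambda(1-\lambda)Z^{\alpha'}_sZ^\alpha_s/Z^{\alpha^\lambda}_s>0$ $\mathbb{P}$-almost surely on $\{s<\tau\}$ for $\lambda\in(0,1)$, any $\alpha,\alpha'$ yielding distinct measures $\mathbb{P}^\alpha,\mathbb{P}^{\alpha'}$ must differ on a set of positive $dt\otimes d\mathbb{P}$-measure and produce a strict inequality. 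The main obstacle I foresee is pinning down the sharp constant $\lambda(1-\lambda)$ in the entropy-to-$d_{LC}^2$ comparison, since a naive triangle-plus-Pinsker argument loses a factor of $4$, so a more careful analysis tailored to the mixture geometry is needed.
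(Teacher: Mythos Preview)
Your argument for admissibility, for the $f^2$ and $g$ contributions, and for the $f^1$ contribution up to and including the first inequality is essentially the paper's proof: the decomposition, the use of $p$-convexity on the sub-probability mixture, the application of $m$-strong convexity of $f^1$ to $\alpha^\lambda_s=\iota_s\alpha'_s+(1-\iota_s)\alpha_s$, and the conversion of the resulting $\frac{m}{2}\mathbb{E}^{\alpha^\lambda}[\int_0^{T\wedge\tau}\iota_s(1-\iota_s)\|\alpha'_s-\alpha_s\|^2\,ds]$ into $\frac{m}{L^2}(\lambda\mathcal{H}(\mathbb{P}^{\alpha'}\!\parallel\!\mathbb{P}^{\alpha^\lambda})+(1-\lambda)\mathcal{H}(\mathbb{P}^{\alpha}\!\parallel\!\mathbb{P}^{\alpha^\lambda}))$ via the Girsanov entropy formula and the Lipschitz bound all match. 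The rewriting $\iota_s(1-\iota_s)\|\alpha'_s-\alpha_s\|^2=\iota_s\|\alpha'_s-\alpha^\lambda_s\|^2+(1-\iota_s)\|\alpha_s-\alpha^\lambda_s\|^2$ that the paper uses is exactly your algebraic identity $\lambda Z^{\alpha'}_s(1-\iota_s)^2+(1-\lambda)Z^\alpha_s\iota_s^2=Z^{\alpha^\lambda}_s\iota_s(1-\iota_s)$ after dividing by $Z^{\alpha^\lambda}_s$.

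The gap you flag in the second inequality is real, and the paper resolves it not via Pinsker--Hellinger but by a direct strong-convexity computation. Set $\Gamma:=\dfrac{d\mathbb{P}^{\alpha'}}{d\big(\tfrac{\mathbb{P}^{\alpha'}+\mathbb{P}^\alpha}{2}\big)}\in[0,2]$, so that $\dfrac{d\mathbb{P}^\alpha}{d(\cdot)}=2-\Gamma$ and $\dfrac{d\mathbb{P}^{\alpha^\lambda}}{d(\cdot)}=\lambda\Gamma+(1-\lambda)(2-\Gamma)$. Then
\[
\lambda\mathcal{H}(\mathbb{P}^{\alpha'}\!\parallel\!\mathbb{P}^{\alpha^\lambda})+(1-\lambda)\mathcal{H}(\mathbb{P}^{\alpha}\!\parallel\!\mathbb{P}^{\alpha^\lambda})
=\mathbb{E}^{\frac{\mathbb{P}^{\alpha'}+\mathbb{P}^\alpha}{2}}\!\big[\lambda\,\Gamma\log\Gamma+(1-\lambda)(2-\Gamma)\log(2-\Gamma)-\phi(\lambda\Gamma+(1-\lambda)(2-\Gamma))\big],
\]
where $\phi(x)=x\log x$. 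Since $\phi''(x)=1/x\geq 1/2$ on $(0,2]$, $\phi$ is $\tfrac12$-strongly convex there, and the right-hand side is bounded below by $\tfrac{1}{4}\lambda(1-\lambda)\,\mathbb{E}^{\frac{\mathbb{P}^{\alpha'}+\mathbb{P}^\alpha}{2}}\big[(\Gamma-(2-\Gamma))^2\big]$. But by definition $d_{LC}(\mathbb{P}^\alpha,\mathbb{P}^{\alpha'})^2=\tfrac14\,\mathbb{E}^{\frac{\mathbb{P}^{\alpha'}+\mathbb{P}^\alpha}{2}}\big[(\Gamma-(2-\Gamma))^2\big]$, which gives precisely the constant $\lambda(1-\lambda)$ you were missing. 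No triangle inequality or Hellinger detour is needed.
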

\begin{remark}\label{rmk: gnr cnv}
    Following the proof of Proposition \ref{prop: J cnv}, we can see that this convexity property  still holds true when the involved change of measures are allowed to be non equivalent as in the setting of Theorem \ref{thm: opt non eq}.
\end{remark}

\begin{proof}[Proof of Proposition \ref{prop: J cnv}]
    Since $\iota_t = \lambda \frac{d\mathbb{P}^{\alpha'}_{\vert \mathcal{F}_t}}{d\mathbb{P}^{\alpha^\lambda}_{\vert \mathcal{F}_t}}$, we have
    \begin{align*}
        \mathbb{E}^{\alpha^\lambda}\bigg[\int_0^{T\wedge \tau} \Vert \alpha^{\lambda}_s \Vert^2ds\bigg] &\leq \mathbb{E}^{\alpha^\lambda}\bigg[\int_0^{T\wedge\tau} \iota_s \Vert\alpha'_s\Vert^2 + (1-\iota_s)\Vert \alpha_s \Vert^2ds\bigg]\\
        &= \lambda \mathbb{E}^{\alpha'}\bigg[\int_0^{T\wedge\tau}\Vert \alpha'_s\Vert^2ds\bigg] + (1-\lambda)\mathbb{E}^\alpha\bigg[\int_0^{T\wedge\tau}\Vert\alpha_s\Vert^2ds\bigg] < \infty
    \end{align*}
    so that $\alpha \in \mathbb{A}$. Similarly, using our convexity properties, we have
    \begin{align*}
        \lambda J(\alpha') &+ (1-\lambda)J(\alpha)\\ 
        & = \mathbb{E}^{\alpha^\lambda}\bigg[\int_0^{T\wedge \tau} \iota_sf^1(s, X_{\cdot \wedge s}, \alpha'_s)  + (1 - \iota_s)f^1(s, X_{\cdot \wedge s}, \alpha_s) ds\bigg]\\
        &\quad +\lambda\mathbb{E}^{\alpha'}\bigg[\int_0^{T\wedge\tau} f^2(\theta^{\alpha'}_s)ds + \mathbf{1}_{\lbrace T < \tau \rbrace}g(\theta^{\alpha'}_T)\bigg] + (1 - \lambda)\mathbb{E}^\alpha\bigg[\int_0^{T\wedge\tau}f^2(\theta^\alpha_s)ds + \mathbf{1}_{\lbrace T < \tau \rbrace}g(\theta^\alpha_T)\bigg]\\
        &\geq \mathbb{E}^{\alpha^\lambda}\bigg[\int_0^{T\wedge\tau}f^1(s, X_{\cdot \wedge s}, \alpha^\lambda_s) + \frac{m}{2}\iota_s(1-\iota_s)\Vert \alpha'_s - \alpha_s \Vert^2 + f^2(\theta^{\alpha^\lambda}_s)ds + \mathbf{1}_{\lbrace T < \tau \rbrace} g(\theta^{\alpha^\lambda}_T)\bigg]\\
        &= J(\alpha^\lambda)+ \frac{m}{2}\mathbb{E}^{\alpha^\lambda}\bigg[\int_0^{T\wedge\tau}\iota_s \Vert \alpha'_s - \alpha^\lambda_s \Vert^2 + (1-\iota_s)\Vert\alpha_s - \alpha^\lambda_s\Vert^2ds\bigg] \\
        &\geq J(\alpha^\lambda) + \frac{m}{L^2}\Big(\lambda\mathcal{H}(\mathbb{P}^{\alpha'}\parallel\mathbb{P}^{\alpha^\lambda})+(1-\lambda)\mathcal{H}(\mathbb{P}^\alpha\parallel\mathbb{P}^{\alpha^\lambda})\Big)
    \end{align*}
    where the first of the two inequalities must be strict if $f^1$ is strictly convex and $\alpha'$ does not a.e. coincide with $\alpha$. 
    To justify the second statement, putting $\Gamma = \frac{d\mathbb{P}^{\alpha'}}{d(\frac{\mathbb{P}^{\alpha'}+\mathbb{P}^\alpha}{2})}$ and using that $x\mapsto x\log(x)$ is $\frac{1}{2}$-strongly convex on $[0, 2]$, use that 
    \begin{align*}
        &\lambda \mathcal{H}(\mathbb{P}^{\alpha'}\parallel \mathbb{P}^{\overline{\alpha}^\lambda}) + (1-\lambda)\mathcal{H}(\mathbb{P}^\alpha\parallel\mathbb{P}^{\overline{\alpha}^\lambda}) \\
        &= \mathbb{E}^{\frac{\mathbb{P}^{\alpha'} + \mathbb{P}^\alpha}{2}}\Big[\lambda \Gamma \log(\Gamma) + (1-\lambda)(2-\Gamma)\log(2-\Gamma)-(\lambda \Gamma + (1-\lambda)(2-\Gamma))\log(\lambda \Gamma + (1-\lambda)(2-\Gamma))\Big]\\
        &\geq \frac{\lambda(1-\lambda)}{4}\mathbb{E}^{\frac{\mathbb{P}^{\alpha'}+\mathbb{P}^\alpha}{2}}[(\Gamma-(2-\Gamma))^2].
    \end{align*}
\end{proof}
Our proof of the existence of an optimal control for bounded $
A$ relies on the convexity property stated above to construct a candidate control. 
As this construction will be used frequently, we first present it as a standalone result in a general framework. 
The key idea is that, since the control processes are typically not compact, we instead work with the corresponding densities of changes of measure.

Before that, we state another preliminary result. For this construction—and later developments—it will be convenient to work with uniform convergence on compacts in probability (ucp) and the Émery topology as defined in \cite[Definitions 12.4.1, 12.4.3]{CE}. For our purposes, ucp means convergence in probability of random variables valued in the space of continuous functions on $[0,T]$, with the supremum norm. The Émery topology is a stronger topology on semimartingales. As we could not find a suitable reference, we briefly recall some standard properties here for completeness.

\begin{lemma}\label{lem: sem top}
    Let $M^n$ and $M$ be continuous local martingales.\begin{itemize}
        \item[(i)]If $M^n$ and $M$ are true martingales and if the $M^n_T$ converges in $\mathbb{L}^1$ to $M_T$, the $M^n$ converge in ucp to $M$.
        \item[(ii)]If the $M^n$ converge in ucp to $M$, then $\langle M^n - M \rangle_T$ converges in probability to $0$. In particular, $M^n$ converges to $M$ in the Émery topology. 
        \item[(iii)]If we have continuous processes $P^n$, $P$ and $Q^n$ and $Q$ such that $P^n$ converges in ucp to $P$, $Q^n$ converges in ucp to $Q$, and the $Q^n$ and $Q$ are strictly positive, then $\frac{P^n}{Q^n}$ converge in ucp to $\frac{P}{Q}$.
    \end{itemize}
\end{lemma}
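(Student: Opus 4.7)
The plan is to handle the three claims independently using classical martingale tools. For (i), Doob's maximal inequality applied to the martingale $M^n - M$ gives
\[
\PP\bigl(\sup_{t\in[0,T]} |M^n_t - M_t| \geq \epsilon\bigr) \leq \epsilon^{-1}\,\E[|M^n_T - M_T|]
\]
for every $\epsilon>0$, and the right-hand side vanishes by the $\mathbb{L}^1$ hypothesis, which is exactly ucp convergence.

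For (ii), write $N^n := M^n - M$, a continuous local martingale starting at $0$. The key observation is that BDG goes the wrong way for this implication, so I would proceed by a localisation argument: for a fixed $\epsilon>0$, put $\tau^n = \inf\{t \geq 0 : |N^n_t| \geq \epsilon\} \wedge T$. By continuity, the stopped process $N^n_{\cdot\wedge\tau^n}$ is bounded by $\epsilon$, hence is a genuine $\mathbb{L}^2$-martingale, so that
\[
\E[\langle N^n\rangle_{T\wedge\tau^n}] = \E[(N^n_{T\wedge\tau^n})^2] \leq \epsilon^2.
\]
Splitting on $\{\tau^n<T\}$ and applying Markov then yields
\[
\PP(\langle N^n\rangle_T > \delta) \leq \PP\bigl(\sup_t |N^n_t| \geq \epsilon\bigr) + \epsilon^2/\delta.
\]
The first term tends to $0$ by ucp convergence, and $\epsilon$ can be taken arbitrarily small once $\delta$ is fixed, giving $\langle N^n\rangle_T \to 0$ in probability. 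Convergence in the Émery topology for continuous local martingales is then a direct consequence of this together with ucp convergence, via the characterisation recorded in \cite[Ch.~12]{CE}.

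For (iii), the essential point is that $Q$, being continuous and strictly positive on the compact interval $[0,T]$, satisfies $\inf_{t\in[0,T]} Q_t > 0$ almost surely. Working on the events $\{\inf_t Q_t \geq \eta\}$ (whose probability tends to $1$ as $\eta\searrow0$) and using that $\sup_t|Q^n_t-Q_t| \to 0$ in probability, one gets $\inf_t Q^n_t \geq \eta/2$ with high probability and then the elementary bound
\[
\Bigl|\frac{1}{Q^n_t} - \frac{1}{Q_t}\Bigr| \leq \frac{2}{\eta^2}\,|Q^n_t - Q_t|
\]
uniform in $t$. This delivers $1/Q^n \to 1/Q$ in ucp, and combining with $P^n \to P$ in ucp via the standard argument that multiplication is continuous in ucp when one factor remains locally bounded in probability produces $P^n/Q^n \to P/Q$ in ucp.

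The only genuinely nontrivial step is the first half of (ii): one is tempted to use BDG directly, but those inequalities control suprema by quadratic variation, not the reverse. The stopping-time localisation above is the natural workaround and is really the crux of the lemma; the remaining items are essentially bookkeeping.
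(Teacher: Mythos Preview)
Your proof is correct and follows essentially the same strategy as the paper: Doob's maximal inequality for (i), a stopping-time localisation for (ii) (the paper uses a fixed threshold $1$ rather than your variable $\epsilon$, and then invokes \cite[Proposition 2.7]{Kardaras} for the Émery statement), and for (iii) the paper phrases your explicit estimate as an application of the continuous mapping theorem to the division map on $C([0,T])\times C_+([0,T])$.
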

\begin{proof}
\begin{itemize}
    \item[(i)]By Doob's inequality, for any $\epsilon > 0$, we have $\mathbb{P}[\sup_{t \in [0, T]} \vert M^n_t - M_t \vert > \epsilon] \leq \frac{1}{\epsilon} \mathbb{E}[\vert M^n_T - M_T\vert] \rightarrow 0$ and therefore, ucp convergence.
    \item[(ii)]Let $\nu^n = \inf \lbrace t \geq 0 \vert \vert M^n_s - M_s \vert \geq 1 \rbrace \wedge T$. For each $n$, $M^n_{\cdot \wedge \nu^N} - M_{\cdot \wedge \nu^N}$ is a bounded martingale, therefore, $\mathbb{E}[\langle M^n - M \rangle_{\nu^N}] = \mathbb{E}[(M^n - M)_{\nu^N}^2] \leq \mathbb{E}[1 \wedge \sup_{t \in [0, T]} \vert M^n_t - M_t\vert] \rightarrow 0$. By the ucp convergence, we know that a.s.\ $\nu^N \rightarrow T$, and therefore the $\langle M^n - M \rangle_T$ converge in probability to $0$. By \cite[Proposition 2.7]{Kardaras}, this shows convergence in the Émery topology.
    \item[(iii)]For deterministic continuous functions, it is easy to check convergence in the supremum metric of the quotients. Ucp convergence of the processes then follows from the continuous mapping theorem.
\end{itemize}
\end{proof}
It is immediate to see that convergence in ucp is invariant under a change to an equivalent probability measure. The same applies to the Émery topology as well by \cite[Proposition 6]{Emery}.\par
Let us also briefly recall some properties of the density process of absolutely continuous but not necessarily equivalent measures from \cite{Leonard}. 
By \cite[Theorem 1, 2]{Leonard}, for any $\mathbb{Q} \ll \mathbb{P}$ such that $\mathcal{H}(\mathbb{Q} \parallel \mathbb{P}) <\infty$, there exists a progressively measurable process $\beta^\mathbb{Q}$ defined $dt \times \mathbb{Q}$ a.e.\ such that $\frac{d\mathbb{Q}}{d\mathbb{P}} = \mathbf{1}_{\lbrace \frac{d\mathbb{Q}}{d\mathbb{P}}>0 \rbrace} \mathcal{E}(\int_0^{\cdot \wedge \tau} \beta^\mathbb{Q} dW_s)_T$. Further, by \cite[Corollary 1]{Leonard}, we can still $dt \times \mathbb{P}$ a.e.\ make sense of $\frac{d\mathbb{Q}_{\vert \mathcal{F}_t}}{d\mathbb{P}_{\vert \mathcal{F}_t}} = \mathcal{E}(\int_0^{\cdot \wedge \tau} \beta^\mathbb{Q}_s dW_s)_t$ as a stochastic exponential that eventually will reach zero. Once it hits zero, it will stay at zero, and will be zero exactly outside of the support of $\mathbb{Q}$.

\begin{lemma}\label{lem: drft cstr}
    Assume $\beta$ satisfies Assumption \ref{asmp: cnv}. Consider a sequence of $A$ valued progressively processes $\alpha^n$ and let $\mathbb{P}^{\alpha^n} \ll \mathbb{P}$ be such that $\frac{d\mathbb{P}^{\alpha^n}}{d\mathbb{P}} = \mathbf{1}_{\lbrace \frac{d\mathbb{P}^{\alpha^n}}{d\mathbb{P}}>0 \rbrace} \mathcal{E}(\int_0^{\cdot \wedge \tau} \beta(s, X_{\cdot \wedge s}, \alpha^n_s)dW_s)_T$. Assume $\sup_n \mathcal{H}(\mathbb{P}^{\alpha^n} \parallel \mathbb{P}) <\infty$.\par
    Then, up to a subsequence, the $\mathbb{P}^{\alpha^n}$ converge setwise to some probability measure $\tilde{\mathbb{P}}$ (i.e.\ for any bounded random variable $Z$, we have $\mathbb{E}^{\alpha^n}[Z] \rightarrow \mathbb{E}^{\tilde{\mathbb{P}}}[Z]$) that satisfies $\mathcal{H}(\tilde{\mathbb{P}}\parallel\mathbb{P})< \infty$ and thus $\tilde{\mathbb{P}} \ll \mathbb{P}$.
    Moreover, the measure $\tilde{\mathbb{P}}$ is given through a control as well, i.e.\ there exists an $A$-valued progressively measurable process $\tilde{\alpha}$ that is a.e. unique on $\lbrace \frac{d\tilde{\mathbb{P}}}{d\mathbb{P}} > 0 \rbrace \cap [0,\tau)$ such that $d\tilde{\mathbb{P}} = \mathbf{1}_{\lbrace \frac{d\mathbb{P}^{\tilde{\alpha}}}{d\mathbb{P}}>0 \rbrace} \mathcal{E}(\int_0^{\cdot \wedge \tau} \beta(s, X_{\cdot \wedge s}, \tilde{\alpha}_s)dW_s)_Td\P$. 
\end{lemma}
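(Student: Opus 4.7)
The three claims in the lemma are handled in order.

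\textbf{Setwise limit and absolute continuity.} Since $x\mapsto x\log(x)$ is a de la Vall\'ee--Poussin test function, the bound $\sup_n \mathcal{H}(\mathbb{P}^{\alpha^n}\parallel\mathbb{P})<\infty$ makes $\{d\mathbb{P}^{\alpha^n}/d\mathbb{P}\}_{n\geq 1}$ uniformly integrable in $L^1(\mathbb{P})$. By the Dunford--Pettis theorem, up to a subsequence, $d\mathbb{P}^{\alpha^n}/d\mathbb{P}\rightharpoonup\varphi$ weakly in $L^1(\mathbb{P})$ for some $\varphi\geq 0$ with $\int\varphi\,d\mathbb{P}=1$, so that $\tilde{\mathbb{P}}:=\varphi\cdot\mathbb{P}$ is absolutely continuous with respect to $\mathbb{P}$. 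Testing against bounded random variables, this weak $L^1$ convergence is exactly setwise convergence $\mathbb{E}^{\alpha^n}[Z]\to\mathbb{E}^{\tilde{\mathbb{P}}}[Z]$. Lower semicontinuity of the relative entropy under setwise convergence then yields $\mathcal{H}(\tilde{\mathbb{P}}\parallel\mathbb{P})\leq\liminf_n \mathcal{H}(\mathbb{P}^{\alpha^n}\parallel\mathbb{P})<\infty$, after which the L\'eonard representation recalled just before the lemma provides a progressively measurable $\beta^{\tilde{\mathbb{P}}}$ such that
\begin{equation*}
\frac{d\tilde{\mathbb{P}}}{d\mathbb{P}} \,=\, \mathbf{1}_{\{d\tilde{\mathbb{P}}/d\mathbb{P}>0\}}\, \mathcal{E}\bigg(\int_0^{\cdot\wedge\tau}\beta^{\tilde{\mathbb{P}}}_s\, dW_s\bigg)_T.
\end{equation*}

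\textbf{From densities to drivers via Mazur and brackets.} The delicate step is the identification $\beta^{\tilde{\mathbb{P}}}_s=\beta(s,X_{\cdot\wedge s},\tilde{\alpha}_s)$ for an $A$-valued progressive $\tilde{\alpha}$. Mazur's lemma applied to the weak $L^1$ limit produces convex combinations $\sum_{k\geq n}\lambda^n_k\,d\mathbb{P}^{\alpha^k}/d\mathbb{P}$ converging strongly in $L^1(\mathbb{P})$. As observed at the beginning of Section~\ref{sct: exst}, each such combination is again $d\mathbb{P}^{\overline{\alpha}^n}/d\mathbb{P}$ for $\overline{\alpha}^n_t=\sum_k \iota^n_{k,t}\alpha^k_t$ with nonnegative $\iota^n_{k,t}$ summing to $1$ on the support, so convexity of $A$ forces $\overline{\alpha}^n\in \tilde{\mathbb{A}}$, and $\mathbb{P}^{\overline{\alpha}^n}\to\tilde{\mathbb{P}}$ in total variation. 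Doob's weak $(1,1)$ maximal inequality applied to the $\mathbb{P}$-martingale $\mathcal{E}^{\overline{\alpha}^n}-\mathcal{E}^{\tilde{\mathbb{P}}}$ then upgrades this into ucp convergence of the density processes, and Lemma~\ref{lem: sem top}(ii) yields $\langle \mathcal{E}^{\overline{\alpha}^n}-\mathcal{E}^{\tilde{\mathbb{P}}}\rangle_{T\wedge\tau}\to 0$ in $\mathbb{P}$-probability. Since this bracket equals $\int_0^{T\wedge\tau}\|\mathcal{E}^{\overline{\alpha}^n}_s\beta(s,X,\overline{\alpha}^n_s) - \mathcal{E}^{\tilde{\mathbb{P}}}_s\beta^{\tilde{\mathbb{P}}}_s\|^2\,ds$, a further subsequence gives almost sure convergence of this integral to zero; on the event $\{\mathcal{E}^{\tilde{\mathbb{P}}}_{T\wedge\tau}>0\}$, where the non-negative martingale $\mathcal{E}^{\tilde{\mathbb{P}}}$ remains strictly positive on all of $[0,T\wedge\tau]$, I divide by $\mathcal{E}^{\tilde{\mathbb{P}}}_s$ and extract one more subsequence to obtain $\beta(s,X,\overline{\alpha}^n_s)\to \beta^{\tilde{\mathbb{P}}}_s$ for $dt$-a.e.\ $s\in[0,T\wedge\tau]$.

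\textbf{Constructing $\tilde{\alpha}$.} Each $\beta(s,X,\overline{\alpha}^n_s)=\beta^1(s,X)\overline{\alpha}^n_s+\beta^2(s,X)$ lies in $K_s(\omega):=\beta^1(s,X(\omega))A+\beta^2(s,X(\omega))$. By Assumption~\ref{asmp: cnv}(i), the columns of $\beta^1(s,X)$ are linearly independent for $dt\times\mathbb{P}$-a.e.\ $(s,\omega)$, so $a\mapsto\beta^1(s,X)a$ is a homeomorphism of $\mathbb{R}^k$ onto its image and $K_s$ is closed convex in $\mathbb{R}^d$. The pointwise limit above therefore lies in $K_s$ for $dt\times\tilde{\mathbb{P}}$-a.e.\ $(s,\omega)$, and a standard measurable selection provides a progressively measurable $A$-valued $\tilde{\alpha}$ with $\beta^1(s,X)\tilde{\alpha}_s+\beta^2(s,X)=\beta^{\tilde{\mathbb{P}}}_s$ on $\{d\tilde{\mathbb{P}}/d\mathbb{P}>0\}\cap[0,\tau)$; off this set $\tilde{\alpha}$ can be set to $0$. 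Uniqueness on the support follows from the uniqueness of $\beta^{\tilde{\mathbb{P}}}$ in L\'eonard's representation together with the injectivity of $\beta^1(s,X)$.

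\textbf{Main obstacle.} The core difficulty is transferring information from the weak $L^1$ limit of the densities to the Girsanov drivers while staying within the class of controlled measures. The three ingredients overcoming this are Mazur's lemma (which keeps the strongly convergent approximating sequence of the form $\mathbb{P}^{\overline{\alpha}^n}$), Doob's maximal inequality (converting $L^1$ convergence of densities at the terminal time into ucp convergence of the whole density processes), and Lemma~\ref{lem: sem top}(ii) (translating ucp convergence into quadratic-variation convergence). The closedness of $K_s$, a consequence of the linear independence of the columns of $b^1$, is what then ensures that the limiting driver still comes from a control in $A$.
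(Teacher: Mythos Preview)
Your argument is correct and follows a somewhat more direct route than the paper's. Both proofs share the first half (de la Vall\'ee--Poussin, Dunford--Pettis, entropy lower semicontinuity, L\'eonard's representation, Mazur's lemma to pass to strongly $L^1$-convergent convex combinations $\mathbb{P}^{\overline{\alpha}^n}$). The divergence is in how one extracts convergence of the drivers from ucp convergence of the density processes. The paper takes the stochastic logarithm $\int_0^{\cdot}\frac{1}{\mathcal{E}}\,d\mathcal{E}=\int_0^{\cdot}\beta\,dW$, uses Lemma~\ref{lem: sem top}(iii) together with Protter's stability theorem for stochastic integrals in the \'Emery topology to transfer ucp convergence from $\mathcal{E}$ to $\int\beta\,dW$, and only then applies Lemma~\ref{lem: sem top}(ii). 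Because $\frac{1}{\mathcal{E}}$ is only defined when $\mathcal{E}>0$, this forces the paper into a case distinction (equivalent versus merely absolutely continuous limit), the latter handled by mixing with a fixed $\mathbb{P}^{\alpha^0}$ to restore equivalence. You instead apply Lemma~\ref{lem: sem top}(ii) directly to $\mathcal{E}^{\overline{\alpha}^n}-\mathcal{E}^{\tilde{\mathbb{P}}}$, obtain $\int_0^{T\wedge\tau}\|\mathcal{E}^{\overline{\alpha}^n}_s\beta(s,X,\overline{\alpha}^n_s)-\mathcal{E}^{\tilde{\mathbb{P}}}_s\beta^{\tilde{\mathbb{P}}}_s\|^2\,ds\to 0$ in probability, and then divide out the density on the support of $\tilde{\mathbb{P}}$; this handles both cases at once and avoids the appeal to \cite[Theorem~V.15]{Protter13}. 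One small imprecision: the quantity you actually divide by is $\mathcal{E}^{\overline{\alpha}^n}_s$ (which converges to $\mathcal{E}^{\tilde{\mathbb{P}}}_s>0$), not $\mathcal{E}^{\tilde{\mathbb{P}}}_s$ itself; and the ``extract one more subsequence'' step for $dt$-a.e.\ convergence should be phrased as $dt\times\mathbb{P}$-a.e.\ convergence (via Fubini after truncating the integrand), so that the subsequence is $\omega$-independent. Neither affects the validity of your argument.
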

\begin{remark}
    The measure $\tilde{\mathbb{P}}$ can be constructed as follows: For $1 \leq i \leq j$, there are progressively measurable $[0,1]$-valued processes $\iota^{i, j}$ such that for each $i$, all but finitely many $\iota^{i, j}$ are zero, $dt \times \mathbb{P}$ a.e.\ $\sum_{j \geq i} \iota^{i, j}_t = 1$, and such that for $\tilde{\alpha}^i_t = \sum_{j \geq i} \iota^{i, j}_t \alpha_t$, for each $i\geq 1$, the $\mathbb{P}^{\tilde{\alpha}^i}$ are a finite convex combination of the $(\mathbb{P}^{\alpha^j})_{j \geq i}$. Further, the $\mathbb{P}^{\tilde{\alpha}^j}$ converge in total variation to $\tilde{\mathbb{P}}$ and $dt \times \tilde{\mathbb{P}}$ a.e.\ on $\lbrace t < \tau \rbrace$, the $\tilde{\alpha}^i_t$ converge to $\tilde{\alpha}_t$ for $i \rightarrow \infty$. 
\end{remark}
\begin{proof}
    By the de la Vallée-Poussin theorem, the family $(\frac{d\mathbb{P}^{\alpha^n}}{d\mathbb{P}})_{n\ge 1}$ is uniformly integrable with respect to $\mathbb{P}$. By the Dunford-Pettis theorem, they are thus weakly $\mathbb{L}^1$ compact with respect to $\mathbb{P}$. Up to a subsequence, the $\frac{d\mathbb{P}^{\alpha^n}}{d\mathbb{P}}$ thus converge weakly in $\mathbb{L}^1$ to an integrable non negative random variable with expectation $1$. For notational convenience, we will often neglect that convergence only occurs through subsequences. We can thus define a measure $\tilde{\mathbb{P}}$ equivalent to $\mathbb{P}$ such that up to a subsequence, $(\frac{d\mathbb{P}^{\alpha^n}}{d\mathbb{P}})_{n\ge1}$ converges weakly in $\mathbb{L}^1$ with respect to $\mathbb{P}$ to $\frac{d\tilde{\mathbb{P}}}{d\mathbb{P}}$. Note that this implies that the sequence $(\mathbb{P}^{\alpha^n})_{n\ge1}$ converges to $\tilde{\mathbb{P}}$ in the setwise sense. 
    By lower semicontinuity of the entropy \cite[Theorem 4.9]{Polyanskiy}, $\mathcal{H}(\tilde{\mathbb{P}}\parallel\mathbb{P})$ is finite. Thus, by \cite[Theorem 2]{Leonard}, there exists a progressively measurable process $\tilde{\beta}$ defined on the support of $\tilde{\mathbb{P}}$ such that $\tilde{\mathbb{P}}$ a.s.\ $\int_0^T \Vert \tilde{\beta}_s \Vert^2ds < \infty$ and $\frac{d\mathbb{\tilde{\mathbb{P}}}}{d\mathbb{P}} = \mathbf{1}_{\lbrace \frac{d\mathbb{\tilde{\mathbb{P}}}}{d\mathbb{P}} > 0\rbrace}\mathcal{E}(\int_0^{\cdot \wedge \tau} \tilde{\beta}_sdW_s)_T$. Outside of the support of $\tilde{\mathbb{P}}$, we set $\tilde{\beta} = 0$.\par  
    By Mazur's lemma \cite[Lemma 10.19]{Renardy}, for $1 \leq i \leq j$, there exists $\lambda^{i, j} \in [0, 1]$ such that for each $i$, all but finitely many $\lambda^{i, j}$ are finite, $\sum_{j \geq i} \lambda^{i, j} = 1$, and the $\sum_{j \geq i} \lambda^{i, j} \frac{d\mathbb{P}^{\alpha^i}}{d\mathbb{P}}$ converge strongly in $\mathbb{L}^1$ with respect to $\mathbb{P}$ to $\frac{d\tilde{\mathbb{P}}}{d\mathbb{P}}$. Notice that this implies that $\frac{d\tilde{\mathbb{P}}}{d\mathbb{P}}$ is $\mathcal{F}_{T\wedge \tau}$ measurable. We can now define the process such that
    \begin{equation}
    \label{eq:def.tilde.alpha.lemma}
        \tilde{\alpha}^i_t := \sum_{j \geq i} \iota^{i, j}_t \alpha^j_t \text{ with } \iota^{i, j}_t = \frac{\lambda^{i, j} \mathcal{E}(\int_0^{\cdot \wedge \tau} \beta(s, X_{\cdot \wedge s}, \alpha^j)dW_s)_t}{\sum_{j' \geq i} \lambda^{i, j'} \mathcal{E}(\int_0^{\cdot \wedge \tau} \beta(s, X_{\cdot \wedge s}, \alpha^{j'})dW_s)_t}\mathbf{1}_{B_i} + \mathbf{1}_{\{i=j\}}\mathbf{1}_{B^c_i}
    \end{equation}
    with
    $B_i := \lbrace \sum_{j' \geq i} \lambda^{i, j'} \mathcal{E}(\int_0^{\cdot \wedge \tau} \beta(s, X_{\cdot \wedge s}, \alpha^{j'})dW_s)_t > 0 \rbrace$.
    By linearity of $\beta$ in $a$, we have $\beta(t, X_{\cdot \wedge t}, \tilde{\alpha}^i_t) = \sum_{j \geq i} \iota^{i, j}_t \beta(t, X_{\cdot \wedge t}, \alpha^j_t)$, and it is therefore easy to see that $\frac{d\mathbb{P}^{\tilde{\alpha}^i}}{d\mathbb{P}} = \sum_{j\geq i}\lambda^{i, j}\frac{d\mathbb{P}^{\alpha^j}}{d\mathbb{P}}$.\par
    Let us for now additionally assume $\tilde{\mathbb{P}} \sim \mathbb{P}$ and $\mathbb{P}^{\alpha^n} \sim \mathbb{P}$ for all $n \geq 0$. As we have strong convergence of $(\frac{d\mathbb{P}^{\tilde{\alpha}^n}}{d\mathbb{P}})_{n\ge1}$ to $\frac{d\tilde{\mathbb{P}}}{d\mathbb{P}}$, by Lemma \ref{lem: sem top}, the martingales $\mathcal{E}(\int_0^{\cdot \wedge \tau} \beta(s, X_{\cdot \wedge s}, \tilde{\alpha}^n_s)dW_s)$ converge in ucp to $\mathcal{E}(\int_0^{\cdot \wedge \tau} \tilde{\beta}_s dW_s)$ and therefore also in the Émery topology. Further $\frac{1}{\mathcal{E}(\int_0^{\cdot \wedge \tau} \beta(s, X_{\cdot \wedge s}, \tilde{\alpha}^n_s)dW_s)}$ converges in ucp to $\frac{1}{\mathcal{E}(\int_0^{\cdot \wedge \tau} \tilde{\beta}_s dW_s)}$, and thus by \cite[Theorem V.15]{Protter13}, the $\int_0^{\cdot \wedge\tau} \beta(s, X_{\cdot \wedge s}, \tilde{\alpha}^n_s) dW_s = \int_0^\cdot \frac{1}{\mathcal{E}(\int_0^{\cdot \wedge \tau} \beta(s, X_{\cdot \wedge s}, \tilde{\alpha}^n_s)dW_s)}d\mathcal{E}(\int_0^{\cdot \wedge \tau} \beta(s, X_{\cdot \wedge s}, \tilde{\alpha}^n_s)dW_s)$ converge in ucp to $\int_0^{\cdot \wedge \tau} \tilde{\beta}_s dW_s$. Again by Lemma \ref{lem: sem top}, this shows $\int_0^{T \wedge \tau} \Vert\beta(s, X_{\cdot \wedge s}, \tilde{\alpha}^n_s) - \tilde{\beta}_s \Vert^2ds$ converges in probability to zero and therefore, up to a subsequence, $dt \times \mathbb{P}$ a.e.\ on $\lbrace t < \tau \rbrace$ that $\beta(t, X_{\cdot \wedge t}, \tilde{\alpha}^n_t) \rightarrow \tilde{\beta}_t$. In particular, since $\beta^1$ has a.e.\ linearly independent columns, this shows that there exists an $A$-valued process $\tilde{\alpha}$ such that $dt \times \mathbb{P}$ a.e.\ on $\lbrace t < \tau \rbrace$, we have $\tilde{\beta}_t = \beta(t, X_{\cdot \wedge t}, \tilde{\alpha}_t)$ and such that the $\tilde{\alpha}^n_t$ converge to $\tilde{\alpha}_t$.\par
    It remains to examine what happens when equivalence of the measures is not necessarily guaranteed. Let $\alpha^0$ be any bounded $A$ valued process. Of course, $\mathbb{P}^{\alpha^0} \sim \mathbb{P}$. Just like above, let us define $\iota^n_t = \frac{\mathcal{E}(\int_0^{\cdot \wedge \tau}\beta(s, X_{\cdot\wedge s}, \tilde{\alpha}^n_s)dW_s)_t}{\mathcal{E}(\int_0^{\cdot \wedge \tau}\beta(s, X_{\cdot\wedge s}, \tilde{\alpha}^n_s)dW_s)_t + \mathcal{E}(\int_0^{\cdot \wedge \tau}\beta(s, X_{\cdot\wedge s}, \alpha^0_s)dW_s)_t}$ and $\iota_t = \frac{\mathcal{E}(\int_0^{\cdot \wedge \tau} \tilde{\beta}_sdW_s)_t}{\mathcal{E}(\int_0^{\cdot \wedge \tau}\tilde{\beta}_sdW_s)_t + \mathcal{E}(\int_0^{\cdot \wedge \tau}\beta(s, X_{\cdot\wedge s}, \alpha^0_s)dW_s)_t}$, as well as $\tilde{\alpha}^{0, n}_t = \iota^n_t \tilde{\alpha}^n_t + (1 - \iota^n_t) \alpha^0_t$ and $\tilde{\beta}^0 = \iota_t \tilde{\beta}_t + (1 - \iota_t) \beta(t, X_{\cdot \wedge t}, \alpha^0_t)$. This way, $\mathbb{P}^{\tilde{\alpha}^{0, n}} = \frac{\mathbb{P}^{\tilde{\alpha}^n} + \mathbb{P}^{\alpha^0}}{2}$, and $\frac{d\frac{\tilde{\mathbb{P}} + \mathbb{P}^{\alpha^0}}{2}}{d\mathbb{P}} = \mathcal{E}(\int_0^{\cdot \wedge \tau}\tilde{\beta}^0_s dW_s)_T$. We still have that $\mathbb{P}^{\tilde{\alpha}^n}$ converges in total variation to $\tilde{\mathbb{P}}$, which implies $dt \times \mathbb{P}$ a.e.\ $\iota^n_t \rightarrow \iota_t$, as well as that the $\mathbb{P}^{\tilde{\alpha}^{0, n}}$ converge in total variation to $\frac{\tilde{\mathbb{P}} + \mathbb{P}^{\alpha^0}}{2}$. Since the $\frac{\mathbb{P}^{\tilde{\alpha}^n} + \mathbb{P}^{\alpha^0}}{2}$ and $\frac{\tilde{\mathbb{P}}+\mathbb{P}^{\alpha^0}}{2}$ are equivalent to $\mathbb{P}$ we can use the argument above to see that $dt \times \mathbb{P}$ a.e., we have $\iota^n_t \beta(t, X_{\cdot \wedge t}, \tilde{\alpha}^n_t) + (1- \iota^n_t)\beta(t, X_{\cdot \wedge t}, \alpha^0_t) \rightarrow \iota_t \tilde{\beta_t}+(1-\iota_t)\beta(t, X_{\cdot \wedge t}, \alpha^0_t)$. 
    This implies that on the event $\lbrace \iota_t > 0 \rbrace$, we have $\beta(t, X_{\cdot \wedge t}, \tilde{\alpha}^n_t)\to \tilde\beta_t$. 
    The event $\lbrace \iota_t > 0 \rbrace$ coincides with the event $\lbrace \frac{d\tilde{\mathbb{P}}_{\vert \mathcal{F}_t}}{d\mathbb{P}_{\vert \mathcal{F}_t}} > 0\rbrace$ and it is thus equivalent to say that this convergence happens $dt \times \tilde{\mathbb{P}}$ a.e. Again, as $\beta^1$ was assumed to be full rank, we can $\tilde{\mathbb{P}}$ a.e.\ define an $A$-valued process $\tilde{\alpha}$ such that $dt \times \tilde{\mathbb{P}}$ a.e., $\tilde{\beta}_t = \beta(t, X_{\cdot \wedge t}, \tilde{\alpha}_t)$. 
\end{proof}
\subsection{The optimal control}
Let us now come to the proof of the existence result. 
In this section we will also derive integrability properties of the optimal control.
\begin{proof}[Proof of Theorem \ref{thm: exs}]
    Uniqueness for strictly convex $f^1$ is an immediate consequence from Proposition \ref{prop: J cnv}.\par

    To prove existence, let us first consider the case where $A$ is bounded. 
    It is immediate $\inf_{\alpha \in \mathbb{A}} J(\alpha) > - \infty$, and we can consider any minimizing sequence $\alpha^n\in \mathbb{A}$ such that $J(\alpha^n) \leq \inf_{\alpha\in\mathbb{A}}J(\alpha) + \frac{1}{n}$. As $A$ is bounded, we can see that the $\mathcal{H}(\mathbb{P}^{\alpha^n}\parallel\mathbb{P})$ are bounded in $n$. By Lemma \ref{lem: drft cstr}, we can find $\hat{\mathbb{P}}$, such that up to a subsequence, the $\mathbb{P}^{\alpha^n}$ converge to $\hat{\mathbb{P}}$. As this $\hat{\mathbb{P}}$ can also be attained as the limit in total variation of convex combinations of the $\mathbb{P}^{\alpha^n}$, by Fatou's lemma $\mathbb{E}[\frac{d\mathbb{P}}{d\mathbb{\hat{\mathbb{P}}}}] \leq \liminf_{n \rightarrow\infty}\mathbb{E}[\frac{d\mathbb{P}}{d\mathbb{P}^{\alpha^n}}] < \infty$.
    Thus, $\frac{d\hat{\mathbb{P}}}{d\mathbb{P}}>0$ a.s., and hence $\hat{\mathbb{P}} \sim \mathbb{P}$. 
    Therefore, by Lemma \ref{lem: drft cstr}, we can find $\hat{\alpha} \in \mathbb{A}$ such that $\mathbb{P}^{\hat{\alpha}} = \hat{\mathbb{P}}$. Following the construction in Lemma \ref{lem: drft cstr}, the $\mathbb{P}^{\tilde{\alpha}^n}$ are convex combinations of the $\mathbb{P}^{\alpha^n}$ and by the convexity properties proven in Proposition \ref{prop: J cnv}, we have $J(\tilde{\alpha}^n)\leq J(\alpha^n) \leq \inf_{\alpha \in \mathbb{A}} J(\alpha) + \frac{1}{n}$. We further know that $d_{\text{TV}}(\mathbb{P}^{\hat{\alpha}}, \mathbb{P}^{\tilde{\alpha}^n}) \rightarrow 0$, from which it follows that for any $t \geq 0$, we have $\mathbb{P}^{\tilde{\alpha}^n}[t<\tau] \rightarrow \mathbb{P}^{\hat{\alpha}}[t<\tau]$ and $d_{\text{TV}}(\mathcal{L}_{\mathbb{P}^{\tilde{\alpha}^n}}(X_{\cdot \wedge t} \vert t<\tau) , \mathcal{L}_{\mathbb{P}^{\hat{\alpha}}}(X_{\cdot \wedge t}\vert t<\tau)) \rightarrow 0$. By boundedness of our cost coefficients, we have by the dominated convergence theorem that $J(\tilde{\alpha}^n) \rightarrow J(\hat{\alpha})$ and hence optimality.\par
    We now consider the case where $A$ is allowed to be unbounded. We will construct the optimal control in the general case as a limit of optimal controls for bounded control spaces. For each $N\geq1$ let us consider $A^N := \overline{B_0(N)} \cap A$ and $\mathbb{A}^N$ as the set of all $A^N$ valued progressively measurable $\alpha$. Clearly, $\mathbb{A}^N \subset \mathbb{A}_{\text{BMO}}$. 
    We let $\hat{\alpha}^N$ be the optimizer of $J$ over $\mathbb{A}^N$.\par
    Note that the $J(\hat{\alpha}^N)$ are decreasing in $N$ and in particular, thanks to Assumption \ref{asmp: cnv}, it must be bounded in $N$. 
    In addition, using the coercivity property and strong convexity, this implies that the sequence $\mathcal{H}(\mathbb{P}^{\hat{\alpha}^N}\parallel \mathbb{P}) = \mathbb{E}^\mathbb{P}[\frac{d\mathbb{P}^{\hat{\alpha}^N}}{d\mathbb{P}} \log(\frac{d\mathbb{P}^{\hat{\alpha}^N}}{d\mathbb{P}})]$ is bounded. Let $\hat{\mathbb{P}} \ll \mathbb{P}$ be the limit probability measure constructed using Lemma \ref{lem: drft cstr}.
    It remains to show that $\hat{\mathbb{P}} = \mathbb{P}^{\hat\alpha}$ for some optimal $\hat\alpha \in \mathbb{A}$.\par

    Now, for some fixed $n$, consider any $\gamma \in \mathbb{A}^n$. As each $\hat{\alpha}^N$ is optimal for $J$ over $\mathbb{A}^N$, by Theorem \ref{thm: sfc}, for any $N \geq n$, we have $\mathcal{H}(\mathbb{P}^\gamma\parallel\mathbb{P}^{\hat{\alpha}^N}) \leq \frac{2L^2}{m}(J(\gamma) - J(\hat{\alpha}^N))$. As we have that $J$ is uniformly bounded from below thanks to Assumption \ref{asmp: cnv}, 
     the right hand side is bounded in $N$. As the $\mathbb{P}^{\hat{\alpha}^N}$ converge to $\hat{\mathbb{P}}$ setwise up to a subsequence, by lower semicontinuity of the relative entropy, we have $\mathcal{H}(\mathbb{P}^\gamma\parallel\hat{\mathbb{P}})<\infty$. In particular, $\hat{\mathbb{P}}$ is equivalent to $\mathbb{P}$ since we already know that $\mathbb{P}^\gamma$ is equivalent to $\mathbb P$. 
     \par

    Following Lemma \ref{lem: drft cstr}, there exists a progressively measurable process $\hat{\alpha}$ valued in $A$ such that $\hat{\mathbb{P}} = \mathbb{P}^{\hat{\alpha}}$. 
    Further, let $\tilde{\alpha}^n$ be as in Lemma \ref{lem: drft cstr}. By convexity of $J$ as shown in Proposition \ref{prop: J cnv}, we can see that for these $\tilde{\alpha}^n$, we have $\liminf_{n\rightarrow \infty} J(\tilde{\alpha}^n) \leq \liminf_{n\rightarrow\infty}J(\hat{\alpha}^n)$. Now,
    \begin{align*}
        J(\tilde{\alpha}^n) &=  \mathbb{E}^{\hat{\alpha}}\bigg[\int_0^{T\wedge \tau} \frac{d\mathbb{P}^{\tilde{\alpha}^n}}{d\mathbb{P}^{\hat{\alpha}}}\Big( f^1(s, X_{\cdot \wedge s}, \tilde{\alpha}^n_s) + f^2(s, X_{\cdot \wedge s}, \mathcal{L}_{\mathbb{P}^{\tilde{\alpha}^n}}(X_{\cdot \wedge s} \vert s <\tau), \mathbb{P}^{\tilde{\alpha}^n}[s<\tau]) + \underline{C}\Big)ds\\
        \quad& + \frac{d\mathbb{P}^{\tilde{\alpha}^n}}{d\mathbb{P}^{\hat{\alpha}}} \Big( g(X, \mathcal{L}_{\mathbb{P}^{\tilde{\alpha}^n}}(X\vert T<\tau), \mathbb{P}^{\tilde{\alpha}^n}[T<\tau]) + \underline{C} \Big)\bigg] - \underline{C}(1 + \mathbb{E}^{\tilde{\alpha}^n}[T \wedge \tau])
    \end{align*}
    where we chose $\underline{C} > 0$ as some constant upper bounding $-(f_1 + f_2)$ and $-g$. As $d_{\text{TV}}(\mathbb{P}^{\tilde{\alpha}^n}, \mathbb{P}^{\hat{\alpha}}) \rightarrow 0$, we know for each $t\geq 0$ that $\mathbb{P}^{\tilde{\alpha}^n}[t<\tau] \rightarrow \mathbb{P}^{\hat{\alpha}}[t<\tau]$ and $d_{\text{LC}}(\mathcal{L}_{\mathbb{P}^{\tilde{\alpha}^n}}(X_{\cdot \wedge t}\vert t<\tau), \mathcal{L}_{\mathbb{P}^{\hat{\alpha}}}(X_{\cdot \wedge t}\vert t<\tau)) \rightarrow 0$. Since we already know that $dt \times \mathbb{P}$ a.e., $\tilde{\alpha}^n_t \rightarrow \hat{\alpha}_t$ and $\frac{d\mathbb{P}^{\tilde{\alpha}^n}}{d\mathbb{P}^{\hat{\alpha}}} \rightarrow 1$, by Fatou's lemma, we can thus see that $J(\hat{\alpha}) \leq \liminf_{n\rightarrow\infty} J(\tilde{\alpha}^n) \leq \liminf_{n\rightarrow\infty} J(\hat{\alpha}^n) < \infty$ and thus $\hat{\alpha} \in \mathbb{A}$.\par

    To argue that $J(\hat{\alpha}) \geq \limsup_{n\rightarrow \infty}J(\hat{\alpha}^n)$, relying on Lemma \ref{lem: BMO aprx}, it suffices to argue that for any $\alpha \in \mathbb{A}_{\text{BMO}}$, there exists a sequence of controls $(\alpha^n)_{n \geq 1}$ such that each $\alpha^n$ is bounded, and that $J(\alpha^n) \rightarrow J(\alpha)$. Define $\alpha^n_t = \alpha_t \mathbf{1}_{\lbrace\Vert \alpha_t \Vert \leq n \rbrace}$. It is immediate that $\mathcal{H}(\mathbb{P}^\alpha \parallel \mathbb{P}^{\alpha^n})\rightarrow 0$. Further, we have $\Vert \alpha^n \Vert_{\text{BMO}} \leq \Vert \alpha \Vert_{\text{BMO}}$ so that by \cite[Theorem 3.1]{Kazamaki}, there must exist some $r >1$ such that $\mathbb{E}[(\frac{d\mathbb{P}^\alpha}{d\mathbb{P}} - \frac{d\mathbb{P}^{\alpha^n}}{d\mathbb{P}})^r] \rightarrow 0$. At the same time, since $\int_0^{T \wedge \tau} \Vert \alpha_s \Vert^2 ds$ is $q$ integrable with respect to $\mathbb{P}$ for any $q>1$ by \cite[Theorem 8.2.21]{CE}, we can follow that $J(\alpha^n) \rightarrow J(\alpha)$.
    In conclusion, we have shown that $J(\hat{\alpha}^n) \searrow J(\hat{\alpha})$. Again relying on the approximation arguments we have shown, we have $J(\hat{\alpha}) = \inf_{\alpha \in \mathbb{A}}J(\alpha)$, and even $\mathcal{H}(\mathbb{P}^\alpha\parallel\mathbb{P}^{\hat{\alpha}}) \leq \frac{2L^2}{m}(J(\alpha) - J(\hat{\alpha}))$ for any $\alpha \in \mathbb{A}$.\par
    It remains to show that $\hat{\alpha} \in \mathbb{A}_{\text{BMO}}$. The proof of this statement requires a lengthy analysis of the adjoint equation provided below in Lemma \ref{lem: opt BMO}.
\end{proof}

\subsubsection{The BMO property}
To conclude the proof of Theorem \ref{thm: exs}, it remains to show that the optimal control satisfies $\hat\alpha\in \mathbb{A}_{\text{BMO}}$.
To this end, we need to use the necessary condition of optimality (note however that 
we cannot apply Theorem \ref{thm: ncs} directly as this theorem is proved for optimal controls $\hat{\alpha} \in \mathbb{A}_{BMO}$). 
Throughout this section, we are working in the framework of Theorem \ref{thm: exs} and its proof, with $A$ possibly unbounded.\par

First, observe that that since $\beta$ is independent of $\mu$ and $p$ (Assumption \ref{asmp: cnv}.$(i)$) we do not need to require the BMO property and Proposition \ref{prop: wp adj} to solve the adjoint equation \eqref{eq: adj eq}. 
In fact, for any $\alpha \in \mathbb{A}$, since the driver becomes independent of the adjoint variables, we can find $(Y^\alpha, Z^\alpha)$ as the solution of
\begin{align*}
    Y^\alpha_{t\wedge \tau} &= \mathbf{1}_{\lbrace T < \tau \rbrace}\Big(g(\theta^\alpha_T) + \tilde{\mathbb{E}}^\alpha\Big[\frac{\delta g}{\delta m}(\tilde{\theta}^\alpha_T, X)\vert T < \tilde{\tau}\Big] + \mathbb{E}^\alpha[\mathbf{1}_{\lbrace T < \tau \rbrace}g_p(\theta^\alpha_T)]\Big)\\
    &\quad + \int_{t\wedge\tau}^{T\wedge\tau}f^1(s, X_{\cdot \wedge s}, \alpha_s) + f^2(s, \theta^\alpha_s) + \tilde{\mathbb{E}}^\alpha\Big[\frac{\delta f^2}{\delta m}(s, \tilde{\theta}^\alpha_s, X_{\cdot \wedge s})\vert s<\tilde{\tau}\Big] + \mathbb{E}^\alpha[\mathbf{1}_{\lbrace s<\tau \rbrace}f^2_p(s, \theta^\alpha_s)]ds\\ 
    &\quad  -\int_{t\wedge\tau}^{T\wedge\tau} Z^\alpha_s dW^\alpha_s
\end{align*}
where $Y^\alpha$ is given by a conditional expectation and $Z^\alpha$ is found using the martingale representation theorem with respect to $\mathbb{P}^\alpha$ using \cite[Theorem III.5.24]{JS}.  
Since for $\alpha \in \mathbb{A}$ we know that $\int_0^{T\wedge \tau}f^1(s, X_{\cdot \wedge s}, \alpha_s)ds$ is merely integrable with respect to $\mathbb{P}^\alpha$, with the above construction we cannot a priori guarantee any sufficient integrability properties for $Z^\alpha$, and we cannot rely on the usual $\mathbb{L}^p$-theory of BSDEs. 
Therefore, we first focus on representation properties of $\hat{\alpha}$ and $Z^{\hat{\alpha}}$.
\begin{lemma}\label{lem: opt ncs}
    Let $\hat\alpha$ be optimal and $Z^{\hat{\alpha}}$ be as above. 
    Then, $dt \times \mathbb{P}$ a.e. on $\lbrace t < \tau\rbrace$, we have $\hat{\alpha}_t = \arg\min_{a \in A}h^1(t, X_{\cdot \wedge t}, a, Z^{\hat{\alpha}}_t)$.
\end{lemma}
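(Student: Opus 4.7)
The plan is to establish the optimality condition via a bounded localized perturbation of $\hat\alpha$, since Theorem \ref{thm: ncs} cannot be applied directly (it requires the optimizer to lie in $\mathbb{A}_{BMO}$). Fix $n \in \mathbb{N}$, a bounded progressively measurable $A$-valued process $e$, and let $U_n := \{(t,\omega): t<\tau,\ \Vert\hat\alpha_t\Vert\leq n\}$. For $\epsilon \in [0,1]$ consider
$$\alpha^\epsilon_t := \hat\alpha_t + \epsilon(e_t - \hat\alpha_t)\mathbf{1}_{U_n}(t).$$
Convexity of $A$ gives $\alpha^\epsilon \in A$, and $\alpha^\epsilon - \hat\alpha$ is bounded (by $2n$). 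Therefore $d\mathbb{P}^{\alpha^\epsilon}/d\mathbb{P}^{\hat\alpha}$ is the Doléans-Dade exponential of an integral whose integrand $\beta(\cdot,\alpha^\epsilon_s)-\beta(\cdot,\hat\alpha_s)$ is bounded, yielding positive and negative moments of every order, uniformly in $\epsilon$. In particular $\alpha^\epsilon\in\mathbb{A}$.

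Next, compute the right derivative $\frac{d}{d\epsilon}\big|_{\epsilon=0^+}J(\alpha^\epsilon)$ by adapting Theorem \ref{thm: gat drv} to the bounded perturbation direction $\eta_t := (e_t-\hat\alpha_t)\mathbf{1}_{U_n}(t)$. The variational process $\Lambda^{\hat\alpha,\eta}$ defined by the same linear McKean-Vlasov SDE as in Proposition \ref{prop: var prc} is a true $L^q(\mathbb{P}^{\hat\alpha})$-martingale for every $q\geq 1$, since $\eta$ and the derivatives $\beta_a,\delta\beta/\delta m,\beta_p$ are bounded. The integrability estimates used in Proposition \ref{prop: var prc} that previously relied on the BMO norm of $\hat\alpha$ can be replaced by the uniform $L^p$-bounds on $d\mathbb{P}^{\alpha^\epsilon}/d\mathbb{P}^{\hat\alpha}$ and the boundedness of $\hat\alpha$ on $U_n$. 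The resulting formula is
$$\left.\frac{d}{d\epsilon}\right|_{0^+}J(\alpha^\epsilon) = \mathbb{E}^{\hat\alpha}\Big[\int_0^{T\wedge\tau} h^1_a(s,X_{\cdot\wedge s},\hat\alpha_s,Z^{\hat\alpha}_s)^\top(e_s-\hat\alpha_s)\mathbf{1}_{U_n}(s)\,ds\Big].$$
Optimality of $\hat\alpha$ over $\mathbb{A}$ gives $J(\alpha^\epsilon)\geq J(\hat\alpha)$, so the right derivative is nonnegative. Varying $e$ over a countable class of bounded progressively measurable selectors and using a standard measurable selection argument, we conclude that $dt\times\mathbb{P}$-a.e.\ on $U_n$, $h^1_a(t,X_{\cdot\wedge t},\hat\alpha_t,Z^{\hat\alpha}_t)^\top(a-\hat\alpha_t)\geq 0$ for every $a\in A$ with $\Vert a\Vert\leq n$. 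Letting $n\to\infty$ and using convexity of $h^1$ in $a$ from Assumption \ref{asmp: cnv}, this first-order condition is equivalent to $\hat\alpha_t = \arg\min_{a\in A} h^1(t,X_{\cdot\wedge t},a,Z^{\hat\alpha}_t)$ $dt\times\mathbb{P}$-a.e.\ on $\{t<\tau\}$.

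The principal obstacle is in the Gateaux derivative step: since $\hat\alpha$ is only in $\mathbb{A}$, the adjoint BSDE lacks the $\mathcal{M}^2_{d,\hat\alpha}$-regularity of Proposition \ref{prop: wp adj} and $Z^{\hat\alpha}$ is only known to satisfy $\int_0^{T\wedge\tau}\Vert Z^{\hat\alpha}_s\Vert^2 ds<\infty$ almost surely. Thanks to the separability of $f$ and the boundedness of $f^2,g$ and their derivatives in Assumption \ref{asmp: cnv}.$(v)$, the driver terms arising from the $f^2,g$-contributions are bounded, while the $f^1$-contribution (contributing via the conditional expectation defining $Y^{\hat\alpha}$) is only in $L^1(\mathbb{P}^{\hat\alpha})$. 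An $L^p$-BSDE argument for some $p>1$, combined with the localization to $U_n$ where $\hat\alpha$ is bounded, provides the required integrability of $\int_0^{T\wedge\tau}\Vert Z^{\hat\alpha}_s\Vert\mathbf{1}_{U_n}\,ds$ to make the derivative formula rigorous and to justify interchanging limits and expectations.
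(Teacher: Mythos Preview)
Your strategy—perturb $\hat\alpha$ by a bounded direction localized on $U_n=\{t<\tau,\ \|\hat\alpha_t\|\le n\}$ and differentiate $J$ directly—is natural, but the ``principal obstacle'' you identify is not actually overcome by what you propose. The Gateaux formula of Theorem~\ref{thm: gat drv} is obtained via integration by parts between $\Lambda^{\hat\alpha,\eta}$ and the adjoint pair $(Y^{\hat\alpha},Z^{\hat\alpha})$, and this step requires $\int_0^{\cdot}\Lambda^{\hat\alpha,\eta}_s Z^{\hat\alpha}_s\,dW^{\hat\alpha}_s$ and $\int_0^{\cdot}Y^{\hat\alpha}_s\,d\Lambda^{\hat\alpha,\eta}_s$ to be true martingales. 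Localizing the \emph{perturbation} to $U_n$ makes $\eta$ and $\Lambda^{\hat\alpha,\eta}$ as integrable as you like, but it does nothing for $Z^{\hat\alpha}$ or $Y^{\hat\alpha}$: these are fixed by the global martingale representation of $\mathcal{Y}$, and since $\int_0^{T\wedge\tau}f^1(s,X_{\cdot\wedge s},\hat\alpha_s)\,ds$ is only $L^1(\mathbb{P}^{\hat\alpha})$, there is no $L^p$ theory ($p>1$) available—exactly the point the paper makes just above the lemma. The sentence ``An $L^p$-BSDE argument for some $p>1$ \ldots\ provides the required integrability of $\int_0^{T\wedge\tau}\|Z^{\hat\alpha}_s\|\mathbf{1}_{U_n}\,ds$'' is the crux and is unjustified: the indicator $\mathbf{1}_{U_n}$ constrains $\hat\alpha$, not $Z^{\hat\alpha}$, and there is no mechanism linking the two.

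The paper circumvents this entirely. Rather than differentiating at $\hat\alpha$, it uses the bounded-control optimizers $\hat\alpha^N$ from the proof of Theorem~\ref{thm: exs}, for which Theorem~\ref{thm: ncs} applies directly and gives the first-order condition at each level $N$. The work then goes into passing to the limit: one shows $\mathbb{E}^{\hat\alpha^N}[\mathcal{Y}^N\mid\mathcal{F}_\cdot]\to\mathbb{E}^{\hat\alpha}[\mathcal{Y}\mid\mathcal{F}_\cdot]$ in ucp (via Scheff\'e and Lemma~\ref{lem: sem top}), deduces $Z^{\hat\alpha^N}\to Z^{\hat\alpha}$ $dt\times\mathbb{P}$-a.e.\ along a subsequence, and combines this with $\hat\alpha^N\to\hat\alpha$ and continuity of $h^1_a$ to transfer the inequality $(\gamma^N_t)^\top h^1_a(\cdot,\hat\alpha^N_t,Z^{\hat\alpha^N}_t)\ge 0$ to the limit. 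This approximation route needs only $L^1$ control and pointwise convergence, never any $L^p$ bound on $Z^{\hat\alpha}$; that is precisely why it succeeds where a direct variational argument stalls.
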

\begin{proof}
    Just like in the proof of Theorem \ref{thm: exs}, let $\hat{\alpha}^N$ be the optimal control in $\mathbb{A}^N$. Let us write
    \begin{align*}
        \mathcal{Y} &:= \mathbf{1}_{\lbrace T < \tau \rbrace}\Big( g(\theta^{\hat{\alpha}}_T) + \tilde{\mathbb{E}}^{\hat{\alpha}}\big[\frac{\delta g}{\delta m}(\tilde{\theta}^{\hat{\alpha}}_T, X)\vert T < \tilde{\tau}\big] + \mathbb{E}^{\hat{\alpha}}[\mathbf{1}_{\lbrace T < \tau \rbrace}g_p(\theta^{\hat{\alpha}}_T)]\Big)\\
        & \quad+ \int_0^{T\wedge\tau}f^1(s, X_{\cdot \wedge s}, \hat{\alpha}_s) + f^2(s, \theta^{\hat{\alpha}}_s) 
        + \tilde{\mathbb{E}}^{\hat{\alpha}}\Big[\frac{\delta f^2}{\delta m}(s, \tilde{\theta}^{\hat{\alpha}}_s, X_{\cdot \wedge s})\vert s<\tilde{\tau}\Big] + \mathbb{E}^{\hat{\alpha}}[\mathbf{1}_{\lbrace s<\tau \rbrace}f^2_p(s, \theta^{\hat{\alpha}}_s)]ds
    \end{align*}
    and
    \begin{align*}
        \mathcal{Y}^N &= \mathbf{1}_{\lbrace T < \tau \rbrace}\Big(g(\theta^{\hat{\alpha}^N}_T) + \tilde{\mathbb{E}}^{\hat{\alpha}^N}\Big[\frac{\delta g}{\delta m}(\tilde{\theta}^{\hat{\alpha}^N}_T, X)\vert T < \tilde{\tau} \Big] + \mathbb{E}^{\hat{\alpha}^N}[\mathbf{1}_{\lbrace T < \tau \rbrace}g_p(\theta^{\hat{\alpha}^N}_T)]\Big)\\
        &\quad+ \int_0^{T\wedge\tau}f^1(s, X_{\cdot \wedge s}, \hat{\alpha}^N_s) + f^2(s, \theta^{\hat{\alpha}^N}_s) + \tilde{\mathbb{E}}^{\hat{\alpha}^N}\Big[\frac{\delta f^2}{\delta m}(s, \tilde{\theta}^{\hat{\alpha}^N}_s, X_{\cdot \wedge s})\vert s<\tilde{\tau}\Big] + \mathbb{E}^{\hat{\alpha}^N}[\mathbf{1}_{\lbrace s<\tau \rbrace}f^2_p(s, \theta^{\hat{\alpha}^N}_s)]ds.
    \end{align*}
    As described above, $Z^{\hat{\alpha}}$ and $Z^{\hat{\alpha}^N}$ are characterized via $\int_0^\cdot Z^{\hat{\alpha}}_s dW^{\hat{\alpha}}_s = \mathbb{E}^{\hat{\alpha}}[\mathcal{Y}\vert\mathcal{F}_\cdot] - \mathbb{E}^{\hat{\alpha}}[\mathcal{Y}\vert \mathcal{F}_0]$ and $\int_0^\cdot Z^{\hat{\alpha}^N}_s dW^{\hat{\alpha}^N}_s = \mathbb{E}^{\hat{\alpha}^N}[\mathcal{Y}^N\vert\mathcal{F}_\cdot] - \mathbb{E}^{\hat{\alpha}^N}[\mathcal{Y}^N\vert\mathcal{F}_0]$. We will first show that $Z^{\hat{\alpha}^N}$ converges $dt \times\mathbb{P}$ a.e.\ to $Z^{\hat{\alpha}}$ up to a subsequence.\par
    Recall that we have proven that $\mathcal{H}(\mathbb{P}^{\hat{\alpha}^N} \parallel \mathbb{P}^{\hat{\alpha}}) \rightarrow 0$ and particularly, $d_{\text{TV}}(\mathbb{P}^{\hat{\alpha}^N}, \mathbb{P}^{\hat{\alpha}}) \rightarrow 0$. 
    This also implies that the sequence $\mathbb{P}^{\hat{\alpha}^N}[T< \tau]$ converges to $\mathbb{P}^{\hat{\alpha}}[T<\tau]$ and is thus bounded away from zero in $N$. Based on our prior discussion, and as we have assumed our coefficients and their derivatives to be continuous, we can see that $\mathcal{Y}$ converges a.s.\ to $\mathcal{Y}^N$. Further, by Assumption \ref{asmp: cnv}.$(v)$, we can see that $\mathcal{Y}$ and $\mathcal{Y}^N$ can be uniformly bounded from below. By adding a common constant, we can thus assume that $\mathcal{Y}^N$ and $\mathcal{Y}$ are non-negative (this constant does not affect the $Z^{\hat{\alpha}^N}$ and $Z^{\hat{\alpha}}$ as it gets cancelled in the construction of these processes).
    \par

    Now, since we have shown previously that $J(\hat{\alpha}^N) \rightarrow J(\hat{\alpha})$ and since the remaining derivatives terms can be uniformly bounded in $N$, we can see that $\mathbb{E}^{\hat{\alpha}^N}[\mathcal{Y}^N] \rightarrow \mathbb{E}^{\hat{\alpha}}[\mathcal{Y}]$. As we have assumed non-negativity, by Scheffé's lemma, the sequence $(\frac{d\mathbb{P}^{\hat{\alpha}^N}}{d\mathbb{P}^{\hat{\alpha}}}\mathcal{Y}^N)_N$ thus converges in $\mathbb{L}^1$ with respect to $\mathbb{P}^{\hat{\alpha}}$ to $\mathcal{Y}$. By Lemma \ref{lem: sem top}, the martingales $\mathbb{E}^{\hat{\alpha}}[\frac{d\mathbb{P}^{\hat{\alpha}^N}}{d\mathbb{P}^{\hat{\alpha}}}\mathcal{Y}^N\vert \mathcal{F}_\cdot]$ converge to $\mathbb{E}^{\hat{\alpha}}[\mathcal{Y}\vert \mathcal{F}_\cdot]$ in the ucp topology. Analogously, we can see that the martingales $\mathbb{E}^{\hat{\alpha}}[\frac{d\mathbb{P}^{\hat{\alpha}^N}}{d\mathbb{P}^{\hat{\alpha}}}\vert \mathcal{F}_\cdot]$ converge to $1$ (the constant process valued at $1$) in ucp. By Lemma \ref{lem: sem top}, we can see that the processes $\mathbb{E}^{\hat{\alpha}^N}[\mathcal{Y}^N\vert \mathcal{F}_\cdot] = \frac{\mathbb{E}^{\hat{\alpha}}[\frac{d\mathbb{P}^{\hat{\alpha}^N}}{d\mathbb{P}^{\hat{\alpha}}}\mathcal{Y}^N\vert\mathcal{F}_\cdot]}{\mathbb{E}^{\hat{\alpha}}[\frac{d\mathbb{P}^{\hat{\alpha}^N}}{d\mathbb{P}^{\hat{\alpha}}}|\mathcal{F}_\cdot]}$ thus converge in ucp to $\mathbb{E}^{\hat{\alpha}}[\mathcal{Y}\vert\mathcal{F}_\cdot]$.
    \par

    This also show that the $\int_0^\cdot Z^{\hat{\alpha}^N}_s dW^{\hat{\alpha}^N}_s$ converge in ucp to $\int_0^\cdot Z^{\hat{\alpha}}_s dW^{\hat{\alpha}}_s$. Note that $\int_0^\cdot Z^{\hat{\alpha}^N}_s dW^{\hat{\alpha}^N}_s = \int_0^{\cdot}Z^{\hat{\alpha}^N}_sdW^{\hat{\alpha}}_s + \int_0^\cdot (\beta(s, X_{\cdot \wedge s}, \hat{\alpha}_s) - \beta(s, X_{\cdot \wedge s}, \hat{\alpha}^N_s))^\top Z^{\hat{\alpha}^N}_sds$. We can estimate
    \begin{align}
    \notag
        &\mathbb{E}^{\hat{\alpha}^N}\bigg[\bigg(\int_0^{T\wedge\tau}\vert  (\beta(s, X_{\cdot \wedge s}, \hat{\alpha}_s) - \beta(s, X_{\cdot \wedge s}, \hat{\alpha}^N_s) )^\top Z^{\hat{\alpha}^N}_s\vert ds\bigg)^{\frac{1}{4}}\bigg] \\\notag 
        &\quad \leq \mathbb{E}^{\hat{\alpha}^N}\bigg[\bigg( \int_0^{T\wedge\tau}\vert Z^{\hat{\alpha}^N}_s \vert^2ds\bigg)^{\frac{1}{8}} \bigg(\int_0^{T\wedge\tau}\Vert \beta(s, X_{\cdot \wedge s}, \hat{\alpha}_s) - \beta(s, X_{\cdot \wedge s}, \hat{\alpha}^N_s) \Vert^2 ds\bigg)^{\frac{1}{8}}\bigg]\\\label{eq:factor.proof.rep.alpha.bmo}
        &\quad \leq\mathbb{E}^{\hat{\alpha}^N}\bigg[\bigg(\int_0^{T\wedge\tau}\vert Z^{\hat{\alpha}^N}_s \vert^2 ds\bigg)^{\frac{1}{4}}\bigg]^{\frac{1}{2}}\mathbb{E}^{\hat{\alpha}^N}\bigg[\int_0^{T\wedge\tau}\Vert \beta(s, X_{\cdot \wedge s}, \hat{\alpha}_s) - \beta(s, X_{\cdot \wedge s}, \hat{\alpha}^N_s)\Vert^2ds\bigg]^{\frac{1}{8}}.
    \end{align}
    Up to a constant coming from the Burkholder-Davis-Gundy inequality, the first term can be bounded by $\mathbb{E}^{\hat{\alpha}^N}[\sup_{t \in [0, T]} \vert \mathbb{E}^{\hat{\alpha}^N}[\mathcal{Y}^N\vert \mathcal{F}_t] - \mathbb{E}^{\hat{\alpha}^N}[\mathcal{Y}^N\vert \mathcal{F}_0] \vert^{\frac{1}{2}}] \leq 2 \mathbb{E}^{\hat{\alpha}^N}[\vert \mathcal{Y}^N] - \mathbb{E}^{\hat{\alpha}^N}[\mathcal{Y}^N\vert \mathcal{F}_0] \vert]^{\frac{1}{2}}$ where the latter inequality uses \cite[Lemma 6.1]{Briand03}. 
    As the sequence $\mathbb{E}^{\hat{\alpha}^N}[\mathcal{Y}^N]$ is bounded in $N$, the first factor in \eqref{eq:factor.proof.rep.alpha.bmo} is thus bounded in $N$. The second factor coincides with $(2\mathcal{H}(\mathbb{P}^{\hat{\alpha}^N} \parallel \mathbb{P}^{\hat{\alpha}}))^\frac{1}{8}$ and thus converges to zero. 
    Thus, $\int_0^{T\wedge\tau}\vert  (\beta(s, X_{\cdot \wedge s}, \hat{\alpha}_s) - \beta(s, X_{\cdot \wedge s}, \hat{\alpha}^N_s) )^\top Z^{\hat{\alpha}^N}_s\vert ds$ converges to zero in probability from which we can infer that $\int_0^\cdot (\beta(s, X_{\cdot \wedge s}, \hat{\alpha}_s) - \beta(s, X_{\cdot \wedge s}, \hat{\alpha}^N_s))^\top Z^{\hat{\alpha}^N}_sds$ converges to 0 in ucp. Together, this shows that $\int_0^\cdot Z^{\hat{\alpha}^N}_s - Z^{\hat{\alpha}}_s dW^{\hat{\alpha}}_s$ converges to 0 in ucp. Thus, by Lemma \ref{lem: sem top}, up to a subsequence, $Z^{\hat{\alpha}^N}_t$ converges $dt \times \mathbb{P}$ a.e.\ to $Z^{\hat{\alpha}}_t$.\par

    This convergence result allows to show that $\hat{\alpha}_t$ minimizes the Hamiltonian.
    In fact, Since $h^1$ is assumed to be convex and differentiable in $a$, the desired statement is equivalent to showing that for any $\mathbb{R}^k$-valued $\gamma$ that is uniformly bounded and $\hat{\alpha}_t + \gamma_t \in A$ $dt \times \mathbb{P}$ a.e.\ on $\lbrace t < \tau\rbrace$, we have $\gamma_t^\top h^1_a(t, X_{\cdot \wedge t}, \hat{\alpha}_t, Z^{\hat{\alpha}}_t)\geq 0$ $dt \times \mathbb{P}$ a.e.\ on $\lbrace t < \tau\rbrace$. In the following, all statements will only apply to the event $\lbrace t < \tau \rbrace$, even if not explicitly specified.\par
    Given such a $\gamma$, let us define $\gamma^N_t := \Pi_{A^N}(\hat{\alpha}^N_t + \gamma_t)- \hat{\alpha}^N_t$ where for any closed convex set $K$, $\Pi_K$ denotes the orthogonal projection onto $K$. We already know that $\hat{\alpha}^N_t$ converges a.e.\ to $\hat{\alpha}_t$, therefore, a.e., the $\Vert \alpha^N_t \Vert$ are pointwise bounded in $N$. Since $\Pi_{A}$ is Lipschitz, $\Vert \Pi_{A}(\hat{\alpha}^N_t + \gamma_t) - \hat{\alpha}^N_t\Vert \leq \Vert \gamma_t \Vert$ and for $N$ large enough, $\Vert \Pi_{A}(\hat{\alpha}^N_t + \gamma_t) \Vert \leq N$ which implies $\Pi_{A}(\hat{\alpha}^N_t + \gamma_t) = \Pi_{A^N}(\hat{\alpha}^N_t + \gamma_t)$. Therefore, the sequence $\Pi_{A^N}(\hat{\alpha}^N_t + \gamma_t)$ converges a.e.\ to $\Pi_A(\hat{\alpha}_t + \gamma_t) = \hat{\alpha}_t + \gamma_t$ and the sequence $\gamma^N_t$ converges a.e.\ to $\gamma_t$.\par
    By construction, $\hat{\alpha}^N_t + \gamma^N_t \in A^N$ a.e.\ and thus by Theorem \ref{thm: ncs}, we have$ (\gamma^N_t)^\top h^1_a(t, X_{\cdot \wedge t}, \hat{\alpha}^N_t, Z^{\hat{\alpha}^N}_t) \geq 0$ a.s. As $h^1_a$ is continuous in $a$ and $z$, and as the inner product is continuous, this thus shows that a.e., $\gamma_t^\top h^1_a(t, X_{\cdot \wedge t}, \hat{\alpha}_t, Z^{\hat{\alpha}}_t) \geq 0$. 
\end{proof}

This optimality property allows to show the desired integrability of $\hat{\alpha}$ and $(Y^{\hat{\alpha}}, Z^{\hat{\alpha}})$. To do so, let us introduce the functions $a^*:[0, T] \times \mathcal{C} \times \mathbb{R}^d \rightarrow A, (t, x, z) \mapsto \arg \min_{a \in A} h^1(t, x, a, z)$, and $a^*_N:[0, T] \times \mathcal{C} \times \mathbb{R}^d \rightarrow A^N, (t, x, z) \mapsto \arg \min_{a \in A^N} h^1(t, x, a, z)$. Using strong convexity, we can see that for any $t$, $x$, $z$ and $z'$,
\begin{eqnarray*}
    &&m\Vert a^*(t, x, z') - a^*(t, x, z)\Vert^2\\
    &\leq& h^1(t, x, a^*(t, x, z), z') - h^1(t, x, a^*(t, x, z'), z') + h^1(t, x, a^*(t, x, z'), z) - h^1(t, x, a^*(t, x, z), z)\\
    &=& (\beta^1(t, x)(a^*(t, x, z) - a^*(t, x, z')))^\top(z - z') \leq L \Vert a^*(t, x, z') - a^*(t, x, z)\Vert \Vert z-z'\Vert
\end{eqnarray*}
showing that $a^*$ is $\frac{L}{m}$ Lipschitz with respect to $z$. The same applies to the $a^*_N$.
\begin{lemma}\label{lem: opt BMO}
    We have $\Vert \sup_{t \in [0, T]} \vert Y^{\hat{\alpha}}_t \vert \Vert_\infty + \Vert \int_0^{\cdot} Z^{\hat{\alpha}}_s dW^{\hat{\alpha}}_s \Vert_{BMO} < \infty$. In particular, $\hat{\alpha} \in \mathbb{A}_{BMO}$.
\end{lemma}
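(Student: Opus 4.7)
The plan is to use the first-order condition from Lemma \ref{lem: opt ncs} to recast the adjoint equation as a \emph{quadratic} BSDE under $\mathbb{P}$ with bounded terminal and bounded additive part in the driver, and then invoke Kobylanski-type exponential transformations to get both the $L^\infty$-bound on $Y^{\hat\alpha}$ and the BMO-bound on $Z^{\hat\alpha}$.

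\textbf{Step 1 (Linear growth of $\hat\alpha$ in $Z^{\hat\alpha}$).} By Assumption \ref{asmp: cnv}.$(iv)$, $h^1(s,x,\cdot,z)$ is $m$-strongly convex. Using that $0\in A$ and Assumption \ref{asmp: cnv}.$(v).(2)$ (which gives $|f^1_a(s,x,0)|\leq M^1$), the first-order optimality condition at $a^*(s,x,z):=\arg\min_{a\in A}h^1(s,x,a,z)$ together with the strong convexity yields the pointwise bound
\[
 \|a^*(s,x,z)\| \le \frac{M^1+L\|z\|}{m},
\]
and consequently $\|\hat\alpha_s\|\le \frac{M^1+L\|Z^{\hat\alpha}_s\|}{m}$ on $\{s<\tau\}$ by Lemma \ref{lem: opt ncs}. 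Defining $H(s,x,z):=\min_{a\in A}h^1(s,x,a,z)$, the same bound shows that $|H(s,x,z)|\le C_H(1+\|z\|^2)$ for a uniform constant $C_H$.

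\textbf{Step 2 (Quadratic BSDE under $\mathbb{P}$).} Since $\hat\alpha_s=a^*(s,X_{\cdot\wedge s},Z^{\hat\alpha}_s)$ a.e.\ on $\{s<\tau\}$, we have $f^1(s,X_{\cdot\wedge s},\hat\alpha_s)+\beta(s,X_{\cdot\wedge s},\hat\alpha_s)^\top Z^{\hat\alpha}_s=H(s,X_{\cdot\wedge s},Z^{\hat\alpha}_s)$. Rewriting the adjoint equation in terms of $W$ instead of $W^{\hat\alpha}$ via Girsanov yields
\[
 Y^{\hat\alpha}_{t\wedge\tau}=\mathbf 1_{\{T<\tau\}}G^{\hat\alpha}+\int_{t\wedge\tau}^{T\wedge\tau}\!\!\big[H(s,X_{\cdot\wedge s},Z^{\hat\alpha}_s)+\tilde F^{\hat\alpha}_s\big]ds-\int_{t\wedge\tau}^{T\wedge\tau}\!\!Z^{\hat\alpha}_sdW_s,
\]
where $G^{\hat\alpha}$ and $\tilde F^{\hat\alpha}_s$ collect the $g$- and $f^2$-terms (together with their linear derivatives) and are uniformly bounded by Assumption \ref{asmp: cnv}.$(v).(2)$.

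\textbf{Step 3 ($L^\infty$-bound on $Y^{\hat\alpha}$).} Fix $\gamma:=2C_H$ and apply It\^o to $\Phi_t:=e^{\gamma Y^{\hat\alpha}_{t\wedge\tau}}$. The key algebraic cancellation is that, for this choice of $\gamma$, the coefficient of $\|Z^{\hat\alpha}\|^2$ in $-\gamma H+\tfrac{\gamma^2}{2}\|Z^{\hat\alpha}\|^2$ is non-negative, giving
\[
 d\Phi_t\ge -\kappa\,\Phi_t\,dt+\gamma\Phi_tZ^{\hat\alpha}_tdW_t,\quad \kappa:=\gamma(C_H+\|\tilde F^{\hat\alpha}\|_\infty).
\]
To handle the local-martingale issue (we only know a priori that $Y^{\hat\alpha}\in L^1(\mathbb P^{\hat\alpha})$ and that $Z^{\hat\alpha}$ comes from a martingale representation, not that it is in $L^2$), localize by $\tau^n:=\inf\{t:|Y^{\hat\alpha}_t|\ge n\text{ or }\int_0^t\|Z^{\hat\alpha}_s\|^2 ds\ge n\}\wedge T\wedge\tau$; on $[0,\tau^n]$ the stochastic integral is a true $\mathbb P$-martingale. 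Taking conditional expectation and then letting $n\to\infty$ (using $\tau^n\nearrow T\wedge\tau$ a.s.\ and monotone/dominated convergence, the uniform bound coming from the estimate itself) gives
\[
 e^{\gamma Y^{\hat\alpha}_{t\wedge\tau}} \leq e^{\kappa T}e^{\gamma\|G^{\hat\alpha}\|_\infty},
\]
i.e.\ a uniform upper bound on $Y^{\hat\alpha}$. The same argument applied to $e^{-\gamma Y^{\hat\alpha}}$ yields a uniform lower bound, so $\|\sup_t|Y^{\hat\alpha}_t|\|_\infty<\infty$.

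\textbf{Step 4 (BMO-bound on $Z^{\hat\alpha}$ and conclusion).} Once $Y^{\hat\alpha}$ is known to be bounded, apply It\^o to $e^{2\gamma Y^{\hat\alpha}}$ and proceed analogously. This time the coefficient of $\Phi_s\|Z^{\hat\alpha}_s\|^2$ in the drift is strictly positive, so for any stopping time $\nu\le T$ we obtain after conditional expectation
\[
 \mathbb E^{\mathbb P}\!\Big[\int_{\nu\wedge\tau}^{T\wedge\tau}\|Z^{\hat\alpha}_s\|^2\,ds\,\Big|\,\mathcal F_\nu\Big]\le K
\]
with $K$ depending only on $\|Y^{\hat\alpha}\|_\infty,\|G^{\hat\alpha}\|_\infty,\|\tilde F^{\hat\alpha}\|_\infty,C_H,T$. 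Hence $\int_0^{\cdot\wedge\tau}\|Z^{\hat\alpha}_s\|^2ds$ is $\mathbb P$-BMO, which together with Step 1 yields $\int_0^{\cdot\wedge\tau}\|\hat\alpha_s\|^2ds$ $\mathbb P$-BMO, i.e.\ $\hat\alpha\in\mathbb A_{BMO}$, completing the proof.

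The main obstacle is the localization in Step 3: because we start only with $L^1$-integrability of $Y^{\hat\alpha}$ and no a priori $L^2$-control of $Z^{\hat\alpha}$, one cannot simply invoke standard quadratic-BSDE a priori estimates. The double localization (cutting at both $|Y^{\hat\alpha}|\le n$ and $\int\|Z^{\hat\alpha}\|^2\le n$) and the fact that the exponential bound is independent of $n$ are what make the argument work and justify the passage to the limit.
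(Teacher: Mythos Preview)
Your overall strategy---use Lemma \ref{lem: opt ncs} to rewrite the adjoint equation as a quadratic BSDE and then invoke Kobylanski-type a priori bounds---is natural, and Steps 1 and 2 are correct. The gap is in Step 3, precisely at the point you flag as ``the main obstacle.''

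The localization argument does not close. After localizing by $\tau^n$ you obtain
\[
e^{\gamma Y^{\hat\alpha}_{t\wedge\tau^n}}\le e^{\kappa T}\,\mathbb{E}\big[e^{\gamma Y^{\hat\alpha}_{\tau^n}}\,\big|\,\mathcal{F}_t\big],
\]
and you need to pass to the limit in the right-hand side. But on the event $\{\tau^n<T\wedge\tau\}$ the random variable $e^{\gamma Y^{\hat\alpha}_{\tau^n}}$ can equal $e^{\gamma n}$ (when the stopping is triggered by $|Y^{\hat\alpha}|=n$), so there is no domination, no monotonicity, and Fatou goes the wrong way. Your parenthetical ``the uniform bound coming from the estimate itself'' is circular: the estimate bounds $\Phi_{t\wedge\tau^n}$ in terms of $\Phi_{\tau^n}$, not the other way around. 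The Kobylanski exponential transformation yields an $L^\infty$ bound only once one already has enough integrability (e.g.\ $Y$ bounded, or at least $e^{\gamma|Y|}$ integrable) to make the local submartingale a true one; with only $Y^{\hat\alpha}\in L^1(\mathbb{P}^{\hat\alpha})$ this fails. The same issue propagates to Step 4.

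This is exactly why the paper does \emph{not} argue directly on $\hat\alpha$. Instead it works with the approximating optimizers $\hat\alpha^N$ over the bounded action sets $A^N$: for each $N$, the map $a^*_N$ is bounded, so \eqref{eq: Lips no MKV BSDE} is a \emph{Lipschitz} BSDE with bounded data, and a bounded-drift Girsanov change gives a uniform (in $N$) $L^\infty$ bound on $Y^{\hat\alpha^N}$. Quadratic BSDE estimates (\cite[Theorem 7.2.1]{ZhangJianfeng}) then give a uniform bound on $\mathbb{E}^{\hat\alpha^N}[\int_0^T\|Z^{\hat\alpha^N}_s\|^2ds]$. Both bounds are passed to the limit using the convergence of $(Y^{\hat\alpha^N},Z^{\hat\alpha^N})$ to $(Y^{\hat\alpha},Z^{\hat\alpha})$ established in the proof of Lemma \ref{lem: opt ncs}. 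Only \emph{after} obtaining $\mathbb{E}^{\hat\alpha}[\int\|Z^{\hat\alpha}\|^2]<\infty$ does the paper apply the quadratic BSDE machinery to the limiting equation to extract the BMO norm. The approximation by bounded controls is not cosmetic; it is precisely the device that supplies the missing integrability your direct argument lacks.
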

\begin{proof}
    Let us first find a bound for $(Y^{\hat{\alpha}^N}, Z^{\hat{\alpha}^N})$ that is independent of $N$. By Proposition \ref{prop: wp adj}, we know $Y^{\hat{\alpha}^N}$ and $Z^{\hat{\alpha}^N}$ are square integrable with respect to $\mathbb{P}^{\hat{\alpha}^N}$, and by Theorem \ref{thm: ncs}, $\hat{\alpha}^N_t = a^*_N(t, X_{\cdot \wedge t}, Z^{\hat{\alpha}^N}_t)$.
     We can thus also consider $(Y^{\hat{\alpha}^N}, Z^{\hat{\alpha}^N})$ as the unique solution of the BSDE
    \begin{align}\notag
        Y^{\hat{\alpha}^N}_{t\wedge \tau} &= \mathbf{1}_{\lbrace T < \tau \rbrace}\Big( g(\theta^{\hat{\alpha}^N}_T) + \tilde{\mathbb{E}}^{\hat{\alpha}^N}\Big[\frac{\delta g}{\delta m}(\tilde{\theta}^{\hat{\alpha}^N}_T, X)\vert T < \tilde{\tau}\Big] + \mathbb{E}^{\hat{\alpha}^N}[\mathbf{1}_{\lbrace T < \tau \rbrace}g_p(\theta^{\hat{\alpha}^N}_T)]\Big)\\\notag
        &\quad+ \int_{t\wedge\tau}^{T\wedge\tau}f^1(s, X_{\cdot \wedge s}, a^*_N(s, X_{\cdot \wedge s}, Z^{\hat{\alpha}^N}_s)) + f^2(s, \theta^{\hat{\alpha}^N}_s) + \tilde{\mathbb{E}}^{\hat{\alpha}^N}\Big[\frac{\delta f^2}{\delta m}(s, \tilde{\theta}^{\hat{\alpha}^N}_s, X_{\cdot \wedge s})\vert s<\tilde{\tau}\Big]\\\label{eq: Lips no MKV BSDE}
    &\quad\qquad + \mathbb{E}^{\hat{\alpha}^N}\big[\mathbf{1}_{\lbrace s<\tau \rbrace}f^2_p(s, \theta^{\hat{\alpha}^N}_s)\big]ds
    -\int_{t\wedge\tau}^{T\wedge\tau} Z^{\hat{\alpha}^N}_s dW^{\hat{\alpha}^N}_s\quad \mathbb{P}^{\hat{\alpha}^N}\text{-a.s.}
\end{align}
    which is a standard non McKean-Vlasov BSDE with $\mathbb{P}^{\hat{\alpha}^N}$ and $\theta^{\hat{\alpha}^N}$ kept fixed and only $Y^{\hat{\alpha}^N}$ and $Z^{\hat{\alpha}^N}$ as part of the solution. Since $a^*_N$ is Lipschitz but also valued in a bounded set, we can see that the driver of the above BSDE is Lipschitz in $Z$. Therefore, the process $\kappa_t := \mathbf{1}_{\lbrace T < \tau \rbrace}\mathbf{1}_{\lbrace Z^{\hat{\alpha}^N}_t \neq 0 \rbrace} \frac{f^1(s, X_{\cdot}, a^*_N(s, X_{\cdot \wedge s}, Z^{\hat{\alpha}}_s)) - f^1(s, X_{\cdot}, a^*_N(s, X_{\cdot \wedge s}, 0))}{\Vert Z^{\hat{\alpha}^N}_s\Vert^2} Z^{\hat{\alpha}^N}_s$, is uniformly bounded. In particular, we can consider the equivalent probability measure $ \tilde{\mathbb{P}}^{\hat\alpha^N} :=  \mathbb{P}^{\hat{\alpha}^N+\kappa}$ that also satisfies $\frac{d\tilde{\mathbb{P}}^{\hat{\alpha}^N}}{d\mathbb{P}^{\hat{\alpha}^N}} = \mathcal{E}(\int_0^{\cdot \wedge \tau} \kappa_s dW^{\hat{\alpha}^N}_s)_T$. Since $Z^{\hat{\alpha}^N}$ is square integrable with respect to $dt \times \mathbb{P}^{\hat{\alpha}^N}$, we have that $\int_0^\cdot Z^{\hat{\alpha}^N}_s dW^{\hat{\alpha}^N + \kappa}_s$ is a true $\mathbb{P}^{\hat{\alpha}^N+\kappa}$ martingale, and
    \begin{align*}
        Y^{\hat{\alpha}^N}_{t\wedge\tau} &= \mathbb{E}^{\hat{\alpha}^N + \kappa}\bigg[ \mathbf{1}_{\lbrace T < \tau \rbrace}\Big( g(\theta^{\hat{\alpha}^N}_T) + \tilde{\mathbb{E}}^{\hat{\alpha}^N}\Big[\frac{\delta g}{\delta m}(\tilde{\theta}^{\hat{\alpha}^N}_T, X)\vert T < \tilde{\tau}\Big] + \mathbb{E}^{\hat{\alpha}^N}[\mathbf{1}_{\lbrace T < \tau \rbrace}g_p(\theta^{\hat{\alpha}^N}_T )]\Big)\\
        &\quad +\int_{t\wedge\tau}^{T\wedge\tau}f^1(s, X_{\cdot \wedge s}, a^*_N(s, X_{\cdot \wedge s}, 0)) + f^2(s, \theta^{\hat{\alpha}^N}_s) + \tilde{\mathbb{E}}^{\hat{\alpha}^N}\Big[\frac{\delta f^2}{\delta m}(s, \tilde{\theta}^{\hat{\alpha}^N}_s, X_{\cdot \wedge s})\vert s<\tilde{\tau}\Big] \\
        &\quad \qquad + \mathbb{E}^{\hat{\alpha}^N}[\mathbf{1}_{\lbrace s<\tau \rbrace}f^2_p(s, \theta^{\hat{\alpha}^N}_s)]ds \vert \mathcal{F}_{t\wedge \tau}\bigg].
    \end{align*}
    Recall that $f^1$ is bounded from below, and further, $f^1(s, X_{\cdot \wedge s}, a^*_N(s, X_{\cdot\wedge s}, 0)) \leq f^1(s, X_{\cdot \wedge s}, 0)$ showing that $\vert f^1(s, X_{\cdot \wedge s}, a^*_N(s, X_{\cdot\wedge s}, 0))\vert$ is uniformly bounded independent of $N$. Further since the sequence $\mathbb{P}^{\hat{\alpha}^N}[T < \tau]$ converges to $ \mathbb{P}^{\hat{\alpha}}[T<\tau] > 0$ and are thus bounded away from $0$, the remaining terms are also bounded in $N$ by Assumption \ref{asmp: cnv}. Therefore, we can also see that $\vert Y^{\hat{\alpha}^N}\vert$ is $dt\times \mathbb{P}$ a.e.\ bounded by a constant independent of $N$.\par
    For a uniform bound of the $Z^{\hat{\alpha}}$, note that we can also interpret \eqref{eq: Lips no MKV BSDE} as a quadratic BSDE with $\vert f^1(t, x, a^*_N(t, x, z)) - f^1(s, x, a^*_N(s, x, z'))\vert \leq \frac{M^1 L}{m}\Vert z - z' \Vert(1 + \Vert a^*_N(t, x, 0)\Vert + \frac{L}{2m}(\Vert z \Vert + \Vert z'\Vert))$ with $\Vert a^*_N(t, x, 0) \Vert^2 \leq \frac{4}{m}(f^1(s, x, 0) - f^1(s, x, a^*_N(s, x, 0)))$ being bounded over all $N$ using \cite[Lemma 2.4]{Z24}. Therefore, by \cite[Theorem 7.2.1]{ZhangJianfeng}, there exists a constant $C$ independent of $N$ such that $\mathbb{E}^{\hat{\alpha}^N}[\int_0^{T\wedge \tau}\Vert Z^{\hat{\alpha}^N}_s \Vert^2 ds] \leq C$.\par    
    From the last proof, recall that for each $t \geq 0$, up to a subsequence, we have that $\int_0^{t\wedge \tau} Z^{\hat{\alpha}^N}_sdW^{\hat{\alpha}^N}_s$ converges a.e.\ to $\int_0^{t\wedge \tau} Z^{\hat{\alpha}}_sdW^{\hat{\alpha}}_s$, as well as $Y^{\hat{\alpha}^N}_0 \rightarrow Y^{\hat{\alpha}}_0$ a.s.\  Further, as established in the proof of Lemma \ref{lem: opt ncs} (using notation therein) we have that $\frac{\mathbb{P}^{\hat{\alpha}^N}}{d\mathbb{P}^{\hat{\alpha}}} \mathcal{Y}^N$ converges in $\mathbb{L}^1(\mathbb{P}^{\hat{\alpha}})$ to $\mathcal{Y}$. Using that our coefficients are all bounded from below, we have by Vitali's theorem that
    $$\frac{d\mathbb{P}^{\hat{\alpha}^N}}{d\mathbb{P}^{\hat{\alpha}}} \int_0^{t\wedge \tau}f^1(s, X_{\cdot \wedge s}, \hat{\alpha}^N_s) +  f^2(s, \theta^{\hat{\alpha}^N}_s) + \tilde{\mathbb{E}}^{\hat{\alpha}^N}\Big[\frac{\delta f^2}{\delta m}(s, \tilde{\theta}^{\hat{\alpha}^N}_s, X_{\cdot \wedge s})\vert s<\tilde{\tau} \Big] + \mathbb{E}^{\hat{\alpha}^N}[\mathbf{1}_{\lbrace s<\tau \rbrace}f^2_p(s, \theta^{\hat{\alpha}^N}_s)]ds$$ converges in $\mathbb{L}^1(\mathbb{P}^{\hat{\alpha}})$ to
    $$\int_0^{t\wedge\tau}f^1(s, X_{\cdot \wedge s}, \hat{\alpha}_s) + f^2(s, \theta^{\hat{\alpha}}_s) + \tilde{\mathbb{E}}^{\hat{\alpha}}\Big[\frac{\delta f^2}{\delta m}(s, \tilde{\theta}^{\hat{\alpha}}_s, X_{\cdot \wedge s})\vert s<\tilde{\tau} \Big] + \mathbb{E}^{\hat{\alpha}}[\mathbf{1}_{\lbrace s<\tau \rbrace}f^2_p(s, \theta^{\hat{\alpha}}_s)]ds.$$ 
    This further implies that the sequence
    $$\int_0^{t\wedge \tau}f^1(s, X_{\cdot \wedge s}, \hat{\alpha}^N_s) +  f^2(s, \theta^{\hat{\alpha}^N}_s) + \tilde{\mathbb{E}}^{\hat{\alpha}^N}\Big[\frac{\delta f^2}{\delta m}(s, \tilde{\theta}^{\hat{\alpha}^N}_s, X_{\cdot \wedge s})\vert s<\tilde{\tau} \Big] + \mathbb{E}^{\hat{\alpha}^N}[\mathbf{1}_{\lbrace s<\tau \rbrace}f^2_p(s, \theta^{\hat{\alpha}^N}_s)]ds$$
    converges up to a subsequence a.s.\ to
    $$\int_0^{t\wedge\tau}f^1(s, X_{\cdot \wedge s}, \hat{\alpha}_s) + f^2(s, \theta^{\hat{\alpha}}_s) + \tilde{\mathbb{E}}^{\hat{\alpha}}\Big[\frac{\delta f^2}{\delta m}(s, \tilde{\theta}^{\hat{\alpha}}_s, X_{\cdot \wedge s})\vert s<\tilde{\tau} \Big] + \mathbb{E}^{\hat{\alpha}}[\mathbf{1}_{\lbrace s<\tau \rbrace}f^2_p(s, \theta^{\hat{\alpha}}_s)]ds$$
    so that together, we can see that $Y^{\hat{\alpha}^N}_{t\wedge \tau}$ converges a.s.\ to $Y^{\hat{\alpha}}_{t\wedge\tau}$ up to a subsequence. By our previously shown bound, this implies that $\Vert \sup_{0\leq t \leq \tau}\vert Y^{\hat{\alpha}}_t \vert \Vert_\infty<\infty$.\par
    Again, recall from the proof of Lemma \ref{lem: opt ncs} that $Z^{\hat{\alpha}^N}_t$ converges $dt \times \mathbb{P}$ a.e. to $Z^{\hat{\alpha}}$ and also that $d_{\mathrm{TV}}(\mathbb{P}^{\hat{\alpha}}, \mathbb{P}^{\hat{\alpha}^N}) \rightarrow 0$. Thus, by Fatou's lemma, we have $\mathbb{E}^{\hat{\alpha}}[\int_0^{T\wedge\tau}\Vert Z^{\hat{\alpha}}_s \Vert^2 ds] \leq C$. Now, $(Y^{\hat{\alpha}}, Z^{\hat{\alpha}})$ can also be considered as a solution of the BSDE
    \begin{align*}
        Y^{\hat{\alpha}}_{t\wedge \tau} &= \mathbf{1}_{\lbrace T < \tau \rbrace}\Big( g(\theta^{\hat{\alpha}}_T) + \tilde{\mathbb{E}}^{\hat{\alpha}}\Big[ \frac{\delta g}{\delta m}(\tilde{\theta}^{\hat{\alpha}}_T, X)\vert T < \tilde{\tau}\Big] + \mathbb{E}^{\hat{\alpha}}[\mathbf{1}_{\lbrace T < \tau \rbrace}g_p(\theta^{\hat{\alpha}}_T)]\Big)\\
        &\quad + \int_{t\wedge\tau}^{T\wedge\tau}f^1(s, X_{\cdot \wedge s}, a^*(s, X_{\cdot \wedge s}, Z^{\hat{\alpha}}_s)) + f^2(s, \theta^{\hat{\alpha}}_s) + \tilde{\mathbb{E}}^{\hat{\alpha}}\Big[\frac{\delta f^2}{\delta m}(s, \tilde{\theta}^{\hat{\alpha}}_s, X_{\cdot \wedge s})\vert s<\tilde{\tau}\Big]\nonumber\\
        &\quad + \mathbb{E}^{\hat{\alpha}}[\mathbf{1}_{\lbrace s<\tau \rbrace}f^2_p(s, \theta^{\hat{\alpha}}_s)]ds
        -\int_{t\wedge\tau}^{T\wedge\tau} Z^{\hat{\alpha}}_s dW^{\hat{\alpha}}_s\nonumber
    \end{align*}
    again considered as a standard non McKean-Vlasov BSDE with $\mathbb{P}^{\hat{\alpha}}$ and $\theta^{\hat{\alpha}}$ kept fixed. Using the same argument as above, this is again a quadratic BSDE and by \cite[Theorem 7.2.1]{ZhangJianfeng}, we have $\Vert \int_0^\cdot Z_s dW^{\hat{\alpha}_s}_s \Vert_{\mathbb{P}^\alpha\text{-BMO}} < \infty$. 
    As $a^*$ is Lipschitz, this shows $\Vert \hat{\alpha} \Vert_{\mathbb{P}^\alpha \text{- BMO}} < \infty$ and consequently using \cite[Theorem 3.6.]{Kazamaki}, also finiteness of the BMO norms with respect to $\mathbb{P}$.
\end{proof}

\subsection{Approximation of non equivalent changes of measures}

\begin{proof}[Proof of Theorem \ref{thm: opt non eq}]
    Let $\alpha \in \tilde{\mathbb{A}}$ be arbitrary. For any $n \geq 1$, we can consider $\mathbb{P}^n := (1 - \frac{1}{n})  \mathbb{P}^\alpha + \frac{1}{n}\mathbb{P}$. Like in our previous arguments, by defining $\alpha^n_t = (1-\frac{1}{n})\frac{d\mathbb{P}^\alpha_{\vert \mathcal{F}_t}}{d\mathbb{P}^n_{\vert\mathcal{F}_t}}\alpha_t$ as a pointwise convex combination, we see that $\mathbb{P}^{\alpha^n} = \mathbb{P}^n$. 
    Further, 
    \begin{equation}\label{eq: no eqv cnv}
        \mathbb{E}^{n}\bigg[\int_0^{T\wedge\tau}f^1(s, X_{\cdot \wedge s}, \alpha^n_s)ds \bigg] \leq (1-\frac{1}{n}) \mathbb{E}^\alpha\bigg[\int_0^{T\wedge\tau}f^1(s, X_{\cdot \wedge s}, \alpha_s)ds\bigg] + \frac{1}{n}\mathbb{E}\bigg[ \int_0^{T\wedge\tau}f^1(s, X_{\cdot\wedge \tau}, 0)ds \bigg]< \infty.
    \end{equation}
    One can thus see that $\alpha^n \in \mathbb{A}$.\par
    Following \cite[(30)]{Leonard}, we have $\mathcal{H}(\mathbb{P}^\alpha \parallel \mathbb{P}^n) \rightarrow 0$. Thus,
    \begin{eqnarray*}
        &&\int_0^T F(s, \mathcal{L}_{\mathbb{P}^{\alpha^n}}(X_{\cdot \wedge s}\vert s<\tau), \mathbb{P}^{\alpha^n}[s<\tau])ds + G(\mathcal{L}_{\mathbb{P}^{\alpha^n}}(X\vert T<\tau), \mathbb{P}^{\alpha^n}[T<\tau])ds\\
        &\rightarrow& \int_0^T F(s, \mathcal{L}_{\mathbb{P}^\alpha}(X_{\cdot \wedge s}\vert s<\tau), \mathbb{P}^\alpha[s<\tau])ds + G(\mathcal{L}_{\mathbb{P}^\alpha}(X\vert T<\tau), \mathbb{P}^\alpha[T<\tau])ds.
    \end{eqnarray*}
    Further, \eqref{eq: no eqv cnv} implies that $\mathbb{E}^\alpha[\int_0^{T\wedge\tau}f^1(s, X_{\cdot\wedge s}, \alpha_s)ds] \geq \limsup_{n \rightarrow \infty}\mathbb{E}^n[\int_0^{T\wedge \tau} f^1(s, X_{\cdot \wedge s}, \alpha^n_s)ds]$ and combined with Fatou's lemma thus $\mathbb{E}^\alpha[\int_0^{T\wedge \tau} f^1(s, X_{\cdot \wedge s}, \alpha_s)ds] = \lim_{n\rightarrow\infty} \mathbb{E}^n[\int_0^{T \wedge \tau} f^1(s, X_{\cdot \wedge s}, \alpha^n_s)ds]$. In particular, this shows $\tilde{J}(\alpha) \geq \tilde{J}(\hat{\alpha})$.\par
    To see uniqueness of the optimizer, we see that $\tilde{J}(\alpha) = \tilde{J}(\hat{\alpha})$ would imply that each $\alpha^n$ is an optimizer as well. As each $\alpha^n$ is already in $\mathbb{A}$, this shows $\alpha^n = \hat{\alpha}$ and thus $\alpha = \hat{\alpha}$.
\end{proof}


\section{Applications: Schrödinger problem and mean field games}\label{sct: aplc}


Let us first discuss the application to Schrödinger problems with hard killing.
\subsection{Proof of Theorem  \ref{thm: trgt}}
    For each $l\ge1$, the problem \eqref{eq:def.schro.penal} is a standard (conditional) McKean-Vlasov optimal control with running cost $f(t,x,\mu,p) = \frac12|a|^2$ and terminal cost $g^l(t,x,\mu,p) = \frac{l}{2}\Vert \hat p\hat \mu - p\mu\Vert^{2}_{-s}$.
    By the conditions on $b$, the assumptions \ref{asmp: beta}, \ref{asmp: ncs} and \ref{asmp: cnv} hold. 
    In particular, as discussed in Subsection \ref{sct: FW mtrc}, $g^l$ is $p$-convex.
    Thus, by Theorem \ref{thm: exs}, the problem \eqref{eq:def.schro.penal} admits a unique minimizer $\hat\alpha^l \in \mathbb{A}_{\mathrm{BMO}}$.
    Moreover, if $\sup_lV^l_{\hat p,\hat\mu}<\infty$,
    then since $g^l\ge0$, we have $\sup_l \mathcal{H}(\mathbb{P}^{\hat{\alpha}^l} \parallel \mathbb{P}) < \infty$.
    By Lemma \ref{lem: drft cstr}, there thus exists an $A$-valued process $\hat{\alpha}$ such that up to a subsequence, the $\mathbb{P}^{\hat{\alpha}^l}$ converge setwise to $\mathbb{P}^{\hat{\alpha}}$.
    We will keep these notation throughout the proof.

\begin{proof}[Proof of (i)]
    If\ $V_{\hat{p}, \hat{q}} < \infty$, then for any $l$, we have $V^l_{\hat{p}, \hat{\mu}} \leq V_{\hat{p}, \hat{\mu}}$ and it is easy to check that $V^l_{\hat{p}, \hat{\mu}} \nearrow V_{\hat{p}, \hat{\mu}}$.

    Assume \ $V_{\hat{p}, \hat{\mu}} = \infty$. If we did not also have $V^l_{\hat{p}, \hat{\mu}}\nearrow \infty$, then we would have $\sup_l V^l_{\hat{p}, \hat{\mu}} < \infty$ and thus $\sup_l \mathcal{H}(\mathbb{P}^{\hat{\alpha}^l} \parallel \mathbb{P}) < \infty$. 
    Thus the construction above gives a sequence of optimal controls $\hat\alpha^l$ for \eqref{eq:def.schro.penal} such that $\mathbb{P}^{\hat{\alpha}^l}$ converge setwise to $\mathbb{P}^{\hat{\alpha}}$.
    Using continuity of the Fourier Wasserstein distance and $\sup_l V^l_{\hat{p}, \hat{\mu}} < \infty$ gives that $\big\Vert \hat{p}\hat{\mu} - \mathbb{P}^{\hat\alpha}[T<\tau]\mathcal{L}_{\mathbb{P}^{\hat\alpha}}(X_T\vert s<\tau)\big\Vert^2_{-s} = 0$.
    In particular, $\hat\alpha$ is feasible, which contradicts $V_{\hat p,\hat\mu}=\infty$.
    \end{proof}

\begin{proof}[Proof of (ii)]
    Let us show that $\hat\alpha$ is the unique minimizer in \eqref{eq:def.schro.hard.kill}.
    Let $\alpha^0$ be an arbitrary feasible control in $\tilde{\mathbb{A}}$.
    Although $\alpha^0$ is not assumed to necessarily lie in $\mathbb{A}$, since $\mathbb{P}^{\alpha^0}[T<\tau] = \hat{p} > 0$ by assumption, $J^j(\alpha^0)$ is still well defined and we can still apply Theorem \ref{thm: opt non eq} to see that $J(\alpha^0) = J^j(\alpha^0) \geq J^j(\hat{\alpha}^j)$. In particular, this shows 
    \begin{equation}
    \label{eq:chain.ineq.schro}
        J^l(\hat{\alpha}) \leq \lim_{i\rightarrow\infty} J^i(\hat{\alpha}^i) \leq J(\alpha^0) < \infty.
    \end{equation}
    Let $\tilde{\alpha}^l$ be  constructed as in \eqref{eq:def.tilde.alpha.lemma} in the proof of Lemma \ref{lem: drft cstr}. Since the sequence $(\tilde{\alpha}^l)_{l\ge0}$ $dt \times \mathbb{P}$ converges a.e.\ to $\hat{\alpha}$, and as $\mathbb{P}^{\tilde{\alpha}^l}$ converges to $\mathbb{P}^{\hat{\alpha}}$ in total variation, for any $l \geq 0$, by Fatou's lemma, we have $J^l(\hat{\alpha}) \leq \liminf_{i \rightarrow \infty} J^l(\tilde{\alpha}^i)$. Since $\mathbb{P}^{\tilde{\alpha}^i} = \sum_{j \geq i} \lambda^{i, j} \mathbb{P}^{\hat{\alpha}^j}$ is given by a finite convex combination for suitable $\lambda^{i, j} \in [0, 1]$ (which, just as in Lemma \ref{lem: drft cstr} are found using Mazur's lemma \cite[Lemma 10.19]{Renardy}), we can see that $\liminf_{i \rightarrow \infty} J^l(\tilde{\alpha}^i) \leq \liminf_{i \rightarrow \infty} \sum_{j \geq i} \lambda^{i, j} J^l(\hat{\alpha}^j) \leq \liminf_{i \rightarrow \infty} \sum_{j \geq i} \lambda^{i, j} J^j(\hat{\alpha}^j) \leq \lim_{i \rightarrow \infty} J^i(\hat{\alpha}^i)$. The first inequality is a consequence of the convexity of $J^l$ as in Proposition \ref{prop: J cnv} and Remark \ref{rmk: gnr cnv}, and the latter two follow as $J^l \leq J^j$ as soon as $j \leq l$, as well as that $J^i(\hat{\alpha}^i)$ is increasing in $i$.\par
    As the sequence $(J^l(\hat{\alpha}))_{l\ge 1}$ bounded and $g^l$ is lower semicontinuous, it follows that $\Vert \hat{p}\hat{\mu} - \mathbb{P}^{\hat\alpha}[T<\tau]\mathcal{L}_{\mathbb{P}^{\hat\alpha}}(X_T\vert s<\tau)\Vert^2_{-s}=0$ and thus, $\hat{\alpha}$ is feasible. Consequently, $J(\hat{\alpha}) = J^l(\hat{\alpha}) \leq J(\alpha^0)$, and since the admissible control $\alpha^0$ was taken arbitrary, this shows that $\hat{\alpha}$ is optimal for $V_{\hat{p}, \hat{q}}$.\par
    Uniqueness of $\hat{\alpha}$ is an immediate consequence of the strong convexity of $J$ proven in Proposition \ref{prop: J cnv} and Remark \ref{rmk: gnr cnv}, since for any two optimal $\hat{\alpha}^1$ and $\hat{\alpha}^2$, there will be a control $\hat{\alpha}^*$ such that $\mathbb{P}^{\hat{\alpha}^*} = \frac{\mathbb{P}^{\hat{\alpha}^1} + \mathbb{P}^{\hat{\alpha}^2}}{2}$ so that $\hat{\alpha}^*$ must also be feasible, but will actually admit a strictly smaller value for $J$ if $\hat{\alpha}^1 \neq \hat{\alpha}^2$.\par
    
    Let us now show convergence of $(\hat{\alpha}^l)_{l\ge1}$. From \eqref{eq:chain.ineq.schro},  choosing $\alpha^0 = \hat{\alpha}$ shows that $J(\hat{\alpha}) = \lim_{l \rightarrow \infty} J^l(\hat{\alpha}^l)$. Now, by Theorem \ref{thm: sfc}, the approximation in Theorem \ref{thm: opt non eq}, and Fatou's lemma, we have 
    \begin{equation}
    \label{eq:conv.control.Schroed}
        \mathcal{H}(\mathbb{P}^{\hat{\alpha}} \parallel \mathbb{P}^{\hat{\alpha}^l}) \leq \frac{L^2}{2}\mathbb{E}^{\hat{\alpha}}\bigg[\int_0^{T\wedge \tau}\Vert \hat{\alpha}^l_s - \hat{\alpha}_s \Vert^2 ds\bigg] \leq L^2(J(\hat{\alpha}) - J^l(\hat{\alpha}^l)) \rightarrow 0.
    \end{equation}
 \end{proof}
\begin{proof}[Proof of (iii)]
    When there is a feasible $\alpha^0 \in \mathbb{A}$, we can use the approximation argument from Theorem \ref{thm: opt non eq} and Theorem \ref{thm: sfc} to see that $\mathcal{H}(\mathbb{P}^{\alpha^0} \parallel \mathbb{P}^{\hat{\alpha}^l}) \leq L^2(J^l(\alpha^0) - J^l(\hat{\alpha}^l)) \leq L^2(J(\alpha^0) - J^1(\hat{\alpha}^1))$ is uniformly bounded from above. Therefore $\mathcal{H}(\mathbb{P}^{\alpha^0} \Vert \mathbb{P}^{\hat{\alpha}}) < \infty$, and $\mathbb{P}^{\hat{\alpha}}$ is equivalent to $\mathbb{P}$ so that $\hat{\alpha} \in \mathbb{A}$.
\end{proof}
\begin{proof}[Proof of (iv)]
        By part (ii) of the statement, 
        there exists a unique feasible $\hat{\alpha} \in \tilde{\mathbb{A}}$ that minimizes $J$ 
         over all controls in $\tilde{\mathbb{A}}$ that satisfy the target constraints.
        Using \cite[Theorem 2]{Leonard}, we can see that $\hat{\mathbb{P}}:= \mathbb{P}^{\hat{\alpha}}$ is the unique minimizer of \eqref{eq:def.schro.hard.kill} on $\mathcal{F}_{T\wedge\tau}$.
         To represent the density of $\hat{\mathbb{P}}$, we use the fact that $\hat{\alpha}$ satisfies \eqref{eq:conv.control.Schroed} where $\hat\alpha^l$ is optimal for \eqref{eq:def.schro.penal}.
        Thus, by
     Theorem \ref{thm: ncs}, $\hat\alpha^l_t = -Z^l_t$ a.e. on $\{t<\tau\}$ where $(Y^l, Z^l)$ satisfies
    \begin{eqnarray}\label{eq: schradj}
        &&Y^l_{t\wedge \tau} = \mathbf{1}_{\lbrace T<\tau \rbrace} (g^l(\mathcal{L}_{\mathbb{P}^{\hat{\alpha}^l}}(X_T\vert T<\tau), \mathbb{P}^{\hat{\alpha}^l}[T<\tau]) + \frac{\delta g^l}{\delta m}(\mathcal{L}_{\mathbb{P}^{\hat{\alpha}^l}}(X_T\vert T<\tau), \mathbb{P}^{\hat{\alpha}^l}[T<\tau], X_T)\nonumber\\
        &+& \mathbb{P}^{\hat{\alpha}^l}[T<\tau] g^l_p(\mathcal{L}_{\mathbb{P}^{\alpha ^l}}(X_T \vert T < \tau), \mathbb{P}^{\hat{\alpha}^l}[T<\tau]))
        +\int_{t\wedge \tau}^{T\wedge\tau} \frac{1}{2} \Vert Z^l_s \Vert^2 ds - \int_{t\wedge \tau}^{T\wedge \tau} Z^l_s dW^{\hat{\alpha}^l}_s.
    \end{eqnarray}
    Note that since $Y_0$ is $\mathcal{F}_0$-measurable, there must exists a measurable map $\phi^l:D \rightarrow \mathbb{R}$, such that $-Y^l_0 = \phi^l(X_0)$. Let us also define the mapping $\psi^l:D \rightarrow \mathbb{R}$ as
    \begin{align*}
        \psi^l(x) &:= g^l(\mathcal{L}_{\mathbb{P}^{\hat{\alpha}^l}}(X_T\vert T<\tau), \mathbb{P}^{\hat{\alpha}^l}[T<\tau]) + \frac{\delta g^l}{\delta m}(\mathcal{L}_{\mathbb{P}^{\hat{\alpha}^l}}(X_T\vert T<\tau), \mathbb{P}^{\hat{\alpha}^l}[T<\tau],x)\\
        &\qquad + \mathbb{P}^{\hat{\alpha}^l}[T<\tau] g^l_p(\mathcal{L}_{\mathbb{P}^{\alpha ^l}}(X_T \vert T < \tau), \mathbb{P}^{\hat{\alpha}^l}[T<\tau]).
    \end{align*}
    It will be useful to rewrite $\psi^l$ slightly differently. 
    Let $D^* = D \cup \lbrace \zeta \rbrace$ with $\zeta$ denoting a cemetery state and $D^*$ equipped with the $\sigma$ algebra generated by the Borel $\sigma$ algebra on $D$ and $\lbrace \zeta \rbrace$. We then extend $\psi^l$ as $\tilde{\psi}^l: D^* \rightarrow \mathbb{R}, \mathbf{1}_{\lbrace x \neq \zeta \rbrace} \psi^l(x)$. Then, if we define the $D^*$-valued process $\tilde{X}_t = \mathbf{1}_{t < \tau} X_t + \mathbf{1}_{t \geq \tau} \zeta$, we can write $\frac{d\mathbb{P}^{\hat{\alpha}^l}}{d\mathbb{P}} = e^{\phi^l(X_0) + \tilde{\psi}^l(\tilde{X}_T)}$.\par
    Recall that $\mathcal{H}(\mathbb{P}^{\hat{\alpha}} \parallel \mathbb{P}^{\hat{\alpha}^l}) \rightarrow 0$. By Pinsker's inequality this implies $d_{TV}(\mathbb{P}^{\hat{\alpha}}, \mathbb{P}^{\hat{\alpha}^l}) \rightarrow 0$ and thus $\mathbb{P}^{\hat{\alpha}^l}[ \frac{d\mathbb{P}^{\hat{\alpha}}}{d\mathbb{P}^{\hat{\alpha}^l}} > 0] = \mathbb{P}^{\hat{\alpha}^l}[\frac{d\mathbb{P}^{\hat{\alpha}}}{d\mathbb{P}} > 0] \rightarrow \mathbb{P}^{\hat{\alpha}}[\frac{d\mathbb{P}^{\hat{\alpha}}}{d\mathbb{P}} > 0] = 1$. Consequently, with the usual convention that $0 \cdot \infty = 0$,
    $$\mathbb{E}^{\hat{\alpha}}\Big[\Big\vert\Big(\frac{d\mathbb{P}^{\hat{\alpha}}}{d\mathbb{P}^{\hat{\alpha}^l}}\Big)^{-1} - 1\Big\vert\Big] = \mathbb{E}^{\hat{\alpha}^l}\Big[\mathbf{1}_{\big\lbrace\frac{d\mathbb{P}^{\hat{\alpha}}}{d\mathbb{P}^{\hat{\alpha}^l}} > 0 \big\rbrace} \Big\vert 1 - \frac{d\mathbb{P}^{\hat{\alpha}}}{d\mathbb{P}^{\hat{\alpha}^l}}\Big\vert\Big] = 2d_{TV}(\mathbb{P}^{\hat{\alpha}}, \mathbb{P}^{\hat{\alpha}^l}) - \mathbb{E}^{\hat{\alpha}^l}\Big[\mathbf{1}_{\big\lbrace \frac{d\mathbb{P}^{\hat{\alpha}}}{d\mathbb{P}^{\hat{\alpha}^l}} = 0 \big\rbrace}\Big] \rightarrow 0$$
    and further, since $-\log(x) + x - 1 \geq 0$ for any $x >0$,
    \begin{eqnarray*}
        &&\mathbb{E}^{\hat{\alpha}}\Big[\Big\vert \log\Big(\frac{d\mathbb{P}^{\hat{\alpha}^l}}{d\mathbb{P}}\Big) - \log\Big(\frac{d\mathbb{P}^{\hat{\alpha}}}{d\mathbb{P}}\Big) \Big\vert\Big] \leq \mathbb{E}^{\hat{\alpha}}\Big[\Big\vert - \log\Big(\Big(\frac{d\mathbb{P}^{\hat{\alpha}^l}}{d\mathbb{P}^{\hat{\alpha}}}\Big)^{-1}\Big) + \Big(\frac{d\mathbb{P}^{\hat{\alpha}}}{d\mathbb{P}^{\hat{\alpha}^l}}\Big)^{-1} - 1 \Big\vert\Big] \\
        &+& \mathbb{E}^{\hat{\alpha}}\Big[\Big\vert \Big(\frac{d\mathbb{P}^{\hat{\alpha}}}{d\mathbb{P}^{\hat{\alpha}^l}}\Big)^{-1} - 1 \Big\vert\Big] = \mathcal{H}(\mathbb{P}^{\hat{\alpha}}\parallel\mathbb{P}^{\hat{\alpha}^l}) + \mathbb{P}^{\hat{\alpha}^l}\Big[\frac{d\mathbb{P}^{\hat{\alpha}}}{d\mathbb{P}^{\hat{\alpha}^l}} > 0\Big] - 1 + \mathbb{E}^{\hat{\alpha}}\Big[\Big\vert \Big(\frac{d\mathbb{P}^{\hat{\alpha}}}{d\mathbb{P}^{\hat{\alpha}^l}}\Big)^{-1} - 1 \Big\vert\Big] \rightarrow 0.
    \end{eqnarray*}
Therefore, $\phi^l(X_0) + \tilde{\psi}^l(\tilde{X}_T)$ must admit an $\mathbb{L}^1$ limit with respect to $\mathbb{P}^{\hat{\alpha}}$.\par
     To characterize the limit let us first discuss the joint law of $(X_0, \tilde{X}_T)$ under $\mathbb{P}$. Note that $\mathcal{L}_{\mathbb{P}}(\tilde{X}_T) = \mathbb{P}[T \geq \tau] \delta_{\zeta} + \mathbb{P}[T<\tau]\mathcal{L}_{\mathbb{P}}(X_T\vert T<\tau)$. Note $\mathbb{P}[T < \tau]$, as well as $\mathbb{P}[T <\tau \vert X_0 = x_0]$ for $\nu$ a.e. $x_0 \in D$, are both strictly in $(0, 1)$. Further, using the characterization of $\mathcal{L}_\mathbb{P}(X_T\vert T<\tau)$ in \cite[Chapter 2.VII.9]{Doob84}, $\mathcal{L}_\mathbb{P}(X_T\vert T<\tau)$, as well as $\mathcal{L}_\mathbb{P}(X_T \vert T<\tau, X_0 = x_0)$ for $\nu$ a.e. $x_0$, are equivalent to the Lebesgue measure on $D$. Therefore, we have $\mathcal{L}_\mathbb{P}(X_0, \tilde{X}_T) \ll \nu \times (\mathbb{P}[T \geq \tau]\delta_\zeta + \mathbb{P}[T<\tau] \mathcal{L}_\mathbb{P}(X_T\vert T<\tau))$. As $\mathbb{P}^{\hat{\alpha}} \ll \mathbb{P}$, this allows us to argue that $\mathcal{L}_\mathbb{P}^{\hat{\alpha}}(X_0, \tilde{X}_T) \ll \nu \times (\mathbb{P}^{\hat{\alpha}}[T \geq \tau]\delta_\zeta + \mathbb{P}^{\hat{\alpha}}[T<\tau] \hat{\mu})$. Consequently, we can apply \cite[Proposition 2]{Ruschendorf} to see that there must exist measurable $\hat{\phi}: D \rightarrow R$ and $\tilde{\psi}: D^* \rightarrow \mathbb{R}$ such that $\mathbb{P}^{\hat{\alpha}}$ a.s., we have $\log(\frac{d\mathbb{P}^{\hat{\alpha}}}{d\mathbb{P}}) = \hat{\phi}(X_0) + \tilde{\psi}(\tilde{X}_T)$. By shifting $\hat{\phi}$ and $\tilde{\psi}$ by constants, we can w.l.o.g.\ assume $\tilde{\psi}(\zeta) = 0$. Thus, defining $\hat{\psi} = \tilde{\psi}_{\vert D}$, we have $\tilde{\psi}(\tilde{X}_T) = \mathbf{1}_{\lbrace T<\tau \rbrace}\hat{\psi}(X_T)$ which concludes the proof.    
\end{proof}

\subsection{Standard McKean-Vlasov control problems}\label{sct: uncnd}
We developed the weak formulation for McKean-Vlasov control to handle the missing regularity coming from conditioned mean field interactions. Although our focus has been on the conditioned case, our results also provide an interesting new approach to the standard unconditioned problem. To recover the unconditioned case from our framework, let the domain be $D = \mathbb{R}^d$. 
In this case, $\tau = \infty$, $\mathcal{L}_{\mathbb{P}^\alpha}(X_{\cdot \wedge t}\vert t < \tau) = \mathcal{L}_{\mathbb{P}^\alpha}(X_{\cdot \wedge t})$ and $\mathbb{P}^\alpha[t < \tau] = 1$ for any $t$. The dependence of the cost in $\mathbb{P}^\alpha[t < \tau]$ becomes superfluous.\par
Our state process is then a weak solution of
$$X_t = \xi + \int_0^t b(s, X_{\cdot \wedge s}, \alpha_s, \mathcal{L}_{\mathbb{P}^\alpha}(X_{\cdot \wedge s}))ds + \int_0^t \sigma(s, X_{\cdot \wedge s})dW^\alpha_s$$
and the cost functional becomes
$$J(\alpha) := \mathbb{E}^\alpha\bigg[\int_0^{T \wedge\tau} f(s, X_{\cdot \wedge s}, \alpha_s, \mathcal{L}_{\mathbb{P}^\alpha}(X_{\cdot \wedge s})) ds + g(X, \mathcal{L}_{\mathbb{P}^\alpha}(X))\bigg].$$
In this case, the adjoint equation takes the easier form
\begin{align}
\notag
    Y^\alpha_t &= g(\Theta^\alpha_T) + \tilde{\mathbb{E}}^\alpha\Big[\frac{\delta g}{\delta m}(\tilde{\Theta}^\alpha_T, X) \Big] + \int_t^T f(\Theta^\alpha_s) + \tilde{\mathbb{E}}^\alpha\Big[\frac{\delta f}{\delta m}(\tilde{\Theta}^\alpha_s, X_{\cdot \wedge s}) + \frac{\delta \beta}{\delta m}(\tilde{\Theta}^\alpha_s, X_{\cdot \wedge s})^\top\tilde{Z}^\alpha_s\Big]ds\\ \notag 
    &\quad- \int_t^T Z^\alpha_s dW^\alpha_s\\\label{eq: unc adj}
    &=g(\Theta^\alpha_T) + \tilde{\mathbb{E}}^\alpha\Big[\frac{\delta g}{\delta m}(\tilde{\Theta}^\alpha_T, X)\Big] + \int_t^T h(\Theta^\alpha_s, Z^\alpha_s) + \tilde{\mathbb{E}}^\alpha\Big[\frac{\delta h}{\delta m}(\tilde{\Theta}^\alpha_s, \tilde{Z}^\alpha_s, X_{\cdot \wedge s})\Big]ds - \int_t^T Z^\alpha_s dW_s.
\end{align}
Moreover, the Pontryagin maximum principle we established is also applicable to unconditioned McKean-Vlasov control, and our discussion on the optimal control remains true as well. In the unconditioned setting, the $p$-convexity required in Assumption \ref{asmp: cnv} for the sufficient condition, as well as our existence results, reduces to requiring $\int f^2(t, x, \mu)\mu(dx)$ and $\int g(x, \mu)\mu(dx)$ to be linearly convex as functions in $\mu$. Furthermore, as we no longer worry about the possibility that the state a.s.\ leaves the domain, one can directly formulate the problem in the setting of theorem \ref{thm: opt non eq}, allowing for non-equivalent changes of measures from the beginning without having to additionally assume extendability.


\subsection{Potential mean field games}\label{sct: potgms}
We conclude by presenting the link with mean field games.
In fact, the Pontryagin maximum principle derived in this paper allows to derive solutions of a class of mean field games called potential mean field games.
Although this result may seem folklore, the first general construction was only recently obtained by \cite{Hofer2}.
We will extend and deduce these results as a byproduct of our necessary condition for optimality together with results in \cite{PossamaiTangpi}.


\begin{proof}[Proof of Theorem \ref{thm: pot mfg}]
    Let us start by proving the first statement. We are only going to present the proof for the case where $a^*$ exists and is invertible, the other case follows by the same steps. Let $\alpha$ be any control that is optimal for $J$ over $\mathbb{A}_{BMO}$ and write $\nu = (\mathcal{L}_{\mathbb{P}^\alpha}(X_{\cdot \wedge t}, \alpha_t))_{t \in [0, T]}$. For any other $\alpha' \in \mathbb{A}_{BMO}$, we define $(\overline{Y}^{\nu, \alpha'}, \overline{Z}^{\nu, \alpha'})$ as the solution of
    \begin{align*}
        \overline{Y}^{\nu, \alpha'}_t &= g(X, \nu^x_T) + \int_{\mathcal{C}_D}\frac{\delta g}{\delta m}(\tilde{x}, \nu^x_T, X)\nu^x_T(d\tilde{x}) + \int_t^T f(s, X_{\cdot \wedge s}, \alpha'_s, \nu^x_s) + \int_{\mathcal{C}_D \times A}\frac{\delta f}{\delta m}(s, \tilde{x}, \tilde{a}, \nu^x_s, X_{\cdot \wedge s}) \\
        &\quad + \frac{\delta \beta}{\delta m}(s, \tilde{x},\tilde{a}, \nu^x_s, X_{\cdot \wedge s})^\top (a^*)^{-1}(s, \tilde{x}, \tilde{a}, \nu^x_s)\nu_s(d\tilde{x}, d\tilde{a})ds - \int_t^T \overline{Z}^{\nu, \alpha'}_s d\overline{W}^{\nu, \alpha'}_s\\
        &= g(X, \nu^x_T) + \int_{\mathcal{C}_D}\frac{\delta g}{\delta m}(\tilde{x}, \nu^x_T, X)\nu^x_T(d\tilde{x}) + \int_t^T h(s, X_{\cdot \wedge s}, \alpha'_s, \nu^x_s, \overline{Z}^{\nu, \alpha'}_s) + \int_{\mathcal{C}_D \times A}\frac{\delta f}{\delta m}(s, \tilde{x}, \tilde{a}, \nu^x_s, X_{\cdot \wedge s}) \\
        &+ \frac{\delta \beta}{\delta m}(s, \tilde{x},\tilde{a}, \nu^x_s, X_{\cdot \wedge s})^\top (a^*)^{-1}(s, \tilde{x}, \tilde{a}, \nu^x_s)\nu_s(d\tilde{x}, d\tilde{a})ds - \int_t^T \overline{Z}^{\nu, \alpha'}_s dW_s
    \end{align*}
    which is immediately found using the martingale representation theorem applied with respect to $\mathbb{P}^{\alpha'}$ and $W^{\alpha'}$. Note that $\mathbb{E}[\overline{Y}^{\nu, \alpha'}_0] = \mathbb{E}^{\overline{\mathbb{P}}^{\nu, \alpha'}}[\overline{Y}^{\nu, \alpha'}_0] = J^{\mathrm{MFG}}(\alpha', \nu)$, as well as $(\overline{Y}^{\nu, \alpha}, \overline{Z}^{\nu, \alpha}) = (Y^\alpha, Z^\alpha)$ with $(Y^\alpha, Z^\alpha)$ being the solution to \eqref{eq: adj eq} as found in Proposition \ref{prop: wp adj}. For any $\alpha' \in \mathbb{A}_{BMO}$ it is easy to check that under our growth conditions, $\overline{Y}^{\nu, \alpha'}$ must be uniformly bounded. Further,  applying Itô's lemma to $(\overline{Y}^{\nu, \alpha'})^2$ shows that $\mathbb{E}[\int_0^T \Vert \overline{Z}^{\nu, \alpha'}_s \Vert^2 ds] < \infty$.\par
    By Theorem \ref{thm: ncs}, $\alpha_t \in \arg\max_{a \in A} h(t, X_{\cdot \wedge t}, a, \mathcal{L}_{\mathbb{P}^\alpha}(X_{\cdot \wedge t}), Z^\alpha_t)$. 
    Let us define $H(t, x, \mu, z) = \min_{a \in A} h(t, x, a, \mu, z)$. Using strong convexity of $f$ and Lipschitzness of $\beta$ in $a$, one can check that $a^*$ must be Lipschitz in $z$. which shows that $H$ admits quadratic growth in $z$. Therefore, $(Y^\alpha, Z^\alpha)$ is also a solution to the (non McKean-Vlasov) quadratic BSDE
    \begin{align*}
        Y^\alpha_t =& g(X, \nu^x_T) + \int_{\mathcal{C}_D} \frac{\delta g}{\delta m}(\tilde{x}, \nu^x_T, X)\nu^x_T(d\tilde{x}) + \int_t^T  H(s, X_{\cdot \wedge s}, \nu^x_s, Z^\alpha_s)  + \int_{\mathcal{C}_D \times A}\frac{\delta f}{\delta m}(s, \tilde{x}, \tilde{a}, \nu^x_s, X_{\cdot \wedge s}) \\
        &\quad + \frac{\delta \beta}{\delta m}(s, \tilde{x},\tilde{a}, \nu^x_s, X_{\cdot \wedge s})^\top (a^*)^{-1}(s, \tilde{x}, \tilde{a}, \nu^x_s)\nu_s(d\tilde{x}, d\tilde{a})ds - \int_t^T Z^\alpha_s dW_s.
    \end{align*}
    Since we for any $t \geq 0$ we a.s. have $H(t, X_{\cdot \wedge t}, \nu^x_t, \overline{Z}^{\nu, \alpha'}_t) \leq h(t, X_{\cdot \wedge t}, \alpha'_t, \nu^x_t, \overline{Z}^{\nu, \alpha'}_s)$, by the comparison principle for quadratic BSDEs \cite[Theorem 2.6]{Kobylanski00}, we have that a.s. $Y^\alpha_0 \leq \overline{Y}^{\nu, \alpha'}_0$ and thus $J^{\mathrm{MFG}}(\alpha, \nu) \leq J^{\mathrm{MFG}}(\alpha', \nu)$, showing that $(\alpha, \nu)$ is a solution to the potential mean field game.\par
    Now, in case we additionally assumed \ref{asmp: cnv}, applying Proposition \ref{prop: pcnv diff} to functions independent of $p$ shows that linear convexity of $\int_{\mathcal{C}_D} g(x, \mu) d\mu(dx)$ implies for any $\mu$ and $\mu'$ that
    Summing up this inequality with the one derived by swapping $\mu$ and $\mu'$ shows that the function $g(x, \mu) + \int_{\mathcal{C}_D}\frac{\delta g}{\delta m}(\tilde{x}, \mu, x) \mu(d\tilde{x})$ is Lasry-Lions monotone in the sense that
    $$\int_{\mathcal{C}_D}\Big\{ g(x, \mu') + \int_{\mathcal{C}_D}\frac{\delta g}{\delta m}(\tilde{x}, \mu', x) \mu'(d\tilde{x}) - g(x, \mu) - \int_{\mathcal{C}_D}\frac{\delta g}{\delta m}(\tilde{x}, \mu, x) \mu(d\tilde{x})\Big\}(\mu'-\mu)(dx) \geq 0$$
    for any $\mu$, $\mu'$. The same discussion applies to $f^2$ as well.\par
    Then, following the argument in \cite[Theorem 3.8]{Carmona15} implies that the potential mean field game admits at most one solution. (Although \cite[Theorem 3.8]{Carmona15} assumes bounded controls, working with controls in $\mathbb{A}_{BMO}$ still ensures enough integrability to use the argument.)
\end{proof}
\begin{remark}
    When $A$ is bounded and accordingly $b$ is independent of $\mu$, some conditions can be weakened. For the first statement, $f$ only needs to be convex, not strongly, and the additional bounds on $f$, $g$, $\frac{\delta f}{\delta m}$, $\frac{\delta g}{\delta m}$ and $\frac{\delta \beta}{\delta m}$ beyond the ones in assumption \ref{asmp: beta} and \ref{asmp: ncs} are not required. This is since in this case there is no need to work with quadratic BSDEs but with Lipschitz BSDEs, in a way that is essentially covered in \cite[Proposition 2.8]{PossamaiTangpi}. The second statement also holds without the extra bounds, and will only require $f$ to be strictly convex to ensure that the Hamiltonian is uniquely minimizable.
\end{remark}
This result is consistent with the recent findings on potential mean field games in \cite{Hofer2}. In this paper, the authors consider a more general framework allowing among other for interaction through the law of control, as well as common noise. Notably, convexity of the running cost in $a$ is not required. 
Notwithstanding, our derivation of the result differ fundamentally. \cite{Hofer2} relies on first principle arguments differentiating an additionally introduced randomization at initial time whereas we derive the result from Pontryagin's maximum principle.\par

\bibliographystyle{plain}
\bibliography{refs.bib}
\end{document}